\definecolor{mycolor1}{rgb}{0.00000,0.44700,0.74100}
\definecolor{mycolor2}{rgb}{0.8500, 0.3250, 0.0980}
\definecolor{mycolor3}{rgb}{0.9290, 0.6940, 0.1250}
\definecolor{mycolor4}{rgb}{0.4940, 0.1840, 0.5560}
\definecolor{mycolor5}{rgb}{0.4660, 0.6740, 0.1880}
\theoremstyle{remark}
\newtheorem{remark}{Remark}[section]
\theoremstyle{definition}
\newtheorem{definition}[remark]{Definition}
\newtheorem{proposition}[remark]{Proposition}
\newtheorem{corollary}[remark]{Corollary}
\newtheorem{theorem}[remark]{Theorem}
\newtheorem{exercise}[remark]{Exercise}
\newtheorem{lemma}[remark]{Lemma}
\theoremstyle{definition}
\newenvironment{example}
  {\pushQED{\qed}\examplex}
  {\popQED\endexamplex}
\newenvironment{examplestar}
  {\pushQED{\qed}\examplestarx}
  {\popQED\endexamplestarx}
\numberwithin{equation}{section}
\newcommand{\R}{\mathbb{R}}
\newcommand{\Z}{\mathbb{Z}}
\newcommand{\C}{\mathbb{C}}
\newcommand{\N}{\mathbb{N}}
\newcommand{\Q}{\mathbb{Q}}
\newcommand{\PP}{\mathbb{P}}
\newcommand{\A}{\mathscr{A}}
\newcommand{\J}{\mathscr{J}}
\newcommand{\id}{\mathrm{id}}
\newcommand{\p}{\mathfrak{p}}
\renewcommand{\S}{\mathsf{S}}
\newcommand{\im}{\operatorname{im}}
\newcommand{\Specm}{\operatorname{Specm}}
\newcommand{\Hom}{\operatorname{Hom}}
\newcommand{\blue}[1]{\textcolor{mycolor1}{#1}}
\newcommand{\red}[1]{\textcolor{mycolor2}{#1}}
\newcommand{\Div}{\mathrm{Div}}
\newcommand{\CDiv}{\mathrm{CDiv}}
\newcommand{\Cl}{\mathrm{Cl}}
\newcommand{\Pic}{\mathrm{Pic}}
\renewcommand{\div}{\mathrm{div}}
\newcommand{\OO}{\mathscr{O}}
\newcommand{\f}{\hat{f}}
\newcommand{\V}{V}
\newcommand{\Irrel}{B}
\title{Introduction to Toric Geometry}
\author{Simon Telen}
\date{}
\begin{document}
\maketitle

\begin{abstract}
These notes are based on a series of lectures given by the author at the Max Planck Institute for Mathematics in the Sciences in Leipzig. Addressed topics include affine and projective toric varieties, abstract normal toric varieties from fans, divisors on toric varieties and Cox's construction of a toric variety as a GIT quotient. We emphasize the role of toric varieties in solving systems of polynomial equations and provide many computational examples using the Julia package \texttt{Oscar.jl}. 
\end{abstract}

\tableofcontents

\section{Introduction}
Toric varieties form a well-understood subclass of algebraic varieties whose geometric properties are encoded by combinatorial objects. These objects include lattices, polyhedra, polyhedral cones and fans. 
Loosely speaking, toric varieties are those varieties that can be parametrized by monomials. Though this might seem like a narrow class of varieties, many familiar varieties are toric. Examples are $(\C^*)^n = (\C \setminus \{0\})^n$, $\C^n$, $\PP^n$ and $\PP^k \times \PP^{n-k}$. Other toric varieties we will encounter are Hirzebruch surfaces and the cuspidal cubic $\{y^2 - x^3 = 0 \}$, which is parametrized by $t \mapsto (t^2, t^3)$. Toric varieties often serve as test cases for conjectures, or illustrative examples for theorems. Many general concepts from algebraic geometry have a nice, concrete description in the toric setting. Examples are the gluing construction of abstract varieties (Section \ref{subsec:gluing}) and groups of divisors (Section \ref{subsec:backgrounddiv}). Toric geometry provides nice connections between algebraic geometry, polyhedral geometry (Sections \ref{subsec:cones}, \ref{subsec:polytopes}), tropical geometry (Example \ref{ex:tropical}) and geometric invariant theory (Section \ref{subsec:coxconstr}).

Application areas of toric varieties include geometric modelling \cite[Ch.~3]{cox2020applications}, statistics, chemistry and phylogenetics \cite[Sec.~8.3]{michalek2021invitation}, interior point methods for linear programming \cite{sturmfels2022toric} and mirror symmetry \cite{Batyrev94dualpolyhedra}. They also naturally pop up when solving systems of polynomial equations. Toric varieties are the natural solution spaces for families of polynomial systems with a fixed monomial support. This means that we fix the monomials that can occur in the equations, while the coefficients can vary. An example is worked out below (Example \ref{ex:example}). Exploiting this toric structure is crucial for the efficiency, but also for the accuracy of numerical solution methods. Examples of such toric solution methods include polyhedral homotopies \cite{duff2020polyhedral,huber1995polyhedral}, toric resultants \cite{canny2000subdivision,emiris1999matrices} and toric normal form methods \cite{bender2020toric,telen2019numerical}.

These notes are based on a series of lectures given by the author at the Max Planck Institute for Mathematics in the Sciences in Leipzig. They provide a first introduction to the subject, with an emphasis on solving equations and effective computations with toric varieties. In particular, we will highlight some of the functionalities of the Julia package \texttt{Oscar.jl} \cite{OSCAR}. We assume that the reader is familiar with basic concepts from algebraic geometry, at the level of \cite{cox2013ideals}. We mostly follow the presentation in the first four chapters of the standard textbook \cite{cox2011toric}. Another classical reference is \cite{fulton1993introduction}. Our intention is not to be as complete and self-contained as these books. Proofs are given when considered particularly insightful for the rest of the material and we provide full references where proofs are skipped or sketched. Apart from \cite{cox2011toric,fulton1993introduction}, the reader can consult the lecture notes \cite{sottile2017ibadan} by Frank Sottile, and the paper \cite{cox2003toric} by David Cox for further reading. The discussion on Cox rings and the toric interpretation of polynomial systems in Section \ref{sec:homogeneous} is mostly taken from \cite{telen2020thesis}.

We start out with affine toric varieties in Section \ref{sec:affineTV}. These are the building blocks of the projective and abstract toric varieties discussed in Sections \ref{sec:projectiveTV} and \ref{sec:abstractTV}. Sections \ref{sec:affineTV}-\ref{sec:abstractTV} feature the standard constructions of affine toric varieties from cones, projective toric varieties from polytopes and abstract toric varieties from fans. A particularly interesting result for polynomial system solving is Kushnirenko's theorem (Theorem \ref{thm:kush}), which we prove in Section \ref{sec:kush}. Section \ref{sec:divisors} discusses the theory of divisors on normal toric varieties, which is the starting point for defining homogeneous coordinates in Section \ref{sec:homogeneous}.

The text contains many examples, some of which are marked with an asterisk, e.g., \textbf{Example* \ref{ex:chiara1}}. The asterisk indicates that the computations were performed using software. All Julia code related to these `examples*' can be found at the MathRepo page  \url{https://mathrepo.mis.mpg.de/ToricGeometry}. The reader is encouraged to try out these computations on their own machine. For all computations and reported timings, we used \texttt{Oscar.jl} (v0.8.1) in Julia (v1.7.1) on a macOS (v12.0.1) machine with a Quad-Core Intel Core i7 processor working at 2,8 GHz. Some exercises are included in the text as well, and many more can be found in \cite{cox2011toric}. The author is glad to receive any suggestions for further computational examples or exercises for future editions of this course. We conclude the introduction with an example of how toric varieties arise in the context of equation solving. 

\begin{example} \label{ex:example}
Suppose we are interested in solving $f_1 = f_2 = 0$, where 
\begin{align}\label{eq:sys_full}
    f_1 &= a_0 + a_1 x + a_2 y + a_3 x y + a_4 x^2 + a_5 y^2 \\
    f_2 &= b_0 + b_1 x + b_2 y + b_3 x y + b_4 x^2 + b_5 y^2.
\end{align}
If we view $f_1, f_2$ as elements of the polynomial ring $\C[x,y]$, the solutions live naturally in $\C^2$. By Bézout's Theorem, for a generic choice of the complex parameters $a_i, b_i$, there are $4$ such solutions. In our toric context, we will view $f_1$ and $f_2$ as \emph{Laurent polynomials}, i.e., as elements of the larger ring $\C[x^{\pm 1}, y^{\pm 1}]$. Solutions are then points in the \emph{algebraic torus} $(\C^*)^2 \subset \C^2$, where $\C^* = \C \setminus \{0\}$. Generically, all $4$ affine solutions lie in $(\C^*)^2$.

Next, we re-interpret \eqref{eq:sys_full} as a system of \emph{linear} equations on a projective variety. To that end, we define the following map:
\begin{align}
    \Phi_\triangle : (\C^*)^2 \to \PP^5 \quad \text{given by} \quad (x,y) \mapsto (1 : x : y : xy : x^2 : y^2).
\end{align}
We denote by $X_\triangle$ the Zariski closure of its image: $X_\triangle = \overline{\im \Phi_\triangle} \subset \PP^5$. As we will see, $X_\triangle$ is a \emph{projective toric variety}. It is equal to the 2-uple Veronese embedding $\nu_2\left(\PP^2 \right)$ of $\PP^2$. Using homogeneous coordinates $(u_0: \ldots:u_5)$ on $\PP^5$, let us consider the linear space defined by
\begin{equation} \label{eq:Ltriangle}
    L_\triangle = \left\lbrace
    \begin{aligned}
        a_0 u_0 + a_1 u_1 + a_2 u_2 + a_3 u_3 + a_4 u_4 + a_5 u_5 &= 0 \\
        b_0 u_0 + b_1 u_1 + b_2 u_2 + b_3 u_3 + b_4 u_4 + b_5 u_5 &= 0
    \end{aligned}
    \right\rbrace \subset \PP^5.
\end{equation}
We observe that solutions $(x,y)\in (\C^*)^2$ of \eqref{eq:sys_full} are one-to-one with points $u$ such that
\begin{equation} \label{eq:incl1}
(u_0: \cdots: u_5) \, \, \in \, \, \im \Phi_\triangle \cap L_\triangle \, \, \subset \,\, X_\triangle \cap L_\triangle.    
\end{equation}
The right-hand-side of the inclusion counts the number of solutions of the homogenized version of \eqref{eq:sys_full} in projective space. Its cardinality, for generic parameters, is $|X_\triangle \cap L_\triangle| = \deg X_\triangle = \deg \nu_2\left(\PP^2 \right) = 4$. As discussed above, the inclusion \eqref{eq:incl1} is generically an equality. Intuitively, by this procedure we moved from non-linear equations \eqref{eq:sys_full} on the flat space $(\C^*)^2$ to linear equations \eqref{eq:Ltriangle} on the curvy space $X_\triangle \subset \PP^5$.
We now force the coefficients $a_4, a_5, b_4, b_5$ to be zero. That is, we consider the polynomial system $f_1=f_2=0$, where 
\begin{align}\label{eq:sys_sparse}
    f_1 &= a_0 + a_1 x + a_2 y + a_3 x y \\
    f_2 &= b_0 + b_1 x + b_2 y + b_3 x y.
\end{align}
Again, the system is defined by quadratic equations. B\'ezout's theorem predicts 4 solutions. However, one easily checks that there are at most two solutions in $(\C^*)^2$ (we leave this as an exercise). To explain this discrepancy, we homogenize $f_1$ and $f_2$ to obtain the quadrics 
\begin{align}\label{eq:sys_sparse_homog}
    F_1 &= a_0 Z^2+ a_1 XZ + a_2 YZ + a_3 XY \\
    F_2 &= b_0 Z^2+ b_1 XZ + b_2 YZ + b_3 XY
\end{align}
with homogeneous coordinates $(X:Y:Z)$ on $\PP^2$.   
This reveals the two 'missing' solutions: the points $(0:1:0)$ and $(1:0:0)$ are solutions to $F_1 = F_2 = 0$ for any choice of the parameters $a_i$ and $b_i$. 
Forcing some coefficients to be zero causes some solutions to lie on the boundary of $(\C^*)^2$ in $\PP^2$. The linear space $\tilde{L}_\triangle$ given by \eqref{eq:Ltriangle} with $a_4 = a_5 = b_4 = b_5 = 0$ is non-generic. In particular, there is a \emph{strict} inclusion $\im \Phi_\triangle \cap \tilde{L}_\triangle \subsetneq X_\triangle \cap \tilde{L}_\triangle$. Indeed, interpreting \eqref{eq:sys_sparse} as a system of equations on $X_\triangle$, we introduce two spurious intersection points in $(X_\triangle \setminus \im \Phi_\triangle) \cap \tilde{L}_\triangle$, independently of the choice of $a_0, \ldots, a_3, b_0, \ldots, b_3$. To remedy this, we define a new map
\begin{align}
    \Phi_\square : (\C^*)^2 \to \PP^3  \quad \text{given by} \quad 
    (x,y) \mapsto (1 : x : y : xy)
\end{align}
and let $X_\square$ be the Zariski closure of its image. One checks that $X_\square = \sigma(\PP^1 \times \PP^1)$ is the Segre embedding of $\PP^1 \times \PP^1$ in $\PP^3$. Our new linear space is the line
\begin{equation}
    L_\square = \left\lbrace
    \begin{aligned}
        a_0 u_0 + a_1 u_1 + a_2 u_2 + a_3 u_3 &= 0 \\
        b_0 u_0 + b_1 u_1 + b_2 u_2 + b_3 u_3 &= 0
    \end{aligned}
    \right\rbrace \subset \PP^3.
\end{equation}
For general coefficients, $|X_\square \cap L_\square| = \deg X_\square = 2$ and the inclusion $\im \Phi_\square \cap L_\square \subset X_\square \cap L_\square$ is an equality. The conclusion is that a natural compact space containing the solutions of \eqref{eq:sys_sparse} is $X_\square \simeq \PP^1 \times \PP^1$, rather than $X_\triangle \simeq \PP^2$.

We point out that this example features several familiar toric varieties: $$(\C^*)^2, \, \C^2, \, \PP^2 \simeq X_\triangle, \, \PP^5, \, \PP^1 \times \PP^1 \simeq X_\square, \, \PP^3.$$
All of these are \emph{normal toric varieties} (in particular, they are smooth). We will see that such varieties have a very nice combinatorial description in terms of fans, and sometimes in terms of polytopes. As a teaser, we justify the notation $X_\triangle, X_\square$. The \emph{Newton polytopes} (see Section \ref{sec:homog}) $\triangle$ and $\square$ of the equations in \eqref{eq:sys_full} and \eqref{eq:sys_sparse} are shown in Figure \ref{fig:newton_polytopes}.
\begin{figure}[h!]
    \centering
    \begin{subfigure}{0.4\textwidth}
    \centering
    \begin{tikzpicture}[scale=1]
\begin{axis}[%
width=1.5in,
height=1.5in,
scale only axis,
xmin=-0.5,
xmax=2.5,
ymin=-0.5,
ymax=2.5,
ticks = none, 
ticks = none,
axis background/.style={fill=white},
axis line style={draw=none} 
]


\addplot [color=mycolor1,solid,fill opacity=0.2,fill = mycolor4,forget plot]
  table[row sep=crcr]{%
 0 0\\
0 2\\	
2 0\\
0 0 \\
};

\addplot [very thick, color=mycolor2,solid,fill opacity=0.2,fill = mycolor1,forget plot]
  table[row sep=crcr]{%
 0 0 \\
 0 2\\
};

\addplot [very thick, color=mycolor2,solid,fill opacity=0.2,fill = mycolor1,forget plot]
  table[row sep=crcr]{%
 0 1 \\
 1 1\\
};

\addplot [very thick, color=mycolor2,solid,fill opacity=0.2,fill = mycolor1,forget plot]
  table[row sep=crcr]{%
 1 0 \\
 1 1\\
};

\addplot [very thick, color=mycolor2,solid,fill opacity=0.2,fill = mycolor1,forget plot]
  table[row sep=crcr]{%
 1 0 \\
 0 1\\
};

\addplot [very thick, color=mycolor2,solid,fill opacity=0.2,fill = mycolor1,forget plot]
  table[row sep=crcr]{%
 0 0 \\
 2 0 \\
};

\addplot [very thick, color=mycolor2,solid,fill opacity=0.2,fill = mycolor1,forget plot]
  table[row sep=crcr]{%
 0 2\\
 2 0 \\
};


\addplot[only marks,mark=*,mark size=1.1pt,black
        ]  coordinates {
   (0,0) (1,0) (2,0) (0,1) (1,1) (2,1) (0,2) (1,2) (2,2)
};

\end{axis}
\end{tikzpicture} 
    \end{subfigure}
    \begin{subfigure}{0.4\textwidth}
    \centering
   \begin{tikzpicture}[scale=1]
\begin{axis}[%
width=1.5in,
height=1.5in,
scale only axis,
xmin=-0.5,
xmax=2.5,
ymin=-0.5,
ymax=2.5,
ticks = none, 
ticks = none,
axis background/.style={fill=white},
axis line style={draw=none} 
]


\addplot [color=mycolor1,solid,fill opacity=0.2,fill = mycolor4,forget plot]
  table[row sep=crcr]{%
 0 0\\
0 1\\	
1 1\\
1 0 \\
0 0\\
};

\addplot [very thick, color=mycolor2,solid,fill opacity=0.2,fill = mycolor1,forget plot]
  table[row sep=crcr]{%
 0 1 \\
 1 1\\
};

\addplot [very thick, color=mycolor2,solid,fill opacity=0.2,fill = mycolor1,forget plot]
  table[row sep=crcr]{%
 1 0 \\
 1 1\\
};

\addplot [very thick, color=mycolor2,solid,fill opacity=0.2,fill = mycolor1,forget plot]
  table[row sep=crcr]{%
 1 0 \\
 0 1\\
};

\addplot [very thick, color=mycolor2,solid,fill opacity=0.2,fill = mycolor1,forget plot]
  table[row sep=crcr]{%
 0 0 \\
 1 0 \\
};

\addplot [very thick, color=mycolor2,solid,fill opacity=0.2,fill = mycolor1,forget plot]
  table[row sep=crcr]{%
 0 0\\
 0 1 \\
};


\addplot[only marks,mark=*,mark size=1.1pt,black
        ]  coordinates {
   (0,0) (1,0) (2,0) (0,1) (1,1) (2,1) (0,2) (1,2) (2,2)
};

\end{axis}
\end{tikzpicture} 
    \end{subfigure}
    \caption{The Newton polytopes associated to the polynomial system \eqref{eq:sys_full} on the left and to \eqref{eq:sys_sparse} on the right.}
    \label{fig:newton_polytopes}
\end{figure}
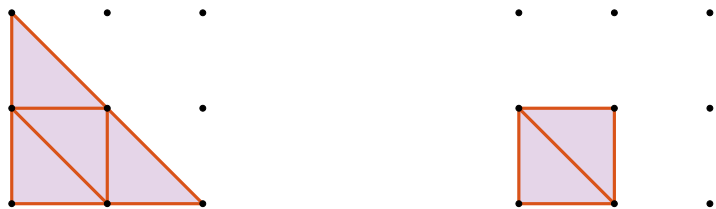
Note that from these polytopes $P$ we can deduce the following geometric information on $X_P$: 
 \begin{enumerate}
     \item the dimension of $P$ equals that of $X_P$,
     \item the degree of $X_P$ is the \emph{normalized volume} of $P$.
 \end{enumerate}
The second statement is an instance of \emph{Kushnirenko's Theorem}, see Section \ref{sec:kush}. The normalized volume of $P\subset \R^n$ equals its Euclidean volume divided by $n!$. It counts the number of standard simplices that can fit in $P$, see Figure \ref{fig:newton_polytopes}.
\end{example}

\paragraph{Acknowledgements.} 
I would like to thank all course attendants for their active participation. Some exercises suggested by the author were worked out by students and included as examples in the text. Special thanks to Chiara Meroni for helping with Examples* \ref{ex:chiara1}, \ref{ex:chiara3}, \ref{ex:chiara4}, \ref{ex:chiara5}, \ref{ex:chiara6}, \ref{ex:chiara7}, \ref{ex:chiara8}, \ref{ex:chiara9}, \ref{ex:chiara10} and for setting up an accompanying MathRepo page at \url{https://mathrepo.mis.mpg.de/ToricGeometry}. Examples* \ref{ex:claudia1}, \ref{ex:claudia2} were contributed by Claudia Fevola, and Mara Belotti and Chiara Meroni worked on Example* \ref{ex:chiaramara}. Thanks to Michael Joswig for several useful suggestions, including Example* \ref{ex:chiara7}. The lectures were supported by Jupyter notebooks written in Julia. I want to thank Lars Kastner and Martin Bies for implementing desired features and for several swift bug fixes. Their help was indispensable in underpinning this course with computational examples. Thanks to Claudia Fevola, Chiara Meroni and Pedro Tamaroff for their help with proofreading. 

\section{Affine toric varieties} \label{sec:affineTV}
Affine toric varieties are the building blocks of more general toric varieties. An \emph{affine variety} in $\C^s$ is the common zero set of finitely many polynomials in $\C[x_1, \ldots, x_s]$. For affine \emph{toric} varieties, we will see that all these polynomials can be taken of the simple form $x^a - x^b$, where $x^a = x_1^{a_1} \cdots x_s^{a_s}$,  $a = (a_1, \ldots, a_s) \in \N^s$ and similarly for $x^b$. We will also see that toric varieties come in the form of a simple parametrization given by monomials, and a more intrinsic description involves \emph{semigroup algebras}. The nicest affine toric varieties, i.e., those that are \emph{normal}, correspond to semigroups associated to \emph{rational polyhedral cones}. 

An affine variety $V \subset \C^s$, has coordinate ring $\C[V]$. Its vanishing ideal in $\C[x_1, \ldots, x_s]$ is denoted $I(V)$, i.e.~$\C[V] \simeq \C[x_1, \ldots, x_s]/I(V)$. To associate an affine variety to a nilpotent free finitely generated $\C$-algebra $R$, we use the notation $R \mapsto \Specm(R)$. This stands for \emph{maximal spectrum}: $\Specm(R)$ is the set of all maximal ideals of $R$, endowed with the Zariski topology. We have  $V \simeq \Specm(\C[V]) = \Specm(\C[x_1, \ldots, x_s]/I(V))$ and $\C[\Specm(R)] = R$.

\subsection{Tori and their lattices}
Recall that $(\C^*)^n$ is a group under component-wise multiplication: for $t, t' \in (\C^*)^n$,
\[ t \cdot t' = (t_1, \ldots, t_n) \cdot (t'_1, \ldots, t'_n) = (t_1t'_1, \ldots, t_nt'_n) \in (\C^*)^n. \] 

\begin{definition}[Torus]
A \emph{torus} $T$ is an affine variety which is isomorphic to $(\C^*)^n$, where $T$ inherits a group structure from this isomorphism.
\end{definition}

By definition, the coordinate ring $\C[T]$ of an $n$-dimensional torus $T$ is isomorphic to the \emph{Laurent polynomial ring} $\C[t_1^{\pm 1}, \ldots, t_n^{\pm 1}]$. These are the morphisms $T \rightarrow \C$. Among them, \emph{characters} are the non-vanishing morphisms on $T$ which respect the group structure.
\begin{definition}[Character]
A \textit{character} of a torus $T$ is a morphism $T \rightarrow \C^*$ which is a group homomorphism. 
\end{definition}

\begin{proposition}\label{prop:characters}
The characters of $(\C^*)^n$ are the morphisms $(\C^*)^n \rightarrow \C^*$ given by
$$(t_1,\dots,t_n) \mapsto t_1^{a_1}\cdots t_n^{a_n} \qquad \text{for } (a_1,\dots a_n)\in \Z^n.$$
\end{proposition}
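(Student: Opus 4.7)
The plan is to prove both containments. The forward direction (every monomial map of this form is a character) is a routine verification: I would observe that $t^a = t_1^{a_1}\cdots t_n^{a_n}$ is a unit in the Laurent polynomial ring $\C[t_1^{\pm 1},\ldots,t_n^{\pm 1}]$, hence defines a morphism $(\C^*)^n \to \C^*$, and then check the homomorphism property $(t \cdot t')^a = t^a \cdot (t')^a$ coordinatewise using the usual exponent rules.

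For the reverse direction, I would take an arbitrary character $\chi : (\C^*)^n \to \C^*$ and pull it back to the coordinate ring. Composing with the inclusion $\C^* \hookrightarrow \C$, the morphism $\chi$ corresponds to some $f \in \C[t_1^{\pm 1},\ldots,t_n^{\pm 1}]$. Because $\chi$ is nowhere-vanishing, $1/\chi$ is also a regular function on $(\C^*)^n$, so $f$ is a unit in the Laurent polynomial ring.

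The crux is then a units lemma: the units of $\C[t_1^{\pm 1},\ldots,t_n^{\pm 1}]$ are exactly the expressions $c \cdot t^a$ with $c \in \C^*$ and $a \in \Z^n$. I would prove this by clearing denominators: if $f g = 1$ in the Laurent ring, multiplying both factors by a suitable monomial yields $\tilde f \tilde g = t^b$ inside $\C[t_1,\ldots,t_n]$ for some $b \in \N^n$. Since $\C[t_1,\ldots,t_n]$ is a UFD and each $t_i$ is prime, every irreducible factor of $\tilde f$ must be (an associate of) some $t_i$, forcing $\tilde f = c \cdot t^{a'}$ with $c \in \C^*$, and hence $f = c \cdot t^a$ back in the Laurent ring. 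An equivalent route is a Newton-polytope argument: $\mathrm{Newt}(fg) = \mathrm{Newt}(f) + \mathrm{Newt}(g)$ is a single point, and a Minkowski sum is a point only when both summands are.

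Once the units lemma is in hand, applying the group-homomorphism property $\chi(1,\ldots,1) = 1$ gives $c \cdot 1^a = c = 1$, so $\chi(t) = t^a$ as claimed. The main obstacle is the units lemma, but it reduces cleanly to unique factorization in $\C[t_1,\ldots,t_n]$, so I do not anticipate any serious technical difficulty.
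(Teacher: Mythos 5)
Your proof is correct and follows the same route as the paper's: classify the nowhere-vanishing morphisms $(\C^*)^n \to \C^*$ (equivalently, the units of $\C[t_1^{\pm 1},\ldots,t_n^{\pm 1}]$) as $c\,t^a$ with $c \in \C^*$, $a \in \Z^n$, and then use the group-homomorphism condition $\chi(1,\ldots,1)=1$ to force $c=1$. The only difference is that the paper cites the units classification as a black box, whereas you supply a self-contained proof of it by clearing denominators and invoking unique factorization in $\C[t_1,\ldots,t_n]$.
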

\begin{proof}
The only morphisms from $(\C^*)^n \to \C^*$ are those given by $t \rightarrow \lambda t^a$, for $a \in \Z^n$ and $\lambda \in \C^*$ (see \cite[Prop. 2.3]{michalek2018selected}). Among those, the 
group homomorphisms have $\lambda = 1$. 
\end{proof}

\begin{corollary}\label{cor:freeAbelianGroup}
The characters of a torus $T\simeq (\C^*)^n$ form a lattice $M\simeq \Z^n$, called the \emph{character lattice} of $T$.
\end{corollary}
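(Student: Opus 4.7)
The plan is to exhibit an explicit group isomorphism between $\Z^n$ and the set of characters of $(\C^*)^n$, then transfer the structure along any chosen isomorphism $T \simeq (\C^*)^n$. First I would note that the set of morphisms $T \to \C^*$ is an abelian group under pointwise multiplication, and that the subset of characters (group homomorphisms) is closed under this operation, inverses, and contains the constant $1$. So it suffices to identify this group.

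Next, using Proposition \ref{prop:characters}, I would define the map
\[
\varphi : \Z^n \longrightarrow \{\text{characters of }(\C^*)^n\}, \qquad a = (a_1,\ldots,a_n) \longmapsto \chi_a,
\]
where $\chi_a(t) = t_1^{a_1}\cdots t_n^{a_n}$. Surjectivity of $\varphi$ is precisely the content of Proposition \ref{prop:characters}. The fact that $\varphi$ is a group homomorphism is immediate from the identity $\chi_a(t)\chi_b(t) = t^{a+b} = \chi_{a+b}(t)$, valid for every $t \in (\C^*)^n$.

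The only nontrivial step is injectivity. If $\chi_a = \chi_b$ as functions on $(\C^*)^n$, then $t^{a-b} = 1$ for every $t \in (\C^*)^n$. Writing $c = a-b = (c_1,\ldots,c_n)$ and evaluating at $t = (t_1, 1, \ldots, 1)$ with $t_1 \in \C^*$ arbitrary forces the Laurent polynomial $t_1^{c_1}$ to equal $1$ identically, so $c_1 = 0$; iterating coordinate by coordinate yields $c = 0$, i.e.\ $a = b$. Equivalently, one can invoke the linear independence of distinct monomials in $\C[t_1^{\pm 1}, \ldots, t_n^{\pm 1}]$. Thus $\varphi$ is a group isomorphism, and the character group of $(\C^*)^n$ is a free abelian group of rank $n$.

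Finally, an isomorphism $T \xrightarrow{\sim} (\C^*)^n$ of tori (hence of algebraic groups) induces by precomposition a group isomorphism between the character group of $(\C^*)^n$ and that of $T$. Combining this with the previous step yields the desired isomorphism of the character group of $T$ with $\Z^n$. I do not anticipate a real obstacle here; the only genuinely substantive input is Proposition \ref{prop:characters} (the classification of characters), which is already granted, so the corollary is essentially bookkeeping.
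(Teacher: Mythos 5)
Your proposal is correct and is essentially the argument the paper has in mind: the corollary is stated without a separate proof, as an immediate consequence of Proposition \ref{prop:characters}, and your write-up just makes the implicit bookkeeping explicit (group structure under pointwise multiplication, surjectivity from the proposition, injectivity from independence of Laurent monomials, and transport along a fixed isomorphism $T \simeq (\C^*)^n$).
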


By a \emph{lattice} we mean a free abelian group of finite rank. Conventionally, for a point $m\in M$ in the character lattice, we write $\chi^m:T\rightarrow \C^*$ for the corresponding character. Note that when $T = (\C^*)^n$, Proposition \ref{prop:characters} says that characters are Laurent monomials.

The dual lattice $N = \text{Hom}_{\Z}(M,\Z) \simeq \Z^n$ of the character lattice $M$ is called the \textit{cocharacter lattice} or the \textit{lattice of one-parameter subgroups} of $T$. More explicitly, $N$ consists of the morphisms $\C^* \rightarrow T$ that are group homomorphisms. 

\begin{proposition} 
The one-parameter subgroups of $(\C^*)^n$ are given by the morphisms 
$$t \mapsto (t^{u_1},\dots,t^{u_n}) \qquad \text{for } u = (u_1,\dots, u_n)\in \Z^n.$$
\end{proposition}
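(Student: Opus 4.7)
The plan is to reduce this immediately to the one-dimensional case of Proposition \ref{prop:characters}. A one-parameter subgroup of $(\C^*)^n$ is, by definition, a morphism $\phi : \C^* \to (\C^*)^n$ that is simultaneously a group homomorphism. Since $(\C^*)^n$ sits inside $\C^n$ and the group operation is componentwise, such a $\phi$ is uniquely determined by its $n$ component morphisms $\phi_i : \C^* \to \C^*$, and $\phi$ is a group homomorphism if and only if each $\phi_i$ is.

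First I would write $\phi = (\phi_1, \ldots, \phi_n)$ and observe that each $\phi_i$ is then a character of $\C^*$, i.e., an element of the character group of the one-dimensional torus. Applying Proposition \ref{prop:characters} with $n = 1$ gives $\phi_i(t) = t^{u_i}$ for a unique $u_i \in \Z$. Assembling the components yields $\phi(t) = (t^{u_1}, \ldots, t^{u_n})$, which is the claimed form.

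For the converse direction, I would simply verify that for any $u = (u_1, \ldots, u_n) \in \Z^n$, the map $t \mapsto (t^{u_1}, \ldots, t^{u_n})$ is a morphism of affine varieties (its coordinate functions are Laurent monomials) and respects multiplication, since $(tt')^{u_i} = t^{u_i} (t')^{u_i}$ componentwise. Hence every such $u$ genuinely defines a one-parameter subgroup.

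There is essentially no obstacle here: the statement is the evident dual of Proposition \ref{prop:characters}, and the only mild point to articulate carefully is that a group homomorphism into a direct product group is the same data as a tuple of group homomorphisms into each factor, so that the problem genuinely decomposes into $n$ independent copies of the one-dimensional character computation already established.
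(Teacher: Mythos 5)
Your proof is correct, and it is the natural argument: the paper states this proposition without proof, evidently treating it as the immediate dual analogue of Proposition \ref{prop:characters}, and your reduction to the $n=1$ case of that proposition via the componentwise decomposition of a group homomorphism into a direct product is exactly the argument being left implicit. The one small point worth stating explicitly, which you do, is that a morphism $\C^* \to (\C^*)^n$ is the same data as a tuple of morphisms $\C^* \to \C^*$ and it is a group homomorphism precisely when each component is; with that observed, the claim falls out of the $n=1$ characters statement and the elementary verification of the converse.
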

We write $\lambda^u: \C^* \rightarrow T$ for the one-parameter subgroup corresponding to $u\in \Z^n$. When $T = (\C^*)^n$, a one-parameter subgroup is a monomial curve. 

An isomorphism $\phi:T \simeq (\C^*)^n$ determines a basis $e_1,\dots, e_n$ of $M$ and a dual basis ${e'}_1,\dots, e'_n$ of $N$. Explicitly, $e_i$ corresponds to the character $\chi^{e_i}(t) = \phi(t)_i$, and $e_i'$ represents the cocharacter $\lambda^{e_i'}(t) = \phi^{-1}(1, \ldots, t, \ldots, 1)$, where $t$ is in the $i$-th position.
The bases $\{e_i\}$ and $\{e_i'\}$ fix isomorphisms $\phi_M: M\to \Z^n$ and $\phi_N: N \to \Z^n$. The
natural pairing: 
$$\langle \cdot, \cdot \rangle: M\times N \rightarrow \Hom_\Z(\Z, \Z) \simeq \Z, \quad (m,n)\mapsto \chi^m \circ \lambda^n$$
is given in coordinates by $\langle m,u\rangle = \phi_M(m) \cdot \phi_N(u)$, where $ \cdot $ is the usual dot product in $\Z^n$. 

\begin{example}
We work out an explicit example. Let $n = 2$ and take $m = 2 e_1 + 3 e_2 \in M, u = e_1' - 4e_2' \in N$. We have $\phi_M(m) = (2,3), \phi_N(u) = (1,-4)$. From 
\begin{align*}
(\chi^m \circ \lambda^u)(t) &=\chi^m ( \lambda^{e_1' - 4 e_2'}(t)) =\chi^m(\lambda^{e_1'}(t) \lambda^{e_2'}(t)^{-4}) = \chi^m(\phi^{-1}(t,t^{-4})) \\ &= \chi^{2e_1 + 3e_2}(\phi^{-1}(t,t^{-4})) = \chi^{e_1}(\phi^{-1}(t,t^{-4}))^2 \chi^{e_2}(\phi^{-1}(t,t^{-4}))^3 = t^2 t^{-12} = t^{-10},
\end{align*}
we see that $\chi^m \circ \lambda^u: \Z \rightarrow \Z$ sends $t \mapsto t^{-10}$. Hence $\langle m, u \rangle = -10 = (2,3) \cdot (1, -4)$. 
\end{example}
Given a lattice $N \simeq \Z^n$, the torus $T_N = N \otimes_\Z \C^* \simeq (\C^*)^n$ has cocharacter lattice $N$. One-parameter subgroups are given explicitly by $\lambda^u(t) = u \otimes t$. The torus $T_N$ is canonically associated to $N$, which we emphasize with the subscript $N$. A basis $e_1', \ldots, e_n'$ of $N$ fixes an isomorphism $T_N \rightarrow (\C^*)^n$ which sends $u \otimes t = \left( \sum_{i = 1}^n u_i  \, e_i'  \right ) \otimes t$ to $(t^{u_1}, \ldots, t^{u_n})$.

\begin{remark}
One can often assume that dual bases for the lattices $M$, $N$ are fixed. In such cases, and in most statements below, one can set $M = N = \Z^n$ and $T  = T_N = (\C^*)^n$. Characters are Laurent monomials and co-characters are monomial curves. The general set-up presented here will be useful in contexts where several tori and lattices are involved.
\end{remark}
\subsection{Definition of an affine toric variety}
\begin{definition}[Affine toric variety] \label{def:ATV}
An \textit{affine toric variety} is an irreducible affine variety $V$ containing a torus $T \simeq (\C^*)^n$ as a Zariski dense open subset such that the action of $T$ on itself extends to an algebraic action of $T$ on $V$.
\end{definition}
In this definition, an \emph{algebraic action} is a morphism of varieties $T\times V\rightarrow V$.
\begin{example}
Here are some examples of affine toric varieties:
\begin{itemize}
    \item A torus $T$ is an affine toric variety. In particular, the action of $T$ on itself is given by a morphism $T \times T \rightarrow T$, as $(\C^*)^n \times (\C^*)^n \rightarrow (\C^*)^n$ is given by Laurent monomials.  
    \item Affine $n$-space $\C^n$ is an affine toric variety. It has a dense torus $T = (\C^*)^n \hookrightarrow \C^n$ whose group operation $T \times T \rightarrow T$ extends trivially to an algebraic action $T \times \C^n \rightarrow \C^n$. 
    \item The curve $C =  \{x^3-y^2 = 0 \} \subseteq \C^2$ contains the torus $T \simeq \C^*$ via $t \mapsto (t^2, t^3)$. The action of $T$ on $C$ is given by $(t, (x,y))\mapsto(t^2x,t^3y) $;
    \item The threefold $V = \{xy-zw = 0 \} \subset \C^4$ contains a torus $T = (V \cap (\C^*)^4) \simeq (\C^*)^3$ via $(t_1, t_2, t_3) \mapsto (t_1, t_2, t_3, t_1t_2t_3^{-1})$. The action of $T$ on itself extends to an algebraic action of $T$ on $V$ as $((t_1,t_2,t_3), (x,y,z,w))\mapsto(t_1 x,t_2y, t_3 z, t_1 t_2 t_3^{-1} w) $. \qedhere
\end{itemize}
\end{example}
Definition \ref{def:ATV} seems somewhat technical. Below, we will discuss three different, very explicit ways of constructing affine toric varieties. The first construction uses monomial maps, the second uses toric ideals, and the third involves affine semigroups.

\subsection{Affine toric varieties from monomial maps} \label{subsec:affmonomial}
Let $T\simeq (\C^*)^n$ be a torus with character lattice $M$ and fix a subset $\mathscr{A}=\{m_1,\dots , m_s\}\subset M$. We consider the map
\begin{align*}
    \Phi_{\mathscr{A}}\;:\; T \to \C^s \quad \text{given by} \quad  t \mapsto (\chi^{m_1}(t),\dots, \chi^{m_s}(t) ).
\end{align*}
We denote the Zariski closure of its image by $Y_{\mathscr{A}} = \overline{\text{im} \, \Phi_{\mathscr{A}}}\subset \C^s$. 

\begin{proposition}\label{prop:charLattice}
$Y_{\mathscr{A}}$ is an affine toric variety whose dense torus has character lattice $\Z \mathscr{A} = \{ \sum_{i=1}^s a_i m_i ~|~ a_i \in \Z \}$.
\end{proposition}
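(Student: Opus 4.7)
My plan is to regard $\Phi_{\mathscr{A}}$ not merely as a morphism to $\C^s$ but, before taking closures, as a homomorphism of tori $\Phi_{\mathscr{A}}\colon T \to (\C^*)^s$. By Proposition \ref{prop:characters}, each coordinate $\chi^{m_i}\colon T \to \C^*$ is a group homomorphism, so $\Phi_{\mathscr{A}}$ is too. Dually, it corresponds to the lattice map $\hat\pi\colon \Z^s \to M$, $e_i \mapsto m_i$, whose image is exactly $\Z\mathscr{A}$. The image $T_Y := \im \Phi_{\mathscr{A}}$ is a constructible subgroup of $(\C^*)^s$, hence closed, and it is irreducible as the image of an irreducible variety; so $T_Y$ is a subtorus of $(\C^*)^s$.

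Next I would pin down the character lattice of $T_Y$. A character $\chi^a$ of $(\C^*)^s$ (with $a \in \Z^s$) restricts to the trivial character of $T_Y$ if and only if $\chi^a \circ \Phi_{\mathscr{A}} \equiv 1$ on $T$; this composite equals $\chi^{\sum a_i m_i}$, which is trivial exactly when $\sum a_i m_i = 0$ in $M$, i.e.\ when $a \in \ker \hat\pi$. Therefore the character lattice of $T_Y$ is $\Z^s / \ker\hat\pi \simeq \im \hat\pi = \Z\mathscr{A}$, as claimed.

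It then remains to check the toric conditions for $Y_{\mathscr{A}} = \overline{T_Y} \subset \C^s$. Irreducibility is immediate since $T_Y$ is irreducible and irreducibility is preserved under Zariski closure. To see $T_Y$ is open and dense in $Y_{\mathscr{A}}$, note $T_Y \subseteq Y_{\mathscr{A}} \cap (\C^*)^s$, and since $T_Y$ is already closed in $(\C^*)^s$, this inclusion is an equality; density holds by construction. Finally, the componentwise multiplication $T_Y \times \C^s \to \C^s$ is a morphism restricting the standard action of $(\C^*)^s$, and for any $t \in T_Y$ the translation $t\cdot\overline{T_Y} = \overline{t \cdot T_Y} = \overline{T_Y}$, so the action preserves $Y_{\mathscr{A}}$ and extends the group operation on $T_Y$.

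The main subtlety I expect is the step asserting that $T_Y$ is closed in $(\C^*)^s$, since this is what makes the identification $Y_{\mathscr{A}} \cap (\C^*)^s = T_Y$ work and underlies the dense-open torus property. One can either appeal to the general fact that a constructible subgroup of an algebraic group is closed, or avoid it by working directly with the sublattice $L = \ker \hat\pi \subseteq \Z^s$ and defining $T_Y$ as the vanishing locus in $(\C^*)^s$ of the binomials $\{u^a - u^b : a-b \in L\}$; either route recovers the same subtorus and confirms its character lattice is $\Z\mathscr{A}$.
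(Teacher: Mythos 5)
Your proof is correct and follows essentially the same path as the paper's: you view $\Phi_{\mathscr{A}}$ as a morphism of tori, observe that its image $T'$ is a closed subtorus of $(\C^*)^s$ (the paper cites this as a lemma from linear algebraic group theory, you invoke the constructible-subgroup fact directly), conclude $T' = Y_{\mathscr{A}} \cap (\C^*)^s$ is dense and open, extend the action via the $t, t^{-1}$ translation argument on closures, and identify the character lattice through the dual map $\Z^s \to M$ with image $\Z\mathscr{A}$. The one spot worth tightening is your character-lattice computation: you show the kernel of restriction $\Z^s \to M(T')$ is $\ker\widehat{\Phi}_{\mathscr{A}}$, which gives an injection $\Z^s/\ker\widehat{\Phi}_{\mathscr{A}} \hookrightarrow M(T')$, but to conclude this is an isomorphism you also need that restriction is surjective, i.e.\ that every character of the subtorus $T'$ extends to $(\C^*)^s$; this is exactly the ``$\phi$ injective $\Rightarrow$ $\widehat\phi$ surjective'' half of Lemma~\ref{lem:phihat}, which the paper's diagram-chase uses explicitly and which you should cite.
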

For proving this proposition, we will need a few lemmas. 
We will sometimes think of $\Phi_\A$ as a map of tori $T \to (\C^*)^s$. 
\begin{lemma} \label{lem:phihat}
A morphism of tori $\phi: T_1 \rightarrow T_2$ that is also a group homomorphism induces a map $\widehat{\phi}: M_2 \rightarrow M_1$, where $M_i$ is the character lattice of $T_i$. This map is such that $\chi^{\widehat{\phi}(m_2)}$ is the character $\chi^{m_2} \circ \phi$. Moreover, if $\phi$ is injective, $\widehat{\phi}$ is surjective and vice versa.  
\end{lemma}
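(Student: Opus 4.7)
The plan is to first build the map $\widehat{\phi}$ from the defining property it is supposed to satisfy, verify it is a lattice homomorphism, and then argue the injectivity/surjectivity equivalence by Pontryagin-style duality between tori and their character lattices. Given $m_2 \in M_2$, the composite $\chi^{m_2} \circ \phi : T_1 \to \C^*$ is a morphism of varieties (composition of morphisms) and a group homomorphism (composition of group homomorphisms), hence a character of $T_1$. By Corollary \ref{cor:freeAbelianGroup}, it equals $\chi^{m_1}$ for a unique $m_1 \in M_1$, and I set $\widehat{\phi}(m_2) := m_1$. The lattice homomorphism property follows from the fact that pointwise multiplication of characters corresponds to addition in the character lattice, a fact that is preserved under precomposition with $\phi$.

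For the easier direction, namely $\widehat{\phi}$ surjective $\Rightarrow$ $\phi$ injective, I would use that characters separate points of a torus (after choosing an isomorphism $T_1 \simeq (\C^*)^n$, the coordinate characters already do this). If $\phi(t) = \phi(t')$, then for any $m_1 = \widehat{\phi}(m_2)$ one computes $\chi^{m_1}(t) = \chi^{m_2}(\phi(t)) = \chi^{m_2}(\phi(t')) = \chi^{m_1}(t')$; surjectivity of $\widehat{\phi}$ makes this hold for all $m_1 \in M_1$, forcing $t = t'$.

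The harder direction $\phi$ injective $\Rightarrow$ $\widehat{\phi}$ surjective is where the main obstacle sits, since it requires the duality between tori and their character lattices rather than just point-separation. My plan is to set $L := \widehat{\phi}(M_2) \subseteq M_1$ and identify the kernel of $\phi$ with the subgroup
\[ K = \{\, t \in T_1 : \chi^{m_1}(t) = 1 \text{ for all } m_1 \in L \,\}, \]
using that $\phi(t) = 1$ in $T_2$ iff every character of $T_2$ evaluates to $1$ on $\phi(t)$. Via the canonical identification $T_1 \simeq \Hom_\Z(M_1, \C^*)$ induced by the basis $\{e_i\}$ discussed after Corollary \ref{cor:freeAbelianGroup}, one gets $K \simeq \Hom_\Z(M_1/L, \C^*)$. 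Injectivity of $\phi$ forces this $\Hom$ group to be trivial, and since $\C^*$ is a divisible abelian group containing roots of unity of all orders, the only finitely generated abelian group $A$ with $\Hom_\Z(A, \C^*) = 0$ is $A = 0$. Hence $M_1/L = 0$, i.e.~$L = M_1$, and $\widehat{\phi}$ is surjective. The delicate point is making the identification of $K$ as a $\Hom$ group precise; once that is in place, the divisibility of $\C^*$ does the rest.
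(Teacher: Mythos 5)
The paper states this lemma without proof and refers it to Exercise \ref{ex:phihat}, so there is no official argument to compare against; your solution is correct. The construction of $\widehat{\phi}$ via $\chi^{\widehat{\phi}(m_2)} = \chi^{m_2}\circ\phi$ (using that the composite is a character and that distinct lattice points give distinct characters) is the expected one, and the direction ``$\widehat{\phi}$ surjective implies $\phi$ injective'' is routine once one notes that characters separate points of a torus.

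For the harder direction, your chain of identifications $\ker\phi = \{\,t\in T_1 : \chi^{m_1}(t)=1 \ \text{for all}\ m_1\in\widehat{\phi}(M_2)\,\} \simeq \Hom_\Z\bigl(M_1/\widehat{\phi}(M_2),\,\C^*\bigr)$ via the evaluation isomorphism $T_1\simeq\Hom_\Z(M_1,\C^*)$ is correct, and the closing step is sound: a nonzero finitely generated abelian group surjects onto $\Z$ or onto some $\Z/n$ with $n>1$, and $\C^*$ receives nonzero homomorphisms from both (send $1\mapsto 2$, resp.\ a primitive $n$-th root of unity), so $\Hom_\Z(A,\C^*)=0$ forces $A=0$. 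One small wording quibble: the isomorphism $T_1\simeq\Hom_\Z(M_1,\C^*)$ is canonical and does not depend on the basis $\{e_i\}$; the basis is only needed to verify surjectivity. A more computational variant, closer to the matrix notation used elsewhere in the paper, writes $\phi$ as a monomial map with exponent matrix $F$, notes $\widehat{\phi}=F^\top$, and applies Smith normal form: a zero or non-unit elementary divisor of $F$ yields nontrivial elements of $\ker\phi$ (a whole $\C^*$-factor or a root of unity), so injectivity of $\phi$ forces all elementary divisors to be units, which is exactly surjectivity of $F^\top$. Both routes are fine; yours trades the explicit normal form for the module-theoretic fact that $\C^*$ detects nonzero finitely generated abelian groups.
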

\begin{exercise} \label{ex:phihat}
Prove Lemma \ref{lem:phihat} and check that the map $\widehat{\Phi}_\A : \Z^s \rightarrow M$ is given by $a = (a_1, \ldots, a_s) \mapsto \sum_{i=1}^s a_i m_i$. 
\end{exercise}
In the proof of Proposition \ref{prop:charLattice}, we will also use the following result from the theory of linear algebraic groups \cite[Sec.~16]{humphreys2012linear}.

\begin{lemma}\label{lem:imageSubtorus}
Let $\phi: T_1\to T_2$ be a morphism of tori that is a group homomorphism. The image $\phi(T_1)\subset T_2$ is a closed subtorus.
\end{lemma}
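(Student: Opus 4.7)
The plan is to realize $\phi(T_1)$ explicitly as a closed subvariety of $T_2$ cut out by characters, and then to verify that this subvariety is a torus and coincides with the image set-theoretically. By Lemma \ref{lem:phihat}, $\phi$ induces a homomorphism $\widehat{\phi}: M_2 \to M_1$ between character lattices; let $K = \ker \widehat{\phi}$, a finitely generated subgroup of $M_2$. The candidate subvariety is
\[ H = \{ t \in T_2 : \chi^m(t) = 1 \text{ for all } m \in K \} \subseteq T_2, \]
which is closed since it is cut out by finitely many regular functions (take any finite generating set of $K$).

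Next I would check that $H$ is a torus. Its coordinate ring is $\C[M_2]/(\chi^m - 1 : m \in K) \simeq \C[M_2/K]$, and since $\widehat{\phi}(M_2)$ is a subgroup of the lattice $M_1$ it is torsion-free, so $M_2/K \simeq \widehat{\phi}(M_2)$ is itself a lattice. This makes $H$ a torus with character lattice $\widehat{\phi}(M_2)$. The inclusion $\phi(T_1) \subseteq H$ is immediate: for any $t_1 \in T_1$ and $m \in K$, $\chi^m(\phi(t_1)) = \chi^{\widehat{\phi}(m)}(t_1) = 1$. A dimension count then gives $\dim \overline{\phi(T_1)} = \mathrm{rank}\,\widehat{\phi}(M_2) = \dim H$, and since $H$ is irreducible, $\overline{\phi(T_1)} = H$.

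The final step, promoting this to set-theoretic equality $\phi(T_1) = H$, is where the group structure of the target becomes essential. I would appeal to Chevalley's theorem: $\phi(T_1)$ is a constructible subset of $T_2$, and since it is the image of the irreducible variety $T_1$, it contains a dense open subset $U$ of $H$. For any $h \in H$, the opens $U$ and $hU^{-1}$ both meet the irreducible $H$ densely, so they intersect; this produces $u_1, u_2 \in U \subseteq \phi(T_1)$ with $h = u_1 u_2 \in \phi(T_1)$. This last step is the main obstacle, and it is the concrete torus-theoretic incarnation of the general fact, cited from Humphreys, that a constructible subgroup of an algebraic group is closed.
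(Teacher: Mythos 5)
Your proposal is correct and complete, and it is a genuine proof where the paper only gives a citation: the text defers Lemma \ref{lem:imageSubtorus} entirely to \cite[Sec.~16]{humphreys2012linear} without argument. What you do differently, and what it buys: instead of invoking the general fact from the theory of linear algebraic groups that the image of a morphism of algebraic groups is a closed subgroup, you construct the image explicitly from the character-lattice data already set up in Lemma \ref{lem:phihat}. Letting $K = \ker\widehat{\phi}$, the closed set $H$ cut out by $\chi^m - 1$ for $m\in K$ has coordinate ring $\C[M_2/K]$, and $M_2/K \simeq \widehat{\phi}(M_2)$ is torsion-free (being a subgroup of $M_1$), so $H$ is a torus; the containment $\phi(T_1)\subset H$ is a one-line computation, and irreducibility of $H$ plus a dimension (or, equivalently, the computation $\ker\phi^* = \langle \chi^m-1 : m\in K\rangle$) forces $\overline{\phi(T_1)} = H$. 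Your final upgrade from density to equality via Chevalley and the translation-intersection trick is exactly the standard argument that a constructible subgroup of an algebraic group is closed, specialized to this concrete setting. The net effect is a self-contained proof in the toric dialect, which fits the expository style of the surrounding material better than a black-box citation. One small optional streamlining: since $\C^*$ is a divisible abelian group, $\Hom_\Z(-,\C^*)$ is exact, so applying it to the factorization $M_2 \twoheadrightarrow \widehat{\phi}(M_2) \hookrightarrow M_1$ immediately exhibits $\phi$ as the composite of a surjection $T_1 \twoheadrightarrow H$ with a closed embedding $H\hookrightarrow T_2$, avoiding Chevalley entirely; but your route is correct and arguably more elementary in spirit.
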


\begin{proof}[proof of Proposition \ref{prop:charLattice}]
Let us denote $T'= \im \Phi_{\mathscr{A}}$. The lemma above implies that $T'$ is a closed subtorus of $(\C^*)^s$. Hence we have $T' = \overline{T'}\cap (\C^*)^s = Y_{\mathscr{A}} \cap (\C^*)^s$ where the closure is taken in $\C^s$. This shows that $Y_{\mathscr{A}} = \overline{T'}$ is irreducible and contains a Zariski dense torus. We need to prove that the action of $T'$ on itself extends to $Y_{\mathscr{A}}$. An element $t \in T' \subset (\C^*)^s$ acts on $\C^s$ by coordinate-wise scaling. In particular, if $V \subset \C^s$ is a closed subvariety, then so is $t \cdot V$ for $t \in T'$. Since
$T' = t\cdot T' \subset t\cdot Y_{\mathscr{A}}$ and $Y_{\mathscr{A}}=\overline{T'}$, we find $Y_{\mathscr{A}}\subset t \cdot Y_{\mathscr{A}}$ by taking the closure on both sides. Replacing $t$ with its inverse $t^{-1}$, we see that $Y_{\mathscr{A}}\subset t^{-1} \cdot Y_{\mathscr{A}}$ and hence $t\cdot Y_{\mathscr{A}} \subset Y_{\mathscr{A}} \subset t\cdot Y_{\mathscr{A}}$. This shows that the action extends, and it is given by a morphism obtained from restricting $(\C^*)^s \times \C^s \rightarrow \C^s$ to $T' \times Y_\A \rightarrow Y_\A$. In order to understand the character lattice of $Y_{\mathscr{A}}$, we let $M'$ be the character lattice of $T'$ and consider the diagrams 
\begin{center}
\begin{tikzcd}
T \ar[r] \ar[dr,->>]
    & (\C^*)^s  \\
    & T' \ar[u,hookrightarrow]
\end{tikzcd}
\hspace{2cm}
\begin{tikzcd}
M 
    & \Z^s \ar[l] \ar[d,->>] \\
    & M' \ar[ul,hookrightarrow]
\end{tikzcd}
\end{center}
where the right diagram is induced by the left diagram via Lemma \ref{lem:phihat}. Since the image of $\Z^s \rightarrow M$ is $\Z \A$ (Exercise \ref{ex:phihat}), we see that $M' \simeq \Z \mathscr{A}$.
\end{proof}
\begin{corollary} \label{cor:dimYA}
The dimension of $Y_\A$ is the rank of the sublattice $\Z \A \subset M$. 
\end{corollary}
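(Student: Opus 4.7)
My plan is to chain together three dimension equalities, each of which is either immediate from the definitions or supplied by the preceding material.

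First, I would invoke Proposition~\ref{prop:charLattice}: $Y_\A$ is irreducible and contains the subtorus $T' = \im \Phi_\A$ as a Zariski dense open subset. For any irreducible variety, the dimension of a nonempty Zariski open subset coincides with the dimension of the whole variety, so $\dim Y_\A = \dim T'$.

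Next, by Proposition~\ref{prop:charLattice} the character lattice $M'$ of $T'$ is $\Z \A$. If I set $r = \mathrm{rank}(\Z\A)$, then $M' \simeq \Z^r$ as a free abelian group, and via the correspondence between tori and their character lattices (Corollary~\ref{cor:freeAbelianGroup} together with the fact that a torus is determined up to isomorphism by the rank of its character lattice), one gets $T' \simeq (\C^*)^r$. Hence $\dim T' = r = \mathrm{rank}(\Z\A)$. Concatenating gives
\[
\dim Y_\A \;=\; \dim T' \;=\; \mathrm{rank}(M') \;=\; \mathrm{rank}(\Z\A),
\]
which is the claim.

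There is no serious obstacle here; the only point deserving a brief comment is why an $r$-dimensional torus (in the sense of Definition of torus, i.e.\ isomorphic to some $(\C^*)^r$) has its dimension determined by the rank of its character lattice. This follows from Proposition~\ref{prop:characters}, since fixing an isomorphism $T' \simeq (\C^*)^k$ identifies $M'$ with $\Z^k$, forcing $k = \mathrm{rank}(M')$. So the entire argument reduces to combining this basic rank-equals-dimension fact with the identification $M' \simeq \Z\A$ already established in Proposition~\ref{prop:charLattice}, making the corollary a one-line consequence of the proposition.
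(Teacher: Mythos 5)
Your proof is correct and is exactly the intended deduction: the paper leaves the corollary unproved precisely because it follows immediately from Proposition~\ref{prop:charLattice} by observing that the dense torus $T'$ determines $\dim Y_\A$ and that a torus has dimension equal to the rank of its character lattice. Nothing further to add.
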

If $T = (\C^*)^n$, then $m_i \in \Z^n$ are integer vectors. Collecting these in the columns of a matrix $A \subset \Z^{n \times s}$, Corollary \ref{cor:dimYA} implies that $\dim Y_\A = \textup{rank} \, A$. Moreover, the character lattice of the dense torus of $Y_\A$ is the image of $A$ in $\Z^n$. 

\begin{example} \label{ex:parabola}
Let $n = 1, T = \C^*$ and $\A = \{ 2, 4 \} \subset M = \Z$. The map $\Phi_\A$ is given by $\Phi_\A(t) = (t^2, t^4)$. The affine toric variety $Y_\A$ is the parabola $\{y-x^2 = 0\} \subset \C^2$. Its dense torus is $T' = Y_\A \setminus \{0\}$. The matrix $A$ is $[2 ~ ~ 4]: \Z^2 \rightarrow \Z$, which has rank 1. Its image is $2 \Z \subset \Z$, which is the character lattice of $T'$. The same affine toric variety is obtained from $\A = \{1, 2\}$, for which $A : \Z^2 \rightarrow \Z$ is surjective and $\Phi_\A: T \rightarrow T'$ is an isomorphism. 
\end{example}

\begin{exercise}
Let $n = 2, T = (\C^*)^2$ and $\A = \{(1,0),(0,1),(1,1)\}$. What is the dimension of $Y_\A$? What are its defining equations? Describe $Y_\A \setminus T'$ and the character lattice $M'$ of $T'$. Answer the same questions for $\A = \{ (1,1), (2,2), (3,3) \}$. 
\end{exercise}

\begin{exercise} \label{ex:closed}
Find an example where $s \geq 2$ and the image of $\Phi_\A$ is closed in $\C^s$.
\end{exercise}

\subsection{Affine toric varieties from toric ideals}
In this section our goal is to describe the vanishing ideal $I(Y_\A) \subset \C[x_1, \ldots, x_s]$ of the affine toric variety $Y_\A$. A polynomial $f \in I(Y_\A)$ vanishes on the image of the monomial map $\Phi_\A$. A key observation is that, if $\sum_{i=1}^s a_i m_i = \sum_{i=1}^s b_i m_i$, then $x^a - x^b$ has this property. Indeed, plugging in $x = (\chi^{m_1}(t), \ldots, \chi^{m_s}(t)) \in \im \Phi_\A$, we get $\prod_{i=1}^s \chi^{m_i}(t)^{a_i} - \prod_{i=1}^s \chi^{m_i}(t)^{b_i} = \chi^{\sum_{i=1}^s a_i m_i}(t) - \chi^{\sum_{i=1}^s b_i m_i}(t) = 0$. By Exercise \ref{ex:phihat}, this shows that for every $a, b \in \N^s$ such that $a - b \in \ker \widehat{\Phi}_\A$, we find a binomial $x^a - x^b \in I(Y_\A)$. 

If $\ell \in \ker \widehat{\Phi}_\A \subset \Z^s$, we can write $\ell = \blue{\ell_+} - \red{\ell_-}$ where $\blue{\ell_+} = \sum_{\ell_i > 0} \ell_i e_i$ and $\red{\ell_-} = - \sum_{\ell_i < 0} \ell_i e_i$, with $e_i$ the standard basis vectors of $\Z^s$. By construction, we have that $x^{\blue{\ell_+}} - x^{\red{\ell_-}} \in I(Y_\A)$.

\begin{example}
Let $n =1, \A = \{1,2,3\}, \widehat{\Phi}_\A = A = [1~2~3] : \Z^3 \rightarrow \Z^1$. The vectors 
\[ (\blue{1}, \red{-2}, \blue{1}), \quad (\blue{1},\blue{1}, \red{-1}), \quad (\blue{2}, \red{-1}, 0)\]
are in $\ker \widehat{\Phi}_\A $. This gives binomials $x^{\blue{1}} z^{\blue{1}} - y^{\red{2}}, x^{\blue{1}} y^{\blue{1}} - z^{\red{1}}, x^{\blue{2}} - y^{\red{1}} \in I(Y_\A)$. Note that $Y_\A$ is the twisted cubic curve.
\end{example}
It turns out that $I(Y_\A)$ is generated by binomials that are found in this way. 

\begin{theorem} \label{thm:toricideal}
The vanishing ideal $I(Y_\A)$ of the affine toric variety $Y_\A \subset \C^s$ is given by 
\begin{align*} 
 I(Y_\A) &= \langle x^{\blue{\ell_+}} - x^{\red{\ell_-}} ~|~ \ell \in \ker \widehat{\Phi}_\A \rangle  \\
 &=  \langle x^a - x^b ~|~ a - b \in \ker \widehat{\Phi}_\A \rangle \subset \C[x_1, \ldots, x_s].
 \end{align*}
\end{theorem}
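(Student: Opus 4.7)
The plan has two halves. First, the two ideals on the right-hand side coincide: given $a,b \in \N^s$ with $a - b = \ell \in \ker \widehat{\Phi}_\A$, setting $c = a - \ell_+ = b - \ell_- \in \N^s$ writes $x^a - x^b = x^c(x^{\ell_+} - x^{\ell_-})$, so the first ideal contains the second, and the reverse containment is immediate because each generator $x^{\ell_+} - x^{\ell_-}$ is itself of the form $x^a - x^b$ with $a - b \in \ker \widehat{\Phi}_\A$. Call this common ideal $J$. The inclusion $J \subseteq I(Y_\A)$ was already verified in the paragraph preceding the statement: substituting $x_i = \chi^{m_i}(t)$ into $x^a - x^b$ collapses to $\chi^{\widehat{\Phi}_\A(a)}(t) - \chi^{\widehat{\Phi}_\A(b)}(t) = 0$ whenever $\widehat{\Phi}_\A(a) = \widehat{\Phi}_\A(b)$, by Exercise \ref{ex:phihat}.

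The interesting direction is $I(Y_\A) \subseteq J$, which I would prove through a normal form argument. Modulo $J$, two monomials $x^a$ and $x^b$ are identified whenever $\widehat{\Phi}_\A(a) = \widehat{\Phi}_\A(b)$, so the class of $x^a$ in $\C[x_1,\ldots,x_s]/J$ depends only on the lattice point $\widehat{\Phi}_\A(a) \in M$. Choosing, for each $m \in \widehat{\Phi}_\A(\N^s) \subseteq M$, a single preimage $a(m) \in \N^s$, one reduces an arbitrary polynomial $f = \sum_a c_a x^a$ to a finite sum $f \equiv \sum_m C_m\, x^{a(m)} \pmod{J}$, where $C_m = \sum_{a : \widehat{\Phi}_\A(a) = m} c_a$.

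If now $f \in I(Y_\A)$, then because $J \subseteq I(Y_\A)$, the normal form must also vanish on $Y_\A$, and in particular on $\im \Phi_\A$. Evaluating at $t \in T$ yields $\sum_m C_m \chi^m(t) = 0$ for every $t$. Since distinct characters of $T$ are $\C$-linearly independent as functions on $T$ — equivalently, distinct Laurent monomials are linearly independent in $\C[t_1^{\pm 1}, \ldots, t_n^{\pm 1}]$ after fixing an isomorphism $T \simeq (\C^*)^n$ — every $C_m$ must vanish. Hence $f \equiv 0 \pmod{J}$, i.e., $f \in J$.

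The main obstacle is this final linear independence step, but it is a standard fact that can be invoked cleanly once the normal form is in place; the rest of the argument is purely formal bookkeeping about cosets of $\ker \widehat{\Phi}_\A$ in $\Z^s$. Note that this argument does not require choosing a finite generating set for $J$ explicitly — the congruence $x^a \equiv x^{a(\widehat{\Phi}_\A(a))} \pmod{J}$ follows directly from the definition of $J$, so no Noetherianity or Gröbner-basis input is needed.
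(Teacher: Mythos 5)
The paper does not actually give a proof of Theorem \ref{thm:toricideal}; it refers to \cite[p.~15]{cox2011toric}. So the relevant comparison is between your proof and the one in that reference (Cox--Little--Schenck, Prop.~1.1.9).

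Your proof is correct. The bookkeeping at the start is fine: with $c = a - \ell_+ = b - \ell_-$ you do have $c \in \N^s$ component-wise (when $\ell_i \le 0$, $(\ell_+)_i = 0 \le a_i$; when $\ell_i > 0$, $(\ell_+)_i = a_i - b_i \le a_i$), so $x^a - x^b = x^c(x^{\ell_+} - x^{\ell_-})$ and the two descriptions of $J$ agree. The inclusion $J \subseteq I(Y_\A)$ is, as you say, the computation in the text preceding the theorem. For the reverse inclusion, your normal-form step — that in $\C[x_1,\ldots,x_s]/J$ the class of $x^a$ depends only on $\widehat{\Phi}_\A(a)$, because $x^a - x^b \in J$ whenever $a - b \in \ker\widehat{\Phi}_\A$ — is exactly right, and it correctly reduces the claim to linear independence of distinct characters of $T$, i.e., that a Laurent polynomial vanishing identically on $(\C^*)^n$ is the zero polynomial. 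One small point worth stating explicitly if you write this up: $I(Y_\A) = I(\im\Phi_\A)$ since $Y_\A$ is the Zariski closure, which is what lets you conclude that the normal form vanishes on $\im\Phi_\A$.

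Your route differs modestly from the one in \cite{cox2011toric}. There, one fixes a monomial order and argues by induction on leading terms: if $f \in I(Y_\A)$ has leading monomial $x^a$, then applying $\Phi_\A^*$ forces some other monomial $x^b$ in $f$ to satisfy $\widehat{\Phi}_\A(a) = \widehat{\Phi}_\A(b)$, and subtracting the appropriate multiple of $x^a - x^b \in J$ lowers the leading term; termination uses the well-ordering of the monomial order. Your version avoids that induction entirely by reducing $f$ once and for all to a canonical representative indexed by lattice points $m \in \widehat{\Phi}_\A(\N^s)$, and then invoking the same character-independence fact. This is slightly cleaner organizationally and, as you observe, makes it transparent that no Noetherianity or choice of finite generating set for $J$ is needed; it also makes visible the isomorphism $\C[x_1,\ldots,x_s]/J \simeq \C[\N\A]$, which is the short exact sequence that appears in the proof of Proposition \ref{prop:semigroup}. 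The leading-term argument in the reference trades a bit of conceptual overhead for the more explicit, algorithmic flavor familiar from Gr\"obner basis theory.
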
 

We omit the proof of Theorem \ref{thm:toricideal} and refer to \cite[p.~15]{cox2011toric}. 

\begin{exercise}
Let $n = 1, T = \C^*, \A = \{2, 3\}$. Show that $Y_\A$ is the cuspidal cubic, i.e. $I(Y_\A) = \langle x^3 - y^2 \rangle$. 
\end{exercise}

\begin{exercise} \label{ex:C2inC3}
Let $n = 2, T =( \C^*)^2, \A = \{(1,0), (0,1),(1,1)\}$. Show that $Y_\A$ is given by $I(Y_\A) = \langle xy - z \rangle$. 
\end{exercise}

\begin{exercise} \label{ex:contains1}
Show that $Y_\A$ always contains $(1,\ldots,1) \in \C^s$.
\end{exercise}

\begin{definition}[Toric ideal]
A \emph{toric ideal} in $\C[x_1, \ldots, x_s]$ is a prime ideal that is generated by binomials. 
\end{definition}

As a corollary of Theorem \ref{thm:toricideal}, any affine toric variety coming from a monomial map $\Phi_\A$ is defined by a toric ideal. Computing toric ideals is an important task from \emph{combinatorial commutative algebra}. See \cite[Chapter 12]{sturmfels1996grobner} for algorithms. These algorithms are implemented in software systems such as \texttt{Macaulay2}, \texttt{Singular} and \texttt{Oscar.jl}.

\begin{examplestar} \label{ex:chiara1}
We illustrate how to compute $I(Y_\A)$ in \texttt{Oscar.jl}.  Consider the matrix
\begin{equation}
    A = 
    \begin{bmatrix}
    2 &2 &1 &0 &0 &1 &1 \\
    1 &0 &0 &1 &2 &2 &1 \\
    0 &1 &2 &2 &1 &0 &1
    \end{bmatrix}
\end{equation}
from \cite[Example 8.9]{michalek2021invitation}. The associated toric variety $Y_{\A}$ has dimension $3$ and is embedded in $\C^7$. To compute the toric ideal $I(Y_{\A})$, we use the function \texttt{toric\_ideal}. 
An example of in- and output in a Jupyter Notebook is shown in Figure \ref{fig:output1}. Note that, according to \texttt{Oscar} conventions, \texttt{toric\_ideal} requires the \emph{transpose} of $A$ as an input. The result, as in \cite[Example 8.9]{michalek2021invitation}, is an ideal generated by 9 binomials. We reiterate that the code of this example, and all other examples marked by an asterisk, is available at \url{https://mathrepo.mis.mpg.de/ToricGeometry}.
\begin{figure}
\centering
\includegraphics[scale=0.3]{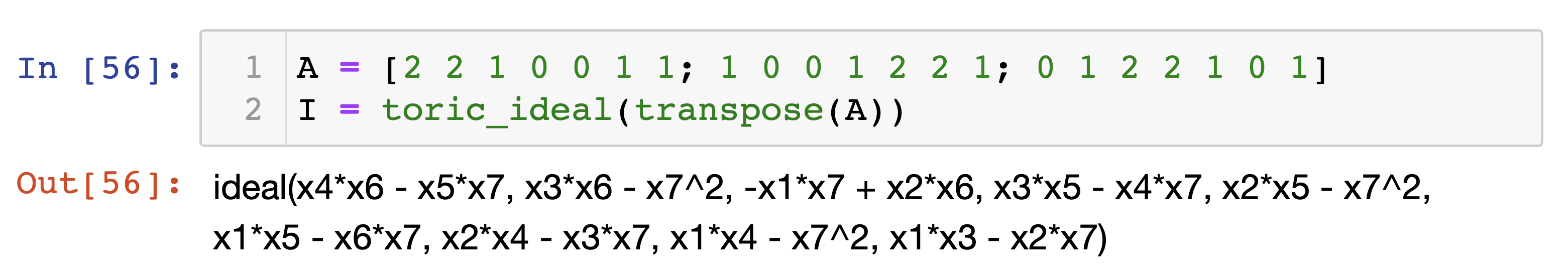}
\caption{Computing toric ideals in \texttt{Oscar.jl}.}
\label{fig:output1}
\end{figure}
\end{examplestar}

\subsection{Affine toric varieties from affine semigroups}
In the previous sections, we have described $Y_\A \subset \C^s$ as the closure of the image of a monomial map, and as the zero set of a toric ideal. In this section, we will describe its coordinate ring in a more intrinsic way. In particular, we will see that a choice of embedding corresponds to a choice of generators of an \emph{affine semigroup}.
\begin{definition}[Affine semigroup] \label{def:affsemgroup}
An \emph{affine semigroup} is a set $\S$ with a binary operation $+$ such that 
\begin{enumerate}
\item $+$ is associative, 
\item $\S$ has an identity element $0 \in \S$, 
\item $+$ is commutative, 
\item $\S$ is finitely generated: there is a finite subset $\A \subset \S$ such that 
\[ \N \A = \left \{ \sum_{m \in \A} a_m m ~|~ a_m \in \N \right \} = \S, \]
\item $\S$ can be embedded in a lattice $M$.
\end{enumerate}
\end{definition}
Note that only associativity is usually required for a semigroup. The adjective \emph{affine} adds all four other requirements to the list. By definition, all affine semigroups (up to isomorphism) are of the form $\N \A$ for some finite subset $\A$ of a lattice $M$. 

\begin{definition}[Semigroup algebra]
Let $M$ be the character lattice of a torus $T$. The \emph{semigroup algebra} associated to an affine semigroup $\S \subset M$ is the $\C$-algebra
\[ \C[\S] = \left \{ \sum_{m \in \S} c_m \chi^m ~|~ c_m \in \C, \text{finitely many $c_m$ nonzero} \right \},\]
with multiplication induced by the group operation in $M$: $\chi^m \chi^{m'} = \chi^{m + m'}$. 
\end{definition}

\begin{example} \label{ex:fingen}
If $\A = \{ m_1, \ldots, m_s \} \subset M$, then $\C[\N \A] = \C[\chi^{m_1}, \ldots, \chi^{m_s}]$. In particular, the Laurent polynomial ring $\C[t_1^{\pm 1}, \ldots, t_n^{\pm 1}]$ is the semigroup algebra $\C[\Z^n]$. 
\end{example}

Here is how affine semigroups give rise to affine toric varieties. 
\begin{proposition} \label{prop:semigroup}
Let $\S \subset M$ be an affine semigroup. Then 
\begin{enumerate}
\item $\C[\S]$ is an integral domain and finitely generated as a $\C$-algebra, 
\item $\Specm(\C[\S])$ is an affine toric variety whose dense torus has character lattice $\Z \S$. 
\end{enumerate}

\begin{proof}
For point 1, note that $\C[M]$ is an integral domain. Indeed, it is isomorphic to the Laurent polynomial ring (Example \ref{ex:fingen}), which is the coordinate ring of an irreducible affine variety. Since $\C[\S] \subset \C[M]$, $\C[\S]$ is an integral domain as well. It is finitely generated as a $\C$-algebra by definition (Example \ref{ex:fingen}). 

For point 2, let $\S = \N \A$ for a finite set $\A \subset M$ and let $\Phi_\A^* : \C[x_1, \ldots, x_s] \rightarrow \C[M]$ be the pullback map of $\Phi_\A$, given by $f \mapsto f \circ \Phi_\A$. There is a short exact sequence 
\[ 0 \rightarrow I(Y_\A) \hookrightarrow \C[x_1 ,\ldots, x_s] \overset{\Phi_\A^*}{\longrightarrow} \C[\S] \rightarrow 0, \]
which shows $\C[\S] \simeq \C[x_1, \ldots, x_s]/I(Y_\A)$. This proves that $\Specm(\C[\S])$ is isomorphic to $Y_\A$. Since $\Z \A = \Z \S$, the torus of $\Specm(\C[\S])$ has character lattice $\Z \S$ by Proposition \ref{prop:charLattice}.
\end{proof}
\end{proposition}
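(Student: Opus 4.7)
The plan is to reduce part 2 to the monomial-map construction of Section \ref{subsec:affmonomial} by realizing $\C[\S]$ as a quotient $\C[x_1, \ldots, x_s]/I(Y_\A)$, and to extract part 1 as a by-product of the embedding $\C[\S] \hookrightarrow \C[M]$.

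For part 1, I would fix an isomorphism $M \simeq \Z^n$ so that Example \ref{ex:fingen} identifies $\C[M]$ with the Laurent polynomial ring $\C[t_1^{\pm 1}, \ldots, t_n^{\pm 1}]$, which is the coordinate ring of the irreducible variety $(\C^*)^n$ and hence a domain. Since the multiplication $\chi^m \chi^{m'} = \chi^{m+m'}$ in $\C[\S]$ is compatible with that in $\C[M]$, the semigroup inclusion $\S \hookrightarrow M$ induces an injective $\C$-algebra homomorphism $\C[\S] \hookrightarrow \C[M]$, so $\C[\S]$ inherits the domain property. Writing $\S = \N \A$ for a finite $\A = \{m_1, \ldots, m_s\}$, every $m \in \S$ is a nonnegative integer combination of the $m_i$, giving $\C[\S] = \C[\chi^{m_1}, \ldots, \chi^{m_s}]$, a finitely generated $\C$-algebra.

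For part 2, I would identify $\Specm \C[\S]$ with $Y_\A$ for any finite generating set $\A$ of $\S$. Define the $\C$-algebra map $\Phi_\A^* : \C[x_1, \ldots, x_s] \to \C[M]$ by $x_i \mapsto \chi^{m_i}$; this is the pullback of the monomial morphism $\Phi_\A : T \to \C^s$. The finite-generation argument above shows that its image is exactly $\C[\S]$. The crux is to identify the kernel: a polynomial $f$ lies in $\ker \Phi_\A^*$ iff $f \circ \Phi_\A \equiv 0$ on $T$, iff $f$ vanishes on $\im \Phi_\A$, iff (by continuity of polynomials in the Zariski topology) $f$ vanishes on $Y_\A = \overline{\im \Phi_\A}$, iff $f \in I(Y_\A)$. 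The first isomorphism theorem then yields the short exact sequence
\[ 0 \to I(Y_\A) \hookrightarrow \C[x_1, \ldots, x_s] \overset{\Phi_\A^*}{\longrightarrow} \C[\S] \to 0, \]
so $\C[\S] \simeq \C[Y_\A]$ and hence $\Specm \C[\S] \simeq Y_\A$, which is an affine toric variety. Finally, Proposition \ref{prop:charLattice} pins down the character lattice of the dense torus of $Y_\A$ as $\Z \A$, and this equals $\Z \S$ because $\A$ generates $\S$ as a semigroup.

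The principal, though mild, obstacle is the kernel identification — specifically the passage from ``$f$ vanishes on $\im \Phi_\A$'' to ``$f$ vanishes on $Y_\A$'' — which is essentially automatic for a polynomial vanishing on a set and its Zariski closure. A secondary care point is verifying that the map $\C[\S] \to \C[M]$ is truly injective (rather than merely a quotient presentation): this follows because the characters $\chi^m$ for distinct $m \in M$ are linearly independent over $\C$, so no nonzero finite $\C$-linear combination of them can vanish as a Laurent polynomial.
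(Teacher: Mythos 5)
Your proof is correct and follows the same strategy as the paper: embed $\C[\S]$ into $\C[M]$ to deduce the domain property, and use the short exact sequence induced by the pullback $\Phi_\A^*$ to identify $\Specm(\C[\S])$ with $Y_\A$, then cite Proposition~\ref{prop:charLattice} for the character lattice. The paper simply states the exact sequence without spelling out the kernel identification or the injectivity of $\C[\S]\hookrightarrow\C[M]$; your additional detail on both points is sound and fills in the implicit steps.
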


The following statement affirms that all affine toric varieties come from monomial maps, toric ideals and affine semigroups. 

\begin{theorem} \label{thm:classaff}
Let $V$ be an affine variety. The following are equivalent: 
\begin{enumerate}
\item $V$ is an affine toric variety,
\item $V \simeq Y_\A$ for a finite subset $\A$ of a lattice $M$,
\item $V$ is defined by a toric ideal,
\item $V = \Specm(\C[\S])$ for some affine semigroup $\S$.
\end{enumerate}
\end{theorem}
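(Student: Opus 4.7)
The plan is to reduce as much as possible to the propositions and theorems already established in this section, leaving only one implication that requires genuinely new work. The implications $(2)\Rightarrow(1)$, $(2)\Rightarrow(3)$, and $(2)\Leftrightarrow(4)$ are all immediate from preceding results: the first from Proposition \ref{prop:charLattice}, the second from Theorem \ref{thm:toricideal} (together with the fact that $Y_\A$ is irreducible, so $I(Y_\A)$ is prime), and the third from Proposition \ref{prop:semigroup}, applied to $\S = \N\A$ in one direction and to a finite generating set $\A$ of $\S$ in the other. So the cycle will close if I supply $(1)\Rightarrow(4)$ and $(3)\Rightarrow(1)$.

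For $(3)\Rightarrow(1)$, I would adapt the argument in the proof of Proposition \ref{prop:charLattice}. If $V = V(I) \subset \C^s$ with $I$ prime and generated by binomials $x^{a_i} - x^{b_i}$, then $T' := V \cap (\C^*)^s$ is cut out inside $(\C^*)^s$ by the equations $x^{a_i - b_i} = 1$, so it is a closed subgroup; it is nonempty since $(1,\dots,1) \in V$ by Exercise \ref{ex:contains1}. Irreducibility of $V$ forces $T'$ to be irreducible, hence a subtorus by Lemma \ref{lem:imageSubtorus}, and it is Zariski dense in $V$ because it is a nonempty open subset. The coordinate-wise scaling action of $T' \subset (\C^*)^s$ on $\C^s$ preserves $V$ by exactly the closure argument used for $Y_\A$, so $V$ is an affine toric variety.

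The substantive direction is $(1)\Rightarrow(4)$. Here I would use the torus action to grade the coordinate ring. Density of $T \hookrightarrow V$ gives an injection $\C[V] \hookrightarrow \C[T] = \C[M] = \bigoplus_{m \in M} \C\cdot\chi^m$, and the algebraic action of $T$ on $V$ makes $\C[V]$ a rational $T$-representation, which decomposes into $M$-weight spaces. Since the weight spaces of $\C[M]$ are the one-dimensional lines $\C\cdot\chi^m$, each weight space of $\C[V]$ is either $0$ or $\C\cdot\chi^m$. Setting $\S := \{m \in M : \chi^m \in \C[V]\}$ therefore yields $\C[V] = \bigoplus_{m \in \S} \C\cdot\chi^m = \C[\S]$. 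The set $\S$ contains $0$ because $1 = \chi^0 \in \C[V]$, and is closed under $+$ because $\chi^m\chi^{m'} = \chi^{m+m'}$. Finite generation follows by picking $\C$-algebra generators $g_1,\dots,g_s$ of $\C[V]$, expanding each as a finite $\C$-linear combination of characters $\chi^m$ (each of which lies in $\C[V]$ by the grading), and observing that the finitely many exponents appearing generate $\S$ as a semigroup. Thus $\S$ is an affine semigroup and $V \simeq \Specm(\C[\S])$.

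The main obstacle is justifying that the $T$-action on $\C[V]$ decomposes into one-dimensional weight spaces aligned with the characters $\chi^m$. This is a standard fact about rational representations of an algebraic torus, but it is the one nontrivial input beyond the material developed so far; a self-contained verification proceeds either by averaging over the compact real torus $(S^1)^n \subset T$ or, more algebraically, by checking that the comultiplication $\C[V] \to \C[V] \otimes \C[M]$ dual to the action decomposes $\C[V]$ into common eigenspaces. Once this is in place, the remaining steps---closure of $\S$ under addition and the extraction of a finite generating set---are essentially bookkeeping.
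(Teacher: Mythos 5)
Your proof is correct, and it goes considerably further than the paper's. The paper establishes only the cycle $2 \Rightarrow 3 \Rightarrow 4 \Rightarrow 2$ via the short exact sequence from the proof of Proposition \ref{prop:semigroup}, and defers all implications touching item 1 to \cite[Thm.~1.1.17]{cox2011toric}. You close the whole loop by supplying $(3)\Rightarrow(1)$ and $(1)\Rightarrow(4)$. Your $(1)\Rightarrow(4)$ is the standard argument from rational representation theory: the $T$-equivariant injection $\C[V]\hookrightarrow\C[M]$ forces the weight decomposition of $\C[V]$ to be a sub-sum of $\bigoplus_m \C\cdot\chi^m$, giving $\C[V]=\C[\S]$, and finite generation of $\C[V]$ yields finite generation of $\S$. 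You correctly identify the complete reducibility of rational $T$-representations into one-dimensional weight spaces as the one ingredient not developed in this text; this is exactly what the paper is sidestepping by citing the reference. Your $(3)\Rightarrow(1)$ also works: $T' = V\cap(\C^*)^s$ is cut out by the character equations $x^{a_i-b_i}=1$, is nonempty (binomials vanish at $(1,\dots,1)$), is an irreducible open dense subset of $V$, and the closure argument from Proposition \ref{prop:charLattice} shows the scaling action of $T'$ preserves $V$.

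Two small citation glitches. To conclude $T'$ is a torus you need the fact that an irreducible (equivalently connected) closed subgroup of a torus is a torus; Lemma \ref{lem:imageSubtorus} as stated only covers \emph{images} of torus morphisms. The fact you want is a close cousin from the same circle of ideas in \cite[Sec.~16]{humphreys2012linear} (structure of diagonalizable groups), but it is not literally that lemma. Similarly, Exercise \ref{ex:contains1} is stated for $Y_\A$; the fact that $(1,\dots,1)$ lies in any variety cut out by binomials is immediate but deserves its own sentence rather than a reference to that exercise. Neither affects the mathematical substance.
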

\begin{proof}
This is Theorem 1.1.17 in \cite{cox2011toric}. The equivalence of 2 - 4 can be seen as follows. If $V \simeq Y_\A$, then $V \simeq \Specm(\C[x_1,\ldots,x_s]/I(Y_\A))$, where $I(Y_\A)$ is the toric ideal from Theorem \ref{thm:toricideal}. This implies $V \simeq \Specm(\C[\N \A])$ by the short exact sequence in the proof of Proposition \ref{prop:semigroup}. The same sequence implies that $V \simeq Y_\A$, which gives $2 \Rightarrow 3 \Rightarrow 4 \Rightarrow 2$.
\end{proof}

\subsection{Rational convex polyhedral cones} \label{subsec:cones}
We have seen that all affine toric varieties arise as $\Specm(\C[\S])$ for some affine semigroup $\S$. We will see that the nicest affine toric varieties are those for which the semigroup algebra $\C[\S]$ is a \emph{normal} domain. These are affine toric varieties coming from polyhedral cones. Before introducing them, we present some notation and concepts related to such cones. \\

To a lattice $N \simeq \Z^n$ we associate the real vector space $N_\R = N \otimes_\Z \R \simeq \R^n$. The dual vector space $M_\R$ comes from $M = \Hom_\Z(N,\Z)$: $M_\R = \Hom_\R(N_\R,\R) =  M \otimes_\Z \R \simeq \R^n$. 

\begin{definition}[Rational convex polyhedral cone]
A \emph{rational convex polyhedral cone} in $N_\R$ is a set of the form
\[ \sigma = {\rm Cone}(S) = \left \{ \sum_{u \in S} \lambda_u \, u ~|~ \lambda_u \in \R_{\geq 0}  \right \}, \]
for some finite set $S \subset N$. The set $S$ is called a \emph{set of generators} of $\sigma$. 
\end{definition}
More generally, a convex cone is a set of the form ${\rm Cone}(S)$ for $S \subset N_\R$. Requiring $S$ to be finite makes the cone polyhedral, and requiring $S \subset N$ makes it rational. The cones related to toric varieties are rational, convex and polyhedral. In what follows, we will omit the adjectives `polyhedral' and `convex' and simply write `cone' in the statements that hold for any convex polyhedral cone. When we write `rational cone', we mean `rational convex polyhedral cone'. Given a cone $\sigma \subset N_\R$, its \emph{dual cone} $\sigma^\vee$ is 
\[ \sigma^\vee = \{ m \in M_\R ~|~ \langle u, m \rangle \geq 0 \text{ for all } u \in \sigma \}. \]
We have $(\sigma^\vee)^\vee = \sigma$, and if $\sigma$ is a rational cone then so is $\sigma^\vee$. For a point $m \in M_\R$, we set
\[ H_m = \{ u \in N_\R ~|~ \langle u, m \rangle = 0 \} \quad \text{and} \quad H_m^+ = \{ u \in N_\R ~|~ \langle u, m \rangle \geq 0 \}.\]
\begin{definition}[Face] A \emph{face} $\tau$ of a cone $\sigma$ is a subset of the form $\tau = \sigma \cap H_m$ for $m \in \sigma^\vee$. 
\end{definition}
Faces of (rational) cones are again (rational) cones.
Note that $\sigma$ is considered to be face of itself, as $0 \in \sigma^\vee$. If $\tau$ is a face of $\sigma$, we write $\tau \preceq \sigma$ and $\tau \prec \sigma$ if $\tau \neq \sigma$. 
\begin{example} \label{ex:coneex}
Let $N = \Z^2$. The cone $\sigma = {\rm Cone}((0,1),(1,2),(2,1))$ and its dual are shown in Figure \ref{fig:coneex}. The point $m = (-1,2) \in \sigma^\vee$ gives the face $\tau = \sigma \cap H_m = {\rm Cone}((2,1))$.
\end{example}
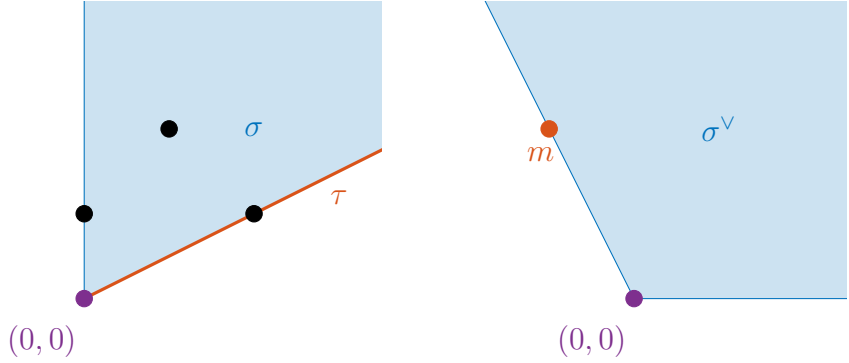
\begin{figure}
\centering
\begin{tikzpicture}[scale=1]
\begin{axis}[%
width=2in,
height=2in,
scale only axis,
xmin=-1.0,
xmax=3.5,
ymin=-1.0,
ymax=3.5,
ticks = none, 
ticks = none,
axis background/.style={fill=white},
axis line style={draw=none} 
]


\addplot [color=mycolor1,solid,fill opacity=0.2,fill = mycolor1,forget plot]
  table[row sep=crcr]{%
 0 5\\
10	 5\\	
0 0\\
0 5 \\
};

\addplot [very thick, color=mycolor2,solid,fill opacity=0.2,fill = mycolor1,forget plot]
  table[row sep=crcr]{%
0 0 \\
 10 5 \\
};

\addplot[only marks,mark=*,mark size=3.1pt,black
        ]  coordinates {
  (0,1) (1,2) (2,1)
};

\addplot[only marks,mark=*,mark size=3.1pt,mycolor4
        ]  coordinates {
  (0,0)
};

\node (P) at (axis cs:2,2) {$\textcolor{mycolor1}{\sigma}$};
\node (P) at (axis cs:3,1.2) {$\textcolor{mycolor2}{\tau}$};
\node (P) at (axis cs:-0.5,-0.5) {$\textcolor{mycolor4}{(0,0)}$};

\end{axis}
\end{tikzpicture} 
\qquad 
\begin{tikzpicture}[scale=1]
\begin{axis}[%
width=2in,
height=2in,
scale only axis,
xmin=-2.0,
xmax=2.5,
ymin=-1.0,
ymax=3.5,
ticks = none, 
ticks = none,
axis background/.style={fill=white},
axis line style={draw=none} 
]


\addplot [color=mycolor1,solid,fill opacity=0.2,fill = mycolor1,forget plot]
  table[row sep=crcr]{%
 5 0\\
5	 5\\	
-2.5 5\\
0 0 \\
5 0 \\
};

\addplot[only marks,mark=*,mark size=3.1pt,mycolor2
        ]  coordinates {
  (-1,2) 
};

\addplot[only marks,mark=*,mark size=3.1pt,mycolor4
        ]  coordinates {
  (0,0)
};

\node (P) at (axis cs:1,2) {$\textcolor{mycolor1}{\sigma^\vee}$};
\node (P) at (axis cs:-1.1,1.7) {$\textcolor{mycolor2}{m}$};
\node (P) at (axis cs:-0.5,-0.5) {$\textcolor{mycolor4}{(0,0)}$};

\end{axis}
\end{tikzpicture} 
\caption{A rational cone $\sigma$ and its dual. }
\label{fig:coneex}
\end{figure}

A useful way of representing cones, dual to a representation in terms of generators, is by a finite set of linear inequalities.
\begin{proposition}
The set $\{m_i \}_{i = 1}^s \subset M$ generates $\sigma^\vee$ if and only if $\sigma = H_{m_1}^+ \cap \cdots \cap H_{m_s}^+$. 
\end{proposition}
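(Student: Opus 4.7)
The plan is to reduce the proposition to the double-duality statement $(\sigma^\vee)^\vee = \sigma$, which the excerpt has already recorded. The bridge between a generating-set description and a half-space description is a single symmetric identity that I would extract as a preliminary lemma.

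First I would establish the auxiliary identity
\[
\operatorname{Cone}(m_1, \dots, m_s)^\vee \;=\; H_{m_1}^+ \cap \cdots \cap H_{m_s}^+
\]
in $N_\R$. The inclusion $\subseteq$ is immediate, since any $u$ in the left-hand side pairs nonnegatively with every element of the cone, in particular with each generator $m_i$, placing it in every $H_{m_i}^+$. For $\supseteq$, if $u \in H_{m_i}^+$ for all $i$ and $m = \sum_{i=1}^s \lambda_i m_i$ with $\lambda_i \geq 0$ is an arbitrary element of $\operatorname{Cone}(m_1,\dots,m_s)$, then
\[
\langle u, m \rangle \;=\; \sum_{i=1}^s \lambda_i \langle u, m_i \rangle \;\geq\; 0,
\]
so $u$ lies in the dual cone. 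This lemma is the only real computation in the argument.

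With that in hand, both directions collapse into one line each using $(\sigma^\vee)^\vee = \sigma$. For $(\Rightarrow)$, assuming $\sigma^\vee = \operatorname{Cone}(m_1,\dots,m_s)$, I take duals on both sides and invoke the lemma:
\[
\sigma \;=\; (\sigma^\vee)^\vee \;=\; \operatorname{Cone}(m_1, \dots, m_s)^\vee \;=\; H_{m_1}^+ \cap \cdots \cap H_{m_s}^+.
\]
For $(\Leftarrow)$, assuming $\sigma = H_{m_1}^+ \cap \cdots \cap H_{m_s}^+$, I rewrite the right-hand side using the lemma and dualize again:
\[
\sigma^\vee \;=\; \bigl(\operatorname{Cone}(m_1,\dots,m_s)^\vee\bigr)^\vee \;=\; \operatorname{Cone}(m_1, \dots, m_s),
\]
where the last step uses the double-duality property applied to the rational convex polyhedral cone $\operatorname{Cone}(m_1, \dots, m_s)$.

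There is no real obstacle here: the content of the proposition is entirely absorbed into $(\sigma^\vee)^\vee = \sigma$, which the excerpt assumes. The only thing I would be careful about is noting that the lemma above applies equally well in both directions because the roles of $N_\R$ and $M_\R$ are symmetric with respect to the pairing $\langle \cdot, \cdot \rangle$, so the same argument produces the analogous identity with $\sigma \subset N_\R$ playing the role of the finite generating set on the dual side.
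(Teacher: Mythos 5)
Your proof is correct, and it is the standard argument. The paper states this proposition without proof (it is folded into the general background on polyhedral duality referred to \cite[Sec.~1.2]{cox2011toric}), so there is no paper proof to compare against. Your reduction — prove the identity $\operatorname{Cone}(m_1,\dots,m_s)^\vee = H_{m_1}^+ \cap \cdots \cap H_{m_s}^+$ by bilinearity of the pairing, then apply $(\sigma^\vee)^\vee = \sigma$ in both directions — is exactly the right way to package it, and you correctly flag that the double-duality statement applies on the $M_\R$ side by symmetry of the pairing, which is the one point a careless reader might gloss over.
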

In the following, we summarize a list of properties of cones and their faces. For proofs, we refer to \cite[Sec.~1.2]{cox2011toric} and references therein.
\begin{proposition} \label{prop:bigconesprop}
Let $\sigma, \tau, \tau'$ be cones. Then we have 
\begin{enumerate}
\item $\tau \preceq \sigma$ and $\tau' \preceq \sigma$ implies $\tau \cap \tau' \preceq \sigma$,
\item $\tau \preceq \sigma$ and $\tau' \preceq \tau$ implies $\tau' \preceq \sigma$, 
\item if $\tau \preceq \sigma$ and $v, w \in \sigma$, then $v + w \in \tau$ implies $v \in \tau$ and $w \in \tau$, 
\item faces of $\sigma$ and faces of $\sigma^\vee$ are in bijective, inclusion reversing correspondence. 
\end{enumerate}
\end{proposition}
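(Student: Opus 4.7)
The plan is to handle the four statements in order, since each is a separate geometric fact about cones, with (1) and (3) being direct from the definition of a face as an intersection with a supporting hyperplane $H_m$, while (2) and (4) require a key perturbation trick for finding elements of $\sigma^\vee$.

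For (1), if $\tau = \sigma \cap H_m$ and $\tau' = \sigma \cap H_{m'}$ with $m, m' \in \sigma^\vee$, I would check that $\tau \cap \tau' = \sigma \cap H_{m+m'}$. The sum $m+m'$ lies in $\sigma^\vee$ by convexity, so the right-hand side is a face of $\sigma$. The inclusion $\supseteq$ is immediate, and for $\subseteq$, on $u \in \sigma$ the pairings $\langle u, m\rangle$ and $\langle u, m'\rangle$ are both non-negative, so their sum vanishes iff both vanish. The same additivity argument proves (3): if $\tau = \sigma \cap H_m$ and $v+w \in \tau$ for $v,w\in \sigma$, then $\langle v,m\rangle + \langle w,m\rangle = 0$ with both summands $\geq 0$, forcing each to be zero, hence $v,w \in \tau$.

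For (2), the subtle point is that if $\tau' = \tau \cap H_{m'}$ with $m' \in \tau^\vee$, we want to realize $\tau'$ as a face of $\sigma$, but $m'$ is only guaranteed to be in $\tau^\vee$, not in $\sigma^\vee$. I would use the standard perturbation: pick a finite generating set $\{u_1,\dots,u_r\}$ of $\sigma$, note $\langle u_i, m\rangle > 0$ for $u_i \notin \tau$, and choose $k \in \N$ large enough that $\langle u_i, km+m'\rangle \geq 0$ for every generator, with strict inequality for $u_i \notin \tau$. Then $km+m' \in \sigma^\vee$, and I would verify $\sigma \cap H_{km+m'} = \tau'$: writing $u = \sum \lambda_i u_i$ with $\lambda_i \geq 0$, the pairing vanishes iff every $\lambda_i \langle u_i, km+m'\rangle$ vanishes, which forces $\lambda_i = 0$ whenever $u_i \notin \tau$, so $u \in \tau$, and then $\langle u, m\rangle = 0$ gives $\langle u, m'\rangle = 0$.

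For (4), I would define the duality map sending a face $\tau = \sigma \cap H_m$ of $\sigma$ to
\[ \tau^* = \sigma^\vee \cap \tau^\perp = \{m' \in \sigma^\vee \mid \langle u, m'\rangle = 0 \text{ for all } u \in \tau\}. \]
The main tasks are: (a) show $\tau^*$ is a face of $\sigma^\vee$ — which follows because $\tau^* = \sigma^\vee \cap H_u$ for any $u$ in the relative interior of $\tau$, using that all generators of $\tau$ lie in $H_u^+$ by duality together with the hyperplane description of faces in the previous Proposition; (b) show $\dim \tau + \dim \tau^* = n$, via linear algebra in $N_\R$ after quotienting by the lineality space; (c) show $\tau^{**} = \tau$ using $(\sigma^\vee)^\vee = \sigma$ together with (b); (d) show that $\tau \preceq \tau'$ iff $\tau'^* \preceq \tau^*$, which is immediate from the definition. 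Bijectivity then follows from (c).

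The main obstacle will be (2), where the perturbation step requires the existence of a finite generating set and a careful strict-vs.-non-strict inequality argument; the rest amounts to linear-algebraic bookkeeping, though (4)(b) also relies on a non-trivial dimension count that I would derive from writing $\sigma$ in its minimal inequality representation $\sigma = H_{m_1}^+ \cap \cdots \cap H_{m_s}^+$ and identifying the facets of $\sigma$ with the extreme rays of $\sigma^\vee$.
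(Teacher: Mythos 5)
The paper itself does not prove this proposition; it refers the reader to \cite[Sec.~1.2]{cox2011toric}, so the comparison is really with the standard textbook treatment, which your sketch tracks closely. Parts (1) and (3) are correct and complete: nonnegativity of $\langle \cdot, m \rangle$ on $\sigma$ turns the additivity of the pairing into termwise vanishing, which is exactly what is needed in both cases. Part (2) is also correct: $km+m'$ for $k \gg 0$ lands in $\sigma^\vee$, and your verification that $\sigma \cap H_{km+m'} = \tau'$ via strict positivity of $\langle u_i, km+m' \rangle$ for generators $u_i \notin \tau$ is exactly the needed argument. All three parts crucially use that $\sigma$ is polyhedral (hence finitely generated), which the paper's standing convention guarantees.

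The one genuine gap is step (4b), the dimension count $\dim \tau + \dim \tau^* = n$. The phrase ``linear algebra after quotienting by the lineality space'' does not supply it: what is actually needed is the equality $\operatorname{span}(\tau^*) = \tau^\perp$, and the nontrivial inclusion $\operatorname{span}(\tau^*) \supseteq \tau^\perp$ is not linear algebra but a second instance of your perturbation trick from (2). Fix $m \in \sigma^\vee$ with $\tau = \sigma \cap H_m$ (so $m \in \tau^*$); for arbitrary $m' \in \tau^\perp$, checking signs on the generators of $\sigma$ shows $km + m' \in \sigma^\vee \cap \tau^\perp = \tau^*$ for $k \gg 0$, hence $m' = (km+m') - km \in \operatorname{span}(\tau^*)$. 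With that repaired, the dimension formula holds, and your derivation of $\tau^{**} = \tau$ in (4c) goes through once you also record the lemma that a face of $\sigma$ contained in another face of $\sigma$ is itself a face of that larger face (needed to pass from $\tau \subseteq \tau^{**}$ and equal dimensions to equality). For (4a), you should also make explicit the standard argument that $\sigma^\vee \cap H_u = \sigma^\vee \cap \tau^\perp$ for $u$ in the relative interior of $\tau$: for any $v \in \tau$ one has $u - \epsilon v \in \tau$ for small $\epsilon > 0$, which together with $\langle u, m' \rangle = 0$ forces $\langle v, m' \rangle = 0$.
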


\begin{example} 
The bijection in item 4 of Proposition \ref{prop:bigconesprop} works as follows. Let $\tau \preceq \sigma^\vee$ be a face and let $m \in \tau$ be a generic point of $\tau$. More precisely, we pick $m$ such that $m$ is on none of the faces of $\tau$ except for $\tau$ itself. Then $H_m \cap \sigma$ is the corresponding face of $\sigma$. 
\end{example}
The dimension $\dim \sigma$ of a cone $\sigma \subset N_\R$ is the dimension of the smallest linear subspace of $N_\R$ containing $\sigma$. Here are some further properties of cones we will need. 
\begin{definition}[Strong convexity] \label{def:strongconvex}
A cone $\sigma \subset N_\R$ is called \emph{pointed} or \emph{strongly convex} if $\sigma \cap (-\sigma) = \{0\}$. This is equivalent to $\{0\} \preceq \sigma$, and to $\dim \sigma^\vee = n$. 
\end{definition}
The one-dimensional faces $\rho \preceq \sigma$ are called the \emph{rays} of $\sigma$. If $\sigma$ is strongly convex, for each ray $\rho$ there is a unique lattice point $u_\rho$ such that $\rho \, \cap \, N = \N \cdot u_\rho$. The set $\{ u_\rho ~|~ \rho~\preceq~\sigma, \dim \rho = 1 \}$ generates $\sigma$. It is called the set of \emph{minimal generators} of $\sigma$. 
\begin{definition}[Smoothness]
A strongly convex rational cone $\sigma \subset N_\R$ is \emph{smooth} if its minimal generators are a subset of a $\Z$-basis of $N$. 
\end{definition}
\begin{definition}[Simplicial]
A strongly convex cone $\sigma$ is \emph{simplicial} if its minimal generators are linearly independent over $\R$. 
\end{definition}

\begin{example}
All two-dimensional cones are simplicial. The cone $\sigma$ from Example \ref{ex:coneex} is not smooth. Its minimal generators $(0,1)$ and $(2,1)$ span a sublattice of $\Z^2$. The cone over $\{(1,0,0),(0,1,0),(1,0,1),(0,1,1)\}$ has 4 minimal generators. Hence it is not simplicial.
\end{example}

\begin{examplestar} \label{ex:claudia1}
Here is how to work with rational convex polyhedral cones in \texttt{Oscar.jl}. We define the cone $\sigma$ from Example \ref{ex:coneex} and its dual as follows:
\vspace{-0.2cm}
\begin{minted}{julia}
σ = positive_hull([0 1;1 2;2 1]); σ_dual = polarize(σ);
\end{minted}
The dimension of $\sigma$ and its facet inequalities are obtained via
\vspace{-0.2cm}
\begin{minted}{julia}
dimσ = dim(σ); facets_σ = facets(σ);
\end{minted}
This shows that $\sigma$ has dimension two and it is given by $x_1\geq 0$ and $x_1-2x_2\leq 0 $. Computations involving polyhedral geometry in \texttt{Oscar.jl}, and thus many computations related to toric varieties, rely on the software \texttt{Polymake} \cite{gawrilow2000polymake}.
\end{examplestar}

\subsection{Affine toric varieties from cones}

Let $\sigma$ be a rational cone (that is, a rational convex polyhedral cone) in $N_\R$. We associate an affine toric variety to $\sigma$, by considering the semigroup
\[ \S_\sigma = \sigma^\vee \cap M \subset M. \]
\begin{lemma}[Gordan's lemma]
The semigroup $\S_\sigma$ is affine, in the sense of Definition \ref{def:affsemgroup}.
\end{lemma}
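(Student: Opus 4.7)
The plan is to verify the five axioms of Definition \ref{def:affsemgroup} for $\S_\sigma = \sigma^\vee \cap M$. Four of them are immediate: $\S_\sigma$ inherits the abelian group operation from $M$ (so associativity and commutativity are free), it contains $0$ since $0 \in \sigma^\vee$, and it is embedded in $M$ by construction. Closure under $+$ follows because $\sigma^\vee$ is a convex cone and $M$ is a subgroup. The only substantive point is finite generation (item 4), so that is where the argument must concentrate.

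For finite generation, I would start from the fact that $\sigma^\vee$ is itself a rational convex polyhedral cone (stated in the text just after the definition of $\sigma^\vee$). Hence we can pick a finite generating set $\{m_1, \ldots, m_s\} \subset M$ with $\sigma^\vee = {\rm Cone}(m_1, \ldots, m_s)$. The classical idea is to enlarge this generating set to the lattice points in the compact ``fundamental parallelotope''
\[ K = \Bigl\{ \textstyle\sum_{i=1}^s \lambda_i m_i ~\Bigl|~ 0 \leq \lambda_i \leq 1 \Bigr\}. \]
Because $K$ is bounded in $M_\R \simeq \R^n$ and $M$ is discrete, the intersection $K \cap M$ is finite. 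I claim $\A := K \cap M$ generates $\S_\sigma$ as a semigroup, i.e. $\S_\sigma = \N \A$.

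Both inclusions need to be checked. For $\N \A \subset \S_\sigma$, note that each element of $\A$ already lies in $\sigma^\vee \cap M = \S_\sigma$, and $\S_\sigma$ is closed under $\N$-linear combinations. For the reverse inclusion, take any $m \in \S_\sigma$. Since $m \in \sigma^\vee$, we can write $m = \sum_{i=1}^s \lambda_i m_i$ with $\lambda_i \in \R_{\geq 0}$. Split each coefficient as $\lambda_i = \lfloor \lambda_i \rfloor + \{\lambda_i\}$, giving
\[ m = \sum_{i=1}^s \lfloor \lambda_i \rfloor m_i + \sum_{i=1}^s \{\lambda_i\} m_i. \]
The first summand is in $\N \A$ because each $m_i \in K \cap M = \A$. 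The second summand lies in $K$ by construction, and it equals $m$ minus a lattice element, hence also lies in $M$; thus it lies in $\A$, and is in particular in $\N \A$. So $m \in \N \A$, completing the proof.

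The only mild obstacle is making sure $K \cap M$ is finite — this is just that a bounded subset of $M_\R \simeq \R^n$ meets the lattice $M$ in finitely many points — and that the generators $m_i$ can be chosen in $M$ rather than merely in $M_\R$, which is exactly the content of $\sigma^\vee$ being a \emph{rational} cone. Everything else is bookkeeping.
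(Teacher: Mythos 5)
Your proof is correct and is exactly the classical argument found in the reference the paper cites for this statement (\cite[Prop.~1.2.17]{cox2011toric}); the paper itself omits the proof. The bounded-parallelotope trick, the observation that the $m_i$ can be taken in $M$ by rationality of $\sigma^\vee$, and the floor/fractional-part decomposition are all exactly right, and you correctly identify finite generation as the only non-trivial axiom to check.
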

We omit the proof of Gordan's lemma and refer to \cite[Prop.~1.2.17]{cox2011toric}. It entails that $\S_\sigma$ is finitely generated, so that $\S_\sigma = \N \A$ for a finite subset $\A \subset M$. Here is an obvious corollary.
\begin{corollary}
Let $\sigma \subset N_\R$ be a rational cone with associated semigroup $\S_\sigma = \sigma^\vee \cap M$. Then $U_\sigma = \Specm(\C[\S_\sigma])$ is an affine toric variety. 
\end{corollary}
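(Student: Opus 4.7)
The corollary is essentially a one-line composition of two results already in hand, so my plan is to make explicit exactly how the two ingredients fit together rather than to do any new work.

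First I would invoke Gordan's lemma: it states that $\S_\sigma = \sigma^\vee \cap M$ is an affine semigroup in the sense of Definition \ref{def:affsemgroup}. In particular, there exists a finite subset $\A \subset M$ such that $\S_\sigma = \N \A$, and $\S_\sigma$ sits inside the lattice $M$. So $\S_\sigma$ satisfies all five requirements of being an affine semigroup. Nothing in this step needs to be proved since Gordan's lemma has been taken for granted above (with a pointer to \cite[Prop.~1.2.17]{cox2011toric}).

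Next I would apply Proposition \ref{prop:semigroup} directly to $\S = \S_\sigma$. Part (2) of that proposition tells us that $\Specm(\C[\S_\sigma])$ is an affine toric variety, whose dense torus has character lattice $\Z \S_\sigma$. Since $U_\sigma$ is defined to be $\Specm(\C[\S_\sigma])$, the conclusion follows.

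There is really no obstacle here: the only non-obvious piece is Gordan's lemma itself, and that is cited rather than reproved. If I wanted to make the proof slightly more informative, I could add one sentence identifying the dense torus of $U_\sigma$ explicitly as $T_N = \Specm(\C[M])$ in the case that $\sigma$ spans $N_\R$, using the fact that then $\Z \S_\sigma = M$, but this is not required by the statement. So the entire proof is simply: \emph{By Gordan's lemma, $\S_\sigma$ is an affine semigroup; by Proposition \ref{prop:semigroup}, $\Specm(\C[\S_\sigma])$ is an affine toric variety.}
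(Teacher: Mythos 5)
Your proof is correct and is precisely the argument the paper intends: the corollary is stated immediately after Gordan's lemma and labelled ``obvious,'' with the implicit justification being exactly your two-step composition of Gordan's lemma with Proposition \ref{prop:semigroup}. Nothing to add.
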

\begin{proposition} \label{prop:7}
Let $U_\sigma = \Specm(\C[\S_\sigma])$ be the affine toric variety corresponding to $\sigma \subset N_\R$. We have that $\dim U_\sigma = n$ if and only if $\sigma$ is pointed. In this case, the torus of $U_\sigma$ has character lattice $M = \Hom_\Z(N,\Z)$.
\end{proposition}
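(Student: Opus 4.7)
The strategy is to apply Proposition \ref{prop:semigroup} together with Corollary \ref{cor:dimYA}: the former tells us that the dense torus of $U_\sigma = \Specm(\C[\S_\sigma])$ has character lattice $\Z \S_\sigma$, while the latter (via $U_\sigma \simeq Y_\A$ for any finite generating set $\A$ of $\S_\sigma$) implies $\dim U_\sigma = \mathrm{rank}\, \Z \S_\sigma$. Thus the statement reduces to two subclaims: (i) $\mathrm{rank}\, \Z \S_\sigma = n$ if and only if $\sigma$ is pointed, and (ii) when $\sigma$ is pointed, one in fact has $\Z \S_\sigma = M$ (not merely a finite-index sublattice).

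Claim (i) is a short dimension count. Since $\S_\sigma = \sigma^\vee \cap M \subset M_\R$, the $\R$-span of $\Z \S_\sigma$ coincides with the $\R$-span of $\sigma^\vee$, because a full-dimensional rational cone contains a rational (hence, after clearing denominators, lattice) basis of its linear hull. Therefore $\mathrm{rank}\, \Z \S_\sigma = \dim \sigma^\vee$, and Definition \ref{def:strongconvex} already identifies $\dim \sigma^\vee = n$ with $\sigma$ being pointed.

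Claim (ii) is the main obstacle and requires an interior-point argument. Assume $\sigma$ is pointed, so $\sigma^\vee$ has full dimension $n$ and hence nonempty interior in $M_\R$. Since $\sigma^\vee$ is rational, its interior contains a rational point, and scaling by a positive integer yields an $m_0 \in \mathrm{int}(\sigma^\vee) \cap M$. Writing $\sigma = \mathrm{Cone}(u_1, \ldots, u_s)$ with minimal generators $u_i \in N$, the facet description of $\sigma^\vee$ gives $\langle u_i, m_0\rangle > 0$ for every $i$. Now for an arbitrary $m \in M$, pick $k \in \N$ large enough that $\langle u_i, m\rangle + k \langle u_i, m_0\rangle \geq 0$ for all $i$; this is possible because each $\langle u_i, m_0\rangle$ is a strictly positive integer. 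Then $m + k m_0 \in \sigma^\vee \cap M = \S_\sigma$, whence $m = (m + k m_0) - k m_0 \in \Z \S_\sigma$. This proves $\Z \S_\sigma = M$, and combined with Proposition \ref{prop:semigroup} identifies the character lattice of the torus of $U_\sigma$ with $M = \Hom_\Z(N, \Z)$, finishing the proof.
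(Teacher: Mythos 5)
Your proof is correct, and it follows the same overall reduction as the paper's proof (express $\dim U_\sigma$ as $\operatorname{rank} \Z\S_\sigma$ via Propositions \ref{prop:charLattice}/\ref{prop:semigroup} and Corollary \ref{cor:dimYA}); the difference is that the paper then \emph{cites} \cite[Thm.~1.2.18]{cox2011toric} for the chain of equivalences $\operatorname{rank}\Z\S_\sigma = n \Leftrightarrow \Z\S_\sigma = M \Leftrightarrow \sigma$ strongly convex, whereas you prove it directly. Your argument for this is the standard one: for claim (i), note that $\Z\S_\sigma$ and $\sigma^\vee$ have the same $\R$-span (since a rational cone is generated by lattice points within its linear hull), so $\operatorname{rank}\Z\S_\sigma = \dim\sigma^\vee$, which equals $n$ exactly when $\sigma$ is pointed by Definition \ref{def:strongconvex}; for claim (ii), the interior-point trick — pick $m_0 \in \operatorname{int}(\sigma^\vee)\cap M$, so $\langle u_i, m_0\rangle > 0$ for all minimal generators $u_i$ of $\sigma$, then for any $m \in M$ choose $k$ large enough that $m + km_0 \in \sigma^\vee\cap M$ and conclude $m = (m+km_0) - km_0 \in \Z\S_\sigma$ — is exactly the mechanism inside the cited textbook proof. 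So your proposal is self-contained where the paper defers; one small phrasing quibble is that in claim (i) you write ``a full-dimensional rational cone contains a rational basis of its linear hull,'' when what you need (and use) is the statement for an \emph{arbitrary} rational cone $\sigma^\vee$, viewed as full-dimensional inside $\operatorname{span}_\R(\sigma^\vee)$; the argument is fine, just worth saying cleanly since proving the implication $\operatorname{rank}\Z\S_\sigma = n \Rightarrow \dim\sigma^\vee = n$ requires the case $\dim\sigma^\vee < n$ too.
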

\begin{proof}
By Proposition \ref{prop:charLattice}, $\dim U_\sigma = {\rm rank} \, \Z S_\sigma$. By the proof of \cite[Thm.~1.2.18]{cox2011toric}, we have \[{\rm rank} \, \Z \S_\sigma = n \Longleftrightarrow \Z \S_\sigma = M \Longleftrightarrow \sigma \text{ is strongly convex}. \qedhere \]
\end{proof}
\begin{example} \label{ex:smooth}
Let $\sigma = {\rm Cone}(e_1, \ldots, e_r) \subset \R^n = (\Z^n)_\R$, where $r \leq n$ and $e_i$ is the $i$-th standard basis vector. Note that $\sigma$ is pointed. One checks that $\sigma^\vee = {\rm conv}(e_1', \ldots, e_r', \pm e_{r+1}', \ldots, \pm e_n')$, where $e_i'$ is the dual basis vector of $(\R^n)^\vee$. The associated affine toric variety $U_\sigma$ is 
\[ \Specm(\C[x_1,\dots,x_r, x_{r+1}^{\pm 1}, \ldots, x_n^{\pm 1}]) \simeq \C^r \times (\C^*)^{n-r}. \]
By changing bases in the lattice $\Z^n$, we see that the affine toric variety corresponding to any smooth cone $\sigma$ is the product of an affine space with a torus. 
\end{example}
\begin{example}
Let $\A= \{(1,0),(0,1),(1,1)\} \subset M = \Z^2$. We've seen that $I(Y_\A) = \langle xy-z \rangle$ (Exercise \ref{ex:C2inC3}). The associated semigroup is $\S = \N \A = \R^2_{\geq 0} \cap \Z^2 = S_\sigma$, where $\sigma = \R^2_{\geq 0}$ (check that this is self-dual: $\sigma = \sigma^\vee$). Therefore $Y_\A \simeq U_\sigma = \Specm(\C[x,y]) = \C^2$. 
\end{example}
We have seen that if an affine semigroup $\S \subset M$ is generated by a finite set $\A = \{m_1, \ldots, m_s \} \subset \S$, then $Y_\A \subset \C^s$ is an embedding of $\Specm(\C[\S])$. To embed $U_\sigma$, we need to find a set of generators of $\S_\sigma = \sigma^\vee \cap M$. Such a basis can be described in terms of \emph{irreducible elements} of $\S_\sigma$. These are the elements $m \in \S_\sigma$ such that if $m = m' + m''$ for $m', m'' \in \S_\sigma$, then $m' = 0$ or $m'' = 0$. The following is shown in \cite[Prop.~1.2.23]{cox2011toric}.
\begin{proposition} \label{prop:hilbertbasis}
Let $\mathscr{H} = \{ m \in S_\sigma ~|~ m \text{ is irreducible } \}$. Then 
\begin{enumerate}
\item $\mathscr{H}$ is finite and generates $\S_\sigma$, 
\item $\mathscr{H}$ contains the minimal generators of $\sigma^\vee$,
\item $\mathscr{H}$ is the minimal set of generators with respect to inclusion.
\end{enumerate}
\end{proposition}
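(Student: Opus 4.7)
The plan is to derive all three items by combining Gordan's lemma (to control finiteness) with an induction on a grading given by a lattice point in the interior of $\sigma$ (to control generation). Throughout I rely on the standing assumption that $\sigma$ is strongly convex, so that $\S_\sigma$ has no nontrivial units and each ray of $\sigma^\vee$ has a well-defined minimal lattice generator $u_\rho$.

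For finiteness in (1), I would start from a finite generating set $\A = \{m_1, \ldots, m_s\} \subset \S_\sigma \setminus \{0\}$ supplied by Gordan's lemma and observe that any nonzero irreducible $m$ must already lie in $\A$: expanding $m = \sum a_i m_i$ with some $a_j \geq 1$ and splitting $m = m_j + (m - m_j)$ in $\S_\sigma$ forces $m = m_j$ by irreducibility. The same ``expand-and-collapse'' move immediately delivers minimality (3): for any generating set $\mathscr{H}'$, each $m \in \mathscr{H}$ collapses to a single element of $\mathscr{H}'$, so $\mathscr{H} \subseteq \mathscr{H}'$. For (2), I would apply Proposition~\ref{prop:bigconesprop}(3) to $u_\rho$: any decomposition $u_\rho = m' + m''$ with $m', m'' \in \S_\sigma \subseteq \sigma^\vee$ forces both summands into the face $\rho \cap M = \N \cdot u_\rho$, and then the equation $u_\rho = a\, u_\rho + b\, u_\rho$ with $a + b = 1$, $a, b \in \N$, forces $m' = 0$ or $m'' = 0$.

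The substantive step is generation in (1). I would pick a lattice point $u \in \text{int}(\sigma) \cap N$, so that $\langle u, m \rangle \in \Z_{>0}$ for every nonzero $m \in \S_\sigma$, and induct on this positive integer. An irreducible $m$ lies in $\mathscr{H}$ by definition; otherwise $m = m' + m''$ with $m', m'' \in \S_\sigma \setminus \{0\}$, giving $\langle u, m' \rangle, \langle u, m'' \rangle$ strictly smaller positive integers, and the inductive hypothesis expresses each summand as an $\N$-combination from $\mathscr{H}$. The main obstacle is producing this grading $u$: its existence requires $\sigma^\vee$ to be strongly convex (equivalently, $\sigma$ full-dimensional in $N_\R$), so that $\langle u, \cdot \rangle$ is strictly positive on $\S_\sigma \setminus \{0\}$. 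In the general pointed case one first reduces to the full-dimensional setting by quotienting by the largest linear subspace contained in $\sigma^\vee$, after which the induction proceeds as above.
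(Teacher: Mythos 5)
The paper itself does not prove this proposition but refers to \cite[Prop.~1.2.23]{cox2011toric}; your argument is essentially the proof given there. Gordan's lemma for finiteness, the ``expand-and-collapse'' observation for minimality, Proposition~\ref{prop:bigconesprop}(3) for membership of the minimal ray generators, and the induction on $\langle u,\cdot\rangle$ for a lattice point $u$ interior to $\sigma$ for generation are exactly the right ingredients. You also correctly flag that the grading argument needs $\sigma$ to be full-dimensional (equivalently $\sigma^\vee$ strongly convex), a hypothesis the paper leaves implicit but which is present in the cited source and is also needed to make ``minimal generators of $\sigma^\vee$'' in item~2 meaningful.

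The one thing to push back on is your closing sentence. If $\sigma$ is pointed but not full-dimensional, then $\sigma^\vee$ contains a line and $\S_\sigma$ has a nonzero unit $m$. Every nonzero $m'\in\S_\sigma$ then factors nontrivially as $m'=(m'+m)+(-m)$, so $\S_\sigma$ has no irreducible elements at all and $\mathscr{H}$ is empty, hence cannot generate. Quotienting by the lineality space of $\sigma^\vee$ produces a genuinely different semigroup; it does not recover a finite minimal generating set of $\S_\sigma$ made of irreducibles. In short, the proposition simply fails without full-dimensionality, so the proposed reduction should be dropped rather than sketched; within the full-dimensional strongly convex setting your proof is complete and correct.
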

The set $\mathscr{H}$ from Proposition \ref{prop:hilbertbasis} is called the \emph{Hilbert basis} of $\S_\sigma$, or of $\sigma^\vee$. A consequence of this proposition is that $U_\sigma$ can be embedded via a monomial map in an affine space of dimension $|\mathscr{H}|$. We will see below that $|\mathscr{H}|$ is the smallest such dimension. 

\begin{examplestar} \label{ex:claudia2}
We will now use \texttt{Oscar.jl} to compute the embedding of $U_\sigma$, where $\sigma$ is the cone from Example \ref{ex:coneex}. We generated this cone and its dual in Example \ref{ex:claudia1}. The first step is to compute the Hilbert basis of $\sigma^\vee$: 
\begin{minted}{julia}
H = hilbert_basis(σ_dual)
\end{minted}
This gives $\mathscr{H} = \{ (1,0), (-1,2), (0,1) \}$. From it, we can compute the toric ideal $I(Y_{\mathscr{H}})$ via
\begin{minted}{julia}
I = toric_ideal(H)
\end{minted}
It is generated by $x_1 x_2 - x_3^2$, which is the equation for $U_\sigma \simeq Y_{\mathscr{H}}$ embedded in $\C^3$. 
\end{examplestar}

\subsection{Points on affine toric varieties}

In this section, we characterize points on an affine toric variety $V$ in terms of semigroup homomorphisms. This will help to give an intrinsic, coordinate-free description of the torus action $T \times V \rightarrow V$. By a \emph{semigroup homomorphism} we mean a map between commutative monoids which preserves the binary operation: $\gamma: \S_1 \rightarrow \S_2$ is such that $\gamma(m + cm') = \gamma(m) + c \gamma(m')$. The image of the identity element in $\S_1$ is the identity element in $\S_2$.

\begin{proposition} \label{prop:pointsarehomom}
Let $V = \Specm(\C[\S])$ for an affine semigroup $\S \subset M$. There are one-to-one correspondences between 
\begin{enumerate}
\item points $p \in V$, 
\item maximal ideals $\mathfrak{m}_p \subset \C[\S]$,
\item semigroup homomorphisms $\S \rightarrow \C$. 
\end{enumerate}
\end{proposition}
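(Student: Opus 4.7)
The equivalence $1 \Leftrightarrow 2$ is essentially the definition of $\Specm$ combined with the Hilbert Nullstellensatz. Since $\C[\S]$ is a finitely generated $\C$-algebra (Proposition \ref{prop:semigroup}), every maximal ideal $\mathfrak{m} \subset \C[\S]$ has residue field $\C[\S]/\mathfrak{m} \simeq \C$, and under the isomorphism $V \simeq \Specm(\C[\S])$ a point $p \in V$ corresponds to the ideal $\mathfrak{m}_p = \ker(\mathrm{ev}_p)$ of functions vanishing at $p$. I would invoke this without rehashing the general theory.

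The heart of the proof is $2 \Leftrightarrow 3$. The plan is to interpret both sides as $\C$-algebra homomorphisms $\C[\S] \to \C$ (maximal ideals correspond to such maps via $\phi \mapsto \ker \phi$, using that $\phi$ is automatically surjective because it fixes $\C$), and then pass back and forth between $\C$-algebra homomorphisms and semigroup homomorphisms. Given a $\C$-algebra homomorphism $\phi: \C[\S] \to \C$, I would define $\gamma_\phi : \S \to \C$ by $\gamma_\phi(m) = \phi(\chi^m)$; the relation $\chi^m \chi^{m'} = \chi^{m+m'}$ and $\chi^0 = 1$ in $\C[\S]$ immediately forces $\gamma_\phi(m+m') = \gamma_\phi(m)\gamma_\phi(m')$ and $\gamma_\phi(0) = 1$, i.e.\ $\gamma_\phi$ is a semigroup homomorphism from $(\S,+,0)$ to $(\C,\cdot,1)$.

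Conversely, given a semigroup homomorphism $\gamma : \S \to \C$, I would extend $\C$-linearly by
\[ \phi_\gamma \Bigl( \sum_{m \in \S} c_m \chi^m \Bigr) = \sum_{m \in \S} c_m \, \gamma(m), \]
which is well-defined because the sum is finite by definition of $\C[\S]$. The multiplicativity of $\gamma$ translates directly into $\phi_\gamma(\chi^m \chi^{m'}) = \phi_\gamma(\chi^m) \phi_\gamma(\chi^{m'})$, and $\phi_\gamma(1) = \gamma(0) = 1$, so $\phi_\gamma$ is a $\C$-algebra homomorphism. The constructions $\phi \mapsto \gamma_\phi$ and $\gamma \mapsto \phi_\gamma$ are mutually inverse, because any $\C$-algebra map out of $\C[\S]$ is determined by its values on the generating set $\{\chi^m : m \in \S\}$.

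I do not anticipate a serious obstacle here: the proof is almost a bookkeeping exercise once one recognizes that $\C[\S]$ has $\{\chi^m\}_{m \in \S}$ as a $\C$-basis and that $\C$ is itself a commutative monoid under multiplication. The only subtle point worth stating explicitly is that $\gamma$ is allowed to take the value $0$, since $0 \in \C$ is absorbing under multiplication; this is consistent because the identity element of the target monoid is $1$, not $0$, and $\gamma(0) = 1$ is enforced by the definition of a semigroup homomorphism.
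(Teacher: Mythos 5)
Your proof is correct and follows essentially the same route as the paper: both establish $2\Leftrightarrow 3$ by identifying maximal ideals with $\C$-algebra homomorphisms $\C[\S]\to\C$ and then trading these for semigroup homomorphisms $\S\to\C$ via the basis $\{\chi^m\}_{m\in\S}$ and the rule $\chi^m\chi^{m'}=\chi^{m+m'}$. The paper additionally spells out the correspondence in coordinates by passing through an embedding $Y_\A\subset\C^s$ (sending $\gamma$ to the point $(\gamma(m_1),\ldots,\gamma(m_s))$), but that is a convenience rather than a logical necessity, and your treatment of the subtle point about $0\in\C$ being absorbing with $1=\gamma(0)$ as the monoid identity matches the paper's remark that $\C$ is viewed as a multiplicative monoid.
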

\begin{proof}
The correspondence between 1 and 2 is standard. To a point $p \in V$, we associate the semigroup homomorphism $m \mapsto \chi^m(p)$. Note that here $\C$ is considered as a multiplicative monoid. We now associate a maximal ideal $\mathfrak{m}_p$ to a semigroup homomorphism $\gamma: \S \rightarrow \C$. Note that $\gamma$ induces a surjective map of $\C$-algebras $\gamma' : \C[\S] \rightarrow \C$. Indeed, the image of $1 \in \C[\S]$ is $1 \in \C$, as $\gamma(0) = 1$. The kernel of $\gamma'$ is a maximal ideal, since $0 \rightarrow \ker \gamma' \rightarrow \C[\S] \rightarrow \C \rightarrow 0$ is exact and hence $\C[\S]/(\ker \gamma') \simeq \C$. We set $\mathfrak{m}_p = \ker \gamma'$. A more concrete way of associating a point $p \in V$ to $\gamma$ is by working with an embedding of $V$. Let $\A = \{ m_1, \ldots, m_s \}$ be such that $\N \A = \S$ and let $p = (\gamma(m_1), \ldots, \gamma(m_s)) \in \C^s$. We show that $p \in Y_\A \simeq V$. If $a, b \in \N^s$ are such that $a - b \in \ker \hat{\Phi}_\A$, then $\sum_{i=1}^s a_i m_i = \sum_{i=1}^s b_i m_i$. Hence 
\[ p^a - p^b = \prod_{i=1}^s \gamma(m_i)^{a_i} - \prod_{i=1}^s \gamma(m_i)^{b_i} = \gamma(\sum_{i=1}^s a_im_i) - \gamma(\sum_{i=1}^s b_im_i) = 0,\]
so that indeed $p \in Y_\A$. Moreover, one checks that the semigroup homomorphism associated to this point in the first step of the proof is $\gamma$, so the correspondence is indeed one-to-one. 
\end{proof}
This way of thinking about points on $V = \Specm(\C[\S])$ gives the following nice description of the torus action $T \times V \rightarrow V$. 
\begin{proposition} \label{prop:torusaction}
Let $V = \Specm(\C[\S])$ be an affine toric variety with dense torus $T \subset V$. An element $t \in T$ acts on $V$ by $(m \mapsto \gamma(m)) \mapsto (m \mapsto \chi^m(t)\gamma(m))$.
\end{proposition}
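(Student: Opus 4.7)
The plan is to reduce the proposition to a coordinate computation via any embedding, and then recognize the resulting formula intrinsically. First I would fix a finite generating set $\A = \{m_1, \ldots, m_s\}$ of $\S$, so that $V \simeq Y_\A \subset \C^s$ by Theorem \ref{thm:classaff}. Under the correspondence of Proposition \ref{prop:pointsarehomom}, a point $p \in V$ with associated semigroup homomorphism $\gamma : \S \to \C$ has coordinates $p = (\gamma(m_1), \ldots, \gamma(m_s))$. In particular, a torus element $t \in T$ corresponds to the restriction to $\S$ of the group homomorphism $m \mapsto \chi^m(t) : M \to \C^*$, and has coordinates $t = (\chi^{m_1}(t), \ldots, \chi^{m_s}(t))$.

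Next I would invoke the construction of the torus action from the proof of Proposition \ref{prop:charLattice}: the action $T \times V \to V$ is the restriction of coordinate-wise scaling $(\C^*)^s \times \C^s \to \C^s$. Hence
\[ t \cdot p = \bigl(\chi^{m_1}(t)\,\gamma(m_1), \ldots, \chi^{m_s}(t)\,\gamma(m_s)\bigr). \]
Reading these coordinates back through Proposition \ref{prop:pointsarehomom}, they are precisely the values on $\A$ of the map $\delta : m \mapsto \chi^m(t)\gamma(m)$. This $\delta$ is a semigroup homomorphism: $\gamma$ is one by hypothesis, $\chi^{\bullet}(t) : M \to \C^*$ is a group homomorphism, and the pointwise product of two semigroup homomorphisms into the commutative multiplicative monoid $\C$ is again a semigroup homomorphism. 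Two semigroup homomorphisms into $\C$ that agree on a generating set of $\S$ are equal, so $t \cdot p$ corresponds to $\delta$ on all of $\S$, which is the claim.

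The only subtlety I anticipate is verifying that the resulting formula is independent of the chosen embedding, but this is immediate once one observes that the recipe $\gamma \mapsto (m \mapsto \chi^m(t)\gamma(m))$ is phrased purely in terms of $\S$, $M$ and the characters of $T$, with no reference to $\A$. Thus, although the verification uses coordinates, the assertion itself is intrinsic and the main obstacle is really just bookkeeping — threading the bijection of Proposition \ref{prop:pointsarehomom} through the scalar action of $(\C^*)^s$ on $\C^s$.
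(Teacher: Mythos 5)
Your proof is correct, but it takes a genuinely different route from the paper's. The paper dualizes the action $T \times V \to V$ to a comultiplication map of $\C$-algebras $\C[\S] \to \C[M] \otimes_\C \C[\S]$, pins this map down as $\chi^m \mapsto \chi^m \otimes \chi^m$ by a commutative-diagram argument comparing with the known comultiplication on $\C[M]$ coming from the group law on $T$, and then reads off the formula for the action on points. You instead work concretely in coordinates: you fix an embedding $V \simeq Y_\A \subset \C^s$, recall from the proof of Proposition \ref{prop:charLattice} that the action is the restriction of the coordinate-wise scaling $(\C^*)^s \times \C^s \to \C^s$, compute $t \cdot p$ coordinate-by-coordinate, and recognize the result as the semigroup homomorphism $m \mapsto \chi^m(t)\gamma(m)$ via the bijection of Proposition \ref{prop:pointsarehomom}. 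The paper's argument is more intrinsic and yields the comultiplication formula as a by-product (useful elsewhere, e.g.\ in Hopf-algebraic contexts), whereas yours is more elementary and directly traces the construction of the action through the point-homomorphism dictionary. One small ingredient your proof makes explicit that the paper leaves implicit is the closure of semigroup homomorphisms into $(\C,\times)$ under pointwise product, and the fact that such a homomorphism is determined by its values on a generating set of $\S$; both are correct and worth stating. Your closing remark about independence of the embedding is also right, and is the same observation that makes the coordinate argument legitimate.
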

\begin{proof}
The action $T \times V \rightarrow V$ is algebraic, so it comes from a $\C$-algebra homomorphism $\C[V] = \C[\S] \rightarrow \C[T \times V] = \C[M] \otimes_\C \C[\S]$. Moreover, it is an extension of the action $T \times T \rightarrow T$ of $T$ on itself, which comes from $\C[M] \rightarrow \C[M] \otimes_\C \C[M]$, $\chi^m \mapsto \chi^m \otimes \chi^m$. Commutativity of the following dual diagrams
\begin{center}
\begin{tikzcd}
T \times T \ar[r] \ar[d,hookrightarrow]
    & T \ar[d,hookrightarrow]  \\
    T \times V \ar[r] & V
\end{tikzcd}
\hspace{2cm}
\begin{tikzcd}
\C[M] \otimes_\C \C[M] 
    & \C[M] \ar[l]  \\
    \C[M] \otimes_\C \C[\S] \ar[u,hookrightarrow]  & \C[\S] \ar[l] \ar[u,hookrightarrow]
\end{tikzcd}
\end{center}
implies that $\C[\S] \rightarrow \C[M] \otimes_\C \C[\S]$ is given by $\chi^m \mapsto \chi^m \otimes \chi^m$, which implies the statement. We leave the details as an exercise to the reader.
\end{proof}
The following statement uses \emph{pointed} semigroups, which are those affine semigroups $\S$ for which $\S \cap (-\S) = \{0\}$ (it is the discrete version of strong convexity). 
\begin{proposition} \label{prop:fixedpoint}
Let $V = \Specm(\C[\S]) \simeq Y_\A$ be an affine toric variety with dense torus $T$, for $\S = \N \A \subset M$ an affine semigroup and $\A = \{ m_1, \ldots, m_s \} \subset \S$ a finite generating set. We have that the torus action $T \times V \rightarrow V$ has a fixed point if and only if $\S$ is pointed. In this case, the fixed point is unique and it is given by 
\[ \gamma : m \longmapsto \begin{cases} 1 & m = 0 \\ 0 & \text{otherwise}
\end{cases}.\]
\end{proposition}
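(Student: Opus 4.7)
My plan is to translate everything into the semigroup-homomorphism picture. By \Cref{prop:pointsarehomom}, a point of $V$ is a semigroup homomorphism $\gamma : \S \to \C$ (with $\C$ viewed as a multiplicative monoid), and by \Cref{prop:torusaction}, the action of $t \in T$ sends $\gamma$ to the homomorphism $m \mapsto \chi^m(t)\gamma(m)$. Thus $\gamma$ is a fixed point if and only if $(\chi^m(t) - 1)\gamma(m) = 0$ for every $m \in \S$ and every $t \in T$. The dense torus of $V$ has character lattice $M' = \Z \S$ by \Cref{prop:charLattice}, and since $\S$ embeds in $M$ we have $\S \hookrightarrow \Z\S = M'$; in particular the character $\chi^m$ is nontrivial on $T$ whenever $0 \neq m \in \S$. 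Hence the fixed-point condition forces $\gamma(m) = 0$ for every $m \in \S \setminus \{0\}$, while $\gamma(0) = 1$ holds automatically for any semigroup homomorphism.

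This pins down at most one candidate for a fixed point, namely the map $\gamma^\star$ described in the statement, so uniqueness is automatic. The remaining task is to decide when $\gamma^\star$ is actually a semigroup homomorphism, i.e.\ when the identity $\gamma^\star(m + m') = \gamma^\star(m)\gamma^\star(m')$ holds for all $m, m' \in \S$. If at least one of $m, m'$ is $0$, both sides agree; if both are nonzero, the right-hand side is $0$, so the identity fails precisely when $m + m' = 0$ with $m, m' \in \S \setminus \{0\}$. Such a pair exists exactly when there is some nonzero $m \in \S$ with $-m \in \S$, i.e.\ when $\S \cap (-\S) \neq \{0\}$, i.e.\ when $\S$ is not pointed.

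Putting this together gives both directions in one stroke: if $\S$ is pointed, $\gamma^\star$ is a bona fide semigroup homomorphism and therefore the unique fixed point of the torus action; if $\S$ is not pointed, $\gamma^\star$ fails to be a homomorphism and the argument above shows it is the only possibility, so no fixed point exists. The only step I expect to require a sentence of justification is the identification of $M'$ with $\Z\S$ and the consequent statement that $\chi^m|_T$ is trivial iff $m = 0$ in $\S$; everything else is bookkeeping with the definition of a semigroup homomorphism.
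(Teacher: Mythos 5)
Your proof takes essentially the same route as the paper's: translate to the semigroup-homomorphism picture via Propositions \ref{prop:pointsarehomom} and \ref{prop:torusaction}, deduce that any fixed point must be $\gamma^\star$, and observe that $\gamma^\star$ is a semigroup homomorphism exactly when $\S$ is pointed. You fill in two details the paper leaves implicit — that $\chi^m$ is a nontrivial character for $0\neq m\in\S$ because $\S\hookrightarrow\Z\S=M'$, and the explicit case analysis showing $\gamma^\star(m+m')=\gamma^\star(m)\gamma^\star(m')$ iff $\S$ is pointed — but the argument is the same.
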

\begin{proof}
Note that the map $\gamma$ in the proposition is a semigroup homomorphism if and only if $\S$ is pointed. Moreover, if $\gamma: \S \rightarrow \C$ is a fixed point of the torus action, we must have $\chi^m(t) \gamma(m) = \gamma(m)$ for all $t \in T$, which implies that $\gamma(0) = 1$ and $\gamma(m) = 0$ for $m \neq 0$. 
\end{proof}
Here are two immediate consequences.
\begin{corollary}
Let $Y_\A \subset \C^s$ be an affine toric variety corresponding to $\A = \{m_1, \ldots, m_s \} \subset M \setminus \{0\}$. The torus action $T \times Y_\A \rightarrow Y_\A$ has a fixed point if and only if $0 \in Y_\A$, in which case $0$ is the unique fixed point. 
\end{corollary}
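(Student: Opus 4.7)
The plan is to apply Proposition \ref{prop:fixedpoint} directly, with the only real content being the translation between the intrinsic description (semigroup homomorphisms $\S = \N \A \to \C$) and the embedded description (points in $\C^s$). By Proposition \ref{prop:pointsarehomom}, a point $p \in Y_\A$ corresponds to a semigroup homomorphism $\gamma_p : \S \to \C$, and under the embedding the coordinates of $p$ are $(\gamma_p(m_1), \ldots, \gamma_p(m_s))$. In particular $p = 0 \in \C^s$ if and only if $\gamma_p(m_i) = 0$ for every $i$.

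For the forward direction, I would assume the torus action has a fixed point. Proposition \ref{prop:fixedpoint} then tells me that $\S$ is pointed and that the fixed point is the unique semigroup homomorphism $\gamma$ with $\gamma(0) = 1$ and $\gamma(m) = 0$ for $m \neq 0$. Since the hypothesis $\A \subset M \setminus \{0\}$ forces $m_i \neq 0$ for every $i$, we get $\gamma(m_i) = 0$ for all $i$, so the fixed point is $0 \in \C^s$, proving $0 \in Y_\A$.

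For the converse, I would start with $0 \in Y_\A$ and let $\gamma_0 : \S \to \C$ be the corresponding semigroup homomorphism, so that $\gamma_0(m_i) = 0$ for all $i$. Because $\A$ generates $\S$ as a semigroup and $\gamma_0$ is multiplicative, this forces $\gamma_0(m) = 0$ for every nonzero $m \in \S$ (any such $m$ is a nontrivial $\N$-linear combination of the $m_i$). The key observation is then that such a homomorphism can only exist when $\S$ is pointed: if there were some $m \in \S$ with $m \neq 0$ and $-m \in \S$, we would get $1 = \gamma_0(0) = \gamma_0(m)\gamma_0(-m) = 0$, a contradiction. Hence $\S$ is pointed, and Proposition \ref{prop:fixedpoint} gives a unique fixed point, which by the forward direction must be $0$.

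There is no genuine obstacle here; the proof is essentially bookkeeping between two equivalent viewpoints. The only place where one must be slightly careful is the converse, where one must invoke the multiplicative structure of $\gamma_0$ together with the hypothesis $0 \notin \A$ to deduce that $\S$ is pointed before applying Proposition \ref{prop:fixedpoint}.
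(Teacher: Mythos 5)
Your proof is correct and fills in exactly what the paper's framing of "immediate consequences" of Proposition \ref{prop:fixedpoint} asks for: translate the intrinsic condition ($\S$ pointed, fixed point $\gamma$ sending $0 \mapsto 1$ and everything else to $0$) to the embedded condition ($0 \in Y_\A$ and the fixed point is the origin), using that $\A \subset M \setminus \{0\}$ forces $\gamma(m_i) = 0$ for all $i$. The paper gives no explicit argument, but yours is the natural one, and the converse direction — deducing pointedness of $\S$ from the multiplicativity of $\gamma_0$ — is precisely the observation already made at the start of the proof of Proposition \ref{prop:fixedpoint} ("the map $\gamma$ in the proposition is a semigroup homomorphism if and only if $\S$ is pointed").
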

\begin{exercise}
Verify that your affine toric from Exercise \ref{ex:closed} does not contain $0$, and explain.
\end{exercise}
\begin{corollary} \label{cor:smoothptsigma}
Let $U_\sigma$ be the affine toric variety corresponding to a strongly convex rational cone $\sigma \subset N_\R \simeq \R^n$. The torus action on $U_\sigma$ has a fixed point if and only if $\sigma$ has dimension $n$, in which case it is given by the maximal ideal $\langle \chi^m ~|~ m \in \S_\sigma \setminus \{0\} \rangle$.
\end{corollary}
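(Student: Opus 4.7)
The plan is to reduce this directly to Proposition \ref{prop:fixedpoint} applied to the affine semigroup $\S_\sigma = \sigma^\vee \cap M$. Once this reduction is made, the entire content of the corollary is the translation of the semigroup-theoretic condition ``$\S_\sigma$ is pointed'' into the polyhedral condition ``$\dim \sigma = n$'', plus an identification of the maximal ideal associated to the fixed semigroup homomorphism $\gamma$ of Proposition \ref{prop:fixedpoint}.

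First I would observe that
\[ \S_\sigma \cap (-\S_\sigma) \;=\; \bigl(\sigma^\vee \cap M\bigr) \cap \bigl(-\sigma^\vee \cap M\bigr) \;=\; \bigl(\sigma^\vee \cap (-\sigma^\vee)\bigr) \cap M. \]
The set $W = \sigma^\vee \cap (-\sigma^\vee)$ is the largest linear subspace contained in $\sigma^\vee$, and since $\sigma$ is rational so is $\sigma^\vee$, which makes $W$ a rational linear subspace of $M_\R$. A rational subspace meets $M$ only in $\{0\}$ precisely when it equals $\{0\}$. Hence $\S_\sigma$ is pointed if and only if $\sigma^\vee$ is strongly convex. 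By Definition \ref{def:strongconvex} applied to $\sigma^\vee$ (together with $(\sigma^\vee)^\vee = \sigma$), this is in turn equivalent to $\dim \sigma = n$.

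Combining this equivalence with Proposition \ref{prop:fixedpoint} immediately yields that the torus action on $U_\sigma$ has a fixed point iff $\dim \sigma = n$, and that in this case the unique fixed point corresponds to the semigroup homomorphism $\gamma : \S_\sigma \to \C$ sending $0 \mapsto 1$ and $m \mapsto 0$ for $m \neq 0$. To finish, I would translate $\gamma$ into a maximal ideal via the correspondence in Proposition \ref{prop:pointsarehomom}: the induced $\C$-algebra homomorphism $\gamma' : \C[\S_\sigma] \to \C$ kills every generator $\chi^m$ with $m \in \S_\sigma \setminus \{0\}$ and sends $\chi^0 = 1$ to $1$, so its kernel is exactly $\langle \chi^m \mid m \in \S_\sigma \setminus \{0\}\rangle$.

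There is no real obstacle here; the only subtle point is justifying that $W$ being a rational linear subspace forces $W \cap M = \{0\} \Leftrightarrow W = \{0\}$, which is what allows us to pass freely between the continuous statement about $\sigma^\vee$ and the discrete statement about $\S_\sigma$. Everything else is bookkeeping on top of Propositions \ref{prop:pointsarehomom} and \ref{prop:fixedpoint}.
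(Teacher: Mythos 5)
Your argument is correct and follows the same route as the paper: reduce to Proposition \ref{prop:fixedpoint} by showing ``$\S_\sigma$ pointed $\Leftrightarrow$ $\sigma^\vee$ strongly convex $\Leftrightarrow$ $\dim\sigma = n$,'' then identify the maximal ideal via Proposition \ref{prop:pointsarehomom}. The only difference is that you explicitly justify the step from the lattice condition $\S_\sigma\cap(-\S_\sigma)=\{0\}$ to the polyhedral one $\sigma^\vee\cap(-\sigma^\vee)=\{0\}$ using rationality of the lineality space, a detail the paper leaves implicit — this is a correct and reasonable thing to spell out.
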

\begin{proof}
The cone $\sigma^\vee$ has dimension $n$ since $\sigma$ is strongly convex. It is strongly convex itself if and only if $\sigma$ has dimension $n$. This is sufficient and necessary for $S_\sigma = \sigma^\vee \cap M$ to be pointed, and hence for there to be a unique fixed point of the torus action (Proposition \ref{prop:fixedpoint}). The maximal ideal $\langle \chi^m ~|~ m \in \S_\sigma \setminus \{0\} \rangle$ is the kernel of the $\C$-algebra homomorphism $\gamma': \C[\S_\sigma] \rightarrow \C$ induced by $\gamma$ from Proposition \ref{prop:fixedpoint}. The statement follows from the proof of Proposition \ref{prop:pointsarehomom}.
\end{proof}

\subsection{Normality and smoothness}

Recall that an integral domain $R$ is \emph{normal} if it is integrally closed in its field of fractions $K$. This means that if $x \in K$ satisfies a monic polynomial relation $x^d + r_{d-1} x^{d-1} + \cdots + r_1 x + r_0$ with coefficients $r_i \in R$, then we must have $x \in R$. An irreducible affine variety $V$ is called \emph{normal} if its coordinate ring $\C[V]$ is normal. Normality of toric varieties will be important in our discussion on divisors later. We will see in this section that normal affine toric varieties are exactly those affine toric varieties corresponding to cones. 

\begin{definition}[Saturated semigroup]
An affine semigroup $\S \subset M$ is \emph{saturated} if $k m \in \S$ for $k \in \N_{>0}, m \in M$ implies that $m \in \S$. 
\end{definition}

\begin{theorem} \label{thm:normal}
Let $V$ be an affine toric variety with dense torus $T \simeq (\C^*)^n$, whose (co)character lattice is $M$ ($N$). The following are equivalent. 
\begin{enumerate}
\item $V$ is normal, 
\item $V = \Specm(\C[\S])$ for a saturated affine semigroup $\S \subset M$, 
\item $V \simeq U_\sigma = \Specm(\C[\S_\sigma])$ for some strongly convex rational cone $\sigma \subset N_\R$.  
\end{enumerate}
\end{theorem}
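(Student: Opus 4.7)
The plan is to establish $(2) \Leftrightarrow (3)$ first, then leverage this equivalence to prove $(1) \Leftrightarrow (2)$. For $(3) \Rightarrow (2)$, I would observe that if $\S = \sigma^\vee \cap M$ and $km \in \S$ for some $k > 0$, then $\langle u, km \rangle \geq 0$ for every $u \in \sigma$ forces $\langle u, m \rangle \geq 0$, so $m \in \sigma^\vee \cap M = \S$. For $(2) \Rightarrow (3)$, given a saturated $\S$ with generators $s_1, \ldots, s_r$, I would form the rational cone $C = {\rm Cone}(s_1, \ldots, s_r) \subset M_\R$ and set $\sigma = C^\vee \subset N_\R$, noting $\sigma^\vee = C$ by biduality. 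Then $\S_\sigma = C \cap M \supset \S$, and the reverse inclusion follows since any $m \in C \cap M$ admits a non-negative rational expansion $m = \sum q_i s_i$ (the polyhedron of such expansions is a non-empty rational polyhedron and therefore contains rational points), so clearing denominators gives $km \in \N\{s_i\} = \S$ for some $k > 0$, whereupon saturation yields $m \in \S$. Strong convexity of $\sigma$ then follows from $\Z\S = M$, which forces $\sigma^\vee = C$ to span $M_\R$.

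For $(1) \Rightarrow (2)$, I would argue as follows. Suppose $\C[\S]$ is normal and take $m \in M$ with $km \in \S$. Proposition \ref{prop:semigroup}, together with the hypothesis that the dense torus of $V$ has character lattice $M$, gives $\Z\S = M$, so $m = s_1 - s_2$ for some $s_i \in \S$, and therefore $\chi^m = \chi^{s_1}/\chi^{s_2}$ lies in the fraction field of $\C[\S]$. The monic equation $X^k - \chi^{km} = 0$, whose constant term lies in $\C[\S]$, exhibits $\chi^m$ as integral over $\C[\S]$; normality forces $\chi^m \in \C[\S]$, i.e., $m \in \S$.

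For $(2) \Rightarrow (1)$, I would use $(2) \Leftrightarrow (3)$ to reduce to $V = U_\sigma$ for a strongly convex rational cone $\sigma$. Writing $u_1, \ldots, u_r \in N$ for the primitive ray generators of $\sigma$, one has $\sigma^\vee = \bigcap_i H_{u_i}^+$, so $\S_\sigma = \bigcap_i \S_{\tau_i}$ where $\tau_i = {\rm Cone}(u_i)$. Each $\tau_i$ is a one-dimensional smooth cone (its primitive generator extends to a $\Z$-basis of $N$), so by Example \ref{ex:smooth} its semigroup algebra $\C[\S_{\tau_i}] \simeq \C[x_1, x_2^{\pm 1}, \ldots, x_n^{\pm 1}]$ is a localization of a polynomial ring, hence normal. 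Intersecting inside $\C[M]$ gives $\C[\S_\sigma] = \bigcap_i \C[\S_{\tau_i}]$, and an intersection of normal subrings sharing a common fraction field is normal, since any monic integral relation with coefficients in the intersection certifies integrality over each factor. The main obstacle, I expect, lies in $(2) \Rightarrow (3)$, specifically the step showing $C \cap M \subset \S$: this requires the rationality of a certain polyhedron of non-negative real coefficients, which is precisely the point at which the combinatorial hypothesis of saturation plays the role of the discrete analogue of real convex-conic closure.
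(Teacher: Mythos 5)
Your proof of $(1)\Rightarrow(2)$ and of $(2)\Rightarrow(3)$ matches the paper's essentially line for line: the monic relation $(\chi^m)^k - \chi^{km} = 0$ for the first, and the cone $\sigma^\vee = \mathrm{Cone}(\A)$ plus a clearing-of-denominators argument for the second, ending with strong convexity coming from $\Z\S = M$. Your explicit $(3)\Rightarrow(2)$ argument is correct but redundant with the cycle $1 \Rightarrow 2 \Rightarrow 3 \Rightarrow 1$.

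The substantive difference is that the paper delegates $(3)\Rightarrow(1)$ to a reference in Cox--Little--Schenck, whereas you supply a proof. Your argument is the standard one (indeed the one CLS use): write $\sigma^\vee$ as an intersection of half-spaces $\tau_i^\vee$ over the rays $\tau_i = \mathrm{Cone}(u_i)$ of $\sigma$, observe that $\C[\S_\sigma] = \bigcap_i \C[\S_{\tau_i}]$ inside $\C[M]$ (using that distinct characters are linearly independent, so membership of $f$ in each $\C[\S_{\tau_i}]$ forces its support into $\bigcap_i \S_{\tau_i} = \S_\sigma$), that each $\C[\S_{\tau_i}] \simeq \C[x_1, x_2^{\pm 1}, \ldots, x_n^{\pm 1}]$ is normal as a localization of a polynomial ring, and that an intersection of normal subrings sharing a common fraction field is normal. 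This correctly fills the gap the paper leaves to the reader, at the cost of invoking Example \ref{ex:smooth} for the one-dimensional smooth cone structure. The rest of your route is a cosmetic reorganization of the same cycle of implications.
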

\begin{proof}
We prove $ 1 \Rightarrow 2 \Rightarrow 3$ and refer to \cite[Thm.~1.3.5]{cox2011toric} for the remaining implication $3 \Rightarrow 1$. For $1 \Rightarrow 2$, suppose that $\C[\S]$ is normal and $km \in \S$. Then $\chi^{km} \in \C[\S]$. Since the character lattice of $V$ is $M$, $\chi^m$ is a regular function on $T \subset V$, and thus a rational function on $V$. That is, $\chi^m$ belongs to the field of fractions of $\C[V] = \C[\S]$ and it satisfies the monic relation $(\chi^m)^k - \chi^{km} = 0$, with coefficients in $\C[\S]$. By normality, $m \in \S$ and $\S$ is indeed saturated. For $2 \Rightarrow 3$, suppose that $\S$ is saturated and that $\S = \N \A$ for a finite set $\A = \{m_1, \ldots, m_s\}$. Let $\sigma^\vee = {\rm Cone}(\A)$. It is clear that $\S \subset \sigma^\vee \cap M$. For the other inclusion, suppose $m \in \sigma^\vee \cap M$. Since 
\begin{equation}
\sigma^\vee \cap (M \otimes_\Z \Q) = \left \{ \sum_{i=1}^s \lambda_i m_i ~|~ \lambda_i \in \Q_{\geq 0} \right \},
\end{equation}
we have that $m = \sum_{i=1}^s \lambda_i m_i$ for some nonnegative rational coefficients $\lambda_i$. Clearing denominators, we can find $k$ such that $km \in \N \A = \S$. Since $\S$ is saturated, we conclude $m \in \S$ and thus $\S = \sigma^\vee \cap M$. Note that $\sigma$ is strongly convex since $V$ has dimension $n$.
\end{proof}
\begin{example}
The affine toric variety $Y_\A$ with $\A = \{2,3\} \subset \Z$ is a standard example of a non-normal variety. Its semigroup $\N \A = \{0,2,3,4,\ldots\}$ is strictly contained in ${\rm Conv}(\A) \cap \Z = \N$. The affine toric variety $Y_\A$ with $\A = \{2,4\} \subset \Z$ from Example \ref{ex:parabola} is normal, although $\N \A \subsetneq \Z$. The reason is that $\A$ is saturated in the lattice it generates, i.e., in the character lattice $2 \Z \subset \Z$ of the dense torus of $Y_\A$. 
\end{example}

Next, we discuss which affine toric varieties are smooth. Smoothness implies normality, so by Theorem \ref{thm:normal}, the only candidates are the affine toric varieties coming from cones. 

\begin{theorem} \label{thm:smooth}
An affine toric variety $V$ is smooth if and only if $V = \Specm(\C[\S_\sigma])$ for some smooth rational cone $\sigma$. 
\end{theorem}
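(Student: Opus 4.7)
The plan is to prove both implications using the characterization of normal affine toric varieties from Theorem \ref{thm:normal} and the fixed-point analysis from Corollary \ref{cor:smoothptsigma}.

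For the easy direction $(\Leftarrow)$, I would invoke Example \ref{ex:smooth}: if $\sigma$ is a smooth rational cone with minimal generators forming part of a $\Z$-basis of $N$, then a change of basis identifies $U_\sigma$ with $\C^r \times (\C^*)^{n-r}$, which is manifestly smooth. For the forward direction $(\Rightarrow)$, first note that smoothness implies normality, so Theorem \ref{thm:normal} gives $V \simeq U_\sigma$ for some strongly convex rational cone $\sigma \subset N_\R$. I would then reduce to the case $\dim \sigma = n$: if $\sigma$ spans only a $d$-dimensional subspace of $N_\R$, one can split $N$ as a direct sum so that $\sigma$ becomes full-dimensional in a rank-$d$ sublattice, and the dual side shows $U_\sigma \simeq U_{\sigma'} \times (\C^*)^{n-d}$ for a full-dimensional $\sigma' \subset \R^d$; smoothness of $\sigma$ is then equivalent to smoothness of $\sigma'$ (modulo the $\Z$-basis extension by coordinate vectors), so we may assume $\dim \sigma = n$.

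Now assume $\sigma$ is strongly convex of dimension $n$. By Corollary \ref{cor:smoothptsigma}, $U_\sigma$ has a unique torus-fixed point $p$ with maximal ideal $\mathfrak{m}_p = \langle \chi^m \mid m \in \S_\sigma \setminus \{0\} \rangle$. The key computation is that the cotangent space satisfies
\[ \dim_\C \mathfrak{m}_p/\mathfrak{m}_p^2 = |\mathscr{H}|,\]
where $\mathscr{H}$ is the Hilbert basis of $\S_\sigma$. The reason: any $m \in \S_\sigma \setminus \{0\}$ that is \emph{not} irreducible decomposes as $m = m' + m''$ with $m', m'' \in \S_\sigma \setminus \{0\}$, so $\chi^m \in \mathfrak{m}_p^2$; conversely, the classes $[\chi^m]$ for $m \in \mathscr{H}$ are linearly independent modulo $\mathfrak{m}_p^2$ because $\mathfrak{m}_p^2$ is spanned by $\chi^{m'+m''}$ with $m', m''$ nonzero, and such sums never yield an irreducible element. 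Since $V$ is smooth of dimension $n$, we conclude $|\mathscr{H}| = n$.

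Combining this with Proposition \ref{prop:hilbertbasis} (which says $\mathscr{H}$ contains the minimal generators of $\sigma^\vee$), and with the fact that a full-dimensional cone $\sigma^\vee \subset M_\R$ has at least $n$ rays, we deduce that $\mathscr{H}$ consists of exactly the $n$ minimal generators $m_1, \ldots, m_n$ of $\sigma^\vee$, and that $\sigma^\vee$ is simplicial. The final and most delicate step is to show that $m_1, \ldots, m_n$ form a $\Z$-basis of $M$. I would argue by considering the half-open parallelepiped $P = \{\sum \lambda_i m_i \mid 0 \leq \lambda_i < 1\}$: the number of lattice points in $P$ equals the index $[M : \Z m_1 + \cdots + \Z m_n]$. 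Any nonzero lattice point $m \in P$ would lie in $\sigma^\vee \cap M = \N\{m_1, \ldots, m_n\}$, yet its unique $\R$-expansion in the $m_i$ has all coefficients strictly less than $1$, forcing $m = 0$ — a contradiction unless the index is $1$. Hence $m_1, \ldots, m_n$ is a $\Z$-basis of $M$, and its dual basis $u_1, \ldots, u_n$ in $N$ is a $\Z$-basis whose members are exactly the minimal generators of $\sigma = \operatorname{Cone}(u_1, \ldots, u_n)$. Thus $\sigma$ is smooth. I expect the main obstacle to be setting up the cotangent space computation cleanly and handling the reduction to full-dimensional $\sigma$ without circular appeals to the result itself.
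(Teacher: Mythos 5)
Your proof is correct and uses essentially the same strategy as the paper: compute the cotangent space at the unique torus-fixed point, show its dimension equals $|\mathscr{H}|$, conclude $|\mathscr{H}| = n$, and deduce that the Hilbert basis elements are a $\Z$-basis of $M$. The only small departures are that you establish the $\Z$-basis claim via a self-contained half-open parallelepiped argument where the paper more briefly invokes $\Z\mathscr{H} = \Z\S_\sigma = M$ (Proposition~\ref{prop:7}) together with the fact that $n$ generators of $\Z^n$ are automatically a basis, and that you spell out the reduction to full-dimensional $\sigma$ (via the splitting $U_\sigma \simeq U_{\sigma'} \times (\C^*)^{n-d}$) where the paper defers this case to \cite[Thm.~1.3.12]{cox2011toric}.
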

\begin{proof}
If $\sigma$ is smooth, $\Specm(\C[\S_\sigma])$ is smooth by Example \ref{ex:smooth}. Conversely, suppose $V = U_\sigma$ is smooth and $\dim \sigma = n$. Then in particular, the fixed point of the torus action corresponding to the maximal ideal $\mathfrak{m} = \langle \chi^m ~|~ m \in \S_\sigma \setminus \{0\} \rangle$ (see Corollary \ref{cor:smoothptsigma}) is a smooth point $p$ of $V$. Therefore, the tangent space at $p$ has dimension $n$: $\dim_\C \mathfrak{m}/\mathfrak{m}^2 = n$. Observe that
\begin{align*}
\mathfrak{m} = \bigoplus_{m \in \S_\sigma \setminus \{0\}} \C \cdot \chi^m &= \bigoplus_{m \text{ irreducible}} \C \cdot \chi^m \oplus \bigoplus_{m \text{ reducible}} \C \cdot \chi^m \\
&= \bigoplus_{m \in \mathscr{H}} \C \cdot \chi^m \oplus \mathfrak{m}^2,
\end{align*}
where $\mathscr{H}$ is the Hilbert basis of $\S_\sigma$ (Proposition \ref{prop:hilbertbasis}). Hence $|\mathscr{H}| = n$, which implies that $\sigma^\vee \cap M$ is generated by $n$ elements. The Hilbert basis contains the minimal generators, and $\sigma$ has dimension $n$ by assumption, so that $\sigma$ has $n$ rays. Moreover, $\Z \S_\sigma = \Z \mathscr{H} = M$, which means that $\mathscr{H}$ is a $\Z$-basis for $M$. We conclude that $\sigma^\vee$ is smooth, and smoothness is preserved by duality (exercise). For the case $\dim \sigma < n$ we refer to \cite[Thm.~1.3.12]{cox2011toric}.
\end{proof}

The proof of Theorem \ref{thm:smooth} implies that in general, when $\sigma$ is a strongly convex rational cone of dimension $n$, the tangent space at the torus fixed point of $U_\sigma$ has dimension $|\mathscr{H}|$. For any affine embedding of $U_\sigma$, the tangent space is at most the dimension of the ambient affine space. Therefore $|\mathscr{H}|$ is the smallest $s$ for which $U_\sigma$ can be embedded in $\C^{s}$.

\begin{examplestar} \label{ex:chiara3} Consider the rational cone 
\[\sigma = {\rm Cone}((1,2,3), (2,1,3), (1,3,2), (3,1,2), (2,3,1), (3,2,1)) \subset \R^3.\] 
This is the cone over the two-dimensional \emph{permutohedron}, see Example* \ref{ex:chiara7}. Since $\sigma \subset \R^3_{\geq 0}$, $\sigma$ is strongly convex and hence $\dim U_\sigma = 3$ (Proposition \ref{prop:7}). We check this in Julia by constructing an affine normal toric variety from $\sigma$: 
\begin{minted}{julia}
U_σ = AffineNormalToricVariety(σ); dim(U_σ)
\end{minted}
For any embedding $U_\sigma \simeq Y_\A \subset \C^s$, $s$ is at least $15$, since this is the number of elements in the Hilbert basis of $\sigma^\vee$, computed as in Example \ref{ex:claudia1}. The function \texttt{toric\_ideal} can be used on the object returned by \texttt{AffineNormalToricVariety}: 
\begin{minted}{julia}
I = toric_ideal(U_σ)
\end{minted}
This computes $77$ binomial generators for $I(Y_\A)$ within $0.0028$ seconds. 
\end{examplestar}

\subsection{Toric morphisms} \label{subsec:toricmorphismsaff}
Let $V_i = \Specm \C[\S_i]$, $i = 1, 2$ be affine toric varieties. In this section we study morphisms $\phi : V_1 \rightarrow V_2$ which preserve the toric structure. 
\begin{definition}[Toric morphism]
A morphism $\phi: V_1 \rightarrow V_2$ is \emph{toric} if the pullback $\phi^*: \C[\S_2] \rightarrow \C[\S_1]$ is induced by a semigroup homomorphism $\widehat{\phi}: \S_2 \rightarrow \S_1$, i.e.~$\phi^*(\chi^m) = \chi^{\widehat{\phi}(m)}$. 
\end{definition}
\begin{example}
The map $\Phi_\A : T \rightarrow (\C^*)^s$ of tori from previous sections is a toric morphism. The pullback map is induced by our map $\widehat{\Phi}_\A: \Z^s \rightarrow M$. 
\end{example}
The following result tells us that to check whether a morphism $\phi$ is toric, it suffices to know its restriction to the torus of $V_1$. Let $T_i$ be the dense torus of the affine toric variety $V_i$ for $i = 1,2$. The (co)character lattice of $T_i$ is denoted by $M_i$ ($N_i$).
\begin{proposition}
 A morphism $\phi: V_1 \rightarrow V_2$ is toric if and only if $\phi(T_1) \subset T_2$ and the restriction $\phi_{|T_1}: T_1 \rightarrow T_2$ is a group homomorphism. 
\end{proposition}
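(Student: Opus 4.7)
The plan is to use the anti-equivalence between affine varieties and their coordinate rings, together with the fact that the characters of a torus $T$ form a $\C$-basis of $\C[T]$. The key ingredients are: (i) $\C[\S_i]$ sits inside $\C[M_i] = \C[T_i]$ as the $\C$-span of $\{\chi^m : m \in \S_i\}$; (ii) $T_i$ is Zariski dense in $V_i$; and (iii) Lemma \ref{lem:phihat}, which identifies group homomorphisms of tori with $\Z$-linear maps of character lattices.

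\textbf{Forward direction.} Assume $\phi$ is toric, so $\phi^*(\chi^m) = \chi^{\widehat{\phi}(m)}$ for a semigroup homomorphism $\widehat{\phi} : \S_2 \to \S_1$. Since $M_i = \Z \S_i$, the map $\widehat{\phi}$ extends uniquely to a group homomorphism $M_2 \to M_1$, which yields a $\C$-algebra map $\C[M_2] \to \C[M_1]$ fitting into a commutative square with $\phi^*$ and the inclusions $\C[\S_i] \hookrightarrow \C[M_i]$. Applying $\Specm$ dualizes this square so that $\phi : V_1 \to V_2$ restricts to a morphism of tori $T_1 \to T_2$; in particular $\phi(T_1) \subset T_2$, and the restriction is a group homomorphism by the second half of Lemma \ref{lem:phihat}.

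\textbf{Backward direction.} Assume $\phi(T_1) \subset T_2$ and set $\psi := \phi_{|T_1}$, a group homomorphism by hypothesis. Lemma \ref{lem:phihat} supplies a lattice map $\widehat{\psi} : M_2 \to M_1$ with $\chi^m \circ \psi = \chi^{\widehat{\psi}(m)}$ for every $m \in M_2$. For $m \in \S_2$, the pullback $\phi^*(\chi^m)$ is a regular function on $V_1$, hence lies in $\C[\S_1] \subset \C[M_1]$; restricted to the dense open $T_1 \subset V_1$ it equals $\chi^{\widehat{\psi}(m)}$, so the two agree in $\C[M_1]$. Because distinct characters are $\C$-linearly independent in $\C[M_1]$ and $\C[\S_1]$ is the $\C$-span of $\{\chi^{m'} : m' \in \S_1\}$, this forces $\widehat{\psi}(m) \in \S_1$. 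Thus $\widehat{\psi}$ restricts to a semigroup homomorphism $\S_2 \to \S_1$ that induces $\phi^*$, so $\phi$ is toric.

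\textbf{Main obstacle.} The subtlest step is in the backward direction: one must verify that the lattice map arising from the torus restriction automatically carries $\S_2$ into $\S_1$. This is not a purely combinatorial fact about semigroups; it relies on Zariski density of $T_1$ in $V_1$ together with $\C$-linear independence of distinct characters in $\C[M_1]$, which jointly pin down where each generator of $\S_2$ must be sent.
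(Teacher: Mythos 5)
Your proof is correct and follows essentially the same route as the paper: extend the semigroup map to a lattice map, dualize via Lemma \ref{lem:phihat} and the commuting coordinate-ring/variety diagrams, and use density of the torus. You spell out one step the paper leaves terse — in the backward direction, the verification that $\widehat{\psi}(\S_2) \subset \S_1$ via Zariski density and linear independence of characters — which is a genuine improvement in clarity but not a different argument.
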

\begin{proof}
By point 2 in Proposition \ref{prop:semigroup}, we have $M_i = \Z \S_i$. If $\phi$ is toric, $\widehat{\phi}:\S_2 \rightarrow \S_1$ extends to a group homomorphism $\widehat{\phi}:M_2 \rightarrow M_1$. We obtain the diagrams
\begin{center}
\begin{tikzcd}
\S_2 \ar[r] \ar[d,hookrightarrow] & \S_1 \ar[d,hookrightarrow] \\
M_2 \ar[r] & M_1
\end{tikzcd}
$\longrightarrow$
\begin{tikzcd}
\C[\S_2] \ar[r] \ar[d,hookrightarrow] & \C[\S_1] \ar[d,hookrightarrow] \\
\C[M_2] \ar[r] & \C[M_1]
\end{tikzcd}
$\overset{\Specm}{\longrightarrow}$
\begin{tikzcd}
V_2 & V_1 \ar[l] \\
T_2 \ar[u,hookrightarrow] & T_1 \ar[u,hookrightarrow] \ar[l]
\end{tikzcd}
\end{center}
which shows that $\phi(T_1) \subset T_2$. The map $V_1 \rightarrow V_2$ is $\phi$ and $T_1 \rightarrow T_2$ is $\phi_{|T_1}$. This is indeed a group homomorphism, obtained by taking $\Hom_\Z(-,\C^*)$ of $\widehat{\phi}: M_2 \rightarrow M_1$. The reader is encouraged to verify this last statement. Conversely, if $\phi(T_1) \subset T_2$ and $\phi_{|T_1}$ is a group homomorphism, the diagram on the right of the above display induces the diagram in the middle, in which $\C[M_2] \rightarrow \C[M_1]$ comes from a group homomorphism $M_2 \rightarrow M_1$ which restricts to a semigroup homomorphism $\S_2 \rightarrow \S_1$, hence $\phi$ is toric. 
\end{proof}

\begin{proposition} \label{prop:equivariantaff}
A toric morphism $\phi: V_1 \rightarrow V_2$ is \emph{equivariant}, meaning that $\phi(t \cdot p) = \phi(t) \cdot \phi(p)$ for $t \in T_1$, $p \in V_1$. 
\end{proposition}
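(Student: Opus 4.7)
The plan is to translate everything to the intrinsic description of points and the torus action provided by Propositions \ref{prop:pointsarehomom} and \ref{prop:torusaction}, and then read off equivariance from the defining property $\phi^*(\chi^m) = \chi^{\widehat{\phi}(m)}$ of a toric morphism.

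First I would set up the notation. A point $p \in V_1$ corresponds to a semigroup homomorphism $\gamma_p : \S_1 \to \C$, $m \mapsto \chi^m(p)$. Under $\phi$, this point goes to the point $\phi(p) \in V_2$ whose associated homomorphism is obtained by pulling back characters: for $m \in \S_2$,
\[
\gamma_{\phi(p)}(m) \,=\, \chi^m(\phi(p)) \,=\, \phi^*(\chi^m)(p) \,=\, \chi^{\widehat{\phi}(m)}(p) \,=\, \gamma_p(\widehat{\phi}(m)).
\]
In other words, $\gamma_{\phi(p)} = \gamma_p \circ \widehat{\phi}$. By the previous proposition, $\phi(t) \in T_2$ whenever $t \in T_1$, so the right-hand side of the equivariance identity is well-defined.

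Next I would compute both sides of $\phi(t \cdot p) = \phi(t) \cdot \phi(p)$ as semigroup homomorphisms $\S_2 \to \C$. Using Proposition \ref{prop:torusaction}, the action of $t$ on $V_1$ sends $\gamma_p$ to $m' \mapsto \chi^{m'}(t)\gamma_p(m')$, so by the computation above,
\[
\gamma_{\phi(t\cdot p)}(m) \,=\, \chi^{\widehat{\phi}(m)}(t)\,\gamma_p(\widehat{\phi}(m)).
\]
On the other hand, again by Proposition \ref{prop:torusaction} applied to the action of $\phi(t) \in T_2$ on $\phi(p) \in V_2$,
\[
\gamma_{\phi(t)\cdot \phi(p)}(m) \,=\, \chi^m(\phi(t))\,\gamma_{\phi(p)}(m) \,=\, \chi^m(\phi(t))\,\gamma_p(\widehat{\phi}(m)).
\]

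Finally I would close the argument with the key identity $\chi^{\widehat{\phi}(m)}(t) = \chi^m(\phi(t))$, which is nothing but the equality $\phi^*(\chi^m) = \chi^{\widehat{\phi}(m)}$ evaluated at $t$; this is exactly the defining property of a toric morphism. Hence the two homomorphisms agree for every $m \in \S_2$, and by the correspondence in Proposition \ref{prop:pointsarehomom} we conclude $\phi(t\cdot p) = \phi(t)\cdot \phi(p)$. The main (minor) obstacle is just keeping straight which character lattice each $\chi^m$ lives in and making sure the pullback identity is invoked on $T_1$ rather than on a general point of $V_1$; once that bookkeeping is done the statement is a one-line calculation.
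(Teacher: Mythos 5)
Your proof is correct, but it takes a genuinely different route from the paper's. The paper argues via a density principle: it writes down the square
\[
\begin{tikzcd}
T_1 \times V_1 \ar[r] \ar[d,"\phi_{|T_1} \times \phi"'] & V_1 \ar[d,"\phi"] \\
T_2 \times V_2 \ar[r] & V_2
\end{tikzcd}
\]
and observes that both composites $T_1 \times V_1 \to V_2$ agree on the Zariski dense open subset $T_1 \times T_1$ (by the group homomorphism property of $\phi_{|T_1}$), hence agree everywhere since $T_1 \times V_1$ is irreducible. Your argument instead works pointwise in the intrinsic picture: you identify points with semigroup homomorphisms (Proposition \ref{prop:pointsarehomom}), show that $\gamma_{\phi(p)} = \gamma_p \circ \widehat{\phi}$ from the defining property $\phi^*(\chi^m) = \chi^{\widehat{\phi}(m)}$, then compare both sides of the equivariance identity using the explicit torus action from Proposition \ref{prop:torusaction}, with the identity $\chi^{\widehat{\phi}(m)}(t) = \chi^m(\phi(t))$ closing the loop. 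Your calculation is fully rigorous and arguably more elementary, since it avoids appeal to the Zariski-density argument; it does however lean specifically on the affine semigroup machinery, whereas the paper's density argument transfers verbatim to the abstract toric setting in Proposition \ref{prop:equivariance} (whose proof the paper declares to be ``identical''). So the paper's proof trades a bit of computation for greater generality.
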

\begin{proof}
The statement comes down to the fact that the diagram 
\begin{center}
\begin{tikzcd}
T_1 \times V_1 \ar[r] \ar[d,"\phi_{|T_1} \times \phi"] & V_1 \ar[d,"\phi"] \\
T_2 \times V_2 \ar[r] & V_2
\end{tikzcd}
, which restricts to \quad
\begin{tikzcd}
T_1 \times T_1 \ar[r] \ar[d,"\phi_{|T_1} \times \phi_{|T_1}"] & T_1 \ar[d,"\phi_{|T_1}"] \\
T_2 \times T_2 \ar[r] & T_2
\end{tikzcd},
\end{center}
commutes. The commutation of the right diagram follows from the fact that $\phi_{|T_1}$ is a group homomorphism. Then the left diagram gives two maps $T_1 \times V_1 \rightarrow V_2$. Since they agree on a nonempty Zariski open subset, they agree on $T_1 \times V_1$, so also the left diagram commutes. 
\end{proof}

We now show how to construct toric morphisms $\phi: V_1 \rightarrow V_2$ between normal affine toric varieties from maps of lattices. Let $\overline{\phi}:N_1 \rightarrow N_2$ be a group homomorphism, with $N_1 \simeq \Z^{n_1}, N_2 \simeq \Z^{n_2}$. The $N_i$ are the cocharacter lattices of the tori $T_i = N_i \otimes_\Z \C^* \simeq (\C^*)^{n_i}$. The map $\overline{\phi}$ naturally induces a group homomorphism $\phi_{|T_1}: T_1 \rightarrow T_2$, which takes $u \otimes t  \mapsto \overline{\phi}(u) \otimes t$.

\begin{example} \label{ex:F}
Here's how this works in coordinates. Let $N_i = \Z^{n_i}, T_i = (\C^*)^{n_i}$. Then $\overline{\phi}: \Z^{n_1} \rightarrow \Z^{n_2}$ is given by a matrix $F \in \Z^{n_2 \times n_1}$. We write $F_{i,:} \in \Z^{n_1}$ for the $i$-th row of $F$. The map $\phi_{|(\C^*)^{n_1}}: (\C^*)^{n_1} \rightarrow (\C^*)^{n_2}$ is given by $\phi_{|(\C^*)^{n_1}}(t) = (t^{F_{1,:}}, \ldots, t^{F_{n_2,:}})$. 
\end{example}

Let $\sigma_i \subset (N_i)_\R, i = 1, 2$ be strongly convex rational cones. Our next goal is to characterize when $\overline{\phi}$ induces a map of tori $\phi_{|T_1}$ which extends to a toric morphism $\phi: U_{\sigma_1} \rightarrow U_{\sigma_2}$. Note that this makes sense since $T_i$ is the dense torus of $U_{\sigma_i}$ by Proposition \ref{prop:semigroup}. We denote $\overline{\phi}_\R = \overline{\phi} \otimes_\Z \R : (N_1)_\R \rightarrow (N_2)_\R$. In the notation of Example \ref{ex:F}, this is the linear map $\R^{n_1} \rightarrow \R^{n_2}$ represented by $F$. 

\begin{proposition} \label{prop:compcone}
Let $\overline{\phi}:N_1 \rightarrow N_2$ be a group homomorphism. The induced map of tori $\phi_{|T_1}: T_1 \rightarrow T_2$ extends to a toric morphism $\phi: U_{\sigma_1} \rightarrow U_{\sigma_2}$ if and only if $\overline{\phi}_\R(\sigma_1) \subset \sigma_2$. 
\end{proposition}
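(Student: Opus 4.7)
The plan is to reduce the problem to a purely combinatorial question about the dual map on character lattices, using the correspondence between toric morphisms and semigroup homomorphisms. The map $\overline{\phi}: N_1 \rightarrow N_2$ has an adjoint $\widehat{\phi}: M_2 \rightarrow M_1$ defined by $\langle \overline{\phi}(u), m \rangle = \langle u, \widehat{\phi}(m) \rangle$, and I would recast both sides of the claimed equivalence in terms of $\widehat{\phi}$.

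For the implication ($\Leftarrow$), assume $\overline{\phi}_\R(\sigma_1) \subset \sigma_2$. First I would show that this forces $\widehat{\phi}_\R(\sigma_2^\vee) \subset \sigma_1^\vee$: for $m \in \sigma_2^\vee$ and any $u \in \sigma_1$, the adjoint relation gives $\langle u, \widehat{\phi}_\R(m) \rangle = \langle \overline{\phi}_\R(u), m \rangle \geq 0$ since $\overline{\phi}_\R(u) \in \sigma_2$. Restricting to lattice points yields $\widehat{\phi}(\S_{\sigma_2}) \subset \S_{\sigma_1}$, i.e., a semigroup homomorphism. This induces a $\C$-algebra map $\C[\S_{\sigma_2}] \rightarrow \C[\S_{\sigma_1}]$ sending $\chi^m \mapsto \chi^{\widehat{\phi}(m)}$, and hence a toric morphism $\phi: U_{\sigma_1} \rightarrow U_{\sigma_2}$ by definition. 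To verify that this $\phi$ restricts to the prescribed $\phi_{|T_1}$ on tori, I would use the commutative diagram extending $\widehat{\phi}: \S_{\sigma_2} \rightarrow \S_{\sigma_1}$ to the group homomorphism $\widehat{\phi}: M_2 \rightarrow M_1$, then apply $\Specm \circ \C[-]$; the induced map of tori is exactly $u \otimes t \mapsto \overline{\phi}(u) \otimes t$, which is our given $\phi_{|T_1}$.

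For the implication ($\Rightarrow$), assume $\phi_{|T_1}$ extends to a toric morphism $\phi: U_{\sigma_1} \rightarrow U_{\sigma_2}$. By the definition of toric morphism, $\phi^*$ comes from a semigroup homomorphism $\S_{\sigma_2} \rightarrow \S_{\sigma_1}$; I would argue that this map must coincide with the restriction of $\widehat{\phi}: M_2 \rightarrow M_1$, since on the generic torus both pullbacks agree with the group homomorphism dual to $\phi_{|T_1}$. Thus $\widehat{\phi}(\sigma_2^\vee \cap M_2) \subset \sigma_1^\vee \cap M_1$. Because $\sigma_2^\vee$ is a rational cone, it equals the $\R_{\geq 0}$-span of its lattice points (by Gordan's lemma applied to its Hilbert basis), so by linearity $\widehat{\phi}_\R(\sigma_2^\vee) \subset \sigma_1^\vee$. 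Dualizing via the adjoint relation once more: for $u \in \sigma_1$ and any $m \in \sigma_2^\vee$, $\langle \overline{\phi}_\R(u), m \rangle = \langle u, \widehat{\phi}_\R(m) \rangle \geq 0$, hence $\overline{\phi}_\R(u) \in (\sigma_2^\vee)^\vee = \sigma_2$, proving $\overline{\phi}_\R(\sigma_1) \subset \sigma_2$.

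The main obstacle I anticipate is bookkeeping rather than any deep difficulty: specifically, carefully justifying that the two sides of the adjointness translate correctly between cones and their duals, and that passing from $\widehat{\phi}$ on lattice points to $\widehat{\phi}_\R$ on the full cone is legitimate (which is where rationality of $\sigma_2$ is essential). The identification of the semigroup homomorphism coming from the toric morphism with the adjoint $\widehat{\phi}$ should follow cleanly from the fact that a group homomorphism between tori is determined by its action on characters, together with density of $T_1$ in $U_{\sigma_1}$.
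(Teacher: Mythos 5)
Your proof is correct and follows essentially the same route as the paper: both arguments reduce the extension question to whether the adjoint $\widehat{\phi}$ (the paper writes it as $F^\top$ after fixing coordinates) maps $\S_{\sigma_2}$ into $\S_{\sigma_1}$, and then translate this back via the pairing $\langle \overline{\phi}_\R(u),m\rangle = \langle u,\widehat{\phi}_\R(m)\rangle$ and double-dualization $(\sigma_2^\vee)^\vee = \sigma_2$. You are a bit more explicit than the paper on two minor points (that a morphism is determined by its restriction to the dense torus, and that rationality lets you pass from $\widehat{\phi}$ on lattice points to $\widehat{\phi}_\R$ on the full cone), but these are elaborations of the same argument rather than a different strategy.
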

\begin{proof}
We identify $N_i = \Z^{n_i}$ and $T_i = (\C^*)^{n_i}$ as in Example \ref{ex:F}, so that $\overline{\phi}$ is represented by $F \in \Z^{n_2 \times n_1}$. The map of tori $\phi_{|T_1}: T_1 \rightarrow T_2$ gives a map of character lattices $\widehat{\phi}: M_2 = \Z^{n_2} \rightarrow M_1 = \Z^{n_1}$ which is given by the transpose $F^\top$. For this to induce a map of $\C$-algebras $\phi^* : \C[\S_{\sigma_2}] \rightarrow \C[\S_{\sigma_1}]$ we need that $\widehat{\phi}(m_2) \in \S_{\sigma_1} = \sigma_1^\vee \cap \Z^{n_1}$ for all $m_2 \in \S_{\sigma_2} = \sigma_2^\vee \cap \Z^{n_2}$, or 
\begin{align*}
 \langle F^\top m_2, u_1 \rangle \geq 0, \quad &\text{ for all $u_1 \in \sigma_1, m_2 \in \sigma_2^\vee$} \\
\Longleftrightarrow ~  \langle m_2, Fu_1 \rangle \geq 0, \quad &\text{ for all $u_1 \in \sigma_1, m_2 \in \sigma_2^\vee$} \\
\Longleftrightarrow Fu_1 \in (\sigma_2^\vee)^\vee = \sigma_2, \quad &\text{ for all $u_1 \in \sigma_1$}. \qedhere
\end{align*}
\end{proof}
If $\overline{\phi}_\R: (N_1)_\R \rightarrow (N_2)_\R$ satisfies $\overline{\phi}(\sigma_1) \subset \sigma_2$, we say that $\overline{
\phi}$ is \emph{compatible} with $\sigma_1$ and $\sigma_2$. 
\begin{example}[Affine open subsets from faces] \label{ex:affineopensubsets}
Let $\sigma \subset N_\R$ be a strongly convex rational cone and let $\tau \preceq \sigma$ be one of its faces. The identity map $\overline{\phi}: N \rightarrow N$ is compatible with $\tau$ and $\sigma$. It induces the inclusion $\phi: U_\tau \hookrightarrow U_\sigma$. The pullback map is the inclusion of $\C$-algebras $\phi^*: \C[\S_\sigma] \hookrightarrow \C[\S_\tau]$ induced by $\sigma^\vee \cap M \subset \tau^\vee \cap M$. It is shown in \cite[Prop.~1.3.16]{cox2011toric} that $\C[\S_\tau] = \C[\S_\sigma]_{\chi^m} = \C[\S_\sigma][\chi^{-m}]$ is the localization of $\C[\S_\sigma]$ at $\chi^m$. That is, $U_\tau$ is the affine open subset $\{ p \in U_\sigma ~|~ \chi^m(p) \neq 0 \}$ of $U_\sigma$. This is illustrated in Figure \ref{fig:localize}.
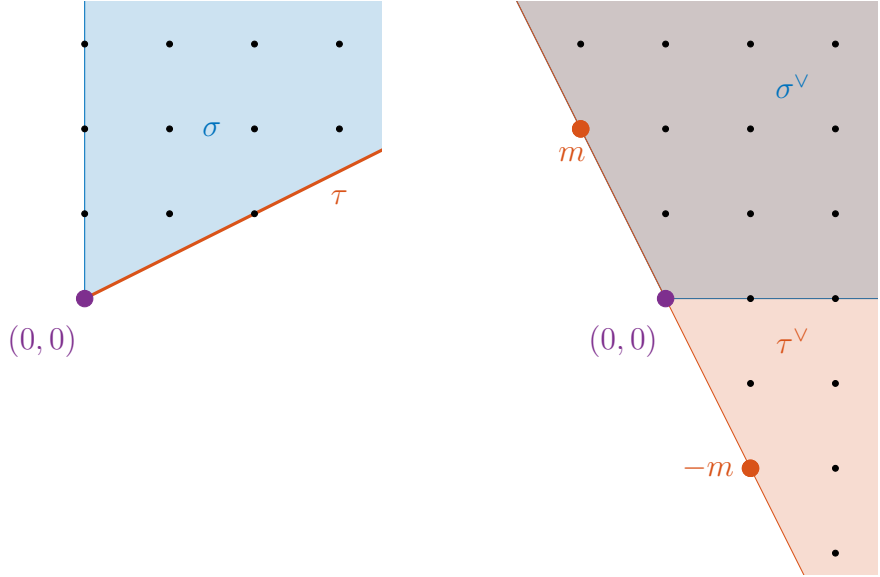
\begin{figure}
\centering
\begin{tikzpicture}[scale=1]
\begin{axis}[%
width=2in,
height=3in,
scale only axis,
xmin=-1.0,
xmax=3.5,
ymin=-3.25,
ymax=3.5,
ticks = none, 
ticks = none,
axis background/.style={fill=white},
axis line style={draw=none} 
]


\addplot [color=mycolor1,solid,fill opacity=0.2,fill = mycolor1,forget plot]
  table[row sep=crcr]{%
 0 5\\
10	 5\\	
0 0\\
0 5 \\
};

\addplot [very thick, color=mycolor2,solid,fill opacity=0.2,fill = mycolor1,forget plot]
  table[row sep=crcr]{%
0 0 \\
 10 5 \\
};

\addplot[only marks,mark=*,mark size=3.1pt,mycolor4
        ]  coordinates {
  (0,0)
};

\addplot[only marks,mark=*,mark size=1.1pt,black
        ]  coordinates {
  (0,1) (1,1) (2,1)
  (0,2) (1,2) (2,2) (3,2) 
  (0,3) (1,3) (2,3) (3,3) 
};

\node (P) at (axis cs:1.5,2) {$\textcolor{mycolor1}{\sigma}$};
\node (P) at (axis cs:3,1.2) {$\textcolor{mycolor2}{\tau}$};
\node (P) at (axis cs:-0.5,-0.5) {$\textcolor{mycolor4}{(0,0)}$};

\end{axis}
\end{tikzpicture} 
\qquad \quad
\begin{tikzpicture}[scale=1]
\begin{axis}[%
width=2in,
height=3in,
scale only axis,
xmin=-2.0,
xmax=2.5,
ymin=-3.25,
ymax=3.5,
ticks = none, 
ticks = none,
axis background/.style={fill=white},
axis line style={draw=none} 
]


\addplot [color=mycolor1,solid,fill opacity=0.2,fill = mycolor1,forget plot]
  table[row sep=crcr]{%
 5 0\\
5	 5\\	
-2.5 5\\
0 0 \\
5 0 \\
};

\addplot [color=mycolor2,solid,fill opacity=0.2,fill = mycolor2,forget plot]
  table[row sep=crcr]{%
-2.5 5\\
3 -6\\
3 6 \\
-2.5 5\\
};

\addplot[only marks,mark=*,mark size=3.1pt,mycolor2
        ]  coordinates {
  (-1,2) (1,-2)
};

\addplot[only marks,mark=*,mark size=3.1pt,mycolor4
        ]  coordinates {
  (0,0)
};


\addplot[only marks,mark=*,mark size=1.1pt,black
        ]  coordinates {
    (-1,3) (0,3) (1,3) (2,3)
    (0,2) (1,2) (2,2)
    (0,1) (1,1) (2,1)
    (1,0) (2,0)
    (1,-1) (2,-1)
    (1,-2) (2,-2)
    (2,-3)
};

\addplot[only marks,mark=*,mark size=3.1pt,mycolor2
        ]  coordinates {
  (-1,2) (1,-2)
};

\node (P) at (axis cs:1.5,2.5) {$\textcolor{mycolor1}{\sigma^\vee}$};
\node (P) at (axis cs:1.5,-0.5) {$\textcolor{mycolor2}{\tau^\vee}$};
\node (P) at (axis cs:-1.1,1.7) {$\textcolor{mycolor2}{m}$};
\node (P) at (axis cs:0.5,-2) {$\textcolor{mycolor2}{-m}$};
\node (P) at (axis cs:-0.5,-0.5) {$\textcolor{mycolor4}{(0,0)}$};

\end{axis}
\end{tikzpicture} 
\caption{The semigroup algebra $\C[\S_\tau] = \C[\tau^\vee \cap \Z^2]$ is the localization $\C[\sigma]_{\chi^m}$.}
\label{fig:localize}
\end{figure}
\end{example}

\begin{example}
Let $\sigma$ be the cone $\R^2_{\geq 0} \subset \R^2$ with affine toric variety $U_\sigma = \C^2$. The dual cone of the face $\tau = \R_{\geq} \cdot e_1$ is $\tau^\vee = \R_{\geq 0} \times \R \subset (\R^2)^\vee$, which contains $\sigma^\vee = \R^2_{\geq 0} \subset (\R^2)^\vee$. The coordinate ring of $U_\tau$ is given by $\C[U_\tau] = \C[\S_\tau] = \C[x_1, x_2, x_2^{-1}] = \C[U_\sigma]_{x_2}$. Here the monomial $x_2$ corresponds to the character $\chi^m$ with $m = (0,1)$, for which $H_m \cap \sigma = \tau$. 
\end{example}
\begin{example}[Sublattices of finite index] \label{ex:sublatticesfiniteindex}
Let $N_1 = \Z^2$ and $N_2 = \{(a/2,b/2) \in \Z^2 ~|~ a + b = 0 \mod 2 \}$ . We consider the cone $\sigma = \R^2_{\geq 0} \subset \R^2 = (N_1)_\R = (N_2)_\R$. The identity map $\overline{\phi}:\R^2 \rightarrow \R^2$ is trivially compatible with $\sigma$ and $\sigma$. The dual lattices are $M_1 = \{(a,b) \in \Z^2 ~|~ a - b = 0 \mod 2 \}$ and $M_2 = \Z^2$. Considering $\sigma$ as a rational cone in $(N_2)_\R$, we obtain the affine toric variety $U_{\sigma,N_1} = \C^2$. On the other hand, $U_{\sigma,N_2}$ is not smooth, as the Hilbert basis of $\sigma^\vee \subset (M_2)_\R$ has three elements $(2,0),(1,1)$ and $(0,2)$. The map $\overline{\phi}$ induces a toric morphism $\phi: U_{\sigma,N_1} \rightarrow U_{\sigma,N_2}$ given by 
\[ \phi^*: \C[x,y,z]/\langle x z - y^2 \rangle \hookrightarrow \C[s,t], \quad \text{where } x \mapsto s^2, y \mapsto st, z \mapsto t^2.\]
The map $\phi$ is generically 2-to-1, and the multiplicative group $\{-1,1\}$ acts on its fibers. The \emph{ring of invariants} of this action is $\im \phi^*$. See \cite[Ex.~1.3.19]{cox2011toric} for a nice discussion. 
\end{example}

\begin{exercise}
Consider the map $\overline{\phi} : \Z^4 \rightarrow \Z^2$ given by the matrix 
\[ F = \begin{bmatrix}
1 &0 & -1  & 0 \\ 0 & 1 & 2 &-1
\end{bmatrix}.\]
This is compatible with the cones $\sigma_1 = {\rm Cone}(e_3,e_4) \subset \R^4$ and $\sigma_2 = {\rm Cone}( (-1,2),(0,-1) ) \subset \R^2$. Show that the corresponding toric morphism from $U_{\sigma_1} \simeq \C^2 \times (\C^*)^2 = \Specm(\C[X_1^{\pm 1}, X_2^{\pm 1}, X_3, X_4])$ to $U_{\sigma_2} \simeq \C^2 = \Specm(\C[t_1^{-1}, t_1^{-2}t_2^{-1}]) = \Specm(\C[x,y])$ is 
\[ \C[x,y] \longrightarrow \C[X_1^{\pm 1}, X_2^{\pm 1}, X_3, X_4]   , \quad \text{where} \quad x \mapsto \frac{X_3}{X_1} \quad \text{and} \quad y \mapsto \frac{X_4}{X_1^2X_2}. \qedhere \]
\end{exercise}

\section{Projective toric varieties} \label{sec:projectiveTV}

We proceed by studying projective toric varieties. These often arise as the closure of the image of a monomial map to projective space, in which case they are covered by affine toric varieties. The polyhedral cones in this setting come from a convex lattice polytope, which combinatorially encodes how the affine pieces are glued together. We start with identifying $\PP^n$ as a projective toric variety. Next, we define projective toric varieties embedded via monomial maps and derive their dimension and degree. Next, we discuss affine covers, connections with polytopes, normality and smoothness.

\subsection{Definition of a projective toric variety}
\begin{definition}[Projective toric variety]
A \emph{projective toric variety} is an irreducible projective variety $X$ containing a torus $T \simeq (\C^*)^n$ as a Zariski open subset, such that the action of $T$ on itself extends to an algebraic action $T \times X \rightarrow X$ of $T$ on $X$.
\end{definition}
\begin{example}
$\PP^n$ is a projective toric variety with torus 
\begin{align*}
T_{\PP^n} &= \{ (x_0: \cdots: x_n) \in \PP^n ~|~ x_i \neq 0, \text{ for all } i \} \\
&= \{ (1:t_1:\cdots:t_n) \in \PP^n ~|~ t \in (\C^*)^n \} \simeq (\C^*)^n,
\end{align*}
where $t \in (\C^*)^n$ acts by $t \cdot (x_0:x_1:\cdots:x_n) = (x_0:t_1x_1:\cdots:t_nx_n)$. Recall that $T_{\PP^n} \simeq (\C^*)^{n+1}/\C^*$, where the quotient is realized by the surjective map 
\[ \pi: (\C^*)^{n+1} \rightarrow T_{\PP^n}, \quad (t_0, \ldots, t_n) \mapsto (t_0:\cdots:t_n).\]
This is a group homomorphism, and it induces an inclusion of character lattices ${\cal M}_n \hookrightarrow \Z^{n+1}$, where ${\cal M}_n$ is the character lattice of $T_{\PP^n}$. This is made explicit by reversing the arrows in the short exact sequence 
\[ 1 \longrightarrow \C^* \longrightarrow (\C^*)^{n+1} \overset{\pi}{\longrightarrow} T_{\PP^n} \longrightarrow 1 \]
which shows that ${\cal M}_n = \ker( (a_0, \ldots, a_n) \mapsto a_0 + \cdots + a_n ) = \{ a \in \Z^{n+1} ~|~ \sum_{i=0}^n a_i = 0 \}$.
\end{example}

\subsection{Projective toric varieties from monomial maps} \label{sec:projmonom}

Recall that a finite subset $\A = \{m_1, \ldots, m_s \} \subset M$ gives a map $\Phi_\A : T \rightarrow (\C^*)^s$ given by $t \mapsto( \chi^{m_1}(t), \ldots, \chi^{m_s}(t))$ 
. In Section \ref{subsec:affmonomial} we used this map to construct affine toric varieties. In this section, we compose this map with the quotient map $\pi: (\C^*)^s \rightarrow T_{\PP^{s-1}}$ introduced above in order to obtain a projective toric variety. We set 
\[ X_\A = \overline{(\pi \circ \Phi_\A)(T)} \subset \PP^{s-1}. \]
\begin{example}
When $T = (\C^*)^n$ and $M = \Z^n$, $X_\A$ is the Zariski closure of the image of the monomial map $t \mapsto (t^{m_1}: \cdots: t^{m_s})$.
\end{example}
The proof of the following proposition is similar to that of Proposition \ref{prop:charLattice}.
\begin{proposition}
With the notation introduced above, $X_\A \subset \PP^{s-1}$ is a projective toric variety. Its dense torus is $(\pi \circ \Phi_\A)(T) = X_\A \cap T_{\PP^{s-1}}$.
\end{proposition}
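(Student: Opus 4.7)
The plan is to mirror the proof of Proposition \ref{prop:charLattice}, replacing $(\C^*)^s$ with $T_{\PP^{s-1}}$ and the closure in $\C^s$ with the closure in $\PP^{s-1}$. Set $T'' := (\pi \circ \Phi_\A)(T) \subset T_{\PP^{s-1}}$. First I check that $\pi \circ \Phi_\A: T \rightarrow T_{\PP^{s-1}}$ is a morphism of tori and a group homomorphism: $\Phi_\A$ lands in $(\C^*)^s$ since characters are nonvanishing on $T$, and both $\Phi_\A$ and $\pi$ are group homomorphisms between tori, so the composition is too. Then Lemma \ref{lem:imageSubtorus} applies and $T''$ is a closed subtorus of $T_{\PP^{s-1}}$.

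Next I prove $T'' = X_\A \cap T_{\PP^{s-1}}$. Taking the closure of $T''$ in $\PP^{s-1}$ defines $X_\A$. Since $T''$ is closed in the open subset $T_{\PP^{s-1}} \subset \PP^{s-1}$, we have $T'' = \overline{T''} \cap T_{\PP^{s-1}} = X_\A \cap T_{\PP^{s-1}}$, which is the second claim. Irreducibility of $X_\A$ follows because the Zariski closure of an irreducible set (such as a torus) is irreducible, and $T''$ is clearly open and dense in $X_\A$ by construction.

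It remains to extend the group action $T'' \times T'' \to T''$ to an algebraic action $T'' \times X_\A \to X_\A$. The torus $T_{\PP^{s-1}}$ acts on $\PP^{s-1}$ algebraically by coordinate-wise scaling: lifting $t \in T_{\PP^{s-1}}$ to $\tilde{t} \in (\C^*)^s$ gives an automorphism of $\PP^{s-1}$, so in particular $t \cdot V$ is closed in $\PP^{s-1}$ whenever $V$ is. Mimicking the affine argument: for $t \in T''$, $T'' = t \cdot T'' \subset t \cdot X_\A$, and taking closures in $\PP^{s-1}$ yields $X_\A \subset t \cdot X_\A$; applying the same to $t^{-1}$ gives the reverse inclusion, hence $t \cdot X_\A = X_\A$. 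The induced map $T'' \times X_\A \to X_\A$ is a morphism, obtained by restricting the algebraic action $T_{\PP^{s-1}} \times \PP^{s-1} \to \PP^{s-1}$.

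The only obstacle I anticipate is conceptual rather than technical: keeping careful track of which closure (in $\PP^{s-1}$ versus in $T_{\PP^{s-1}}$) is being used at each step, and justifying why the scaling action on $\PP^{s-1}$ descends from $(\C^*)^s$ so that it preserves closed subvarieties. Once that is set up cleanly, the argument proceeds by exactly the same formal manipulations as in the affine case.
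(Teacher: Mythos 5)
Your proof is correct and follows precisely the route the paper has in mind: the paper states only that the proof is "similar to that of Proposition \ref{prop:charLattice}," and your argument is exactly that translation, with $(\C^*)^s$ replaced by $T_{\PP^{s-1}}$, Lemma \ref{lem:imageSubtorus} applied to the composite $\pi \circ \Phi_\A$, and the same closure/torus-translation argument used to extend the action.
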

In the case where the image of $\Phi_\A$ is invariant under coordinate-wise scaling $(x_1,\ldots,x_s) \mapsto (\lambda x_1, \ldots, \lambda x_s)$, the affine toric variety $Y_\A$ is clearly the affine cone over the projective toric variety $X_\A$. This happens in the following example. 
\begin{example}
Let $M = \Z^2$ and $\A = \{(1,0),(1,1),(1,2)\}$. This gives the affine toric variety $Y_\A = \{ xz - y^2 = 0 \}$, which is the affine cone over the projective toric curve $X_\A$, given by the same equation in the homogeneous coordinates $(x:y:z)$ on $\PP^2$.
\end{example}
The following proposition characterizes the subsets $\A \subset M$ for which $Y_\A$ is the affine cone over $X_\A$.
\begin{proposition} \label{prop:affinehyper}
The toric ideal $I(Y_\A)$ is homogeneous if and only if there exists $u \in N$ and $c \in \Z \setminus \{0\}$ such that $\langle u, m_i \rangle = c$ for all $i$. 
\end{proposition}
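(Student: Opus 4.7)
The plan is to reduce the statement to a condition on $\ker \widehat{\Phi}_\A \subset \Z^s$, then use linear algebra over $\Q$ to obtain the lattice functional $u \in N$.

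First I would unpack what it means for $I(Y_\A)$ to be homogeneous. By Theorem \ref{thm:toricideal}, $I(Y_\A)$ is generated by binomials $x^{\blue{\ell_+}} - x^{\red{\ell_-}}$ for $\ell \in \ker \widehat{\Phi}_\A$. A binomial $x^a - x^b$ is homogeneous with respect to the total-degree grading iff $\sum a_i = \sum b_i$. I claim $I(Y_\A)$ is homogeneous if and only if every binomial $x^a - x^b \in I(Y_\A)$ with $a-b \in \ker\widehat{\Phi}_\A$ is homogeneous. The nontrivial direction uses that $Y_\A$ contains the dense torus $T' \subset (\C^*)^s$, so $Y_\A \not\subset V(x_i)$ for any $i$; hence no monomial $x^a$ lies in $I(Y_\A)$. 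Consequently, if $I(Y_\A)$ were homogeneous but contained some inhomogeneous $x^a - x^b$, then both $x^a$ and $x^b$ would lie in $I(Y_\A)$ as homogeneous components, a contradiction. Therefore
\[
I(Y_\A) \text{ is homogeneous} \iff \ker \widehat{\Phi}_\A \subseteq \ker e, \quad \text{where } e : \Z^s \to \Z, \ e(\ell) = \sum_i \ell_i.
\]

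Next I would prove the reverse ($\Leftarrow$) implication of the proposition, which is immediate: if $u \in N$ and $c \neq 0$ satisfy $\langle u, m_i \rangle = c$ for all $i$, then for any $\ell \in \ker \widehat{\Phi}_\A$ we have $0 = \langle u, \sum_i \ell_i m_i \rangle = c \sum_i \ell_i$, so $\sum_i \ell_i = 0$, and by the paragraph above $I(Y_\A)$ is homogeneous.

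For the forward direction ($\Rightarrow$), assume $\ker \widehat{\Phi}_\A \subseteq \ker e$. By the universal property of quotients, the linear map $e$ descends through $\widehat{\Phi}_\A$ to a group homomorphism $\tilde{e} : \Z \A \to \Z$ satisfying $\tilde{e}(m_i) = 1$ for every $i$; well-definedness is exactly the hypothesis $\ker\widehat{\Phi}_\A \subseteq \ker e$. The main obstacle is that $\tilde{e}$ is defined only on the subgroup $\Z \A \subseteq M$, and an extension to a $\Z$-linear map $M \to \Z$ need not exist directly because $M/\Z\A$ may have torsion. To get around this, I would tensor with $\Q$: pick any $\Q$-linear extension $u_\Q : M \otimes_\Z \Q \to \Q$ of $\tilde{e} \otimes \Q$, which exists by standard linear algebra since $\Q\A$ is a $\Q$-subspace of $M_\Q$. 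Since $M \simeq \Z^n$ is finitely generated, there is a positive integer $c$ with $c \, u_\Q(M) \subseteq \Z$, giving $u := c\, u_\Q \in \Hom_\Z(M, \Z) = N$. By construction $\langle u, m_i \rangle = c \cdot \tilde{e}(m_i) = c \neq 0$ for every $i$, completing the proof.

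The only delicate step is the extension to $M$ in the forward direction; the presence of the constant $c \in \Z \setminus \{0\}$ (rather than $c = 1$) in the statement is precisely what accommodates torsion in $M/\Z\A$.
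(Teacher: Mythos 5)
Your proposal is correct and follows essentially the same route as the paper: first observe that $I(Y_\A)$ contains no monomial (since $Y_\A$ meets the torus, or equivalently $(1,\dots,1)\in Y_\A$), so homogeneity is equivalent to $\sum_i \ell_i = 0$ for all $\ell\in\ker\widehat{\Phi}_\A$; then pass to linear algebra over $\Q$ to produce the functional, and clear denominators to land in $N$. The only cosmetic difference is that you phrase the $\Q$-linear step via the universal property of quotients and tensoring with $\Q$, while the paper fixes $M=\Z^n$ WLOG and restates the same condition as $(1,\dots,1)$ lying in the $\Q$-row space of the matrix $A$; your remark that the constant $c$ is exactly what absorbs the torsion in $M/\Z\A$ is a nice observation that the paper leaves implicit.
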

If $M = \Z^n$, we can collect the $m_i$ in the columns of a matrix $A \in \Z^{n \times s}$. The condition in Proposition \ref{prop:affinehyper} is equivalent to $(1,\ldots, 1)$ being in the row space of $A$, viewed as a matrix over $\Q$, and to $m_1, \ldots, m_s$ lying on an affine hyperplane in $M_\R$.
\begin{proof}
Without loss of generality, we assume $M = \Z^n$. Note that $I(Y_\A)$ cannot contain a monomial, as we have $(1,\ldots,1) \in  Y_\A \subset \C^s$ (Exercise \ref{ex:contains1}). 
Therefore, if $I(Y_\A)$ is homogeneous, each binomial generator in Theorem \ref{thm:toricideal} must be homogeneous:
\begin{equation} \label{eq:ellsumeq}
\sum_{i = 1}^s (\ell_+)_i - \sum_{i = 1}^s (\ell_-)_i = \sum_{i=1}^s \ell_i = 0, \quad \text{for all $\ell \in \ker \widehat{\Phi}_\A$}.
\end{equation}
This is equivalent to $(1,\ldots,1) \in \Q^s$ being in the row space of $A$, so that $\langle \hat{u}, m_i \rangle = 1$ for some $\hat{u} \in \Q^n$ and for all $m_i \in \A$. Clearing denominators gives $\langle u, m_i \rangle = c$. Conversely, if $(1,\ldots,1)$ is in the row space of $A$, then \eqref{eq:ellsumeq} holds. By Theorem \ref{thm:toricideal}, $I(Y_\A)$ is generated by homogeneous polynomials. 
\end{proof}
We use Proposition \ref{prop:affinehyper} to show that for any finite subset $\A = \{m_1, \ldots, m_s\} \subset M = \Z^n$, there is $\widehat{\A} \subset \Z^n \times \Z$ such that $X_\A = X_{\widehat{\A}}$ and $Y_{\widehat{\A}}$ is the affine cone over $X_{\widehat{\A}}$. This set $\widehat{\A} \subset \Z^n \oplus \Z$ can be obtained by appending a``1"-entry to each element of $\A$. This way 
\[ (\pi \circ \Phi_\A): t \mapsto (t^{m_1}:\cdots:t^{m_s}) \quad \text{and} \quad (\pi \circ \Phi_{\widehat{\A}}): (t, t_{n+1}) \mapsto (t_{n+1} t^{m_1}:\cdots:t_{n+1}t^{m_s}) \]
have the same image in $T_{\PP^{s-1}}$. Moreover, setting $u = e_{n+1}$, Proposition \ref{prop:affinehyper} guarantees that $I(Y_{\widehat{\A}})$ is homogeneous, so we arrive at the following result.
\begin{proposition} \label{prop:Ahat}
With the notation above, $X_\A = X_{\widehat{\A}}$ and $Y_{\widehat{\A}}$ is the affine cone over $X_{\widehat{\A}}$.
\end{proposition}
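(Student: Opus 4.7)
The plan has two parts: establishing $X_\A = X_{\widehat{\A}}$ in $\PP^{s-1}$, and then applying Proposition \ref{prop:affinehyper} to deduce that $Y_{\widehat{\A}}$ is a cone.

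For the first equality I would simply compare the maps $\pi \circ \Phi_\A$ and $\pi \circ \Phi_{\widehat{\A}}$ pointwise. For any $(t, t_{n+1}) \in T \times \C^*$, projective invariance under scaling by $t_{n+1} \in \C^*$ gives
\[ (\pi \circ \Phi_{\widehat{\A}})(t, t_{n+1}) = (t_{n+1} t^{m_1} : \cdots : t_{n+1} t^{m_s}) = (t^{m_1} : \cdots : t^{m_s}) = (\pi \circ \Phi_\A)(t). \]
So the two images coincide in $T_{\PP^{s-1}}$, and taking Zariski closures in $\PP^{s-1}$ yields $X_\A = X_{\widehat{\A}}$.

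For the second assertion I would take $u = e_{n+1} \in N = \Z^{n+1}$, so that $\langle u, (m_i, 1) \rangle = 1$ for every element of $\widehat{\A}$. Proposition \ref{prop:affinehyper} then ensures that $I(Y_{\widehat{\A}}) \subset \C[x_1, \ldots, x_s]$ is a homogeneous ideal, which makes $Y_{\widehat{\A}}$ stable under the coordinate-wise scaling action of $\C^*$ on $\C^s$. The ideal is proper (for instance $Y_{\widehat{\A}}$ contains $(1, \ldots, 1)$ by Exercise \ref{ex:contains1}, so $1 \notin I(Y_{\widehat{\A}})$), hence it vanishes at the origin and $0 \in Y_{\widehat{\A}}$. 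Therefore $Y_{\widehat{\A}}$ is the affine cone over the projective subvariety $\pi(Y_{\widehat{\A}} \setminus \{0\}) \subset \PP^{s-1}$.

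To finish I would identify this cone's base with $X_{\widehat{\A}}$. Since $\Phi_{\widehat{\A}}(T \times \C^*) \subset Y_{\widehat{\A}} \setminus \{0\}$, applying $\pi$ gives $(\pi \circ \Phi_{\widehat{\A}})(T \times \C^*) \subset \pi(Y_{\widehat{\A}} \setminus \{0\})$, and since the right-hand side is closed in $\PP^{s-1}$ this yields $X_{\widehat{\A}} \subset \pi(Y_{\widehat{\A}} \setminus \{0\})$. Conversely, $\pi(Y_{\widehat{\A}} \setminus \{0\})$ is irreducible (it inherits irreducibility from $Y_{\widehat{\A}}$) and contains the dense subset $(\pi \circ \Phi_{\widehat{\A}})(T \times \C^*)$, so it must coincide with the closure $X_{\widehat{\A}}$. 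The only non-definitional step is the standard correspondence between homogeneous radical ideals in $\C[x_1, \ldots, x_s]$ and projective subvarieties of $\PP^{s-1}$, which is where I expect the main bookkeeping to happen; all other steps are short calculations or direct unwindings.
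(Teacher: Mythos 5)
Your proposal is correct and follows essentially the same route as the paper: comparing $\pi\circ\Phi_\A$ with $\pi\circ\Phi_{\widehat{\A}}$ pointwise for the first equality, and invoking Proposition \ref{prop:affinehyper} with $u=e_{n+1}$ to conclude homogeneity of $I(Y_{\widehat{\A}})$. You add a careful verification that the base of the resulting cone is in fact $X_{\widehat{\A}}$, a step the paper leaves implicit, but the underlying ideas are identical.
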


We are now ready to discuss the character lattice of the dense torus of $X_\A$. 
\begin{definition}[Affine lattice]
The \emph{lattice affinely generated by $\A = \{m_1, \ldots, m_s\} \subset M$} is 
\[ \Z' \A = \left \{ \sum_{i=1}^s a_i m_i ~|~ a \in \Z^s, \sum_{i=1}^s a_i = 0 \right \} \subset M.\]
\end{definition}
\begin{exercise} \label{ex:afflattice} Writing $\A - m_i = \{ m_j - m_i, \ldots, m_s - m_i \}$, show that $\Z' \A = \Z (\A - m_i)$, for $i = 1, \ldots, s$.
\end{exercise}
\begin{proposition} \label{prop:charlatticeproj}
The character lattice of the dense torus $T_{X_\A} \subset X_\A$ is $\Z' \A$. 
\end{proposition}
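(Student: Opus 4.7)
My plan is to follow the strategy used in the proof of Proposition \ref{prop:charLattice}, replacing the affine map by its composition with the projectivization map $\pi$. First, I would observe that $\pi \circ \Phi_\A : T \to T_{\PP^{s-1}}$ is a morphism of tori which is also a group homomorphism, so Lemma \ref{lem:imageSubtorus} guarantees that its image is a closed subtorus of $T_{\PP^{s-1}}$. By construction this image is exactly the dense torus $T_{X_\A} = (\pi \circ \Phi_\A)(T)$, giving the factorization
\[ T \twoheadrightarrow T_{X_\A} \hookrightarrow T_{\PP^{s-1}}. \]

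Next, I would apply Lemma \ref{lem:phihat} to each leg: the surjection on the left dualizes to an injection of character lattices, while the inclusion on the right dualizes to a surjection. This produces a dual diagram
\[ \mathcal{M}_{s-1} \twoheadrightarrow M_{X_\A} \hookrightarrow M, \]
where $M_{X_\A}$ denotes the character lattice of $T_{X_\A}$, and the composed map $\mathcal{M}_{s-1} \to M$ has image equal to $M_{X_\A}$, viewed as a sublattice of $M$ via the rightmost inclusion.

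The remaining step is to identify this composed map explicitly. Recall that $\mathcal{M}_{s-1} = \{ a \in \Z^s ~|~ \sum_{i=1}^s a_i = 0 \}$ was described as the kernel of the sum map $\Z^s \to \Z$ coming from the short exact sequence $1 \to \C^* \to (\C^*)^s \to T_{\PP^{s-1}} \to 1$. For $a \in \mathcal{M}_{s-1}$, the character $\chi^a$ pulls back through $\pi$ to the Laurent monomial $t \mapsto \prod t_i^{a_i}$ on $(\C^*)^s$, and further through $\Phi_\A$ (by the same calculation as in Exercise \ref{ex:phihat}) to the character $\chi^{\sum_i a_i m_i}$ on $T$. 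Hence the image of $\mathcal{M}_{s-1} \to M$ is exactly $\{\sum_{i=1}^s a_i m_i ~|~ \sum_i a_i = 0\} = \Z'\A$, which is the desired identification.

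I do not expect any genuine obstacle here: the argument is a diagram chase driven entirely by results already established earlier in the excerpt (Lemmas \ref{lem:phihat} and \ref{lem:imageSubtorus}, together with Exercise \ref{ex:phihat}). The only mild point of care is the bookkeeping of which arrows become surjective versus injective under dualization; everything else is a direct adaptation of the affine argument, just with $\Z^s$ cut down to the sublattice $\mathcal{M}_{s-1}$.
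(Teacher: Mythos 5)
Your proposal is correct and follows exactly the paper's approach: the paper's proof consists precisely of the diagram $T \twoheadrightarrow T_{X_\A} \hookrightarrow T_{\PP^{s-1}}$ and its dual $\mathcal{M}_{s-1} \twoheadrightarrow M' \hookrightarrow M$, with the remark that "the statement follows as in the proof of Proposition \ref{prop:charLattice}." You have simply filled in the diagram chase and the explicit identification of the composed map $\mathcal{M}_{s-1} \to M$ as $a \mapsto \sum_i a_i m_i$ with image $\Z'\A$, which the paper leaves implicit.
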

\begin{proof}
Writing $M'$ for the character lattice of $T_{X_\A}$, the statement follows as in the proof of Proposition \ref{prop:charLattice} from
\begin{center}
\begin{tikzcd}
T \ar[r] \ar[dr,->>]
    & (\PP^*)^{s-1}  \\
    & T_{X_\A} \ar[u,hookrightarrow]
\end{tikzcd}
\hspace{2cm}
\begin{tikzcd}
M 
    & {\cal M}_{s-1} \ar[l] \ar[d,->>] \\
    & M' \ar[ul,hookrightarrow] 
\end{tikzcd}. 
\end{center}
\end{proof}
\begin{corollary}
The dimension of $X_\A$ is the rank of $\Z' \A$, which is ${\rm rank} \, \Z \A - 1$ if the condition of Proposition \ref{prop:affinehyper} is satisfied, and ${\rm rank} \, \Z \A$ otherwise.
\end{corollary}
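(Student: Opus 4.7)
The plan is to combine Proposition~\ref{prop:charlatticeproj} with a careful analysis of the quotient $\Z\A / \Z'\A$. Since the character lattice of the dense torus of $X_\A$ is $\Z'\A$, and the dimension of a variety equals the rank of the character lattice of its dense torus (as used for $Y_\A$ in Corollary~\ref{cor:dimYA}), the first equality $\dim X_\A = \textup{rank}\,\Z'\A$ is immediate from Proposition~\ref{prop:charlatticeproj}. What remains is to compare the two ranks.

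I would first observe that by Exercise~\ref{ex:afflattice} we have $\Z'\A = \Z(\A - m_1)$, so $\Z\A = \Z'\A + \Z m_1$ and the quotient $\Z\A/\Z'\A$ is a cyclic group generated by the image $\overline{m}_1$. Being a cyclic abelian group it is either finite or isomorphic to $\Z$, and correspondingly $\textup{rank}\,\Z\A$ equals $\textup{rank}\,\Z'\A$ or $\textup{rank}\,\Z'\A + 1$. So the corollary reduces to showing that $\Z\A/\Z'\A$ is infinite if and only if the hyperplane condition of Proposition~\ref{prop:affinehyper} holds.

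For the ``if'' direction, suppose there exist $u \in N$ and $c \in \Z \setminus \{0\}$ with $\langle u, m_i\rangle = c$ for all $i$. Restricting the linear functional $\langle u, - \rangle$ to $\Z\A$ gives a homomorphism $\Z\A \to \Z$ whose image is $c\Z$ (since $m_1$ maps to $c$) and whose kernel contains $\Z'\A$, because for $m = \sum a_i m_i$ with $\sum a_i = 0$ one has $\langle u, m\rangle = c\sum a_i = 0$. Conversely, any element of the kernel can be written as $\sum a_i m_i$, and then $c\sum a_i = 0$ forces $\sum a_i = 0$, so the kernel is exactly $\Z'\A$. Therefore $\Z\A/\Z'\A \cong c\Z \cong \Z$, and the rank drops by one.

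For the ``only if'' direction, suppose $\Z\A/\Z'\A \cong \Z$. Then there is a homomorphism $\phi: \Z\A \to \Z$ with kernel $\Z'\A$ and $\phi(m_1) \neq 0$; since all $m_i - m_1 \in \Z'\A$ we get $\phi(m_i) = \phi(m_1)$ for every $i$. The obstacle I expect is extending $\phi$ to an element $u \in N = \textup{Hom}_\Z(M,\Z)$: $\Z\A$ need not be a direct summand of $M$, so a set-theoretic extension may fail. I would resolve this by tensoring with $\Q$ to extend $\phi$ to a $\Q$-linear functional $M_\Q \to \Q$, choosing any $\Q$-linear extension, and then clearing denominators. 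This produces $u \in N$ and $c \in \Z \setminus \{0\}$ with $\langle u, m_i\rangle = c$ for all $i$, which is exactly the condition of Proposition~\ref{prop:affinehyper}. Combining the two directions finishes the proof.
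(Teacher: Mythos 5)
Your proof is correct, and it fills in an argument that the paper delegates entirely to \cite[Prop.~2.1.6]{cox2011toric} without proof. The structure is exactly right: the first equality $\dim X_\A = \textup{rank}\,\Z'\A$ is immediate from Proposition~\ref{prop:charlatticeproj} together with the fact that the dimension of an irreducible variety equals that of any dense open subset, and the remaining work is the comparison of $\textup{rank}\,\Z\A$ with $\textup{rank}\,\Z'\A$. Your key observation --- that $\Z\A = \Z'\A + \Z m_1$ by Exercise~\ref{ex:afflattice}, so the quotient $\Z\A/\Z'\A$ is cyclic, hence either finite (rank equal) or $\Z$ (rank drops by one) --- is the right reduction, and the rest is checking that $\Z\A/\Z'\A \cong \Z$ is equivalent to the affine hyperplane condition of Proposition~\ref{prop:affinehyper}. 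Both directions are carried out correctly. In particular, you were right to flag the extension problem in the ``only if'' direction: $\Z\A$ is not a priori a direct summand of $M$, so the integer-valued functional $\phi: \Z\A \to \Z$ (the quotient map composed with $\Z\A/\Z'\A \cong \Z$) need not extend over $\Z$. Your fix --- extend $\phi \otimes \Q$ to a $\Q$-linear functional on $M_\Q$ and clear denominators, using that $M$ has a finite basis so a single common denominator suffices --- is exactly what is needed, and the resulting $u \in N$ and $c = d\,\phi(m_1) \neq 0$ satisfy $\langle u, m_i\rangle = c$ for all $i$. The ``if'' direction is also sound: the pairing with $u$ gives a homomorphism $\Z\A \to \Z$ with image $c\Z$ and kernel precisely $\Z'\A$ (any representation $\sum a_i m_i$ of a kernel element forces $c\sum a_i = 0$, hence $\sum a_i = 0$), so $\Z\A/\Z'\A \cong \Z$ and the rank drops.
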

\begin{proof}
For details, see \cite[Prop.~2.1.6]{cox2011toric}.
\end{proof}
\begin{exercise} \label{ex:twotoone}
Let $M = \Z^2$ and $\A = \{(1,0),(0,1),(1,2),(2,1)\}$. Show that $\Z' \A$ has the same rank as $\Z \A$, but it has index 2 as a sublattice. Geometrically, this translates into $(\pi \circ \Phi_\A)$ being two-to-one (see Example \ref{ex:sublatticesfiniteindex}).
\end{exercise}

\subsection{Convex lattice polytopes} \label{subsec:polytopes}
We will see that the projective toric varieties from the previous section are in many ways related to \emph{convex polytopes}. In this section, we introduce some definitions and basic facts related to these objects. Much of this is taken from \cite[App.~D]{telen2020thesis}.
\begin{definition}[Polytope]
A \emph{polytope} $P$ in $M_\R$ is the convex hull of a finite set of points $\A = \{m_1,\ldots,m_k\}$ in $M_\R$:
$$P =  {\rm Conv}(\A) = \left \{\sum_{i =1}^{k} c_i m_i \in M_\R ~|~  c_i \in \R ,\sum_{i=1}^k c_i = 1, c_i \geq 0 \right \} \subset M_\R.$$
If $\A \subset M$, $P$ is called a \emph{lattice polytope}.
\end{definition}
Note that we define a polytope to be convex. The reason is that we will not encounter non-convex polytopes in this text. 
The dimension $\dim P$ of a polytope $P \subset M_\R$ is defined as the dimension of the smallest affine subspace of $M_\R$ containing $P$. A polytope in $M_\R$ is said to be \emph{full-dimensional} if $\dim P = n$. 
A point $u \in N_\R \backslash \{0\}$ and a scalar $a \in \R$ give \[ H_{u,a} = \{ m \in M_\R~|~ \langle u,m \rangle + a = 0 \}\quad \text{and} \quad H_{u,a}^+ = \{ m \in M_\R ~|~ \langle u,m \rangle +a \geq 0 \}.\]
\begin{definition}[Faces of a polytope]
Take $ u \in N_\R \backslash \{0\}$, $a \in \R$ and let $P \subset M_\R$ be a convex polytope. The set $H_{u,a} \cap P$ is a \emph{face} of $P$ if $P \subset  H_{u,a}^+$ and $a = - \min_{m \in P} \langle u,m \rangle$. We say that $P$ is a face of $P$ by convention. We write $Q \preceq P$ if $Q$ is a face of $P$.
\end{definition} 
A hyperplane $H_{u,a}$ for which $H_{u,a} \cap P$ is a face of $P$ is called a \emph{supporting hyperplane}. A face $Q$ of a polytope is again a polytope. The \emph{codimension} of a face $Q \subset P$ is $\dim P - \dim Q$. A face of codimension 1 in $P$ is called a \emph{facet}, a face of dimension 1 is an \emph{edge} and a face of dimension 0 is a \emph{vertex}. 
One way of representing a polytope $P$ in terms of faces uses vertices: \[P = {\rm Conv}( v \in M_\R ~|~ v \text{ is a vertex of } P ).\] 
This is called a \emph{vertex representation} or \emph{V-representation}.
Alternatively, any polytope can be expressed as the intersection of finitely many closed half-spaces $H_{u,a}^+$ associated to supporting hyperplanes. That is, any polytope $P \subset M_\R$ can be written as 
\begin{equation} \label{eq:Hrep1}
P = H_{u_1,a_1}^+ \cap \cdots \cap H_{u_k,a_k}^+ = \{ m \in M_\R ~|~ \langle u_i,m \rangle + a_i \geq 0, i = 1, \ldots, k \}
\end{equation}
for some $u_1, \ldots, u_k \in N_\R$, $a_1, \ldots, a_k \in \R$. 
The representation in \eqref{eq:Hrep1} 
is called a \emph{half-space representation} or \emph{H-representation} of the polytope $P$. There exist infinitely many different H-representations for any polytope. However, if $P$ is full-dimensional, there exists an \emph{essentially unique}, \emph{minimal} H-representation of $P$, in the sense that it consists of a minimal number $k$ of inequalities where the inequalities are uniquely defined up to multiplication with a nonzero scalar. Suppose that $P$ is full-dimensional. For a supporting hyperplane $H_{u,a}$ corresponding to a facet $Q$ of $P$, the vector $u$ is uniquely determined up to a nonzero scalar factor. For every facet $Q$, let $u_Q, a_Q$ be such that $P \subset H_{u_Q,a_Q}^+,  H_{u_Q,a_Q} \cap P = Q$. The minimal H-representation of $P$ is given by 
$$ P = \bigcap_{Q \text{ facet of } P} H_{u_Q,a_Q}^+.$$
If $P$ is a full-dimensional lattice polytope, then for any facet $Q \subset P$, $u_Q$ can be chosen in a unique way as the generator of the sublattice 
$$ \{ u \in N ~|~ \langle u,m \rangle = 0 \text{ for all } m \in Q \},$$ 
for which $P \in H_{u_Q,a_Q}^+$. This is called the \emph{primitive, inward pointing facet normal} of $Q$. 
It is the inward pointing integer vector perpendicular to $Q$ of the smallest length. Below, by `the facet normal' associated to $Q$ we mean the primitive, inward pointing facet normal. 
\begin{example} \label{ex:polygon1}
Figure \ref{fig:polygon1} shows a full-dimensional polytope in $\R^2$ (a 2-dimensional polytope is also called a \emph{polygon}) together with its interior lattice points and primitive inward pointing facet normals. 
The minimal H-representation is given by 
\[ u_1 = (-2,-1), \quad u_2 = (1,2) \quad \text{and} \quad u_3 = (1,-1). \]
The supporting hyperplane $H_{u_2,a_2}$ is also shown in Figure \ref{fig:polygon1}, and its corresponding half-space $H_{u_2,a_2}^+$ (shaded in green) contains the polytope. 
\begin{figure}
\centering
\begin{tikzpicture}[scale=0.8]
\begin{axis}[%
width=2.5in,
height=2.19in,
scale only axis,
xmin=-9,
xmax=7,
ymin=-7,
ymax=7,
ticks = none, 
ticks = none,
axis background/.style={fill=white},
axis line style={draw=none} 
]

\addplot [color=mycolor5,solid,thick, fill = mycolor5!20!white,forget plot]
  table[row sep=crcr]{%
8	-7\\
8	8\\
-22	8\\	
8	-7\\
};

\addplot [color=mycolor1,solid,thick, fill = mycolor1!20!white,forget plot]
  table[row sep=crcr]{%
6	-6\\
0	6\\
-6	0\\	
6	-6\\
};

\addplot[only marks,mark=*,mark size=1.0pt,mycolor1!50!white
        ]  coordinates {
    (0,6) (-1,5) (0,5) (-2,4) (-1,4) (0,4) (1,4) (-3,3) (-2,3) (-1,3) (0,3) (1,3) 
};
\addplot[only marks,mark=*,mark size=1.0pt,mycolor1!50!white
        ]  table[row sep=crcr]{%
-4	2\\
-3	2\\
-2	2\\
-1	2\\
0	2\\
1	2\\
2	2\\
-5	1\\
-4	1\\
-3	1\\
-2	1\\
-1	1\\
0	1\\
1	1\\
2	1\\
-6	0\\	
-5	0\\	
-4	0\\
-3	0\\
-2	0\\
-1	0\\
0	0\\	
1	0\\
2	0\\
3	0\\
-4	-1\\
-3	-1\\
-2	-1\\
-1	-1\\
0	-1\\	
1	-1\\
2	-1\\
3	-1\\
-2	-2\\
-1	-2\\
0	-2\\	
1	-2\\
2	-2\\
3	-2\\
4	-2\\
0	-3\\	
1	-3\\
2	-3\\
3	-3\\
4	-3\\
2	-4\\
3	-4\\
4	-4\\
5	-4\\
4	-5\\
5	-5\\
6	-6\\
};

\node (A1) at (axis cs:2, 2) {};
\node (A2) at (axis cs:0, 1) {};
\node (B1) at (axis cs:-2,-2) {};
\node (B2) at (axis cs:-1, 0) {};
\node (C1) at (axis cs:-3,3) {};
\node (C2) at (axis cs:-2,2) {};
\draw [->,>=stealth,mycolor2, thick]  (axis cs:2, 2) -- (axis cs:0, 1);
\draw [->,>=stealth,mycolor2, thick] (axis cs:-2, -2) -- (axis cs:-1, 0);
\draw [->,>=stealth,mycolor2, thick] (axis cs:-3,3) -- (axis cs:-2,2);

\node (u1) at (axis cs:1,0.5) {$u_1$};
\node (u2) at (axis cs:-1.8,0) {$u_2$};
\node (u3) at (axis cs:-1.5,2.5) {$u_3$};
\node (H2) at (axis cs:-7,1.5) {$H_{u_2,a_2}$};
\node (H2+) at (axis cs:5.5,5) {$H_{u_2,a_2}^+$};

\end{axis}
\end{tikzpicture}%
\caption{Illustration of a lattice polytope of dimension 2 and its primitive inward pointing facet normals.}
\label{fig:polygon1}
\end{figure}
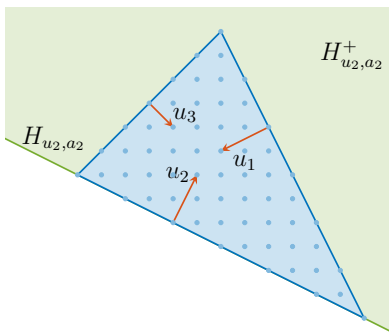
We note that, strictly speaking, the orange arrows do not belong in the same picture: they live in the dual plane $(\R^2)^\vee$. 
\end{example}
For any polytope $P \subset M_\R$ and any $\lambda \in \R, \lambda \geq 0$, we define the polytope $\lambda \cdot  P$ as $\lambda \cdot  P = \{\lambda p ~|~ p \in P \}$. This is called a \emph{dilation} of the polytope $P$ and all dilations are obtained by restricting scalar multiplication in $M_\R$ to $P$. 
\subsection{Kushnirenko's theorem} \label{sec:kush}
In this section we relate the degree of $X_\A \subset \PP^{s-1}$ to the volume of a convex lattice polytope. When $M = \Z^n$ and $\A \subset \Z^n$ is finite, $P = {\rm Conv}(\A)$ is a polytope in $\R^n$ with volume ${\rm Vol}(P) = \int_P  dx_1 \cdots dx_n$. To simplify some of the statements in this section, we will often assume that 
\begin{equation} \label{eq:assumptionkush}
\A = \{m_1, \ldots, m_s\} \subset \Z^n \text{ is such that } \Z' \A = \Z^n.
\end{equation}
Here is this section's main result, due to Kushnirenko \cite{kouchnirenko1976polyedres}. 
\begin{theorem}[Kushnirenko's theorem] \label{thm:kush}
Consider $\A \subset \Z^n$ satisfying \eqref{eq:assumptionkush}. The degree of $X_\A \subset \PP^{s-1}$ is the normalized volume $n! \,  {\rm Vol}(P)$ of the convex lattice polytope $P = {\rm Conv}(\A)$.
\end{theorem}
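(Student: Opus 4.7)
My plan is to compute $\deg X_\A$ via the Hilbert polynomial of its homogeneous coordinate ring, and connect that to lattice point counts in dilates of $P$ using Ehrhart's theorem. As a first step I would use Proposition~\ref{prop:Ahat} to replace $\A$ by $\widehat{\A} = \{(m_1,1),\ldots,(m_s,1)\} \subset \Z^{n+1}$, so that $X_\A = X_{\widehat{\A}}$, the ideal $I(Y_{\widehat{\A}})$ is homogeneous with respect to the standard grading on $\C[u_1,\ldots,u_s]$, and $Y_{\widehat{\A}}$ is the affine cone over $X_\A \subset \PP^{s-1}$. Under the hypothesis $\Z'\A = \Z^n$, Proposition~\ref{prop:charlatticeproj} and its corollary give $\dim X_\A = n$. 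Recall that $\deg X_\A$ equals $n!$ times the leading coefficient of the Hilbert polynomial $HP_{X_\A}(k)$, which agrees with the Hilbert function $HF_{X_\A}(k) = \dim_\C \C[Y_{\widehat{\A}}]_k$ for $k \gg 0$, so the task reduces to the asymptotic behavior of $HF_{X_\A}(k)$.

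Next I would describe this Hilbert function combinatorially. Using the isomorphism $\C[Y_{\widehat{\A}}] \simeq \C[\N \widehat{\A}]$ from Proposition~\ref{prop:semigroup}, a monomial $u^a$ with $\sum a_i = k$ is sent to the character $\chi^{(\sum a_i m_i,\, k)}$; since distinct characters are linearly independent in $\C[\Z^{n+1}]$, the degree-$k$ component has a basis indexed by the set
\[ k\A := \left\{ \sum_{i=1}^s a_i m_i \;:\; a_i \in \N,\; \sum_{i=1}^s a_i = k \right\} \subset kP \cap \Z^n, \]
so that $HF_{X_\A}(k) = |k\A|$. Ehrhart's theorem says that $|kP \cap \Z^n|$ is a polynomial in $k$ of degree $n$ with leading coefficient $\mathrm{Vol}(P)$, so once I show that $|k\A|$ agrees with $|kP \cap \Z^n|$ up to an $O(k^{n-1})$ error, I can conclude $\deg X_\A = n!\,\mathrm{Vol}(P)$.

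For that final comparison --- the heart of the proof --- I would pass to the normalization $\widetilde{Y}_{\widehat{\A}}$ of $Y_{\widehat{\A}}$. The assumption $\Z'\A = \Z^n$ implies that the lattice affinely spanned by $\widehat{\A}$ inside $\Z^{n+1}$ is the full group of lattice points on the affine hyperplane $\{x_{n+1}=1\}$. Consequently, the saturation of $\N\widehat{\A}$ in $\Z^{n+1}$ equals $\sigma^\vee \cap \Z^{n+1}$ for $\sigma^\vee = \mathrm{Cone}(\widehat{\A})$, so by Theorem~\ref{thm:normal} the normalization is the affine toric variety $\widetilde{Y}_{\widehat{\A}} = U_{\sigma}$. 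Its degree-$k$ component has dimension exactly $|kP \cap \Z^n|$ by definition of the cone over $P$. Since normalization is a finite birational morphism, it preserves degrees, and Ehrhart delivers the asymptotics $\mathrm{Vol}(P)\, k^n + O(k^{n-1})$.

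The main obstacle is precisely this saturation/normalization step: identifying $\N\widehat{\A}$ with $\sigma^\vee \cap \Z^{n+1}$ up to a discrepancy of order $O(k^{n-1})$ in each graded piece, and justifying that degree is preserved under normalization of the cone. A variant which avoids normalization is a direct lattice perturbation argument: every lattice point of $kP$ at lattice distance $\geq C$ from $\partial(kP)$ lies in $k\A$, obtained by translating an $\R$-valued representation $m/k = \sum \lambda_i m_i$ by integer vectors in the rank $s-n-1$ kernel of $\widehat{\Phi}_{\widehat{\A}}: \Z^s \to \Z^{n+1}$, with the hypothesis $\Z'\A = \Z^n$ ensuring that the kernel is rich enough to achieve integrality and nonnegativity simultaneously. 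Either way, Ehrhart plus bookkeeping of lower-order terms finishes the proof.
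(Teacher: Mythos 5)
Your strategy is the same as the paper's: express $\deg X_\A/n!$ as the leading coefficient of the Hilbert polynomial (Theorem~\ref{thm:hilbert}), identify $HF_{X_\A}(d) = |d\A|$ (Corollary~\ref{cor:HF}), compare with the Ehrhart count $E_P(d) = |dP \cap \Z^n|$ (Theorem~\ref{thm:ehrhart}), and close the gap between $|d\A|$ and $|dP\cap\Z^n|$ by a saturation argument. The one step you flag as ``the main obstacle'' is exactly what the paper's Lemma~\ref{lem:trick} supplies: an explicit $m = (\tilde m, e) \in \S_\sigma$ with $m + \S_\sigma \subset \N\widehat\A$, built by a zonotope construction --- this is precisely a conductor element for the inclusion $\C[\N\widehat\A] \subset \C[\S_\sigma]$. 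From it the two-sided bound $E_P(d-e) \leq HF_{X_\A}(d) \leq E_P(d)$ follows immediately, giving the match of leading terms without any general commutative algebra about normalization. Your appeal to ``normalization is a finite birational morphism, it preserves degrees'' is correct but, unwound, requires exactly this fact: one needs $\C[\S_\sigma]/\C[\N\widehat\A]$ to be annihilated by a nonzero element so that it has Krull dimension at most $n$ and graded dimension $O(d^{n-1})$. The variant you sketch (``lattice points of $dP$ at distance $\geq C$ from $\partial(dP)$ lie in $d\A$'') is in the right spirit but as stated does not directly yield a nonnegative integral representation $m = \sum a_i m_i$ with $\sum a_i = d$, especially when $\A$ contains non-vertex points; the zonotope construction sidesteps this by producing one fixed translating element that works uniformly for all $d$.
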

\begin{remark} The \emph{degree} of a projective variety $X \subset \PP^{s-1}$ can be thought of as the generic number of intersection points of $X$ with a generic linear space of dimension $s-1-\dim(X)$. A more precise definition can be given via the \emph{Hilbert polynomial} of $X$, see below. 
\end{remark}
\begin{remark}
The term \emph{normalized volume} in Theorem \ref{thm:kush} indicates that the volume measure is normalized such that a standard simplex $\Delta_n = {\rm Conv}(0,e_1,\ldots,e_n)$ in the lattice $\Z^n$ has volume 1: $n! \, {\rm Vol} (\Delta_n) = 1$. If $\A$ does not satisfy \eqref{eq:assumptionkush}, one needs to replace the number $n! \, {\rm Vol}(P)$ in Theorem \ref{thm:kush} with the normalized volume of ${\rm Conv}(\A)$ in the lattice $\Z' \A$. 
\end{remark}
Before proving Theorem \ref{thm:kush}, we state an important consequence for the study of systems of polynomial equations, alluded to in Example \ref{ex:example}.
\begin{corollary} \label{cor:unmixed}
For $\A \subset \Z^n$ satisfying \eqref{eq:assumptionkush}, consider the system of Laurent polynomial equations $f_1(t) = \cdots = f_n(t) = 0$ where $f_i(t) = \sum_{j=1}^s c_{ij} \, t^{m_j} \in \C[t_1^{\pm 1}, \ldots, t_n^{\pm 1}], i = 1, \ldots, n$. This system has at most $n! \, {\rm Vol}(P)$ isolated solutions in $(\C^*)^n$, with $P = {\rm Conv}(\A)$. Moreover, for generic coefficients $c_{ij}$, the number of solutions is precisely  $n! \, {\rm Vol}(P)$.
\end{corollary}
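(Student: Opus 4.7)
The plan is to translate the counting of solutions of the Laurent system into an intersection problem on $X_\A$ and then invoke Kushnirenko's theorem. For $i = 1, \ldots, n$, let $L_i \subset \PP^{s-1}$ be the hyperplane defined by $\sum_{j=1}^s c_{ij} u_j = 0$, and set $L = L_1 \cap \cdots \cap L_n$. Since $(\pi \circ \Phi_\A)(t) = (t^{m_1} : \cdots : t^{m_s})$, a point $t \in (\C^*)^n$ satisfies $f_i(t) = 0$ for all $i$ if and only if $(\pi \circ \Phi_\A)(t) \in L$. Thus the solutions of the system correspond via $\pi \circ \Phi_\A$ to points of $\im(\pi \circ \Phi_\A) \cap L \subset X_\A \cap L$.

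Under assumption \eqref{eq:assumptionkush}, Proposition \ref{prop:charlatticeproj} tells us that the dense torus $T_{X_\A}$ has character lattice $\Z' \A = \Z^n$, matching that of $(\C^*)^n$. The induced map of character lattices is an isomorphism, so $\pi \circ \Phi_\A : (\C^*)^n \to T_{X_\A}$ is itself an isomorphism of tori; in particular the correspondence above is a \emph{bijection} between solutions in $(\C^*)^n$ and points of $T_{X_\A} \cap L$. The same assumption gives $\dim X_\A = \mathrm{rank}\, \Z' \A = n$, so $L$ has complementary codimension in $\PP^{s-1}$, and $\deg X_\A = n!\, \mathrm{Vol}(P)$ by Theorem \ref{thm:kush}. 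For a generic choice of coefficients $c_{ij}$, the linear subspace $L$ is generic of codimension $n$; it therefore meets $X_\A$ transversally in $\deg X_\A$ reduced points, none of which lie on the lower-dimensional boundary $X_\A \setminus T_{X_\A}$ (a generic linear subspace of codimension $n$ misses any closed subvariety of dimension $<n$). Pulling these back through the isomorphism, we conclude that the system has exactly $n!\, \mathrm{Vol}(P)$ solutions in $(\C^*)^n$ for generic $c_{ij}$.

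For the upper bound with arbitrary coefficients, I would invoke the intersection-theoretic fact that for any projective variety $X \subset \PP^N$ of degree $d$ and dimension $k$, and any linear subspace $L \subset \PP^N$ of codimension $k$, the number of isolated points of $X \cap L$ is at most $d$ (any excess intersection is absorbed by positive-dimensional components). Applied to $X_\A$ and our $L$, this bounds the number of isolated points of $X_\A \cap L$, and hence of isolated solutions of the system in $(\C^*)^n$, by $n!\, \mathrm{Vol}(P)$. The main obstacle is precisely this last step: for non-generic coefficients the intersection $X_\A \cap L$ may fail to be proper and can acquire positive-dimensional components, including components on the toric boundary $X_\A \setminus T_{X_\A}$. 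A naive count of intersection multiplicities is therefore not enough, and rigor requires either the general excess intersection bound cited above or, equivalently, a deformation argument in which one specializes from a generic choice of coefficients and uses upper semicontinuity of the number of isolated zeros. The resulting slack between the upper bound and the actual count is exactly the phenomenon illustrated by Example \ref{ex:example}, where solutions are lost to the boundary of $(\C^*)^n$ inside the toric compactification.
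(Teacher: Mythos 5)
Your proof follows essentially the same route as the paper's: identify the solutions with points of $X_\A \cap L \cap T_{X_\A}$ for a codimension-$n$ linear subspace $L$ defined by the coefficient matrix, and then invoke Kushnirenko's theorem for $\deg X_\A$. Where the paper simply cites Kouchnirenko for the genericity and upper-bound claims, you supply the Bertini-type argument and the semicontinuity (or refined Bézout) justification explicitly, which is correct and fills in the gaps the paper outsources to the reference.
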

\begin{proof}
Let $L = \{C \cdot x = 0 \}$ be the linear space in $\PP^{s-1}$ defined by the matrix of coefficients $C = (c_{ij}) \in \C^{n \times s}$.  By Proposition \ref{prop:charlatticeproj} and the assumption \eqref{eq:assumptionkush}, the solutions of $f_1(t) = \cdots = f_n(t)= 0$ are in one to one correspondence with the points in $ L \cap X_\A \cap T_{X_\A}$. The set $L \cap X_\A$ contains at most $\deg(X_\A)$ isolated points, and for generic $C$, $|L \cap X_{\A}| = \deg(X_\A)$ and $L \cap X_\A \subset T_{X_\A}$, see \cite[Thm.~III' (ii)]{kouchnirenko1976polyedres}. The statement follows from Theorem \ref{thm:kush}.
\end{proof}
We present a proof of Theorem \ref{thm:kush} based on the lecture notes \cite{sottile2017ibadan} by Frank Sottile. It uses the following result by Hilbert, see \cite[Thm.~2.2.5]{telen2020thesis} and references therein. 
\begin{theorem} \label{thm:hilbert}
Let $X \subset \PP^{s-1}$ be a projective variety with coordinate ring $\C[X] = \C[x_1, \ldots, x_s]/I(X)$. For $d \gg 0$, the \emph{Hilbert function}
\[ {\rm HF}_X : \Z \rightarrow \N, \quad d \mapsto \dim_\C \C[X]_d \]
is given by a polynomial ${\rm HP}_X(d)$, called the \emph{Hilbert polynomial} of $X$, with leading term 
\[\frac{\deg(X)}{\dim(X)!} d^{\,\dim(X)}. \]
\end{theorem}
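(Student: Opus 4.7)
The plan is induction on $n = \dim X$, combining the algebraic additivity of Hilbert functions along short exact sequences with the geometric invariance of degree under generic hyperplane sections. The base case $n = 0$ is direct: $X$ is a finite set of reduced points in $\PP^{s-1}$, and after reducing $\C[X]$ modulo its nilradical one checks that $\mathrm{HF}_X(d)$ eventually stabilizes at $|X| = \deg X$, so $\mathrm{HP}_X(d) = \deg(X) = \tfrac{\deg X}{0!}\,d^0$.

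For the inductive step, I would pick a generic linear form $h \in \C[x_1, \ldots, x_s]_1$ and set $H = V(h) \subset \PP^{s-1}$. For generic $h$, the hyperplane $H$ avoids the finitely many associated primes of $\C[X]$ other than the irrelevant ideal, so multiplication by $h$ is injective on $\C[X]_d$ for $d \gg 0$. This gives, in high degrees, the short exact sequence
\[ 0 \longrightarrow \C[X]_{d-1} \xrightarrow{\ \cdot h\ } \C[X]_d \longrightarrow \C[X \cap H]_d \longrightarrow 0, \]
and hence the recurrence $\mathrm{HF}_X(d) - \mathrm{HF}_X(d-1) = \mathrm{HF}_{X \cap H}(d)$. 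By the inductive hypothesis, the right-hand side agrees in high degree with a polynomial of degree $n-1$ whose leading term is $\tfrac{\deg(X \cap H)}{(n-1)!}\,d^{n-1}$. A standard finite-difference argument (if $g(d) - g(d-1)$ is eventually polynomial of degree $n-1$, then $g(d)$ is eventually polynomial of degree $n$, and one can read off the leading coefficient) then yields that $\mathrm{HF}_X(d)$ is eventually polynomial of degree $n$ with leading term $\tfrac{\deg(X \cap H)}{n!}\,d^n$.

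To close the loop, I would invoke the geometric fact that a generic hyperplane section has the same degree, $\deg(X \cap H) = \deg X$, which follows from the definition of degree as the number of intersection points with a general linear subspace of complementary dimension: intersecting $X$ with a generic linear subspace of dimension $s-1-n$ is the same as iterating $n$ generic hyperplane sections. Combining with the recursion gives the claimed leading term $\tfrac{\deg X}{n!}\,d^n$.

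The main obstacle is verifying that the short exact sequence is actually exact in high degree, i.e., that a generic linear form $h$ acts as a nonzerodivisor on $\C[X]$ modulo the saturation at the irrelevant ideal. This relies on a primary decomposition argument: the associated primes of $\C[X]$ are finite, each proper associated prime corresponds to a proper subvariety of $\PP^{s-1}$, and a generic hyperplane misses any finite union of proper subvarieties. An alternative route that sidesteps this geometric delicacy would be to appeal to the Hilbert syzygy theorem, constructing a finite graded free resolution of $\C[X]$ over $S = \C[x_1, \ldots, x_s]$ and summing the Hilbert polynomial $\binom{d+s-1}{s-1}$ of $S$ with signs; this proves polynomiality immediately but displaces the leading-coefficient assertion onto a separate intersection-theoretic computation.
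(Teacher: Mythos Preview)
The paper does not actually prove this theorem: it is stated as a classical result due to Hilbert, and the reader is referred to an external reference for a proof. So there is no proof in the paper to compare against; you have supplied an argument where the text supplies none.

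Your approach---induction on $\dim X$ via a generic hyperplane section and the resulting short exact sequence---is the standard one and is essentially correct. Two small points are worth tightening. First, the cokernel of multiplication by $h$ on $\C[X]$ is $\C[X]/(h) = S/(I(X)+(h))$, and $I(X)+(h)$ need not equal $I(X\cap H)$ on the nose, only up to saturation at the irrelevant ideal; your qualifier ``in high degrees'' absorbs this discrepancy, but it is worth making the saturation step explicit rather than silently identifying the two rings in the exact sequence. Second, for $\dim X = 1$ the section $X \cap H$ is a finite set of points and hence typically reducible, so the inductive hypothesis must really be phrased for arbitrary (possibly reducible) projective subschemes rather than irreducible varieties; your base case already handles finite reduced point sets, so this is only a matter of stating the induction hypothesis broadly enough to cover what the hyperplane section actually produces.
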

In order to use Theorem \ref{thm:hilbert} to prove Theorem \ref{thm:kush}, we must understand the coordinate ring $\C[X_\A]$ of $X_\A$ and its graded pieces. We use the notation $\widehat{\A}$ from Proposition \ref{prop:Ahat}.
\begin{lemma}
Let $\A = \{m_1, \ldots, m_s \} \subset M$ be a finite subset. The coordinate ring $\C[X_\A]$ of $X_\A \subset \PP^{s-1}$ is the semigroup algebra $\C[\N \widehat{\A}] \subset \C[M \oplus \Z]$, with grading 
\[ \C[ \N \widehat{\A} ] = \bigoplus_{d \in \Z} \C[\N \widehat{\A}]_d, \quad \C[\N \widehat{\A}]_d = \bigoplus_{m \in d \A} \C \cdot \chi^{(m,d)}. \]
Here $d \A = \{ \sum_{1\leq i_1 \leq \cdots \leq i_d \leq s} m_{i_d} \}$ and $\C[\N \widehat{\A}]_d = 0$ for $d < 0$. 
\end{lemma}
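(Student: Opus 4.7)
The plan is to reduce the statement to the affine toric setting via Proposition \ref{prop:Ahat}. That proposition identifies $X_\A$ with $X_{\widehat{\A}}$ and exhibits $Y_{\widehat{\A}} \subset \C^s$ as the affine cone over $X_{\widehat{\A}}$. Since the homogeneous coordinate ring of a projective variety is by definition the coordinate ring of its affine cone, we have $\C[X_\A] = \C[Y_{\widehat{\A}}]$.

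The first claim then follows immediately from Proposition \ref{prop:semigroup}: the coordinate ring $\C[Y_{\widehat{\A}}]$ is isomorphic to the semigroup algebra $\C[\N\widehat{\A}] \subset \C[M \oplus \Z]$, with the isomorphism induced by the pullback of $\Phi_{\widehat{\A}}$, which sends $x_i \mapsto \chi^{(m_i,\,1)}$.

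For the grading, I would note that the standard grading on $\C[x_1, \ldots, x_s]$ with $\deg x_i = 1$ descends to $\C[X_\A]$ because the toric ideal $I(X_\A) = I(Y_{\widehat{\A}})$ is homogeneous. This is where Proposition \ref{prop:affinehyper} is used, with $u = e_{n+1} \in N \oplus \Z$, so that $\langle u, (m_i, 1)\rangle = 1$ for every element of $\widehat{\A}$. Under the isomorphism $\Phi_{\widehat{\A}}^*$, a monomial $x^a = x_1^{a_1}\cdots x_s^{a_s}$ of total degree $|a| = d$ is sent to $\chi^{\sum_i a_i(m_i,1)} = \chi^{(\sum_i a_i m_i,\,d)}$. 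Hence the induced grading on $\C[\N\widehat{\A}]$ is exactly the grading by the second coordinate.

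Finally, the identification of the graded piece in degree $d$ is combinatorial bookkeeping: an element $(m',d) \in \N\widehat{\A}$ is by definition of the form $\sum_i a_i(m_i,1) = (\sum_i a_i m_i,\,\sum_i a_i)$ with $a_i \in \N$, so the condition that the second coordinate equals $d$ forces $\sum_i a_i = d$, and the first coordinate then ranges precisely over the $d$-fold Minkowski sum $d\A$. For $d < 0$ no such representation exists, so the graded piece vanishes. The main thing to track carefully is the compatibility between the standard grading on $\C[x_1,\ldots,x_s]$ and the projection-to-second-factor grading on $\C[M \oplus \Z]$; once the pullback map $x_i \mapsto \chi^{(m_i,1)}$ is written down this compatibility is transparent, so I expect no real obstacle beyond this verification.
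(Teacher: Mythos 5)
Your proof is correct and follows essentially the same route as the paper: identify $\C[X_\A]$ with $\C[Y_{\widehat{\A}}]$ via Proposition \ref{prop:Ahat}, apply Proposition \ref{prop:semigroup} to get the semigroup algebra with $[x_i] \mapsto \chi^{(m_i,1)}$, and observe that this map transports the standard grading. You merely spell out the two points the paper leaves implicit — invoking Proposition \ref{prop:affinehyper} with $u = e_{n+1}$ to justify that $I(Y_{\widehat{\A}})$ is homogeneous, and unwinding the combinatorics of the degree-$d$ piece — which is a reasonable and correct elaboration.
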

\begin{proof}
By Propositions \ref{prop:semigroup} and \ref{prop:Ahat}, $\C[X_\A] = \C[Y_{\widehat{\A}}] = \C[x_1, \ldots, x_s]/I(Y_{\widehat{\A}}) \simeq \C[\N \widehat{\A}]$. The last isomorphism is given by $[x_i] \mapsto \chi^{(m_i,1)}$, which induces the grading.
\end{proof}
\begin{example} \label{ex:kush}
Let $\A = \{0,2,3\} \subset \Z$, such that $\widehat{\A} = \{(0,1),(2,1),(3,1) \}$. The semigroup $\N \widehat{\A}$ is shown in \cite[Fig.~5]{sottile2017ibadan}. The number of characters $\chi^{(m,d)} \in \C[\N \widehat{\A}]_d$ is $1, 3, 6, 9, 12, \ldots$ for $d = 0,1,2,3,4,\ldots$. These are the values of the Hilbert function ${\rm HF}_{X_\A}$ of $X_\A \subset \PP^2$. The Hilbert polynomial is ${\rm HP}_{X_\A} = 3d$, and it agrees with ${\rm HF}_{X_\A}$ for $d \geq 1$. The vanishing ideal $I(X_\A) = I(Y_{\widehat{\A}})$ is $\langle xz^2-y^3 \rangle$. 
\end{example}
The previous example illustrates the following corollary. 
\begin{corollary} \label{cor:HF}
The Hilbert function of $X_\A$ is given by ${\rm HF}_{X_\A}(d) = |d \A|$. 
\end{corollary}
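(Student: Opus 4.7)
The plan is to read this directly off the graded decomposition supplied by the preceding lemma. By Theorem~\ref{thm:hilbert}, the Hilbert function is defined by ${\rm HF}_{X_\A}(d) = \dim_\C \C[X_\A]_d$, so the task reduces to computing the $\C$-dimension of the $d$-th graded piece of the coordinate ring.

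First I would invoke the preceding lemma, which identifies $\C[X_\A]$ with the semigroup algebra $\C[\N\widehat{\A}] \subset \C[M \oplus \Z]$ and exhibits its grading as $\C[\N\widehat{\A}]_d = \bigoplus_{m \in d\A} \C \cdot \chi^{(m,d)}$ for $d \geq 0$ (and $0$ otherwise). The dimension of this vector space is the number of summands, namely the cardinality of the Minkowski-sum set $d\A = \{m_{i_1} + \cdots + m_{i_d} \mid 1 \leq i_1 \leq \cdots \leq i_d \leq s\}$.

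The one point that deserves a line of justification is that the characters $\{\chi^{(m,d)}\}_{m \in d\A}$ are $\C$-linearly independent. This follows because distinct exponents $(m,d) \in M \oplus \Z$ give distinct monomials in the Laurent polynomial ring $\C[M \oplus \Z]$, and linear independence of distinct characters of a torus is standard (equivalently, $\C[M \oplus \Z]$ is a polynomial-type domain where the $\chi^{(m,d)}$ form a $\C$-basis of $\C[M \oplus \Z]$). Hence the sum is a direct sum with exactly $|d\A|$ nonzero summands, each one-dimensional.

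Combining these observations, ${\rm HF}_{X_\A}(d) = \dim_\C \C[\N\widehat{\A}]_d = |d\A|$, which is the claim. I do not anticipate any obstacle: once the preceding lemma is in hand, the corollary is essentially a matter of counting basis elements. The only subtlety worth flagging is the implicit use of the fact that distinct elements of $d\A$ (as a \emph{set}) yield distinct characters, so that multiplicities in the Minkowski sum do not inflate the dimension beyond $|d\A|$.
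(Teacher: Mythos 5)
Your proof is correct and matches the approach the paper intends: the corollary is stated without a separate proof precisely because it follows immediately from the graded decomposition $\C[\N\widehat{\A}]_d = \bigoplus_{m \in d\A} \C \cdot \chi^{(m,d)}$ in the preceding lemma, and you have simply spelled out that counting the (linearly independent) summands gives $|d\A|$.
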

To link the leading coefficient of the Hilbert function to the volume of ${\rm Conv}(\A)$, we need some results from polyhedral geometry. Here is a famous result by Ehrhart \cite{Ehrhart}. 
\begin{theorem} \label{thm:ehrhart}
Let $P \subset \R^n$ be a convex lattice polytope of dimension $n$. The function $E_P : \N \rightarrow \N$ given by $d \mapsto |d \cdot P \cap \Z^n|$ is a polynomial with leading term ${\rm Vol}(P) \, d^{n}$. 
\end{theorem}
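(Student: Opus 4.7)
The plan is to handle lattice simplices first via a generating function computation, extend to arbitrary lattice polytopes through a half-open triangulation, and then pin down the leading coefficient with a Riemann sum argument.

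For the simplex case, suppose $\sigma = {\rm Conv}(v_0,\dots,v_n) \subset \R^n$ is a full-dimensional lattice simplex. I would form the cone $C_\sigma = {\rm Cone}((v_0,1),\dots,(v_n,1)) \subset \R^{n+1}$, so that $|d\sigma \cap \Z^n|$ equals the number of lattice points of $C_\sigma$ whose last coordinate is $d$. The half-open fundamental parallelepiped $\Pi_\sigma = \{\sum_{i=0}^n \lambda_i (v_i,1) : 0 \le \lambda_i < 1\}$ tiles $C_\sigma$ under nonnegative integer translations by the $(v_i,1)$, which gives the generating function identity
$$\sum_{d \ge 0} E_\sigma(d)\, t^d \;=\; \frac{h^*_\sigma(t)}{(1-t)^{n+1}},$$
where $h^*_\sigma(t) = \sum_{(x,j) \in \Pi_\sigma \cap \Z^{n+1}} t^j$ is a polynomial of degree at most $n$. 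Expanding $(1-t)^{-n-1} = \sum_d \binom{d+n}{n}\, t^d$ and matching coefficients shows that $E_\sigma(d)$ coincides with a polynomial in $d$ of degree at most $n$ for all $d \ge 0$.

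For a general lattice polytope $P$, the next step is to choose a triangulation into lattice $n$-simplices $\sigma_1, \dots, \sigma_k$ and then assemble a half-open decomposition $P = \bigsqcup_i \sigma_i^\circ$, where each $\sigma_i^\circ$ is $\sigma_i$ with a chosen subset of facets removed so that the pieces are pairwise disjoint. The standard device for producing this is a line shelling from a generic interior point, which canonically specifies which facets to strip from each simplex. A mild variant of the parallelepiped argument above applies to each half-open piece and shows that $E_{\sigma_i^\circ}(d)$ is again a polynomial of degree at most $n$. Summing, $E_P(d) = \sum_i E_{\sigma_i^\circ}(d)$ is a polynomial of degree at most $n$.

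For the leading coefficient, I would rewrite
$$\frac{E_P(d)}{d^n} \;=\; \frac{|dP \cap \Z^n|}{d^n} \;=\; \sum_{y \in \frac{1}{d}\Z^n} \mathbf{1}_P(y) \cdot d^{-n}.$$
The right-hand side is exactly a Riemann sum with mesh $1/d$ for the integral $\int_{\R^n} \mathbf{1}_P\, dx = {\rm Vol}(P)$. Because $\partial P$ is a finite union of $(n-1)$-dimensional faces and hence Lebesgue $n$-negligible, the sum converges to ${\rm Vol}(P)$ as $d \to \infty$. Combined with the polynomiality already established, this forces the leading term of $E_P$ to be ${\rm Vol}(P)\, d^n$. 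The main obstacle is executing the half-open triangulation step cleanly, i.e.\ producing a genuinely disjoint decomposition whose pieces each admit the parallelepiped tiling. The line-shelling construction works, but verifying disjointness requires some polyhedral-combinatorial bookkeeping; everything else in the argument is a transparent specialization of standard tools.
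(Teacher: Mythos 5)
The paper does not actually prove Theorem~\ref{thm:ehrhart}; it cites Ehrhart and moves on, so there is no internal proof to compare against. Your argument is, in substance, the standard Ehrhart--Stanley proof, and it is correct at the level of detail you give. The cone-over-the-simplex generating function identity, the observation that $h^*_\sigma$ has degree at most $n$ because the half-open parallelepiped has heights $0,\dots,n$, and the conclusion that $E_\sigma(d)=\sum_j h^*_j\binom{d-j+n}{n}$ is polynomial, are all exactly right. The passage to general $P$ via a half-open triangulation is also the right tool; you correctly flag that producing the disjoint decomposition (e.g.\ via a line shelling or, equivalently, pointing each simplex towards a generic interior point) is where the bookkeeping lives, and that the parallelepiped argument adapts to half-open simplices with the heights shifted into $\{1,\dots,n+1\}$ on the removed coordinates. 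Your Riemann-sum argument for the leading coefficient is a clean way to finish: $\mathbf{1}_P$ is Riemann integrable because $\partial P$ is a finite union of polytopes of dimension $\le n-1$, hence Lebesgue null, so the scaled lattice-point counts converge to ${\rm Vol}(P)$, and polynomiality then forces the leading term. A small remark: one can also read the leading coefficient off the generating function directly, since for a simplex $h^*_\sigma(1)$ equals the number of lattice points in the fundamental parallelepiped, which is the index $n!\,{\rm Vol}(\sigma)$, giving leading coefficient $h^*_\sigma(1)/n! = {\rm Vol}(\sigma)$; additivity of volume over the triangulation then yields ${\rm Vol}(P)$ without any analysis. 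Either route is fine; the Riemann sum avoids knowing the lattice-index fact, while the algebraic route keeps the whole proof combinatorial.
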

\begin{examplestar} \label{ex:chiara3}
The Ehrhart polynomial of a polytope $P$ can be computed using \texttt{Oscar.jl}. We illustrate this and some more functionalities for the polytope $P = {\rm Conv} ((0,0),(1,0),(0,1),(2,1),(1,2))$ in Figure \ref{fig:output2}.
\begin{figure}
\centering
\includegraphics[scale=0.3]{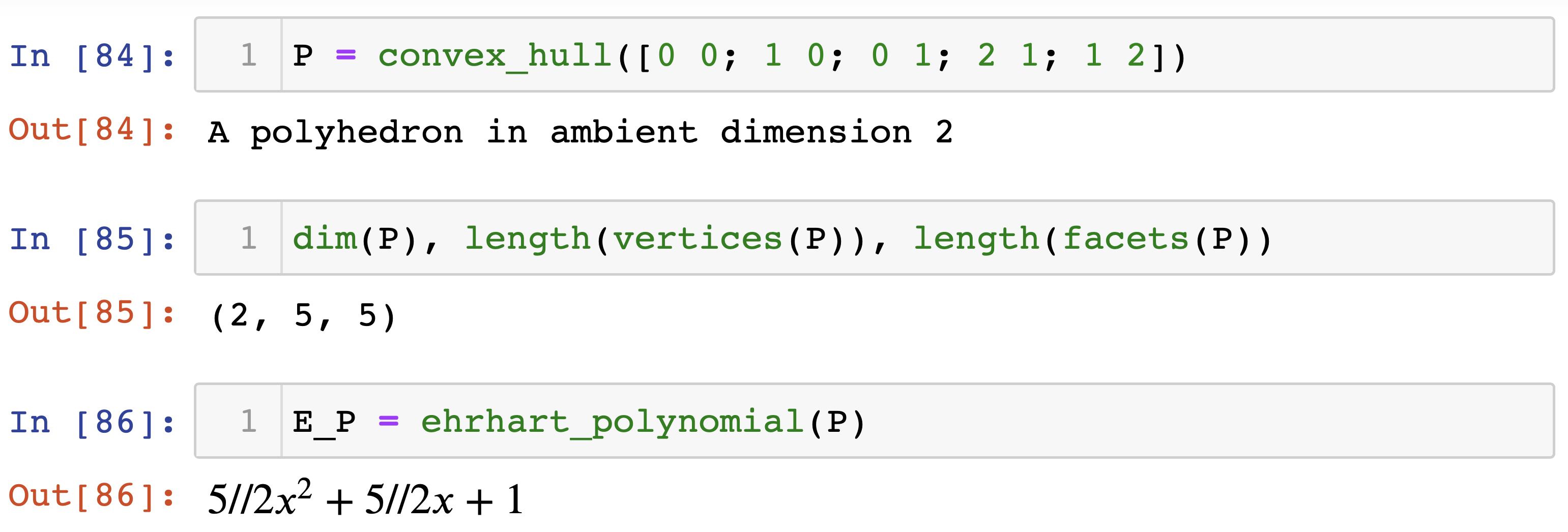}
\caption{The polygon $P$ has dimension 2. It has 5 vertices and 5 edges, and its Ehrhart polynomial is $\frac{5}{2} x^2 + \frac{5}{2} x + 1$.}
\label{fig:output2}
\end{figure}
\end{examplestar}
Note that we have the obvious inequality ${\rm HF}_{X_\A}(d) \leq E_P(d)$, for $d \geq 0$ and $P = {\rm Conv}(\A)$. If equality holds, we are done, since combining Theorems \ref{thm:hilbert} and \ref{thm:ehrhart} would give $\deg(X_\A) = n! {\rm Vol}(P)$. In general, the inequality may be strict for all $d$, see Example \ref{ex:kush}. However, the Hilbert and Ehrhart polynomials do have the same leading term, which is our strategy for proving Theorem \ref{thm:kush}.

Let $\A$ be as in \eqref{eq:assumptionkush} and $\sigma^\vee = {\rm Cone}(\widehat{\A}) \subset \R^{n+1}$. This gives a semigroup $\S_\sigma = \sigma^\vee \cap \Z^{n+1}$. 
\begin{remark} \label{rem:EhrhartVSHilbert}
Note that ${\rm HF}_{X_\A}(d) = E_P(d), d \geq 0$ if and only if $\S_\sigma$ is saturated in $M \oplus \Z$. In this case $X_\A$ is \emph{projectively normal}, see Exercise \ref{ex:projnormal}.
\end{remark}

\begin{lemma} \label{lem:trick}
If $\A$ satisfies \eqref{eq:assumptionkush}, there exists $m \in \N \widehat{\A}$ such that $m + \S_\sigma \subset \N \widehat{\A}$.
\end{lemma}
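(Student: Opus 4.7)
The plan is to exploit the fact that $\N\widehat{\A}$ and $\S_\sigma$ generate the same rational convex cone $\sigma^\vee$ and (under the assumption $\Z' \A = \Z^n$) the same lattice $\Z^{n+1}$. Consequently $\S_\sigma$ is ``not much bigger'' than $\N\widehat{\A}$ -- precisely, $\S_\sigma$ can be written as a finite union of translates of $\N\widehat{\A}$ -- and this is what will let us find the desired $m$.

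First, I would establish the finiteness fact: there is a finite set $R \subset \S_\sigma$ such that $\S_\sigma = R + \N\widehat{\A}$. To see this, apply Gordan's lemma to obtain generators $v_1,\dots,v_t$ of $\S_\sigma$. Since each $v_i$ lies in $\sigma^\vee = {\rm Cone}(\widehat{\A})$, clearing denominators in a nonnegative rational expression for $v_i$ in terms of the $\widehat{m}_j$ produces an integer $k_i \geq 1$ with $k_iv_i \in \N\widehat{\A}$. Let $k$ be a common multiple of the $k_i$. For any $v = \sum_i a_i v_i \in \S_\sigma$, write $a_i = k q_i + r_i$ with $0\leq r_i < k$; this decomposes $v$ as $\sum_i q_i(kv_i) + \sum_i r_iv_i$, where the first summand lies in $\N\widehat{\A}$ and the second belongs to the finite set $R = \bigl\{\sum_i r_iv_i : 0\le r_i < k\bigr\}$.

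Next I would use \eqref{eq:assumptionkush} to write each $r \in R$ as a difference of elements of $\N\widehat{\A}$. The hypothesis $\Z'\A = \Z^n$ implies $\Z\widehat{\A} = \Z^{n+1}$: the differences $\widehat{m}_i - \widehat{m}_j = (m_i - m_j, 0)$ span $\Z'\A \oplus 0 = \Z^n \oplus 0$, and $\widehat{m}_1 = (m_1,1)$ contributes the missing last coordinate. Since every element of the group $\Z\widehat{\A}$ generated by $\N\widehat{\A}$ decomposes into positive and negative parts, for each $r\in R$ we may choose $a_r, b_r \in \N\widehat{\A}$ with $r = a_r - b_r$.

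Finally, set
\[ m \;=\; \sum_{r \in R} b_r \;\in\; \N\widehat{\A}. \]
For any $r_0 \in R$, we have $m + r_0 = (b_{r_0} + r_0) + \sum_{r\neq r_0} b_r = a_{r_0} + \sum_{r\neq r_0} b_r \in \N\widehat{\A}$. Now an arbitrary $v \in \S_\sigma$ can be written as $v = r + u$ with $r \in R$ and $u \in \N\widehat{\A}$ by Step~1, giving $m + v = (m+r) + u \in \N\widehat{\A}$. The main obstacle is the first step, which requires the ``module-finiteness'' of $\S_\sigma$ over $\N\widehat{\A}$; this is essentially the semigroup-theoretic shadow of the fact that $\C[\S_\sigma]$ is a finite module over $\C[\N\widehat{\A}]$ (it is the normalization in the common fraction field), and the trick above simply packages this elementarily.
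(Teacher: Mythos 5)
Your proof is correct, and it takes a genuinely different route than the paper's. The paper works directly with the half-open zonotope $\mathcal{B} = \{\sum_i \lambda_i(m_i,1) : \lambda_i \in [0,1)\cap\Q\}\cap\Z^{n+1}$ as a fundamental domain: it fixes, for each $b\in\mathcal B$, an integral representation $b = \sum_i a_i(b)(m_i,1)$ (possible since $\Z'\A=\Z^n$ forces $\Z\widehat{\A}=\Z^{n+1}$), chooses $\nu$ dominating all the $-a_i(b)$, and sets $m = \nu\sum_i(m_i,1)$; then any $m'\in m+\S_\sigma$ is handled in one step by splitting the rational coefficients of $m'-m$ into their fractional and integer parts. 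Your argument factors this into two cleaner pieces: first a ``module-finiteness'' statement $\S_\sigma = R + \N\widehat{\A}$ with $R$ finite, obtained from Gordan's lemma generators of $\S_\sigma$ plus a common-denominator trick; second, the observation that once $\Z\widehat{\A}=\Z^{n+1}$ is known, each $r\in R$ has a decomposition $r = a_r - b_r$ and $m=\sum_r b_r$ works. Both proofs hinge on exactly the same use of the hypothesis (to get $\Z\widehat{\A}=\Z^{n+1}$), and both are elementary. The paper's version avoids invoking Gordan's lemma (it gets finiteness for free from the boundedness of the zonotope) and produces a very explicit $m$; yours isolates a reusable structural fact about $\S_\sigma$ versus $\N\widehat{\A}$ and, as you note, makes transparent the analogy with $\C[\S_\sigma]$ being module-finite over $\C[\N\widehat{\A}]$ as its normalization — a nice conceptual bonus.
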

\begin{proof}
Let ${\cal B} = \{ \sum_{i=1}^s \lambda_i (m_i,1) \in \Z^{n+1} ~|~ \lambda_i \in [0,1) \cap \Q \}$. For a geometric interpretation, see Example \ref{ex:zonotope}. For each $b \in {\cal B}$, fix $a_i(b) \in \Z$ such that $b = \sum_{i =1}^s a_i(b) (m_i,1)$. Note that this is possible by the assumption $\Z' \A = \Z^n$. Fix $\nu \in \N$ such that $-\nu \leq a_i(b)$ for all $b \in {\cal B}$ and $i = 1, \ldots, s$. We define $m = \nu \, \sum_{i =1}^s (m_i, 1)$ and show that $m + \S_\sigma \subset \N \widehat{\A}$. 

If $m' \in m + \S_\sigma$, there exist $\alpha_i \in \Q_{\geq 0}$ such that $m' - m = \sum_{i=1}^s \alpha_i \, (m_i, 1)$. Set $\alpha_i = \lambda_i +  \gamma_i $ where $\lambda_i \in [0,1) \cap \Q$ and $\gamma_i \in \N$. Then $m'-m = \sum_{i=1}^s \lambda_i (m_i,1) + \sum_{i=1}^s \gamma_i (m_i,1)$, so that 
\begin{align*}
m' &= m + \sum_{i=1}^s \lambda_i (m_i,1) + \sum_{i=1}^s \gamma_i (m_i,1) = \sum_{i = 1}^s (\nu + a_i(b)) (m_i,1) + \sum_{i=1}^s \gamma_i (m_i,1),
\end{align*}
where $b = \sum_{i=1}^s \lambda_i (m_i,1) \in {\cal B}$, and both sums are in $\N \widehat{\A}$ by construction. 
\end{proof}
\begin{proof}[Proof of Theorem \ref{thm:kush}]
By the proof of Lemma \ref{lem:trick}, there exist $\tilde{m} \in \Z^n$, $e \in \N$ such that $m =  (\tilde{m},e) \in \S_\sigma$ satisfies $m + \S_\sigma \subset \N \widehat{\A}$. Therefore, for $d \geq e$ we have
\[ |(d - e) \cdot P \cap \Z^n| \leq |d \A| \leq |d \cdot P \cap \Z^n|. \]
In terms of Ehrhart polynomials and Hilbert functions, by Corollary \ref{cor:HF} this reads
\[ E_{P}(d-e) \leq {\rm HF}_{X_\A}(d) \leq E_P(d) \]
and for $d \gg 0$ we may replace $ {\rm HF}_{X_\A}(d)$ by  ${\rm HP}_{X_\A}(d)$. This implies that $ {\rm HP}_{X_\A}(d)$ and $E_P(d)$ have the same leading term, which implies Theorem \ref{thm:kush} by Theorems \ref{thm:hilbert} and \ref{thm:ehrhart}.
\end{proof}

\begin{example} \label{ex:zonotope}
The set ${\cal B}$ from the proof of Lemma \ref{lem:trick} contains the origin in $\Z^{n+1}$, and the interior lattice points of a \emph{zonotope} obtained by taking the Minkowski sum (see Definition \ref{def:minkowski}) of the line segments connecting $0$ with $(m_i,1)$. This is shown in \cite[Fig.~5]{sottile2017ibadan}.
\end{example}

\begin{examplestar} \label{ex:chiara4}
Let $\A = \{(0,0),(1,0),(0,1),(2,1),(1,2)\}$. The polytope $P = {\rm Conv}(\A)$ was computed in \texttt{Oscar.jl} in Example* \ref{ex:chiara3}. It is the pentagon shown in Figure \ref{fig:ex_7}. By Kushnirenko's theorem, the surface $X_\A \subset \PP^5$ has degree 5. This can be computed in \texttt{Oscar.jl} using the command \texttt{2*volume(P)}.
\begin{figure}[h]
    \centering
 \begin{tikzpicture}[scale=1]
\begin{axis}[%
width=2.0in,
height=2.0in,
scale only axis,
xmin=-1.2,
xmax=3.2,
ymin=-1.2,
ymax=3.2,
ticks = none, 
ticks = none,
axis background/.style={fill=white},
axis line style={draw=none} 
]

\draw[->,>=stealth] (axis cs:-1.1,0) -- (axis cs:3.2,0);
\draw[->,>=stealth] (axis cs:0,-1.1) -- (axis cs:0,3.2);

\addplot [color=mycolor1,solid,fill opacity=0.2,fill = mycolor1,forget plot]
  table[row sep=crcr]{%
0   0\\
0	1\\
1	2\\	
2   1\\
1   0\\
0	0\\
};

\addplot [color=mycolor5,solid,fill opacity=0.2,fill = mycolor5,forget plot]
  table[row sep=crcr]{%
0.03	1\\
1	1.97\\	
1.97   1\\
1   0.03\\
0.03	1\\
};

\addplot[only marks,mark=*,mark size=1.5pt,mycolor1,
        ]  coordinates {
    (0,0) (0,1) (1,0) (2,1) (1,2) (1,1)
};

\addplot[only marks,mark=o,mark size=2.3pt,thick,mycolor5,
        ]  coordinates {
    (0,1) (1,0) (2,1) (1,2) (1,1)
};

\addplot[only marks,mark=*,mark size=1.5pt,black,
        ]  coordinates {
    (-1,-1) (-1,0) (-1,1) (-1,2) (-1,3)
    (0,-1) (0,2) (0,3)
    (1,-1) (1,3)
    (2,-1) (2,0) (2,2) (2,3)
    (3,-1) (3,0) (3,1) (3,2) (3,3)
};

\end{axis}
\end{tikzpicture}
    \caption{The polygons $P = {\rm Conv}(\A)$ and $P' = {\rm Conv}(\A')$ from Example* \ref{ex:chiara4}.}
    \label{fig:ex_7}
\end{figure}
We verify this using Corollary \ref{cor:unmixed} and \texttt{HomotopyContinuation.jl} (v2.6.4) \cite{breiding2018homotopycontinuation}. We construct random polynomials $f_1, f_2$ as follows: 
\begin{minted}{julia}
using HomotopyContinuation, LinearAlgebra
@var t[1:2]
A = [0 0; 1 0; 0 1; 2 1; 1 2]
monomials = [prod(t.^A[:,i]) for i = 1:5]; coeffs = [randn(5) for i = 1:2];
f = [dot(coeffs[i], monomials) for i = 1:2]
\end{minted}
The command \texttt{solve(f)} returns five solutions. 
Alternatively, we can count the number of points in $L \cap X_\A \cap T_\A$ as in the proof of Corollary \ref{cor:unmixed}. We compute the \texttt{toric\_ideal} associated to $\widehat{\A}$ using \texttt{Oscar.jl}; we get the ideal of $\C[x_1,x_2,x_3,x_4,x_5]$ generated by
\[ x_3x_4-x_2x_5, \, x_2x_3^2-x_1^2x_5, \, x_2^2x_3-x_1^2x_4, \, x_1^2x_4^2 - x_2^3x_5.\]
These are the equations of $X_\A \subset \PP^4$. The degree of $X_\A$ counts the number of intersection points with a generic linear space of codimension $2$. This is obtained as follows:
\begin{minted}{julia}
@var x[1:5]
systemA = [x[3]*x[4]-x[2]*x[5]; x[2]*x[3]^2-x[1]^2*x[5]; 
          x[2]^2*x[3]-x[1]^2*x[4];  x[1]^2*x[4]^2 - x[2]^3*x[5];
          dot(coeffs[1],x); dot(coeffs[2],x); x[1]-1]
\end{minted}
Note the dehomogenization $x_1 = 1$. Entering \texttt{solve(systemA)} gives $5$ solutions. They are the images of the previously computed solutions in $(\C^*)^2$ under $\pi \circ \Phi_{\A}$ from Section \ref{sec:projmonom}.

We now remove the point $(0,0)$ from $\A$ to obtain $\A'$, so that the assumption \eqref{eq:assumptionkush} is no longer satisfied. The degree of $X_{\A'} \subset \PP^4$ is 2, while the normalized volume of $P' = {\rm Conv}(\A')$ in $\R^2$ is 4. The number of solutions to a new set of equations with support in $\A'$ is 4. This discrepancy is due to the fact that the map $(\pi \circ \Phi_{\A'}): (\C^*)^2 \rightarrow T_{X_{\A'}}$ is two-to-one, since $\Z' \A$ has index 2 in $\Z^2$. See Exercise \ref{ex:twotoone}.
\end{examplestar}
\begin{remark}
Another geometric invariant of $X_\A$ that is encoded by $P = {\rm Conv}(\A)$ is its dimension. By Proposition \ref{prop:charlatticeproj} and the fact that the rank of $\Z' \A$ is equal to the dimension of $P$, we have $\dim X_\A = \dim P$. This holds regardless of the assumption \eqref{eq:assumptionkush}.
\end{remark}

\subsection{Affine pieces of a projective toric variety}
Let $U_i = \PP^{s-1} \setminus V(x_i)$ for $i = 1, \ldots, s$ be the standard affine charts of $\PP^{s-1}$. Recall that $U_i \simeq \C^{s-1}$ via 
\begin{equation} \label{eq:dehom}
(x_1: \cdots: x_s) \mapsto \left ( \frac{x_1}{x_i}, \ldots, \frac{x_{i-1}}{x_i}, \frac{x_{i+1}}{x_i}, \ldots, \frac{x_s}{x_i} \right ).
\end{equation}
Note that $T_{\PP^{s-1}}  = \bigcap_{i=1}^s U_i$. Since $T_{X_\A} = T_{\PP^{s-1}} \cap X_\A \subset U_i$ and $X_\A$ is the Zariski closure in $\PP^{s-1}$ of $T_{X_\A}$, the affine piece $X_\A \cap U_i$ is the Zariski closure in $U_i$ of $T_{X_\A}$. Composing $T \overset{\Phi_\A}{\longrightarrow} U_i$ with \eqref{eq:dehom} we obtain the map 
\[ t \mapsto (\chi^{m_1-m_i}(t), \ldots, \chi^{m_{i-1}-m_i}(t), \chi^{m_{i+1}-m_i}(t), \ldots, \chi^{m_s-m_i}(t))\]
from which we immediately see the following. 
\begin{proposition} \label{prop:affcharts}
The $i$-th affine piece $X_\A \cap U_i$ of $X_\A$ is isomorphic to the affine toric variety $Y_{\A_i} \simeq \Specm(\C[\S_i])$ where $\A_i = \A - m_i = \{m_1-m_i, \ldots, m_{i-1}-m_i, m_{i+1}-m_i,\ldots,m_s\}$ and $\S_i = \N \A_i$. 
\end{proposition}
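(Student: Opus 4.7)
The plan is to chase the composition $T \xrightarrow{\Phi_\A} (\C^*)^s \xrightarrow{\pi} T_{\PP^{s-1}} \hookrightarrow U_i \xrightarrow{\sim} \C^{s-1}$ and recognize the resulting map as precisely $\Phi_{\A_i}$, then take Zariski closures on both sides.

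First I would argue that $X_\A \cap U_i$ equals the Zariski closure of $T_{X_\A}$ inside $U_i$. Since $T_{X_\A} = X_\A \cap T_{\PP^{s-1}}$ and $T_{\PP^{s-1}} \subset U_i$, we have $T_{X_\A} \subset X_\A \cap U_i$. Moreover, $T_{X_\A}$ is an open dense subset of the irreducible variety $X_\A$ (by the proposition preceding Section \ref{sec:projmonom}), and $X_\A \cap U_i$ is open in $X_\A$, so $T_{X_\A}$ is dense in $X_\A \cap U_i$. Equivalently, since taking closure commutes with restriction to an open subset, $X_\A \cap U_i = \overline{T_{X_\A}}^{\PP^{s-1}} \cap U_i = \overline{T_{X_\A}}^{U_i}$.

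Next I would transport $T_{X_\A}$ through the isomorphism $U_i \simeq \C^{s-1}$ given by \eqref{eq:dehom}. By definition, $T_{X_\A} = (\pi \circ \Phi_\A)(T)$, and the already computed composition with \eqref{eq:dehom} reads
\[
t \longmapsto \bigl(\chi^{m_1-m_i}(t),\ldots,\chi^{m_{i-1}-m_i}(t),\chi^{m_{i+1}-m_i}(t),\ldots,\chi^{m_s-m_i}(t)\bigr),
\]
which is exactly the monomial map $\Phi_{\A_i}:T \to \C^{s-1}$ attached to the translated set $\A_i = \A - m_i$ (with the redundant $0 = m_i - m_i$ entry omitted). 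Taking Zariski closures in $\C^{s-1}$ therefore yields
\[
X_\A \cap U_i \;\simeq\; \overline{\Phi_{\A_i}(T)} \;=\; Y_{\A_i}.
\]

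Finally, the identification $Y_{\A_i} \simeq \Specm(\C[\S_i])$ with $\S_i = \N\A_i$ is immediate from Proposition \ref{prop:semigroup} applied to the finite subset $\A_i \subset M$. I do not expect any serious obstacle here; the only subtlety worth stating explicitly is the interchange of ``closure in $U_i$'' with ``closure in $\PP^{s-1}$ restricted to $U_i$'', which is valid because $U_i$ is open in $\PP^{s-1}$ and $T_{X_\A}$ is contained in $U_i$. The whole argument is essentially bookkeeping on top of the dehomogenization formula \eqref{eq:dehom} together with Propositions \ref{prop:charLattice}--\ref{prop:semigroup}.
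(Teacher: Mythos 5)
Your argument matches the paper's essentially step for step: establish that $X_\A \cap U_i$ is the closure of $T_{X_\A}$ in $U_i$, compute the composition of $\pi\circ\Phi_\A$ with the dehomogenization chart to recognize it as $\Phi_{\A_i}$, then take closures and invoke Proposition \ref{prop:semigroup}. This is exactly the derivation the paper gives in the discussion immediately preceding the proposition, so the proposal is correct and takes the same route.
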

\begin{remark} \label{rem:sametorus}
Note that the character lattice $\Z' \A$ of $T_{X_\A}$ is the same as the character lattice $\Z \A_i$ of the dense torus of $X_A \cap U_i \simeq \C[\S_i]$, as expected. See Exercise \ref{ex:afflattice}.
\end{remark}
For an affine variety $V$ with coordinate ring $\C[V]$ we write $V_f$ for the affine variety $\Specm(\C[V]_f)$, $f \in \C[V]$. The inclusions
\[ X_\A \cap U_i \supset X_\A \cap U_i \cap U_j \subset X_\A \cap U_j \]
 can be described nicely in terms of the semigroup algebras $\C[\S_i], \C[\S_j]$ as follows. For the inclusion $X_\A \cap U_i \supset X_\A \cap U_i \cap U_j$, note that 
 \[ X_\A \cap U_i \cap U_j = \{ x \in U_i ~|~ x_j/x_i \neq 0 \} = (X_\A \cap U_i)_{[x_j/x_i]} \simeq \Specm(\C[\S_i]_{\chi^{m_j-m_i}}). \]
 Hence $X_\A \cap U_i \supset X_\A \cap U_i \cap U_j$ is given by the inclusion of $\C[\S_i]$ in its localization $\C[\S_i]_{\chi^{m_j-m_i}}$. Arguing analogously for $X_\A \cap U_j$, we obtain 
 \[ \C[\S_i] \subset \C[\S_i]_{\chi^{m_j-m_i}} \simeq \C[\S_j]_{\chi^{m_i-m_j}} \supset \C[\S_j]. \]
 This will help us prove another nice connection between $P = {\rm Conv}(\A) \subset M_\R$ and our projective toric variety $X_\A$.
 
 \begin{proposition} \label{prop:projcover}
 Let $\A = \{m_1, \ldots, m_s\} \subset M$, $P = {\rm Conv}(\A) \subset M_\R$ and ${\cal V} = \{ i ~|~ m_i \in \A \text{ is a vertex of $P$} \}$.  We have 
 \[ X_\A = \bigcup_{j=1}^s X_\A \cap U_j = \bigcup_{i \in {\cal V}} X_\A \cap U_i.\] 
 \end{proposition}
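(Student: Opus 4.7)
The plan is to reduce the second equality to a single ideal-theoretic statement in the semigroup algebra $\C[\S_j]$, and then verify that statement using a convex-geometric argument on the face structure of $P$. The first equality is immediate from $X_\A \subset \PP^{s-1} = \bigcup_{j=1}^s U_j$, and the $\supseteq$ direction of the second equality is trivial since $\mathcal{V} \subseteq \{1, \ldots, s\}$, so the content is to show that for each $j \in \{1, \ldots, s\}$, the affine piece $X_\A \cap U_j$ is already contained in $\bigcup_{i \in \mathcal{V}} X_\A \cap U_i$.

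Using the isomorphism $X_\A \cap U_j \simeq \Specm(\C[\S_j])$ from Proposition \ref{prop:affcharts}, where $\S_j = \N(\A - m_j)$, a point $p$ of the chart $U_j$ lies in $U_i$ precisely when $\chi^{m_i - m_j}(p) \neq 0$. By the Nullstellensatz, the covering statement is thus equivalent to the unit-ideal condition
\[
1 \,\in\, I \,:=\, \langle\, \chi^{m_i - m_j} : i \in \mathcal{V}\,\rangle \,\subset\, \C[\S_j].
\]
I would establish this by exhibiting a single vertex $i^* \in \mathcal{V}$ with $m_j - m_{i^*} \in \S_j$: then $\chi^{m_j - m_{i^*}}$ is an honest element of $\C[\S_j]$, and the identity $\chi^{m_{i^*} - m_j} \cdot \chi^{m_j - m_{i^*}} = \chi^0 = 1$ certifies $1 \in I$.

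To produce $i^*$, let $F$ be the smallest face of $P$ containing $m_j$, so $m_j$ lies in the relative interior of $F$, and let $\mathcal{V}_F$ be the set of vertex indices of $F$ (note $\mathcal{V}_F \subseteq \mathcal{V}$ and $\mathcal{V}_F \neq \emptyset$ since any nonempty face of a polytope has a vertex). For any chosen $i^* \in \mathcal{V}_F$, the set $P_{m_j}$ of convex combinations $\lambda = (\lambda_i)_{i \in \mathcal{V}_F}$ with $\sum_i \lambda_i m_i = m_j$ is a nonempty rational polytope, and the relative-interior hypothesis guarantees it is not contained in $\{\lambda_{i^*} = 0\}$: one can move from $m_{i^*}$ slightly past $m_j$ while remaining in $F$, then reassemble a convex combination with positive coefficient on $m_{i^*}$. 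Density of rational points in the rational polytope $P_{m_j}$ then yields a \emph{rational} convex combination $m_j = \sum_{i \in \mathcal{V}_F} \lambda_i m_i$ with $\lambda_{i^*} > 0$. Clearing denominators gives $L\, m_j = m_{i^*} + \sum_k \tilde n_k m_k$ with $\tilde n_k \in \N$ and $\sum_k \tilde n_k = L - 1$, from which
\[
m_j - m_{i^*} \,=\, \sum_k \tilde n_k (m_k - m_j) \,\in\, \N(\A - m_j) = \S_j,
\]
as required.

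The step I expect to demand the most care is the convex-geometric claim $P_{m_j} \not\subset \{\lambda_{i^*} = 0\}$: that every vertex of the minimal face through $m_j$ can be given a strictly positive coefficient in some convex representation of $m_j$. This is handled by the short perturbation argument inside the relative interior of $F$ sketched above. Once this is in place, the rationality step (density of rational points in a rational polytope) and the denominator-clearing manipulation are routine, and the remainder is a purely formal computation in the semigroup algebra.
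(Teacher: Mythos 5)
Your proof is correct and follows the same essential strategy as the paper: reduce the second equality to finding, for each $j$, a vertex index $i^*\in\mathcal{V}$ with $m_j - m_{i^*}\in\S_j$, so that $\chi^{m_{i^*}-m_j}$ is a unit in $\C[\S_j]$ and $X_\A\cap U_j\cap U_{i^*}\simeq X_\A\cap U_j$. The algebraic manipulation (clearing denominators in a rational convex representation of $m_j$ by the vertices, then telescoping to write $m_j-m_{i^*}$ as an $\N$-combination of $\A - m_j$) is likewise the same.

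The one place your argument is more elaborate than it needs to be is the production of $i^*$. You restrict to the minimal face $F$ of $P$ through $m_j$ and prove the strictly stronger claim that \emph{every} vertex of $F$ can be given a positive coefficient in some rational convex representation of $m_j$, which requires the perturbation argument in the relative interior of $F$. This is correct but unnecessary: the paper simply writes $m_j = \sum_{i\in\mathcal{V}} r_i m_i$ as a rational convex combination over all vertices of $P$ (which exists because the constraint polytope $\{\lambda\ge 0,\ \sum\lambda_i m_i = m_j,\ \sum\lambda_i=1\}$ is a nonempty rational polytope and hence has a rational point) and then picks \emph{any} $i^*$ with $r_{i^*}>0$, which is automatic since the coefficients sum to $1$. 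Your minimal-face detour does give a slightly sharper statement (you know exactly which charts suffice, namely those indexed by vertices of the minimal face), but the proposition only asks for a covering, so the simpler argument is enough.
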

 \begin{proof}
 The first equality is the standard covering of $X_\A$ induced by $\PP^{s-1} = \bigcup_{i=1}^s U_i$. For the second equality, we show that $X_\A \cap U_j \subset U_{i^*}$ for some $i^* \in {\cal V}$. Let $M_\Q = M \otimes_\Z \Q$. Since $m_j \in P \cap M_\Q$, there exist $r_i \in \Q_{\geq 0}, i \in {\cal V}$ such that $m_j =  \sum_{i\in {\cal V}} r_i m_i$ and $\sum_{i \in {\cal V}} r_i = 1$. Clearing denominators gives 
 \[ k m_j = \sum_{i \in {\cal V}} k_i m_i, \quad\text{ with } k, k_i \in \N, \,  \sum_{i \in {\cal V}} k_i = k. \]
 This gives $\sum_{i \in {\cal V}} k_i (m_j - m_i) = 0$. We now choose $i^*$ such that $k_{i^*} > 0$ and rewrite
 \[ m_j - m_{i^*} = \sum_{i \in {\cal V} \setminus \{i^*\}} k_i (m_i - m_j) + (k_{i^*} -1)(m_{i^*}-m_j), \]
 which shows that $m_j-m_{i^*} \in \S_j$. Therefore $\chi^{m_{i^*}-m_j}$ is invertible in $\C[\S_j]$, and $\C[\S_j]_{\chi^{m_{i^*}-m_j}} = \C[\S_j]$. The statement follows from Proposition \ref{prop:affcharts} and
 \[ X_\A \cap U_{i^*} \cap U_j \simeq \Specm(\C[\S_j]_{\chi^{m_{i^*}-m_j}}) = \Specm(\C[\S_j]) \simeq X_\A \cap U_j. \qedhere \]
 \end{proof}
 
 \subsection{Very ample and normal polytopes}
 In previous sections we constructed a projective toric variety $X_\A$ from a set of lattice points $\A$, and showed that the polytope ${\rm Conv}(\A)$ encodes many of its properties. In this section, we start from a convex lattice polytope $P$ and associate a projective toric variety to it by using all lattice points contained in it: $\A = P \cap M$. We will see that the nicest such varieties come from \emph{very ample} convex lattice polytopes.
 
 \begin{definition}[Very ample] \label{def:veryample}
 A convex lattice polytope $P \subset M_\R$ is \emph{very ample} if for every vertex $v \in P \cap M$, the semigroup $\N (P \cap M - v)$ is saturated in the lattice $M$. 
 \end{definition}
 
To each vertex $v$ of a polytope $P$ we associate the cone $\sigma_v^\vee = {\rm Cone}(P - v) \subset M_\R$. By Theorem \ref{thm:normal}, a very ample polytope has the property that for each vertex $v \in P \cap M$, the lattice points in $(P\cap M - v)$ suffice to generate the semigroup $\sigma_v^\vee \cap M$. 

Consider a full-dimensional, very ample lattice polytope $P \subset M_\R$. Let $P \cap M = \{ m_1, \ldots, m_s\}$ and let ${\cal V} = \{ i ~|~ m_i \in P \cap M \text{ is a vertex of } P \}$. By Proposition \ref{prop:projcover}, the toric variety $X_{P \cap M} \subset \PP^{s-1}$ is covered by $X_{P \cap M} \cap U_i, i \in {\cal V}$. We will write $\sigma_i = \sigma_{m_i} = (\sigma_{m_i}^\vee)^\vee$ for $i \in {\cal V}$ and $\S_i = \N(P \cap M - m_i)$. 

\begin{theorem} \label{thm:normalPTV}
With the above set-up, for every vertex $m_i, i \in {\cal V}$, $X_{P \cap M} \cap U_i $ is isomorphic to the normal affine toric variety $U_{\sigma_i}= \Specm(\C[\sigma_i^\vee \cap M])$ corresponding to the $n$-dimensional, strongly convex rational cone $\sigma_i$. The dense torus of $X_{P \cap M}$ has character lattice $M$. 
\end{theorem}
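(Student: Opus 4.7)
The plan is to combine Proposition \ref{prop:affcharts}, the very ampleness hypothesis, and Theorem \ref{thm:normal} to identify the affine piece $X_{P \cap M} \cap U_i$ with the cone construction $U_{\sigma_i}$, and then read off the character lattice from Proposition \ref{prop:7}.

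First, by Proposition \ref{prop:affcharts}, $X_{P\cap M} \cap U_i \simeq \Specm(\C[\S_i])$ with $\S_i = \N(P \cap M - m_i)$. Since $m_i$ is a vertex and $P$ is very ample, Definition \ref{def:veryample} tells us that $\S_i$ is saturated in $M$. Running the $2 \Rightarrow 3$ part of the proof of Theorem \ref{thm:normal} verbatim with $\A = P \cap M - m_i$, we find that $\S_i = \tau^\vee \cap M$ where $\tau^\vee = {\rm Cone}(P \cap M - m_i)$. Because $P$ is a lattice polytope with vertices in $\A$, the cone over the lattice points $P \cap M - m_i$ agrees with ${\rm Cone}(P - m_i) = \sigma_i^\vee$, so $\tau = \sigma_i$ and $X_{P\cap M} \cap U_i \simeq U_{\sigma_i}$.

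Next I would verify the three properties claimed for $\sigma_i$. Rationality of $\sigma_i$ follows from the fact that $\sigma_i^\vee$ is generated by lattice points, combined with Proposition \ref{prop:bigconesprop}-style duality. Strong convexity of $\sigma_i$ is equivalent (by Definition \ref{def:strongconvex}) to $\sigma_i^\vee$ being $n$-dimensional, which is immediate since $P$ is full-dimensional and so $P - m_i$ spans $M_\R$. The only genuinely new content is that $\dim \sigma_i = n$, equivalently that $\sigma_i^\vee$ is strongly convex. Here is where I would use in an essential way that $m_i$ is a \emph{vertex}, not merely a lattice point: by the definition of a vertex there is a supporting hyperplane $H_{u,a}$ with $H_{u,a} \cap P = \{m_i\}$, so $\langle u, m - m_i\rangle > 0$ for every $m \in P \setminus \{m_i\}$. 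Consequently $\langle u, v \rangle \geq 0$ for all $v \in \sigma_i^\vee$ with equality only at $v = 0$, which forces $\sigma_i^\vee \cap (-\sigma_i^\vee) = \{0\}$.

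For the last claim, apply Proposition \ref{prop:7} to the strongly convex $n$-dimensional cone $\sigma_i$: the torus of $U_{\sigma_i}$ has character lattice $M$. Since this torus coincides with the dense torus of $X_{P \cap M}$ (it is embedded in $X_{P\cap M} \cap U_i$ as its unique open orbit, cf.\ Remark \ref{rem:sametorus} and Proposition \ref{prop:charlatticeproj}), the conclusion follows. The main obstacle is really the vertex argument in the previous paragraph; everything else is a bookkeeping exercise matching the cone ${\rm Cone}(P - m_i)$ against the cone produced by the saturated-semigroup half of Theorem \ref{thm:normal}.
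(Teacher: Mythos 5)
Your proof is correct and follows essentially the same route as the paper: identify $X_{P\cap M}\cap U_i$ with $\Specm(\C[\S_i])$ via Proposition \ref{prop:affcharts}, use very ampleness plus the saturated-to-cone direction of Theorem \ref{thm:normal} to get $\S_i = \sigma_i^\vee \cap M$, check $\sigma_i$ is pointed and full-dimensional by duality, and invoke Proposition \ref{prop:7} together with Remark \ref{rem:sametorus} for the character lattice. The one place you add detail is the supporting-hyperplane argument that $\sigma_i^\vee$ is strongly convex because $m_i$ is a vertex, which the paper asserts without proof; that is a worthwhile elaboration, not a different approach.
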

\begin{proof}
The cone $\sigma_i^\vee = {\rm Cone}(P \cap M - m_i)$ is strongly convex of dimension $\dim P = n$. Hence, by Definition \ref{def:strongconvex}, $\sigma_i$ is strongly convex of dimension $n$ too. By very ampleness, $\S_i$ is saturated in $M$. Therefore $X_{P\cap M} \cap U_i \simeq \Specm(\C[\S_i]) = \Specm(\C[\sigma_i^\vee \cap M]) = U_{\sigma_i}$. By Proposition \ref{prop:7}, the torus of $U_{\sigma_i}$ has character lattice $M$. Hence, so does $T_{X_{P\cap M}}$, see Remark \ref{rem:sametorus}.
\end{proof}

A projective variety is called \emph{normal} if it is covered by normal affine varieties. Theorem \ref{thm:normalPTV} ensures that when $P$ is very ample, the projective toric variety $X_{P \cap M}$ is normal. Here is another property of convex lattice polytopes which is stronger than very ampleness.

\begin{definition}[Normal polytopes]
A convex lattice polytope $P$ is \emph{normal} if $(k\cdot P) \cap M + (\ell \cdot P) \cap M = ((k+\ell)\cdot P) \cap M$ for all $k, \ell \in \N$. 
\end{definition} 

\begin{exercise} \label{ex:projnormal}
Show that if $P$ is normal, the affine cone $Y_{\widehat{P \cap M}}$ over $X_{P \cap M}$ is a normal affine toric variety. Projective varieties whose affine cone is normal are called \emph{projectively normal}. 
\end{exercise}
The following properties will be useful, see \cite[Thm.~2.2.12, Prop.~2.2.18]{cox2011toric}. 

\begin{proposition} \label{prop:dilatenormal}
Let $P \subset M_\R$ be a full dimensional convex lattice polytope. Then $k \cdot P$ is normal for all $k \geq n-1$. 
\end{proposition}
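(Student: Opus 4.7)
The plan is to reduce the statement to the case of a lattice simplex via triangulation, and then to handle simplices through a careful analysis of barycentric coordinates.

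\textbf{Step 1 (Reduction to lattice simplices).} First I would fix a triangulation of $P$ into lattice $n$-simplices $\Delta_1, \ldots, \Delta_r$ whose vertices lie in $P \cap M$; such a triangulation exists (for instance, a pulling triangulation from $P \cap M$). Scaling by any positive factor produces a triangulation of the corresponding dilate, so every lattice point $m \in (a+b)kP \cap M$ lies in some $(a+b)k\Delta_i$. Because $\Delta_i \subset P$, any splitting $m = m_1 + m_2$ with $m_1 \in ak\Delta_i \cap M$ and $m_2 \in bk\Delta_i \cap M$ automatically yields $m_1 \in akP \cap M$ and $m_2 \in bkP \cap M$. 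Therefore it suffices to prove that, for every lattice $n$-simplex $\Delta$ and every $k \geq n-1$, the polytope $k\Delta$ is normal.

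\textbf{Step 2 (The simplex case via barycentric coordinates).} Fix such a simplex $\Delta$ with vertices $v_0, \ldots, v_n$, and let $m \in (a+b)k\Delta \cap M$. Write $m$ uniquely as $m = \sum_{i=0}^n \lambda_i v_i$ with $\lambda_i \geq 0$, $\sum_i \lambda_i = (a+b)k$, and separate integer and fractional parts:
\[ \lambda_i = \lfloor \lambda_i \rfloor + \{\lambda_i\}, \qquad \pi = \sum_{i} \{\lambda_i\}\, v_i, \qquad s = \sum_{i} \{\lambda_i\} \in \{0, 1, \ldots, n\}. \]
Then $\pi$ is a lattice point of $s\Delta$, being the difference of the lattice points $m$ and $\sum_i \lfloor \lambda_i \rfloor v_i$, and the integer weights $\lfloor \lambda_i \rfloor$ sum to $(a+b)k - s$. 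My strategy is to assign $\pi$ to one of two buckets of target sizes $ak$ and $bk$, and then greedily distribute the integer vertex weights between the buckets to reach exactly those totals, producing lattice points $m_1 \in ak\Delta \cap M$ and $m_2 \in bk\Delta \cap M$ with $m_1 + m_2 = m$.

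\textbf{Main obstacle.} The greedy distribution succeeds as soon as $\pi$ fits into a single bucket, i.e., whenever $s \leq k\min(a,b)$. Since $s \leq n$ and $k \geq n-1$, this covers everything except the extremal configuration $\min(a,b) = 1$, $s = n$, $k = n-1$, where $\pi$ sits at one level above the smaller bucket. This boundary case is the heart of the argument; the key tool for resolving it is the complementary lattice point $q = \sum_i v_i - \pi$, which lies in $\Delta \cap M$ because $\sum_i (1 - \{\lambda_i\}) = (n+1) - s = 1$ in this situation. Using the identity $\pi = \sum_i v_i - q$ to rewrite $m$ allows a finer reshuffling of integer vertex multiplicities between the two buckets with a single correction by $q$, and the hypothesis $k \geq n-1$ is precisely what makes this reshuffling feasible while keeping all barycentric coordinates of the resulting $m_1$ and $m_2$ nonnegative. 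Isolating this extremal combinatorial bookkeeping is where the technical work of the proof is concentrated; once it is handled, the reduction of Step 1 closes out the proposition.
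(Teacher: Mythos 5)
The paper does not prove this proposition; it cites \cite[Thm.~2.2.12]{cox2011toric}, so there is no internal proof to compare against. Your framework is the standard one used in the literature (Ewald--Wessels, Liu, Bruns--Gubeladze--Trung): triangulate $P$ into lattice $n$-simplices with vertices in $P \cap M$, reduce to the simplex case, and then separate a lattice point's barycentric coordinates into integer and fractional parts. Step 1 is correct, and the ``easy'' regime of Step 2 is also correct (in fact you can slightly improve the threshold to $s \leq k\max(a,b)$ by placing $\pi$ in the larger bucket, which already shrinks the bad case to $a=b=1$, $k=n-1$, $s=n$).

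The genuine gap is in the extremal case, which is the entire content of the theorem and which you acknowledge is ``where the technical work is concentrated'' but do not carry out. The hinted plan --- rewrite $\pi = \sum_i v_i - q$ and ``reshuffle with a single correction by $q$'' --- does not obviously close the gap, and the most natural implementations fail. For instance, with $a=b=1$, $k=n-1$, $s=n$ one has $\sum_i \lfloor\lambda_i\rfloor = n-2$, and if you try $m_1 = \sum_i d_i v_i - q$, $m_2 = \sum_i e_i v_i$ with $d_i+e_i = 1+\lfloor\lambda_i\rfloor$, nonnegativity of the barycentric coordinates of $m_1$ forces each $d_i \geq 1$, whence $\sum_i(d_i-1) \geq 0$; but the weight budget $\sum_i d_i - 1 = n-1$ forces $\sum_i(d_i - 1) = -1$, a contradiction. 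Likewise the simplest split $m_1 = q$, $m_2 = m - q$ fails in general: take $n=2$, $\Delta = \mathrm{Conv}\{(0,0),(5,1),(1,5)\}$, $m=(5,5)\in 2\Delta\cap\Z^2$ (barycentric coordinates $(1/3,\,5/6,\,5/6)$). Here $q=(1,1)$, but $m-q=(4,4)$ has barycentric coordinate $2\cdot\tfrac13 - 1 = -\tfrac13 < 0$, so $m-q\notin\Delta$. A valid decomposition does exist ($(5,5)=(4,2)+(1,3)$), but it is not produced by the $q$-correction you describe. The extremal case requires a genuinely finer argument --- e.g.\ the one in Ewald--Wessels or Bruns--Gubeladze--Trung, referenced in \cite[Thm.~2.2.12]{cox2011toric} --- and as written your proof does not supply it.
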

\begin{proposition} \label{prop:normalveryample}
Every normal polytope $P$ is very ample. 
\end{proposition}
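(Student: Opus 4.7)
The plan is to show that for every lattice point $v \in P\cap M$ (a fortiori for every vertex), the semigroup $\N(P\cap M-v)$ is saturated in $M$, which is exactly what very ampleness requires.

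The first step is to unpack normality into its iterated form. By induction on $r$, the condition $(kP)\cap M + (\ell P)\cap M = ((k+\ell)P)\cap M$ implies
\[(rP)\cap M \;=\; \underbrace{(P\cap M) + \cdots + (P\cap M)}_{r \text{ summands}}\]
for every $r\geq 1$; that is, every lattice point of $rP$ is a sum of $r$ lattice points of $P$.

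Now suppose $\ell m \in \N(P\cap M - v)$ for some $\ell \in \N_{>0}$ and $m\in M$. Write $\ell m = \sum_i a_i(m_i - v)$ with $m_i \in P\cap M$ and $a_i\in \N$, and set $k = \sum_i a_i$. The key manoeuvre is to pick any $r\in \N$ with $\ell r \geq k$ (for instance $r = k$) and shift by $rv$:
\[\ell(m+rv) \;=\; \ell m + \ell r v \;=\; \sum_i a_i m_i + (\ell r - k)\,v.\]
Since $v \in P\cap M$ as well, the right-hand side is a sum of $\ell r$ lattice points of $P$, hence lies in $\ell r \cdot P$. Dividing by $\ell$ gives $m+rv \in rP$, and since $m+rv \in M$ we obtain $m+rv \in (rP)\cap M$. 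Applying the iterated normality decomposition to $m+rv$ yields $m+rv = m'_1 + \cdots + m'_r$ with $m'_j \in P\cap M$, so
\[m \;=\; \sum_{j=1}^{r}(m'_j - v) \;\in\; \N(P\cap M - v),\]
as required.

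The only real subtlety is spotting the shift by $rv$: the hypothesis expresses $\ell m$ through $k$ lattice points of $P$, while the conclusion must express $m$ as a sum of $r$ translates $m'_j - v$, and the two counts are reconciled by adding $\ell r - k$ extra copies of $v$ on the left. Once that is set up, the iterated normality identity does all the work, and the argument in fact never uses that $v$ is a vertex — it holds for any lattice point of $P$.
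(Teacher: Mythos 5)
Your proof is correct. The paper itself does not prove this proposition---it cites \cite[Prop.~2.2.18]{cox2011toric}---and your argument reconstructs the standard proof used there: iterate normality to get $(rP)\cap M = (P\cap M)+\cdots+(P\cap M)$ ($r$ summands), shift $m$ by $rv$ with $\ell r \geq k$ so that $\ell(m+rv)$ becomes a sum of $\ell r$ lattice points of $P$, divide by $\ell$ to land in $(rP)\cap M$, and apply the decomposition once more. Your closing remark that $v$ need not be a vertex is also correct.
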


\begin{remark}
The converse of Proposition \ref{prop:normalveryample} is not true, see \cite[Ex.~2.2.20]{cox2011toric}.
\end{remark}

\subsection{Normal fans} \label{sec:normalfans}
In this section, we explain the main reason for defining $U_{\sigma_i}$ from the cone $\sigma_i$ \emph{dual} to the cone $\sigma_i^\vee$, even though the latter seems more closely related to its coordinate ring. The cones $\sigma_v$, where $v$ is a vertex of a full dimensional lattice polytope $P$, fit together very nicely.

\begin{example} \label{ex:fan1}
Consider the standard simplex ${\rm Conv}((0,0),(1,0),(0,1)) \subset \R^2$. The cones $\sigma_1, \sigma_2, \sigma_3$ and their duals are shown in Figure \ref{fig:illustratefan}. 
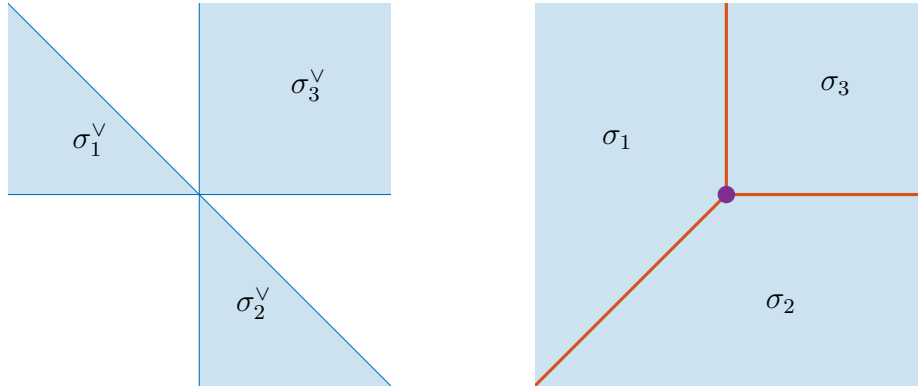
\begin{figure}
\centering
\begin{tikzpicture}[scale=1]
\begin{axis}[%
width=2in,
height=2in,
scale only axis,
xmin=-3.5,
xmax=3.5,
ymin=-3.5,
ymax=3.5,
ticks = none, 
ticks = none,
axis background/.style={fill=white},
axis line style={draw=none} 
]


\addplot [color=mycolor1,solid,fill opacity=0.2,fill = mycolor1,forget plot]
  table[row sep=crcr]{%
 5 0\\
5 5\\	
0 5\\
0 0 \\
5 0 \\
};

\addplot [color=mycolor1,solid,fill opacity=0.2,fill = mycolor1,forget plot]
  table[row sep=crcr]{%
 0 0 \\
0 -5 \\
 5 -5\\
 0 0\\
};

\addplot [color=mycolor1,solid,fill opacity=0.2,fill = mycolor1,forget plot]
  table[row sep=crcr]{%
 0 0 \\
 -5 0\\
 -5 5\\
 0 0\\
};

\node (P) at (axis cs:2,2) {$\sigma_3^\vee$};
\node (P) at (axis cs:1,-2) {$\sigma_2^\vee$};
\node (P) at (axis cs:-2,1) {$\sigma_1^\vee$};

\end{axis}
\end{tikzpicture} 
\qquad \qquad
\begin{tikzpicture}[scale=1]
\begin{axis}[%
width=2in,
height=2in,
scale only axis,
xmin=-3.5,
xmax=3.5,
ymin=-3.5,
ymax=3.5,
ticks = none, 
ticks = none,
axis background/.style={fill=white},
axis line style={draw=none} 
]


\addplot [color=mycolor1,solid,fill opacity=0.2,fill = mycolor1,forget plot]
  table[row sep=crcr]{%
 5 0\\
5 5\\	
0 5\\
0 0 \\
5 0 \\
};

\addplot [color=mycolor1,solid,fill opacity=0.2,fill = mycolor1,forget plot]
  table[row sep=crcr]{%
 0 0 \\
 -5 -5 \\
 5 -5\\
 5 0\\
 0 0\\
};

\addplot [color=mycolor1,solid,fill opacity=0.2,fill = mycolor1,forget plot]
  table[row sep=crcr]{%
 0 0 \\
 -5 -5 \\
 -5 5\\
 0 5\\
 0 0\\
};

\addplot [very thick, color=mycolor2,solid,fill opacity=0.2,fill = mycolor1,forget plot]
  table[row sep=crcr]{%
 0 0 \\
 5 0 \\
};

\addplot [very thick, color=mycolor2,solid,fill opacity=0.2,fill = mycolor1,forget plot]
  table[row sep=crcr]{%
 0 0 \\
 -5 -5 \\
};

\addplot [very thick, color=mycolor2,solid,fill opacity=0.2,fill = mycolor1,forget plot]
  table[row sep=crcr]{%
 0 0 \\
 0 5 \\
};

\addplot[only marks,mark=*,mark size=3.1pt,mycolor4
        ]  coordinates {
  (0,0)
};

\node (P) at (axis cs:2,2) {$\sigma_3$};
\node (P) at (axis cs:1,-2) {$\sigma_2$};
\node (P) at (axis cs:-2,1) {$\sigma_1$};

\end{axis}
\end{tikzpicture} 
\caption{A fan from a triangle.}
\label{fig:illustratefan}
\end{figure}
\end{example}

\begin{definition}[Fan] \label{def:fan}
A \emph{fan} in $N_\R$ is a finite collection $\Sigma$ of strongly convex rational polyhedral cones such that 
\begin{itemize}
\item if $\sigma \in \Sigma$ and $\tau \preceq \sigma$ is a face, then $\tau \in \Sigma$, 
\item if $\tau = \sigma \cap \sigma'$ for $\sigma, \sigma' \in \Sigma$, then $\tau \preceq \sigma$ and $\tau \preceq \sigma'$.
\end{itemize}
\end{definition}

\begin{exercise}
Show that the right part of Figure \ref{fig:illustratefan} represents a fan with three two-dimensional cones, three one-dimensional cones and one zero-dimensional cone. Do you see a correspondence with the faces of the standard simplex from Example \ref{ex:fan1}? 
\end{exercise}



\begin{definition}[Normal fan] \label{def:normalfan}
Let $P \subset M_\R$ be full dimensional and let  $P = \bigcap_{F \text{ facet of } P} H_{u_F,a_F}^+$ be the minimal H-representation of $P$. The \emph{normal fan} of $P$ is 
\[ \Sigma_P = \{ \sigma_{Q} ~|~ Q \preceq P \}, \quad \text{where } \sigma_{Q} = {\rm Cone}(\{ u_F~|~ Q \subset F \} ). \]
\end{definition}
The fact that $\Sigma_P$ is a fan is \cite[Thm.~2.3.2]{cox2011toric}. Here are some properties of normal fans.
\begin{proposition} \label{prop:fanproperties}
Let $P \subset M_\R$ be a full-dimensional lattice polytope and for each face $Q \preceq P$, let $\sigma_Q$ be as in Definition \ref{def:normalfan}. We have 
\begin{enumerate}
\item $\dim Q + \dim \sigma_Q = n$ for all $Q \preceq P$, 
\item $N_\R = \bigcup_{v \text{ vertex of } P} \sigma_v = \bigcup_{Q \preceq P} \sigma_Q$, 
\item for any $m \in M$, $k \in \N \setminus \{0\}$, $\Sigma_{k \cdot P + m} = \Sigma_P$. 
\end{enumerate}
\end{proposition}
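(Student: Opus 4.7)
I plan to prove the three parts in order; each rests mostly on standard polyhedral bookkeeping, with the minimal $H$-representation of $P$ playing a central role.

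For part 1, my approach is to use that every face $Q \preceq P$ is the intersection of the facets of $P$ containing it, i.e.\ $Q = \bigcap_{F \supseteq Q} F$. Taking affine hulls gives ${\rm aff}(Q) = \bigcap_{F \supseteq Q} H_{u_F,-a_F}$, whose parallel linear subspace is the orthogonal complement of $W_Q := {\rm span}_\R \{u_F : F \supseteq Q\}$. Therefore $\dim Q = n - \dim W_Q$, and since $\sigma_Q = {\rm Cone}\{u_F : F \supseteq Q\}$ is by construction full-dimensional inside $W_Q$, one has $\dim \sigma_Q = \dim W_Q$. Combining these yields $\dim Q + \dim \sigma_Q = n$.

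For part 2, I would characterize membership in the $\sigma_Q$ via linear optimization. For each $u \in N_\R$, compactness of $P$ ensures that $\langle u, \cdot \rangle$ attains its minimum on $P$ along a well-defined face ${\rm face}_u(P)$. The plan is to prove the equivalence
\[ u \in \sigma_Q \iff Q \subseteq {\rm face}_u(P).\]
The $(\Rightarrow)$ direction: writing $u = \sum_{F \supseteq Q} \lambda_F u_F$ with $\lambda_F \geq 0$ and using $\langle u_F, m \rangle \geq -a_F$ with equality on $F \supseteq Q$, one gets $\langle u, m \rangle \geq \langle u, v \rangle$ for every $m \in P$ and $v \in Q$. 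The $(\Leftarrow)$ direction uses that ${\rm face}_u(P)$ is itself the intersection of certain facets, all of which must then contain $Q$, placing $u$ inside $\sigma_Q$. Applying the equivalence with $Q = {\rm face}_u(P)$ shows that every $u \in N_\R$ lies in some $\sigma_Q$. Finally, since any face of $P$ contains a vertex $v$ and $\sigma_Q \subseteq \sigma_v$ whenever $v \in Q$ (fewer facet-constraints on a smaller face means more facet normals generating a larger cone), the $\sigma_v$ alone already cover $N_\R$.

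For part 3, both operations preserve the primitive inward facet normals and the face-containment lattice: the facets of $kP + m$ are exactly $kF + m$ as $F$ ranges over the facets of $P$, with unchanged primitive normal $u_F$ and shifted constant $ka_F - \langle u_F, m \rangle$, and the bijection $Q \leftrightarrow kQ + m$ respects \emph{is a facet containing}. Hence each $\sigma_Q$ is computed from identical data, so $\Sigma_{kP + m} = \Sigma_P$. The main obstacle I anticipate is in part 2, specifically the careful justification of the equivalence above: it relies on full-dimensionality of $P$ to ensure that the minimal $H$-representation (and hence the generating set $\{u_F : F \supseteq Q\}$ of $\sigma_Q$) is unambiguous, and on the fact that \emph{every} face of $P$ can be recovered as an intersection of facets, so that ${\rm face}_u(P) \supseteq Q$ really does force $u$ into the combinatorially defined cone $\sigma_Q$ rather than some larger one.
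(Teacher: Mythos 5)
The paper does not prove this proposition itself (it cites \cite[Prop.~2.3.8, 2.3.9]{cox2011toric}), so your argument stands on its own. Parts 1 and 3 are fine in outline; in part 1 the facet hyperplane through $Q \subseteq F$ should be $H_{u_F,a_F}$ rather than $H_{u_F,-a_F}$ under the paper's sign convention. The genuine gap is where you suspect it: the $(\Leftarrow)$ direction of the equivalence $u \in \sigma_Q \iff Q \subseteq {\rm face}_u(P)$ --- and with it the coverage claim in part 2 --- is circular as written. You observe that every facet containing ${\rm face}_u(P)$ also contains $Q$, which yields $\sigma_{{\rm face}_u(P)} \subseteq \sigma_Q$; but you never establish that $u$ actually lies in $\sigma_{{\rm face}_u(P)}$, and that is the entire content of the step. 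Invoking the equivalence with $Q = {\rm face}_u(P)$ merely restates what is to be proved.

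The missing ingredient is a Farkas-type argument. Let $Q' = {\rm face}_u(P)$ and suppose $u \notin {\rm Cone}\{u_F : F \supseteq Q'\}$. By Farkas' lemma there is $m_0 \in M_\R$ with $\langle u_F, m_0 \rangle \geq 0$ for every facet $F \supseteq Q'$ but $\langle u, m_0 \rangle < 0$. Choose $q$ in the relative interior of $Q'$; then $\langle u_F, q \rangle + a_F = 0$ for $F \supseteq Q'$ and $\langle u_F, q \rangle + a_F > 0$ for every other facet $F$, so $q + \epsilon m_0 \in P$ for small $\epsilon > 0$, yet $\langle u, q + \epsilon m_0 \rangle < \langle u, q \rangle$, contradicting that $q$ minimizes $\langle u, \cdot \rangle$ on $P$. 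With this lemma in hand your part 2 goes through, and the same relative-interior argument also justifies the assertion ${\rm aff}(Q) = \bigcap_{F \supseteq Q} H_{u_F,a_F}$ that you glossed over in part 1.
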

For a proof of Proposition \ref{prop:fanproperties}, see \cite[Prop.~2.3.8 and 2.3.9]{cox2011toric}.
\begin{examplestar} \label{ex:chiara5}
Here is how normal fans can be computed in \texttt{Oscar.jl}. We consider the pentagon $P$ from Example* \ref{ex:chiara3} and execute
\texttt{normal\_fan(P)}. The result is Figure \ref{fig:ex_8}.
\begin{figure}[h]
    \centering
    \begin{tikzpicture}[scale=1]
\begin{axis}[%
width=2.0in,
height=2.0in,
scale only axis,
xmin=-2.15,
xmax=2.15,
ymin=-2.15,
ymax=2.15,
ticks = none, 
ticks = none,
axis background/.style={fill=white},
axis x line = middle, 
axis y line = middle, 
axis line style={draw=none} 
]

\addplot [color=black,solid,fill opacity=0.2,fill = mycolor1,forget plot]
  table[row sep=crcr]{%
5   0\\
0   0\\
0   5\\
};

\addplot [color=black,solid,fill opacity=0.2,fill = mycolor2,forget plot]
  table[row sep=crcr]{%
0   3.15\\
0   0\\
-2.2   2.2\\
};

\addplot [color=black,solid,fill opacity=0.2,fill = mycolor3,forget plot]
  table[row sep=crcr]{%
-2.2   2.2\\
0   0\\
-2.2   -2.2\\
};

\addplot [color=black,solid,fill opacity=0.2,fill = mycolor4,forget plot]
  table[row sep=crcr]{%
-2.2   -2.2\\
0   0\\
2.2   -2.2\\
};

\addplot [color=black,solid,fill opacity=0.2,fill = mycolor5,forget plot]
  table[row sep=crcr]{%
2.2   -2.2\\
0   0\\
3.15   0\\
};

\addplot[only marks,mark=*,mark size=1.5pt,black!80,
        ]  coordinates {
    (-2,-2) (-2,2)
    (-1,-1) (-1,1)
    (0,1) (0,2) (0,3)
    (1,-1) (1,0)
    (2,-2) (2,0)
    (3,0)
};

\addplot[only marks,mark=*,mark size=1.5pt,mycolor1,
        ]  coordinates {
    (1,1) (1,2) (1,3)
    (2,1) (2,2) (2,3)
    (3,1) (3,2) (3,3)
};

\addplot[only marks,mark=*,mark size=1.5pt,mycolor2,
        ]  coordinates {
    (-1,2) (-1,3)
    (-2,3)
};

\addplot[only marks,mark=*,mark size=1.5pt,mycolor3,
        ]  coordinates {
    (-2,-1) (-2,0) (-2,1)
    (-1,0)
};

\addplot[only marks,mark=*,mark size=1.5pt,mycolor4,
        ]  coordinates {
    (-1,-2)
    (0,-1) (0,-2)
    (1,-2)
};

\addplot[only marks,mark=*,mark size=1.5pt,mycolor5,
        ]  coordinates {
    (2,-1)
    (3,-1) (3,-2)
};

\end{axis}
\end{tikzpicture}
    \caption{The normal fan from Example* \ref{ex:chiara5}.}
    \label{fig:ex_8}
\end{figure}
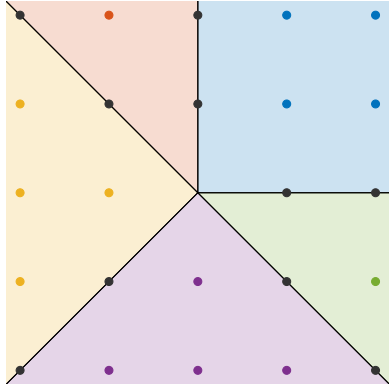
\end{examplestar}
\begin{proposition} \label{prop:normalfanglue}
Let $P \subset M_\R$ be a full-dimensional, very ample convex lattice polytope. Let $P \cap M = \{m_1, \ldots, m_s\}$ and ${\cal V} = \{ i ~|~ m_i \text{ is a vertex of } P \}$. Let $i, j \in {\cal V}, i \neq j$ and let $Q$ be the smallest face of $P$ containing $m_i$ and $m_j$. Then 
\[ X_{P \cap M} \cap U_i \cap U_j \simeq U_{\sigma_Q} = \Specm( \C[\sigma_Q^\vee \cap M]) \]
and the inclusions
\[ X_{P \cap M} \cap U_i \supset X_{P\cap M} \cap U_i \cap U_j \subset X_{P \cap M} \cap U_j \]
are given by
\[ \C[\S_i] \subset \C[\S_i]_{\chi^{m_j-m_i}} \simeq \C[\S_j]_{\chi^{m_i-m_j}} \supset \C[\S_j]. \]
\end{proposition}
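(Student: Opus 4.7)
From the discussion preceding Proposition~\ref{prop:projcover}, the affine chart $X_{P\cap M}\cap U_i$ has coordinate ring $\C[\S_i]$, and the basic open subset $X_{P\cap M}\cap U_i\cap U_j\subset X_{P\cap M}\cap U_i$ is cut out by the nonvanishing of $\chi^{m_j-m_i}$, so its coordinate ring is $\C[\S_i]_{\chi^{m_j-m_i}}$. The symmetric description from the $j$-side gives the isomorphism with $\C[\S_j]_{\chi^{m_i-m_j}}$, and the chain of ring inclusions in the statement drops out of these observations at once. By very ampleness and Theorem~\ref{thm:normalPTV} we have $\S_i=\sigma_i^\vee\cap M$, so the remaining content of the proposition is the identification
\[
\C[\sigma_i^\vee\cap M]_{\chi^{m_j-m_i}}\;\simeq\;\C[\sigma_Q^\vee\cap M].
\]

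To establish this I would apply Example~\ref{ex:affineopensubsets}, which says that for any face $\tau=\sigma_i\cap H_m\preceq\sigma_i$ with $m\in\sigma_i^\vee$, one has $\C[\tau^\vee\cap M]=\C[\sigma_i^\vee\cap M]_{\chi^m}$. The point $m=m_j-m_i$ clearly lies in $\sigma_i^\vee=\mathrm{Cone}(P\cap M-m_i)$, so the task reduces to proving that the face of $\sigma_i$ it cuts out is precisely $\sigma_Q$. Using the standard dual characterization of the normal cone, $\sigma_Q$ consists of those $u\in N_\R$ with $\langle u,m-m_i\rangle\ge 0$ for all $m\in P$ and $\langle u,m'-m_i\rangle=0$ for all $m'\in Q$. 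The inclusion $\sigma_Q\subseteq\sigma_i\cap H_{m_j-m_i}$ is then immediate since $m_j\in Q$.

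For the reverse inclusion, given $u\in\sigma_i$ with $\langle u,m_j-m_i\rangle=0$, consider
\[
F_u=\{m\in P\mid\langle u,m-m_i\rangle=0\}.
\]
Since $u\in\sigma_i$, the affine functional $m\mapsto\langle u,m-m_i\rangle$ is nonnegative on $P$ and vanishes at $m_i$, so $F_u$ is a bona fide face of $P$ containing $m_i$; by hypothesis it also contains $m_j$. The minimality of $Q$ among faces of $P$ containing $\{m_i,m_j\}$ then forces $Q\subseteq F_u$, which gives $\langle u,m'-m_i\rangle=0$ for all $m'\in Q$ and hence $u\in\sigma_Q$.

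I expect this last step, namely translating between the facet-normal definition of $\sigma_Q$ in Definition~\ref{def:normalfan} and its description as the locus where the affine functional $\langle\cdot,m-m_i\rangle$ vanishes on $Q$, and then exploiting the minimality of $Q$, to be the main obstacle. Once the face relation $\sigma_Q=\sigma_i\cap H_{m_j-m_i}$ is in hand, Example~\ref{ex:affineopensubsets} delivers the required isomorphism $\C[\S_i]_{\chi^{m_j-m_i}}\simeq\C[\sigma_Q^\vee\cap M]$ (and, by the parallel argument with $i$ and $j$ swapped, the middle isomorphism $\C[\S_i]_{\chi^{m_j-m_i}}\simeq\C[\S_j]_{\chi^{m_i-m_j}}$). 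Matching this with the description of $X_{P\cap M}\cap U_i\cap U_j$ from the first paragraph yields $X_{P\cap M}\cap U_i\cap U_j\simeq U_{\sigma_Q}$, completing the proof.
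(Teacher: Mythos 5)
Your proof is correct and follows the same reduction as the paper: the chain of ring inclusions is read off the discussion preceding Proposition~\ref{prop:projcover}, and everything then hinges on the identity $\C[\S_i]_{\chi^{m_j-m_i}}\simeq\C[\sigma_Q^\vee\cap M]$. Where the paper simply cites \cite[Prop.~2.3.13]{cox2011toric} for that identity, you fill in the underlying argument via Example~\ref{ex:affineopensubsets} by identifying $\sigma_Q=\sigma_i\cap H_{m_j-m_i}$ through the minimality of $Q$, which is precisely the standard proof the citation points to.
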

\begin{proof}
The statement about the inclusions follows from the discussion preceding Proposition \ref{prop:projcover}. It suffices to show that $\C[\S_i]_{\chi^{m_j-m_i}} = \C[\sigma_Q^\vee \cap M]$. We refer to \cite[Prop.~2.3.13]{cox2011toric}.
\end{proof}

\begin{example} \label{ex:fromthesis}
Proposition \ref{prop:normalfanglue} is illustrated in Figure \ref{fig:localizcone}, taken from \cite[App.~E]{telen2020thesis}.
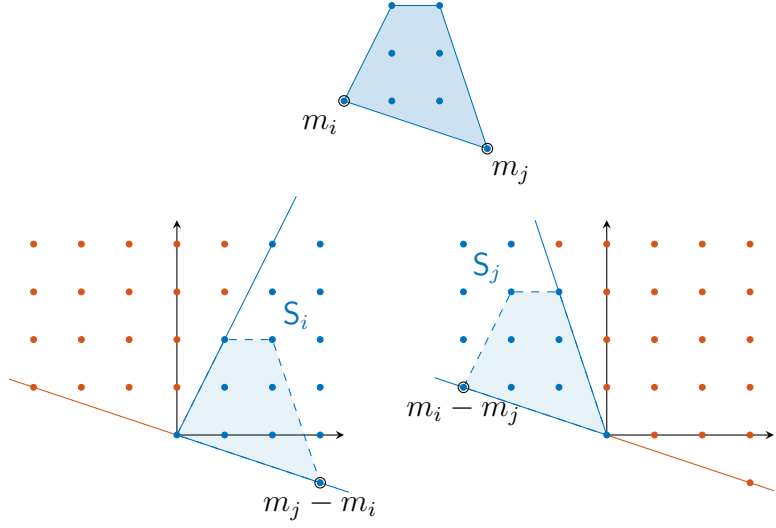
\begin{figure}
\centering
\begin{tikzpicture}[scale=1]
\begin{axis}[%
width=4in,
height=3in,
scale only axis,
xmin=-8.5,
xmax=7.5,
ymin=-2.5,
ymax=9.5,
ticks = none, 
ticks = none,
axis background/.style={fill=white},
axis line style={draw=none} 
]

\draw[->,>=stealth] (axis cs:-5,0) -- (axis cs:-5,4.5);
\draw[->,>=stealth] (axis cs:-5,0) -- (axis cs:-1.5,0);
\draw[->,>=stealth] (axis cs:4,0) -- (axis cs:7.5,0);
\draw[->,>=stealth] (axis cs:4,0) -- (axis cs:4,4.5);

\addplot [color=mycolor1,solid,fill opacity=0.2,fill = mycolor1,forget plot]
  table[row sep=crcr]{%
-1.5	7\\
-0.5	9\\	
0.5	9\\
1.5	6\\
-1.5	7\\
};

\addplot[only marks,mark=*,mark size=1.1pt,mycolor1
        ]  coordinates {
    (-1.5,7) (-0.5,7) (0.5,7) (1.5,6) (-0.5,8) (0.5,8) (-0.5,9) (0.5,9)
};

\addplot [color=mycolor1,dashed,fill opacity=0.1,fill = mycolor1,forget plot]
  table[row sep=crcr]{%
-5	0\\
-4	2\\	
-3	2\\
-2	-1\\
-5	0\\
};

\addplot[only marks,mark=*,mark size=1.1pt,mycolor1
        ]  coordinates {
    (-5,0) (-4,0) (-4,1) (-4,2) (-3,0) (-3,1) (-3,2) (-3,3) (-3,4) (-2,-1) (-2,0) (-2,1) (-2,2) (-2,3) (-2,4)
};

\addplot[only marks,mark=*,mark size=1.1pt,mycolor2
        ]  coordinates {
    (-8,1) (-8,2) (-8,3) (-8,4)
    (-7,1) (-7,2) (-7,3) (-7,4)
    (-6,1) (-6,2) (-6,3) (-6,4)
    (-5,1) (-5,2) (-5,3) (-5,4)
    (-4,3) (-4,4)
};

\addplot [color=mycolor2,solid,fill opacity=0.2,fill = mycolor2,forget plot]
  table[row sep=crcr]{%
-5	0\\
-8.6	1.2\\
};

\addplot [color=mycolor1,solid,fill opacity=0.2,fill = mycolor1,forget plot]
  table[row sep=crcr]{%
-5	0\\
-2.5	5\\
};

\addplot [color=mycolor1,solid,fill opacity=0.2,fill = mycolor1,forget plot]
  table[row sep=crcr]{%
-5	0\\
-1.4 -1.2\\
};

\addplot [color=mycolor1,dashed,fill opacity=0.1,fill = mycolor1,forget plot]
  table[row sep=crcr]{%
1	1\\
2	3\\	
3	3\\
4	0\\
1	1\\
};

\addplot [color=mycolor1,solid,fill opacity=0.2,fill = mycolor1,forget plot]
  table[row sep=crcr]{%
4	0\\
0.4	1.2\\
};

\addplot [color=mycolor1,solid,fill opacity=0.2,fill = mycolor1,forget plot]
  table[row sep=crcr]{%
4	0\\
2.5 4.5\\
};

\addplot[only marks,mark=*,mark size=1.1pt,mycolor1
        ]  coordinates {
    (1,1) (1,2) (1,3) (1,4)
    (2,1) (2,2) (2,3) (2,4)
    (3,1) (3,2) (3,3)
    (4,0)
};

\addplot[only marks,mark=*,mark size=1.1pt,mycolor2
        ]  coordinates {
    (3,4)
    (4,1) (4,2) (4,3) (4,4)
    (5,0) (5,1) (5,2) (5,3) (5,4)
    (6,0) (6,1) (6,2) (6,3) (6,4)
    (7,-1) (7,0) (7,1) (7,2) (7,3) (7,4)
};

\addplot[only marks,mark=o,mark size=2pt,
        ]  coordinates {
    (-2,-1) (1,1) (-1.5,7) (1.5,6)
};

\addplot [color=mycolor2,solid,fill opacity=0.2,fill = mycolor2,forget plot]
  table[row sep=crcr]{%
4	0\\
7.6	-1.2\\
};

\node (P) at (axis cs:-2,6.5) {$m_i$};
\node (P) at (axis cs:2,5.5) {$m_j$};
\node (P) at (axis cs:-2,-1.5) {$m_j - m_i$};
\node (P) at (axis cs:1,0.5) {$m_i - m_j$};
\node[mycolor1] (P) at (axis cs:-2.5,2.5) {$\S_i$};
\node[mycolor1] (P) at (axis cs:1.5,3.5) {$\S_j$};

\end{axis}
\end{tikzpicture} 
\caption{An illustration of Proposition \ref{prop:normalfanglue}.}
\label{fig:localizcone}
\end{figure}
\end{example}

It follows from Proposition \ref{prop:normalfanglue} that when $P$ is full-dimensional and very ample, the affine pieces of $X_{P \cap M}$ and the way they patch together to form $X_{P \cap M}$ are completely determined by the normal fan $\Sigma_P$. Consequently, if $P, P'$ are both very ample with the same normal fan, we have $X_{P \cap M} \simeq X_{P' \cap M}$. That is, $X_{P \cap M}$ and $X_{P' \cap M}$ are the \emph{same} projective toric variety, embedded in a different way. 

\begin{example}
The standard simplex in Example \ref{ex:fan1} gives the toric variety $X_{P \cap M} = \PP^2$. Its dilation $k \cdot P$ is very ample for all $k$, and has the same normal fan by Proposition \ref{prop:fanproperties}. The toric surface $X_{(k \cdot P) \cap M} \subset \PP^{ \left ( \begin{smallmatrix}
k+2 \\ 2
\end{smallmatrix} \right ) - 1}$ is the $k$-th Veronese embedding of $\PP^2$. 
\end{example}

\begin{examplestar} \label{ex:chiara6} 
The projective toric variety $X_{P\cap M}$ where $P$ is the polygon from Example \ref{ex:fromthesis} can be defined in \texttt{Oscar.jl} as follows:
\begin{minted}{julia}
P = convex_hull([0 0; 3 -1; 2 2; 1 2])
X = NormalToricVariety(P)
\end{minted}
Recall that all polygons are normal by Proposition \ref{prop:dilatenormal}. Therefore, we can embed $X_{P\cap M} \subset \PP^7$ via the toric morphism induced by the map of tori $(\C^*)^3 \to (\C^*)^8$ associated to the matrix
\begin{equation*}
    A = 
    \begin{bmatrix}
    0 & 1 & 1& 1 & 2 & 2 & 2 & 3\\
    0 & 0 & 1 & 2 & 0 & 1 & 2 & -1 \\
    1 & 1 & 1 & 1 & 1 & 1 & 1 & 1
    \end{bmatrix}.
\end{equation*}
Its columns are the points in $\widehat{\A}$ obtained from \texttt{lattice\_points(P)}. The toric ideal $I_{\widehat{\A}}$ is 
\[
\begin{matrix} \langle -x_5 x_7 + x_6^2, \quad
 -x_4 x_8 + x_5 x_6, \quad
 -x_3 x_7 + x_4 x_6, \quad
 -x_2 x_7 + x_3 x_6,\quad
 -x_3 x_8 + x_5^2, \\
 -x_2 x_7 + x_4 x_5, \quad
 -x_2 x_6 + x_3 x_5, \quad
 -x_1 x_7 + x_2 x_4,\quad
 -x_1 x_7 + x_3^2, \quad
 -x_1 x_6 + x_2 x_3, \\
 -x_1 x_5 + x_2^2, \quad
 x_2 x_6 x_7 - x_4^2 x_8, \quad
 x_2 x_5 x_7 - x_3 x_4 x_8, \quad
 x_1 x_6 x_7^2 - x_4^3 x_8,\\
 x_1 x_5 x_7^2 - x_3 x_4^2 x_8, \quad
 x_1 x_3 x_7^3 - x_4^4 x_8, \quad
 x_1 x_2 x_7^3 - x_3 x_4^3 x_8, \quad
 x_1^2 x_7^4 - x_3 x_4^4 x_8 \rangle
 \end{matrix}
\]
and it can be obtained using the command \texttt{toric\_ideal} which takes as input the matrix $A$.
The affine charts $X_{P \cap M} \cap U_i$ and $X_{P \cap M} \cap U_j$ corresponding to the cones in Figure \ref{fig:localizcone} are computed via \texttt{affine\_open\_covering(X)}. They can be obtained by dehomogenizing with respect to the variable associated to the vertices $m_i$ and $m_j$, which in our notation are $x_3$ and $x_4$ respectively: the vertices $m_i$, $m_j$ correspond to columns 1 and 8 of $A$.
Their overlaps $\left(X_{P \cap M} \cap U_i\right) \cap U_j$ and $\left(X_{P \cap M} \cap U_j\right) \cap U_i$ are isomorphic via the usual map
\begin{equation*}
    \left(1 , \frac{x_2}{x_1} , \frac{x_3}{x_1} ,\frac{x_4}{x_1} , \frac{x_5}{x_1} , \frac{x_6}{x_1} , \frac{x_7}{x_1}, \frac{x_8}{x_1} \right) \overset{\cdot \frac{x_1}{x_8}}{\longmapsto} \left(\frac{x_1}{x_8} , \frac{x_2}{x_8} , \frac{x_3}{x_8} , \frac{x_4}{x_8} , \frac{x_5}{x_8} , \frac{x_6}{x_8} , \frac{x_7}{x_8}, 1 \right).\qedhere
\end{equation*}
\end{examplestar}

\subsection{Projective toric varieties from polytopes}

To a (not necessarily very ample) full dimensional polytope $P \subset M_\R$, we would like to associate the normal projective toric variety coming from its normal fan. If $P$ is not very ample, the first guess $X_{P \cap M}$ for its projective toric variety is not normal. In order to fix this, by Proposition \ref{prop:dilatenormal} we can dilate $P$ such that it becomes normal. By Proposition \ref{prop:fanproperties}, this does not change the normal fan. 

\begin{definition}[The toric variety of a polytope] \label{def:torvarofpol}
To a full dimensional convex lattice polytope $P \subset M_\R$ we associate the projective toric variety $X_P \simeq X_{(k \cdot P) \cap M}$ for any $k$ such that $k \cdot P$ is very ample. 
\end{definition}

As pointed out in the previous section, different choices of $k$ in Definition \ref{def:torvarofpol} correspond to different embeddings of the same projective variety $X_P = X_{k \cdot P}$. 

We conclude with a result on normality and smoothness. The proof is easy and left as an exercise. A fan $\Sigma$ is called \emph{smooth} if all of its cones $\sigma \in \Sigma$ are smooth. A polytope $P$ is \emph{smooth} if its normal fan is smooth.

\begin{proposition}
Let $P$ be a full dimensional lattice polytope in $M_\R$. The projective toric variety $X_P$ is normal. It is smooth if and only if $P$ is smooth, and it is projectively normal if and only if $P$ is normal.
\end{proposition}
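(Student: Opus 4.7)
The statement has three parts: (a) $X_P$ is normal; (b) $X_P$ is smooth iff $P$ is smooth; (c) $X_P$ is projectively normal iff $P$ is normal. The plan is to reduce each claim to a combinatorial statement about the normal fan $\Sigma_P$, via the affine open cover provided by Theorem \ref{thm:normalPTV}.

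First, by Proposition \ref{prop:dilatenormal}, choose $k \geq n-1$ so that $kP$ is normal, hence very ample by Proposition \ref{prop:normalveryample}. By Definition \ref{def:torvarofpol}, $X_P \simeq X_{(kP) \cap M}$, which by Theorem \ref{thm:normalPTV} is covered by the normal affine toric varieties $U_{\sigma_v}$, one for each vertex $v$ of $kP$. By part 3 of Proposition \ref{prop:fanproperties}, $\Sigma_{kP} = \Sigma_P$, so the $\sigma_v$ are precisely the maximal ($n$-dimensional) cones of $\Sigma_P$. Normality is a local property, so $X_P$ is normal, proving (a).

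For (b), $X_P$ is smooth iff each chart $U_{\sigma_v}$ is smooth, iff each $\sigma_v$ is a smooth cone, by Theorem \ref{thm:smooth}. Any face $\tau \preceq \sigma$ of a smooth cone is itself smooth: its minimal generators form a subset of the minimal generators of $\sigma$, which by assumption are part of a $\Z$-basis of $N$. Hence all cones of $\Sigma_P$ are smooth iff the maximal ones are, i.e.~iff $\Sigma_P$ is smooth, which is the definition of $P$ smooth.

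For (c), after translating (which does not affect the geometry) we may assume $0$ is a vertex of $P$. Interpret $X_P$ via the embedding $X_{P \cap M} \hookrightarrow \PP^{|P \cap M|-1}$; this requires $P$ very ample, which holds under the normality hypothesis by Proposition \ref{prop:normalveryample}. Projective normality of $X_P$ means, by definition, normality of the affine cone $Y_{\widehat{P \cap M}}$, equivalently (by Theorem \ref{thm:normal}) saturation of the semigroup $\N \widehat{P \cap M}$ in its ambient lattice $\Z \widehat{P \cap M}$. Very ampleness together with $0 \in P \cap M$ gives $\Z \widehat{P \cap M} = M \oplus \Z$. Stratifying by height $d \in \N$, the degree-$d$ part of $\N \widehat{P \cap M}$ is $\{(m,d) : m \in d(P \cap M)\}$ (Minkowski sum), while the degree-$d$ part of ${\rm Cone}(\widehat{P \cap M}) \cap (M \oplus \Z)$ is $\{(m,d) : m \in (dP) \cap M\}$. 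Saturation is therefore equivalent to $d(P \cap M) = (dP) \cap M$ for all $d \geq 1$, which in turn is equivalent by a routine induction to $(kP) \cap M + (\ell P) \cap M = ((k+\ell)P) \cap M$ for all $k, \ell \in \N$, i.e.~to $P$ being normal. The main obstacle is precisely this last bookkeeping step in (c): identifying the ambient lattice as $M \oplus \Z$ and matching the height-$d$ pieces of the semigroup and its saturation against the Minkowski-sum definition of a normal polytope. Parts (a) and (b) are essentially formal consequences of the affine open cover by the $U_{\sigma_v}$.
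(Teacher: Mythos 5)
The paper leaves this proposition as an exercise, so there is no in-paper proof to compare against; I will review your argument on its own terms.

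Parts (a) and (b) are correct and cleanly argued. For (a), you reduce normality to normality of the affine charts $U_{\sigma_v}$ from Theorem \ref{thm:normalPTV}, and for (b) you reduce smoothness of $X_P$ to smoothness of each $\sigma_v$ via Theorem \ref{thm:smooth}; your observation that a face of a smooth cone is smooth correctly reduces smoothness of the whole fan $\Sigma_P$ to that of its maximal cones.

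Part (c) contains a circularity in the ``only if'' direction. Your crucial identification $\Z\widehat{P\cap M}=M\oplus\Z$ rests on very ampleness of $P$, which you justify by appealing to normality of $P$ (via Proposition~\ref{prop:normalveryample}). That is fine for ``normal $\Rightarrow$ projectively normal,'' but for the converse you cannot assume normality up front. The gap is substantive: if $P$ is not very ample, $\Z(P\cap M)$ can be a proper sublattice $M'\subsetneq M$ --- for instance a Reeve simplex in $\R^3$ with vertices $0,\, e_1,\, e_2,\, e_1+e_2+2e_3$ has no lattice points besides its vertices, and those generate $\Z e_1\oplus\Z e_2\oplus 2\Z e_3$. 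In that case saturation of $\N\widehat{P\cap M}$ inside $M'\oplus\Z$ (which is what Theorem~\ref{thm:normal} actually gives) is strictly weaker than the condition $(kP)\cap M + (\ell P)\cap M = ((k+\ell)P)\cap M$ defining normality of $P$ in $M$. Relatedly, when $P$ is not very ample, $X_{P\cap M}$ is not the same variety as $X_P$ from Definition~\ref{def:torvarofpol}, and ``projectively normal'' depends on the chosen embedding; choosing the embedding via $(kP)\cap M$ with $k\gg 0$ would always yield projective normality by Proposition~\ref{prop:dilatenormal}, which would falsify the ``only if.'' The remedy is to read the proposition with $P$ very ample as a standing hypothesis for the last clause (so that $X_P=X_{P\cap M}$ is the fixed embedding and $\Z(P\cap M)=M$ holds unconditionally); then your degree-by-degree comparison of $\N\widehat{P\cap M}$ with $\mathrm{Cone}(\widehat{P\cap M})\cap(M\oplus\Z)$ is correct in both directions. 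You should state that assumption explicitly rather than derive very ampleness from the conclusion you are trying to prove.
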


\begin{examplestar} \label{ex:chiara7}
The \emph{permutohedron} $\Pi_n$ is a lattice polytope of dimension $n-1$ in $\R^n$ whose vertices are given by the permutations of the first $n$ natural numbers. Note that it is contained in the affine hyperplane 
\begin{equation*}
    \left\{ (x_1,\ldots,x_n)\in\R^n \,|\, x_1 + \ldots + x_n = 1+2+\ldots+n = \frac{n(n+1)}{2}\right\}.
\end{equation*}
The toric variety $X_{\Pi_n}$ associated to $\Pi_n$ is sometimes called a \emph{permutohedral variety}. For instance, the permutohedral curve is $\PP^1$ and the permutohedral surface is a toric surface of degree $6$.
\begin{figure}
    \centering
    \begin{tikzpicture}[scale=1]
\scriptsize
\begin{axis}[%
width=2.5in,
height=2.0in,
scale only axis,
xmin=-1.2,
xmax=4.2,
ymin=-1.2,
ymax=3.2,
ticks = none, 
ticks = none,
axis background/.style={fill=white},
axis line style={draw=none} 
]


\addplot [color=mycolor1,solid,fill opacity=0.2,fill = mycolor1,forget plot]
  table[row sep=crcr]{%
0	1\\
1	0\\	
2   0\\
3   1\\
2	2\\
1   2\\
0	1\\
};

\addplot[only marks,mark=*,mark size=1.5pt,mycolor1,
        ]  coordinates {
    (0,1) (1,0) (2,0) (3,1) (2,2) (1,2) (1.5,1)
};

\node (P) at (axis cs:-0.6,1) {$(2,3,1)$};
\node (P) at (axis cs:0.9,-0.25) {$(3,2,1)$};
\node (P) at (axis cs:2.1,-0.25) {$(3,1,2)$};
\node (P) at (axis cs:3.6,1) {$(2,1,3)$};
\node (P) at (axis cs:2.1,2.25) {$(1,2,3)$};
\node (P) at (axis cs:0.9,2.25) {$(1,3,2)$};
\node (P) at (axis cs:1.5,0.75) {$(2,2,2)$};

\end{axis}
\end{tikzpicture}
    \quad \quad 
    \includegraphics[width=0.27\textwidth]{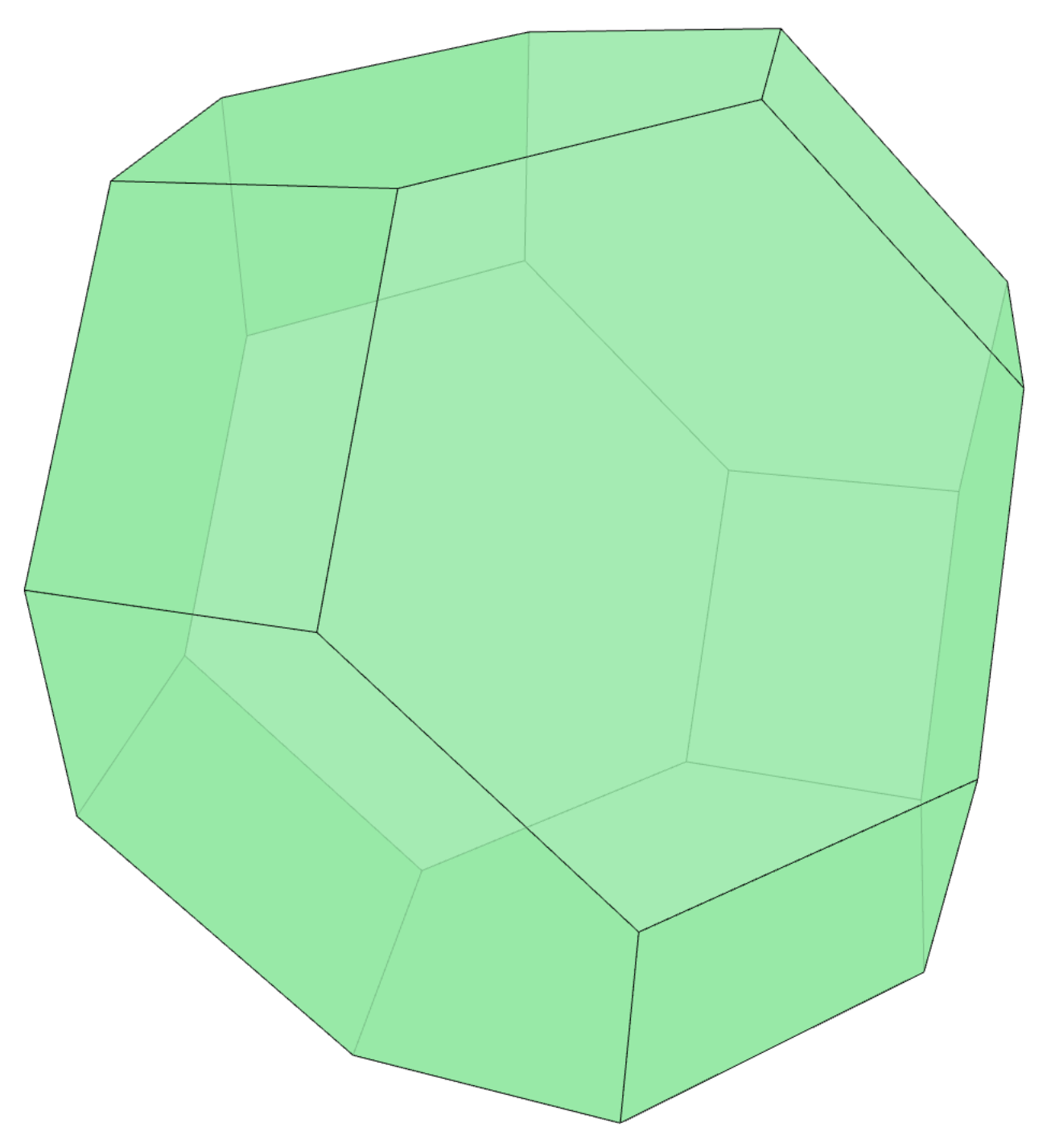}
    \caption{Left: the hexagon $\Pi_3 \subset \{(x_1,x_2,x_3)\in\R^3 \,|\, x_1+x_2+x_3 = 6\}$. The $7$ blue dots are the lattice points contained in $\Pi_3$. Right: the polytope $\Pi_4\subset \{(x_1,x_2,x_3,x_4)\in\R^4 \,|\, x_1+x_2+x_3+x_4 = 10\}$. It has $38$ lattice points, $24 = 4!$ of which are vertices.}
    \label{fig:permutohedral_surface}
\end{figure}
For all $n$, $\Pi_n$ is a smooth, normal polytope, so that $X_{\Pi_n}$ is smooth and projectively normal. We check these properties using \texttt{Oscar.jl} for $n = 4$:
\begin{minted}{julia}
PP = Polyhedron(polytope.permutahedron(3))
P = project_full(PP)
X = NormalToricVariety(P)
isnormal(P)
issmooth(P)
\end{minted}

We compute the Hilbert function of $X_{\Pi_n \cap \Z^n}$. Because the variety is projectively normal, the Hilbert polynomial coincides with the Ehrhart polynomial of $\Pi_n$. We get
\begin{align*}
    E_{\Pi_3}(x) &= 3x^2+3x+1, \\
    E_{\Pi_4}(x) &= 16 x^3 + 15 x^2 + 6 x + 1, \\
    E_{\Pi_5}(x) &= 125 x^4 + 110 x^3 + 45 x^2 + 10 x + 1, \\
    E_{\Pi_6}(x) &= 1296 x^5 + 1080 x^4 + 435 x^3 + 105 x^2 + 15 x + 1, \\
    E_{\Pi_7}(x) &= 16807 x^6 + 13377 x^5 + 5250 x^4 + 1295 x^3 + 210 x^2 + 21 x + 1.
\end{align*}
\texttt{Oscar.jl} can compute this on a MacBook Pro within one minute for $n \leq 7$. The leading coefficient of $E_{\Pi_n}$, which equals the Euclidean volume ${\rm Vol}(E_{\Pi_n})$ by Theorem \ref{thm:ehrhart}, is $n^{n-2}$. This is the number of trees on $n$ labeled nodes \cite[Prop.~2.4]{postnikov2009permutohedra}. 
\end{examplestar}

\begin{example}
Let $P = {\rm Conv}(\{ (0,0), (1,2), (2,1) \})$ be the triangle shown in Figure \ref{fig:singularsurface}. Since all polygons are very ample, its toric variety $X_P$ can be embedded in $\PP^3$. The defining equation, up to reordering the variables, is $x_1^3 - x_0x_2x_3 = 0$. This surface, in the chart $x_0 + x_1 + x_2 + x_3 \neq 0$ is shown on the left side of Figure \ref{fig:singularsurface}. Its three singular points correspond to the vertices of $P$. They are the torus fixed points from Proposition \ref{prop:fixedpoint}.
\begin{figure}
\centering
\includegraphics[scale=0.18]{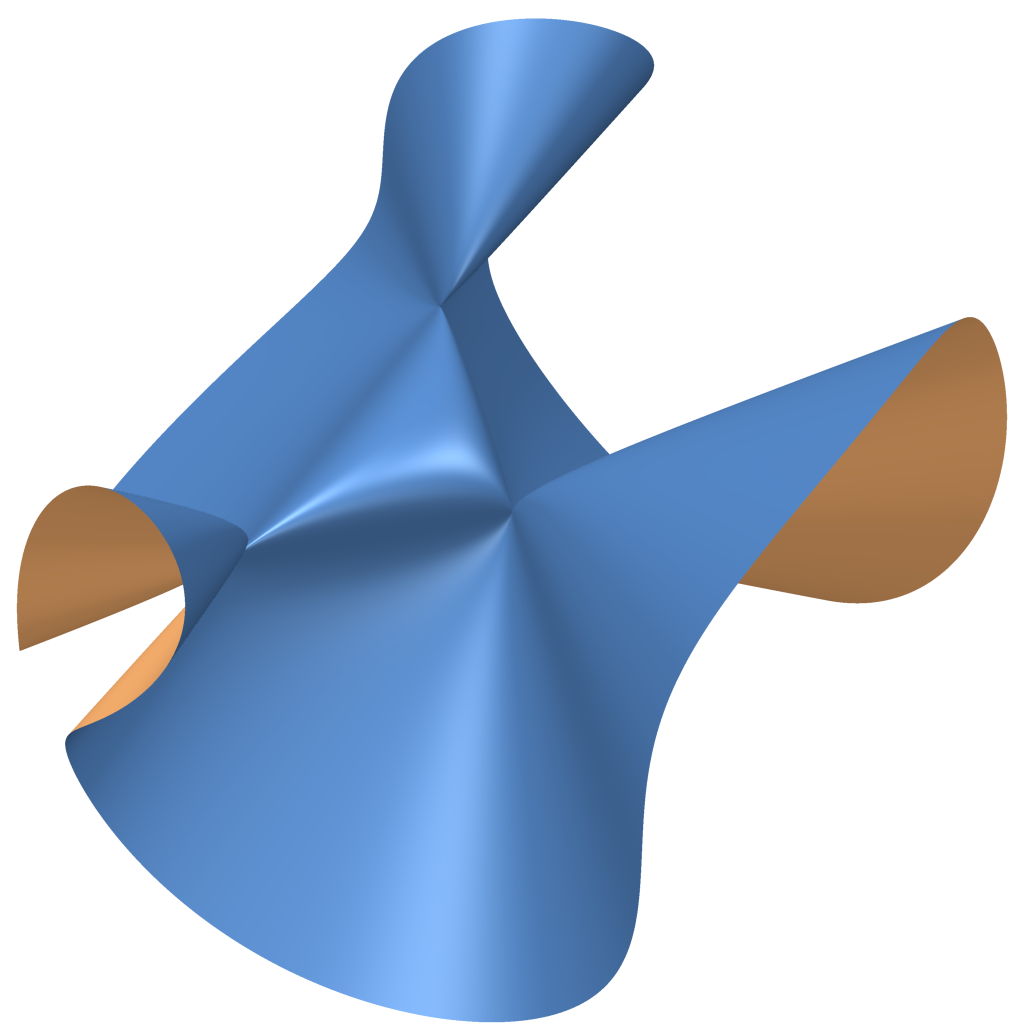}
\qquad 
\begin{tikzpicture}[scale=0.8]
\begin{axis}[%
width=1.5in,
height=3in,
scale only axis,
xmin=-0.5,
xmax=2.5,
ymin=-2,
ymax=4,
ticks = none, 
ticks = none,
axis background/.style={fill=white},
axis line style={draw=none} 
]


\addplot [color=mycolor1,solid,fill opacity=0.2,fill = mycolor4,forget plot]
  table[row sep=crcr]{%
 0 0\\
1 2\\	
2 1\\
0 0 \\
};

\addplot[only marks,mark=*,mark size=2.1pt,black
        ]  coordinates {
  (0,0) (2,1) (1,2) (1,1)
};


\addplot[only marks,mark=*,mark size=1.1pt,black
        ]  coordinates {
   (0,0) (1,0) (2,0) (0,1) (1,1) (2,1) (0,2) (1,2) (2,2)
};

\end{axis}
\end{tikzpicture} 
\caption{The surface $x_1^3-x_0x_2x_3 = 0$ is the toric variety of a singular triangle.}
\label{fig:singularsurface}
\end{figure}
\end{example}

\section{Abstract toric varieties} \label{sec:abstractTV}

In previous sections, we have defined normal toric varieties from cones and polytopes. A more general class of toric varieties comes from fans. To a fan $\Sigma$ we will associate an \emph{abstract toric variety $X_\Sigma$}. The construction generalizes $\sigma \mapsto U_\sigma$ and $P \mapsto \Sigma_P \mapsto X_P$, where the fans are $\sigma$ and all its faces, and the normal fan of $P$ respectively. We recall how abstract varieties are obtained from gluing affine varieties before explaining the construction. Next, we discuss the \emph{orbit-cone correspondence}, which tells us that $X_\Sigma$ is stratified by torus orbits in a way that is nicely encoded by $\Sigma$. Finally, we study toric morphisms in this general setting. 

\subsection{Gluing affine varieties} \label{subsec:gluing}

Much of the material in this section is taken from \cite[Sec.~2.3]{telen2020thesis}. Consider a set $\{Y_i\}_{i \in \J}$ of affine varieties for some index set $\J$, such that for all $i,j \in \J$, we have isomorphic Zariski open subsets $Y_{ij} \subset Y_i$, $Y_{ji} \subset Y_j$. Let $\{\phi_{ij} \}_{i,j \in \J}$ be isomorphisms such that for all $i,j,k \in \J$,
\begin{enumerate}
\item $\phi_{ij}: Y_{ij} \rightarrow Y_{ji}$ and $\phi_{ji}: Y_{ji} \rightarrow Y_{ij}$ satisfy $\phi_{ij} \circ \phi_{ji} = \id_{Y_{ji}}, \phi_{ji} \circ \phi_{ij} = \id_{Y_{ij}}$,
\item $\phi_{ij}(Y_{ij} \cap Y_{ik}) = Y_{ji} \cap Y_{jk}$,
\item $\phi_{ik} = \phi_{jk} \circ \phi_{ij}$ on $Y_{ik} \cap Y_{ij}$.
\end{enumerate} 
The disjoint union $\bigsqcup_{i \in \J} Y_i$ is the set 
$$ \hat{X} = \bigsqcup_{i \in \J} Y_i = \{ (x,Y_i) ~|~ i \in \J, x \in Y_i \}.$$
It is a topological space with the disjoint union topology, which is such that the open subsets of $\hat{X}$ are disjoint unions of open subsets in the $Y_i$. We define an equivalence relation $\sim$ on $\hat{X}$ by setting $(x,Y_i) \sim (y,Y_j)$ if $x \in Y_{ij}$, $y \in Y_{ji}$ and $\phi_{ij}(x) = y$. The first condition on the $\phi_{ij}$ makes $\sim$ reflexive and symmetric, the second and third conditions make it transitive. We consider the quotient space $X = \hat{X}/\sim$ with its quotient topology, called the \emph{Zariski topology} on $X$. In this topological space, 
$$U_i = \{[(x,Y_i)] ~|~ x \in Y_i \} \subset X$$ 
are open subsets isomorphic to $Y_i$ (here we write $[\cdot ]$ for an equivalence class in the quotient). The space $X$ is called an \emph{abstract variety}. The affine varieties $\{Y_i \}_{i \in \J}$ and the isomorphisms $\{\phi_{ij}\}_{i,j \in \J}$ are called the \emph{gluing data} for the construction of $X$. 

\begin{example}[Gluing of $\PP^1$] \label{ex:glueP1}
The projective line $\PP^1$ is covered by $\PP^1 = U_x \cup U_y$ where
$$ U_x = \{ (x:y) \in \PP^1 ~|~ x \neq 0 \}, \quad  U_y = \{ (x:y) \in \PP^1 ~|~ y \neq 0 \}.$$
Consider the isomorphisms 
$$ h_x : U_x \rightarrow \C_t \quad \textup{and} \quad h_y: U_y \rightarrow \C_u,$$ 
where $\C_t$ is $\C$ with coordinate $t$ and analogously for $u$, given by $h_x(x:y) = y/x$ and $h_y(x:y) = x/y$. For a point $(x:y) \in U_x \cap U_y$, we have $h_x(x:y) = h_y(x:y)^{-1}$. Let 
$$\C_{tu} = \C_t^* = \C_t \setminus \{0\}, \quad \C_{ut} = \C_u^* = \C_u \setminus \{0\}$$
and $\phi_{tu}: \C_{tu} \rightarrow \C_{ut}$ given by $\phi_{tu}(t) = t^{-1}$, $\phi_{ut} = \phi_{tu}^{-1}$. This gives a commutative diagram
\[
\begin{tikzcd}[row sep = 1.5cm, column sep = 1.5cm]
U_x \cap U_y \arrow{r}{h_x} \arrow{d}{h_y} & \C_{tu}  \arrow[rightharpoonup]{dl}{\phi_{tu}}\\
\C_{ut} \arrow[rightharpoonup, shift left = 1.5]{ur}{\phi_{ut}} & 
\end{tikzcd} 
\]
The projective line $\PP^1$ is a gluing of two copies of $\C$ with gluing data $\{\C_t,\C_u \}$ and $\{ \phi_{tu},\phi_{ut} \}$. The two affine lines $\C_t$ and $\C_u$ are glued together along the open subsets $\C_t^*$ and $\C_u^*$, which give the open subset $U_x \cap U_y \subset \PP^1$. The missing points $\PP^1 \setminus (U_x \cap U_y) = \{(1:0), (0:1) \}$ correspond to the origin in $\C_t$ and $\C_u$. If we consider $\PP^1$ as the compactification of $\C_t$, the \emph{point at infinity} corresponds to the origin in $\C_u$. This is illustrated in Figure \ref{fig:P1}.
\begin{figure}
\centering
\input{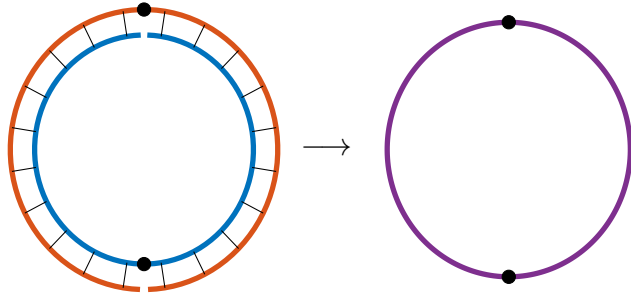}
\caption{Illustration of the construction of $\PP^1$ as the gluing of two affine lines. The affine lines are represented as circles with a missing point (`at infinity'). The origin in each line is indicated with a black dot and the gluing isomorphism is illustrated by black line segments.}
\label{fig:P1}
\end{figure}
\end{example}
\begin{example}[A non-separated variant] \label{ex:nonsep}
We replace the isomorphisms in the glueing data from Example \ref{ex:glueP1} by $\phi_{tu}(t) = t$ and $\phi_{ut} (u) = u$. This way, we obtain an abstract variety $X$ that, like $\PP^1$, is a union of $\C^*$ and two points. This is illustrated in Figure \ref{fig:nonsep}. However, this variety is not \emph{separated}, meaning that the \emph{classical} topology on $X$ is not Hausdorff. We will see that such situations never occur in our gluing construction for toric varieties. 
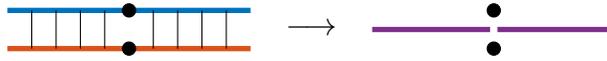
\begin{figure}
\centering
%
%
\definecolor{mycolor1}{rgb}{0.00000,0.44700,0.74100}%
\definecolor{mycolor2}{rgb}{0.85000,0.32500,0.09800}%
\begin{tikzpicture}

\begin{axis}[%
width=3.5in,
height=0.5in,
at={(0.49in,0.343in)},
scale only axis,
xmin=-1.5,
xmax=4.0,
xtick = \empty,
ymin=-0.5,
ymax=0.5,
ytick = \empty,
axis background/.style={fill=white},
axis line style={draw=none},
]
\addplot [color=mycolor1, line width=2.0pt, forget plot]
  table[row sep=crcr]{%
-1 0.2 \\
1 0.2 \\
};
\addplot [color=mycolor2, line width=2.0pt, forget plot]
  table[row sep=crcr]{%
-1 -0.2\\
1 -0.2 \\
};

\addplot [color=mycolor4, line width=2.0pt, forget plot]
  table[row sep=crcr]{%
2 0 0\\
2.97  0\\
};

\addplot [color=mycolor4, line width=2.0pt, forget plot]
  table[row sep=crcr]{%
3.03 0\\
4	0 \\
};

\addplot [color=black, forget plot]
  table[row sep=crcr]{%
-0.8 0.2\\
-0.8 -0.2\\
};
\addplot [color=black, forget plot]
  table[row sep=crcr]{%
-0.6 0.2\\
-0.6 -0.2\\
};
\addplot [color=black, forget plot]
  table[row sep=crcr]{%
-0.4 0.2\\
-0.4 -0.2\\
};
\addplot [color=black, forget plot]
  table[row sep=crcr]{%
-0.2 0.2\\
-0.2 -0.2\\
};
\addplot [color=black, forget plot]
  table[row sep=crcr]{%
0.2 0.2\\
0.2 -0.2\\
};
\addplot [color=black, forget plot]
  table[row sep=crcr]{%
0.4 0.2\\
0.4 -0.2\\
};
\addplot [color=black, forget plot]
  table[row sep=crcr]{%
0.6 0.2\\
0.6 -0.2\\
};
\addplot [color=black, forget plot]
  table[row sep=crcr]{%
0.8 0.2\\
0.8 -0.2\\
};

\addplot [color=black, draw=none, mark size=2.5pt, mark=*, mark options={solid, black}, forget plot]
  table[row sep=crcr]{%
0	0.2\\
};
\addplot [color=black, draw=none, mark size=2.5pt, mark=*, mark options={solid, black}, forget plot]
  table[row sep=crcr]{%
0	-0.2\\
};

\addplot [color=black, draw=none, mark size=2.5pt, mark=*, mark options={solid, black}, forget plot]
  table[row sep=crcr]{%
3	0.2\\
};

\addplot [color=black, draw=none, mark size=2.5pt, mark=*, mark options={solid, black}, forget plot]
  table[row sep=crcr]{%
3	-0.2\\
};

\node (P) at (axis cs:1.5,0) {$\longrightarrow$};

\end{axis}
\end{tikzpicture}%
\caption{A non-separated variety obtained from gluing two affine lines.}
\label{fig:nonsep}
\end{figure}
\end{example}
\begin{example}[Gluing of $\PP^2$] \label{ex:glueP2}
We glue $\PP^2$ from three copies of $\C^2$. Consider the isomorphisms
$$ h_x : U_x \rightarrow \C^2_t ,~ h_y: U_y \rightarrow \C^2_u,~ \textup{and} ~ h_z: U_z \rightarrow \C^2_v $$
where $\C^2_t$ is the affine plane with coordinates $t_1,t_2$ (analogously for $u,v$) and 
$$h_x(x:y:z) = (y/x,z/x),~ h_y(x:y:z) = (x/y, z/y),~ h_z(x:y:z) = (x/z,y/z).$$ 
The gluing morphisms $\phi_{tv} = \phi_{vt}^{-1}$ come from identifying the images of points in $U_x \cap U_z$ under $h_x$ and $h_z$, e.g.\ on $\C^2_{tv} = \C_t^2 \setminus V(t_2)$ 
$$\phi_{tv}(t_1,t_2)= (t_2^{-1}, t_1t_2^{-1}) \quad \textup{comes from} \quad \left ( \frac{x}{z}, \frac{y}{z} \right ) = \left ( \left ( \frac{z}{x} \right )^{-1}, \left ( \frac{y}{x} \right ) \left ( \frac{z}{x} \right )^{-1} \right ). \qedhere$$ 
\end{example}
Let $X_1 = \cup_{i \in \J} U_i$ and $X_2 = \cup_{j \in \J'} U_j'$ be abstract varieties. A \emph{morphism} $\Phi: X_1 \rightarrow X_2$ is a Zariski continuous map such that 
\[\Phi_{|U_i \cap \Phi^{-1}(U_j')}: U_i \cap \Phi^{-1}(U_j') \rightarrow U_j' \]
is a morphism of Zariski open subsets of affine varieties. We note that the product $X_1 \times X_2$ is an abstract variety as well, obtained by glueing $U_i \times U_j'$ in the appropriate way. An abstract variety $X$ is \emph{irreducible} if it cannot be written as a union of two proper closed subvarieties. It is called \emph{normal} if each of the affine varieties $Y_i$ is normal. 

\subsection{Toric varieties from fans}
We are now ready to state our third, last and most general definition of a toric variety.
\begin{definition}[Toric variety] \label{def:TV}
A \emph{toric variety} is an irreducible abstract variety $X$ containing a torus $T \simeq (\C^*)^n$ as a Zariski open subset, such that the action of $T$ on itself extends to an algebraic action $T \times X \rightarrow X$ on $X$.
\end{definition}
Here, by an \emph{algebraic} action we mean that $T \times X \rightarrow X$ is a morphism of abstract varieties. Note that affine toric varieties and projective toric varieties are toric varieties in this sense. 
 
 Recall that a \emph{fan} $\Sigma$ in $N_\R$ is a collection of cones that fit together in a nice way (Definition \ref{def:fan}). We fix such a fan $\Sigma$ in $N_\R$. We now explain how this encodes the gluing data for constructing a toric variety $X_\Sigma$. Recall that each $m \in M_\R$ gives a hyperplane $H_m = \{ u \in N_\R ~|~ \langle u, m \rangle = 0 \}$. Here are two facts we will need in the construction: 
 \begin{enumerate}
 \item[(i)] \emph{Faces of $\sigma$ give affine open subsets of $U_\sigma$.} If $\tau \preceq \sigma$, $U_\tau \simeq (U_\sigma)_{\chi^m} \subset U_\sigma$, where $m \in M$ is such that $\tau = H_m \cap \sigma$. See Example \ref{ex:affineopensubsets}.
 \item[(ii)] \emph{Common faces give common affine open subsets.} Let $\tau = \sigma_1 \cap \sigma_2$. There exists $m \in \sigma_1^\vee \cap (- \sigma_2^\vee) \cap M$ such that $\sigma_1 \cap H_m = \tau = \sigma_2 \cap H_m$. It follows that
 \[ U_{\sigma_1} \supseteq (U_{\sigma_1})_{\chi^m} \simeq U_\tau \simeq (U_{\sigma_2})_{\chi^{-m}} \subset U_{\sigma_2}. \]
 \end{enumerate}
 The second item is \cite[Lem.~1.2.13]{cox2011toric}. The gluing data for $X_\Sigma$ are
 \begin{enumerate}
 \item the collection of normal affine toric varieties $\{ U_\sigma \}_{\sigma \in \Sigma}$ and
 \item for each pair of cones $\sigma_1, \sigma_2 \in \Sigma$, an isomorphism 
 \[ \phi_{\sigma_1,\sigma_2} : U_{\sigma_1,\sigma_2} = (U_{\sigma_1})_{\chi^m} \rightarrow U_{\sigma_2,\sigma_1} = (U_{\sigma_2})_{\chi^{-m}} \quad \text{given by} \quad (U_{\sigma_1})_{\chi^m} \simeq U_\tau \simeq (U_{\sigma_2})_{\chi^{-m}}. \]
 Here $\tau = \sigma_1 \cap \sigma_2$ and $m$ is as in item (ii) above.
\end{enumerate}  

\begin{theorem}
Let $\Sigma$ be a fan in $N_\R$. The abstract variety $X_\Sigma$ obtained from the above gluing data is a normal toric variety. 
\end{theorem}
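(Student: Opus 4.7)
The plan is to verify, in order, the five things required by Definition \ref{def:TV} together with normality: (a) the gluing data determines a well-defined abstract variety, (b) each piece is normal, (c) the pieces share a common dense torus, (d) $X_\Sigma$ is irreducible, and (e) the torus action extends algebraically. The hardest part is really the bookkeeping for (a); once that is done, (b)--(e) follow cleanly from results already proved.

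First, I would check the three axioms for gluing data from Section~\ref{subsec:gluing} applied to $\{U_\sigma\}_{\sigma \in \Sigma}$ and the isomorphisms $\phi_{\sigma_1,\sigma_2}$. Axiom~1 ($\phi_{\sigma_1,\sigma_2}\circ\phi_{\sigma_2,\sigma_1}=\mathrm{id}$) is immediate from the symmetric construction using $m$ and $-m$. For Axiom~2, given three cones $\sigma_1,\sigma_2,\sigma_3\in\Sigma$, fact (i) before the theorem and Proposition~\ref{prop:bigconesprop}(1) (the intersection of faces is a face) let me identify
\[ (U_{\sigma_1})_{\chi^{m_{12}}} \cap (U_{\sigma_1})_{\chi^{m_{13}}} \simeq U_{\sigma_1\cap\sigma_2\cap\sigma_3} \simeq (U_{\sigma_2})_{\chi^{-m_{12}}} \cap (U_{\sigma_2})_{\chi^{m_{23}}} \]
using characters $m_{ij}$ cutting out $\sigma_i\cap\sigma_j$ from $\sigma_i$; the image under $\phi_{\sigma_1,\sigma_2}$ of the left-hand side is exactly the right-hand side. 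Axiom~3 (the cocycle condition on $U_{\sigma_1\cap\sigma_2\cap\sigma_3}$) holds because all three gluing maps, after restricting to this open set, are induced by the identity map of lattices and hence of $\C[M]$. This is where the fan axioms of Definition~\ref{def:fan} are essential: without $\sigma_1\cap\sigma_2$ being a common face of $\sigma_1$ and $\sigma_2$, no single monomial $m$ would serve, and the gluing would fail in the way illustrated by Example~\ref{ex:nonsep}.

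Next, normality of each $U_\sigma$ is immediate from Theorem~\ref{thm:normal}, so $X_\Sigma$ is normal by definition of a normal abstract variety. The common torus is obtained by noting that $\{0\}$ is a face of every $\sigma\in\Sigma$, hence belongs to $\Sigma$, and $U_{\{0\}}=\Specm(\C[M])=T_N$. Fact~(i) shows that $T_N$ sits as a Zariski dense open subset of every $U_\sigma$, and all the gluing isomorphisms $\phi_{\sigma_1,\sigma_2}$ restrict to the identity on $T_N$, so the $T_N$'s are consistently identified and define a single dense open subvariety $T_N\subset X_\Sigma$. Irreducibility of $X_\Sigma$ follows because any two nonempty open subsets each meet this common dense $T_N$.

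Finally, for the torus action I would apply Proposition~\ref{prop:torusaction} to each $U_\sigma$ to get algebraic actions $\mu_\sigma:T_N\times U_\sigma\to U_\sigma$ extending the multiplication on $T_N$. For any pair $\sigma_1,\sigma_2\in\Sigma$, the two maps $\mu_{\sigma_1}$ and $\mu_{\sigma_2}$ restrict to morphisms $T_N\times U_{\sigma_1\cap\sigma_2}\to U_{\sigma_1\cap\sigma_2}$ which agree on the dense open subset $T_N\times T_N$, hence agree everywhere since the target is separated (it is affine). The $\mu_\sigma$ therefore glue to a global morphism $T_N\times X_\Sigma\to X_\Sigma$ extending the self-action of $T_N$, so $X_\Sigma$ is a toric variety in the sense of Definition~\ref{def:TV}. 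The main obstacle, as noted, is Axiom~3 of the gluing, and the trick is to reduce the comparison to the level of $\C[M]$ where the cocycle condition becomes tautological.
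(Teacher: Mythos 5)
Your proof is correct and follows essentially the same strategy as the paper's: obtain normality from Theorem \ref{thm:normal}, identify a common dense torus $U_{\{0\}}$ in every chart (which gives irreducibility), and glue the local torus actions to a global one. You fill in a genuine gap the paper leaves implicit, namely the verification of the gluing axioms from Section \ref{subsec:gluing}, and you correctly identify that Definition \ref{def:fan} (in particular, that intersections of cones in $\Sigma$ are common faces) is exactly what makes a single character $\chi^m$ available so that the cocycle condition on triple overlaps becomes tautological at the level of $\C[M]$. For the compatibility of the torus actions on overlaps, you argue via density of $T_N \times T_N$ and separatedness of the affine target; the paper instead appeals directly to the coordinate-free formula $\gamma \mapsto (m \mapsto \chi^m(t)\gamma(m))$ of Proposition \ref{prop:torusaction}, which shows the action literally does not depend on $\sigma$ — both routes are valid, and the latter avoids the (minor) extra check that $U_{\sigma_1\cap\sigma_2}$ is preserved by the action restricted from $U_{\sigma_1}$.
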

\begin{proof}
By Definition \ref{def:fan}, each $\sigma \in \Sigma$ is strongly convex. Therefore, $\{0\}$ is a face of each cone, and hence $T \simeq (\C^*)^n = \Specm(\C[M]) = U_{\{0\}} \subset U_\sigma$ for each $\sigma \in \Sigma$. These tori are identified by the gluing, so that $X_\Sigma$ is irreducible and $T \subset X_\Sigma$ a Zariski open subset. The actions $T \times U_\sigma \rightarrow U_\sigma$ are compatible on overlaps (this is easily seen from the intrinsic description of the torus action from Proposition \ref{prop:torusaction}), and therefore glue to a morphism $T \times X_\Sigma \rightarrow X_\Sigma$. Normality follows from Theorem \ref{thm:normal}.
\end{proof}

\begin{remark} \label{rem:maxcones}
Since $U_\tau \subset U_\sigma$ when $\tau \preceq \sigma$, we have that $X_\Sigma = \bigcup_{\sigma \in \Sigma} U_\sigma = \bigcup_{\sigma \text{ maximal}} U_\sigma$ is covered by the \emph{maximal} cones in $\Sigma$ (with respect to inclusion).
\end{remark}
We state, without proof, that this construction never leads to non-separated abstract varieties, such as $X$ from Example \ref{ex:nonsep}. See \cite[Exer.~3.1.2]{cox2011toric}. Moreover, it turns out every normal, separated toric variety arises in this way \cite[Cor.~3.1.8]{cox2011toric}. 
\begin{theorem} \label{thm:classification}
Let $X$ be a normal, separated toric variety with torus $T = N \otimes_\Z \C^*$. There exists a fan $\Sigma$ in $N_\R$ such that $X \simeq X_\Sigma$.
\end{theorem}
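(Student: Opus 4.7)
The plan is to construct the fan $\Sigma$ by using a cover of $X$ by $T$-invariant affine open subvarieties, identifying each of them with $U_\sigma$ for a strongly convex rational cone via Theorem \ref{thm:normal}, and then verifying that the collection of such cones (together with their faces) satisfies the fan axioms of Definition \ref{def:fan}.

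The first step is to produce a finite cover $X = U_1 \cup \cdots \cup U_k$ where each $U_i \subset X$ is $T$-invariant and affine. This is the content of Sumihiro's theorem on normal varieties with torus action, which I would invoke as a black box. Once we have such a cover, each $U_i$ is an irreducible, normal affine variety containing $T$ as a dense open subset with an extending $T$-action, hence an affine toric variety. By Theorem \ref{thm:normal}, we may write $U_i \simeq U_{\sigma_i} = \Specm(\C[\S_{\sigma_i}])$ for a uniquely determined strongly convex rational cone $\sigma_i \subset N_\R$ (the identification is canonical because $N$ is the cocharacter lattice of $T$).

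The second step is to show that $\Sigma = \{\tau : \tau \preceq \sigma_i \text{ for some } i\}$ is a fan. The face axiom holds by construction, so the real work is in the intersection axiom: given $\sigma_i$ and $\sigma_j$, one must prove that $\sigma_i \cap \sigma_j$ is a common face of both and belongs to $\Sigma$. Here separatedness enters crucially. Since $X$ is separated, the intersection $U_i \cap U_j$ is affine (as the preimage of the diagonal under $U_i \times U_j \to X \times X$), it is $T$-invariant and normal, so by the same argument $U_i \cap U_j \simeq U_\tau$ for some strongly convex rational cone $\tau$. On the other hand, $U_\tau \hookrightarrow U_{\sigma_i}$ is a $T$-equivariant affine open inclusion of normal affine toric varieties, and such inclusions are classified: by the discussion in Example \ref{ex:affineopensubsets} (together with \cite[Prop.~3.2.2]{cox2011toric}) they arise precisely from face inclusions $\tau \preceq \sigma_i$, and similarly $\tau \preceq \sigma_j$. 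Uniqueness of the cone determining $U_\tau$ forces $\tau = \sigma_i \cap \sigma_j$, giving both axioms at once.

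The third and final step is to identify $X$ with $X_\Sigma$. The cover $\{U_{\sigma_i}\}$ of $X$ and the cover of $X_\Sigma$ by the same affine pieces agree on pairwise intersections: both overlaps $U_{\sigma_i} \cap U_{\sigma_j}$ are canonically isomorphic to $U_{\sigma_i \cap \sigma_j}$ via the localization isomorphism of item (ii) preceding the construction of $X_\Sigma$, because in each case the isomorphism is determined by how the character lattice $M$ sits inside the semigroup algebras. Hence the gluing data on both sides coincide, and $X \simeq X_\Sigma$. The hard part is really the first step: Sumihiro's theorem is the nontrivial input that turns a global $T$-action into a local affine description, without which one cannot even begin to extract cones. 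The intersection axiom in the second step is the other conceptually delicate point, and it is exactly where the hypothesis that $X$ be separated is used; without it one would end up in a situation like Example \ref{ex:nonsep}, where the pieces glue but do not assemble into a fan.
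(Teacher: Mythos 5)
Your proof is correct. Note that the paper does not actually argue Theorem \ref{thm:classification}; it states it and cites \cite[Cor.~3.1.8]{cox2011toric}, so there is no in-text proof to compare against. What you have written is essentially a reconstruction of the argument found in that reference, and you isolate the two nontrivial inputs correctly: Sumihiro's theorem, which produces the finite $T$-invariant affine cover and is the genuinely hard external ingredient; and the fact that a $T$-equivariant open affine inclusion $U_\tau \hookrightarrow U_\sigma$ of normal affine toric varieties over the same torus forces $\tau \preceq \sigma$. For the latter, the paper's Proposition \ref{prop:limits} is the right tool --- it canonically recovers each $\sigma_i$ as $\{u \in N_\R : \lim_{t\to 0}\lambda^u(t) \text{ exists in } U_i\}$ and then, once you know $U_i \cap U_j$ is a $T$-invariant affine open (this is precisely where separatedness is used), forces $\sigma_i\cap\sigma_j$ to be the cone of that overlap. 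Do note that the limit criterion by itself only gives the set containment $\tau\subset\sigma_i$; upgrading that to the face relation $\tau\preceq\sigma_i$ genuinely requires the classification of $T$-stable affine opens (equivalently the orbit-cone correspondence, Theorem \ref{thm:orbitcone}), and you rightly flag a reference for this rather than treating it as obvious. Your final step is also handled correctly: the gluing data on $X$ and on $X_\Sigma$ must agree because both extend the identity on the common dense torus, and morphisms of irreducible varieties that coincide on a dense open subset coincide.
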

\begin{example}
If $\sigma \subset N_\R$ is a strongly convex rational polyhedral cone and $\Sigma$ is the fan consisting of $\sigma$ and all its faces, we have $X_\Sigma = U_\sigma$.
\end{example}
\begin{example}
Let $P$ be a full-dimensional polytope in $M_\R$ and let $\Sigma_P$ be its normal fan. Then $X_P \simeq X_{\Sigma_P}$. 
\end{example}
\begin{example} \label{ex:1D}
If $N = \Z$, there are only three cones in $N_\R$: $\tau = \{0\}, \sigma_1 = \R_{\leq 0}, \sigma_2= \R_{\geq 0}$. By Theorem \ref{thm:classification}, there are only three one-dimensional normal separated toric varieties: 
\begin{enumerate}
\item $\Sigma = \{\tau\}$, $X_\Sigma = \C^*$, 
\item $\Sigma = \{\sigma_i, \tau \}, i = 1 \text{ or } 2$, $X_\Sigma = \C$.
\item $\Sigma = \{\tau, \sigma_1, \sigma_2\}$, $X_\Sigma =\PP^1$. 
\end{enumerate}
In the latter case, $\Sigma$ is the normal fan of a line segment $P = {\rm Conv}(0,1)$, and $ \PP^1 \simeq X_P$. 
\end{example}
\begin{example} \label{ex:coveringP2}
We revisit the gluing of $\PP^2$ from a toric point of view. Consider the fan shown in Figure \ref{fig:fanP2}. 
\begin{figure}[h!]
\centering
\begin{tikzpicture}[scale=1]
\begin{axis}[%
width=2in,
height=2in,
scale only axis,
xmin=-3.5,
xmax=3.5,
ymin=-3.5,
ymax=3.5,
ticks = none, 
ticks = none,
axis background/.style={fill=white},
axis line style={draw=none} 
]


\addplot [color=mycolor1,solid,fill opacity=0.2,fill = mycolor1,forget plot]
  table[row sep=crcr]{%
 5 0\\
5 5\\	
0 5\\
0 0 \\
5 0 \\
};

\addplot [color=mycolor1,solid,fill opacity=0.2,fill = mycolor1,forget plot]
  table[row sep=crcr]{%
 0 0 \\
 -5 -5 \\
 5 -5\\
 5 0\\
 0 0\\
};

\addplot [color=mycolor1,solid,fill opacity=0.2,fill = mycolor1,forget plot]
  table[row sep=crcr]{%
 0 0 \\
 -5 -5 \\
 -5 5\\
 0 5\\
 0 0\\
};

\addplot [very thick, color=mycolor2,solid,fill opacity=0.2,fill = mycolor1,forget plot]
  table[row sep=crcr]{%
 0 0 \\
 5 0 \\
};

\addplot [very thick, color=mycolor2,solid,fill opacity=0.2,fill = mycolor1,forget plot]
  table[row sep=crcr]{%
 0 0 \\
 -5 -5 \\
};

\addplot [very thick, color=mycolor2,solid,fill opacity=0.2,fill = mycolor1,forget plot]
  table[row sep=crcr]{%
 0 0 \\
 0 5 \\
};

\addplot[only marks,mark=*,mark size=3.1pt,mycolor4
        ]  coordinates {
  (0,0)
};

\node (P) at (axis cs:2,2) {$\sigma_3$};
\node (P) at (axis cs:1,-2) {$\sigma_2$};
\node (P) at (axis cs:-2,1) {$\sigma_1$};

\end{axis}
\end{tikzpicture} 
\caption{The fan of $\PP^2$.}
\label{fig:fanP2}
\end{figure}
Let $\sigma_1, \sigma_2, \sigma_3$ be the 2-dimensional cones of $\Sigma$. We have 
\[ U_{\sigma_3} = \Specm(\C[t_1,t_2]), \quad U_{\sigma_1} = \Specm(\C[t_1^{-1},t_1^{-1}t_2]), \quad U_{\sigma_2} = \Specm(\C[t_2^{-1},t_1t_2^{-1}]). \]
This is most easily seen from the dual cones in Figure \ref{fig:illustratefan}. Each of these is isomorphic to $\C^2$. Let $\tau = \sigma_3 \cap \sigma_2$. We have $U_\tau  = \Specm (\C[t_1,t_2,t_2^{-1}]) \simeq  \Specm(\C[t_1,t_2]_{t_2}) \simeq \Specm(\C[t_2^{-1},t_1t_2^{-1}]_{t_2^{-1}})$. Identifying  $\C[t_2^{-1},t_1t_2^{-1}]_{t_2^{-1}}$ with $\C[v_1,v_2]_{v_1}$, induces an isomorphism of $\C$-algebras $\C[v_1,v_2]_{v_1} \rightarrow \C[t_1,t_2]_{t_2} $ sending $v_1 \mapsto t_2^{-1}$ and $v_2 \mapsto t_1 t_2^{-1}$. This is the pullback of $\phi_{tv}$ in Example \ref{ex:glueP2}. The gluing of the other charts is similar. We conclude that $X_\Sigma = \PP^2$. Note that $\Sigma$ is the normal fan of the standard simplex $\Delta_2 = {\rm Conv}( 0, e_1, e_2 ) \subset \R^2$. Hence $X_{\Delta_2} \simeq \PP^2$, which is obvious from $X_{\Delta_2} \simeq X_{\Delta_2 \cap \Z^2}$ (Definition \ref{def:torvarofpol}).
\end{example}
\begin{example} \label{ex:blowupC2}
We consider the fan $\Sigma$ from Figure \ref{fig:fanBl0C2}. 
Note that it is not the normal fan of a polytope. By Remark \ref{rem:maxcones}, $X_\Sigma$ is covered by $U_{\sigma_1} \simeq \{ w - uv = 0 \} \subset \C^3, \quad U_{\sigma_2} \simeq \{ z - xy = 0\} \subset \C^3$
These overlap in $X_\Sigma$ on $(U_{\sigma_1})_v$  and $(U_{\sigma_2})_x$. The isomorphism comes from 
\[ \left( \C[x,y,z]/\langle z - xy \rangle \right)_{[x]} \rightarrow \left( \C[u,v,w]/\langle w - uv \rangle \right)_{[v]} \]
given by $x \mapsto v^{-1}, y \mapsto w, z \mapsto u$. To see to compute this morphism in coordinates, see Example \ref{ex:chiaramara} below. We conclude that $X_\Sigma = \{x_0 z - x_1 y  = 0 \}  = \text{Bl}_0(\C^2)\subset \PP^1 \times \C^2$. 
\begin{figure}[h!]
\centering
\begin{tikzpicture}[scale=1]
\begin{axis}[%
width=2in,
height=2in,
scale only axis,
xmin=-0.5,
xmax=3.5,
ymin=-0.5,
ymax=3.5,
ticks = none, 
ticks = none,
axis background/.style={fill=white},
axis line style={draw=none} 
]


\addplot [color=mycolor1,solid,fill opacity=0.2,fill = mycolor1,forget plot]
  table[row sep=crcr]{%
 5 0\\
5 5\\	
0 0 \\
5 0 \\
};

\addplot [color=mycolor1,solid,fill opacity=0.2,fill = mycolor1,forget plot]
  table[row sep=crcr]{%
0 5\\
5 5\\	
0 0 \\
0 5\\
};

\addplot [very thick, color=mycolor2,solid,fill opacity=0.2,fill = mycolor1,forget plot]
  table[row sep=crcr]{%
 0 0 \\
 5 0 \\
};

\addplot [very thick, color=mycolor2,solid,fill opacity=0.2,fill = mycolor1,forget plot]
  table[row sep=crcr]{%
 0 0 \\
 0 5 \\
};

\addplot [very thick, color=mycolor2,solid,fill opacity=0.2,fill = mycolor1,forget plot]
  table[row sep=crcr]{%
 0 0 \\
 5 5 \\
};

\addplot[only marks,mark=*,mark size=3.1pt,mycolor4
        ]  coordinates {
  (0,0)
};

\node (P) at (axis cs:2,1) {$\sigma_1$};
\node (P) at (axis cs:1,2) {$\sigma_2$};

\end{axis}
\end{tikzpicture} 
\caption{The fan of ${\rm Bl}_0(\C^2)$.}
\label{fig:fanBl0C2}
\end{figure}
\end{example}
\begin{exercise} \label{ex:P1P1}
Show that $\PP^1 \times \PP^1$ is the toric surface $X_\Sigma$ where $\Sigma$ is the normal fan of the square $[0,1] \times [0,1] \subset \R^2$, shown in Figure \ref{fig:fanP1P1}.. 
\end{exercise}
The following result says that the construction of a toric variety from a fan behaves well under taking products. Here is the precise statement \cite[Prop.~3.1.14]{cox2011toric}. 

\begin{proposition} \label{prop:product}
Let $\Sigma_i$ be a fan in $(N_i)_\R$, $i = 1, 2$. The set of cones
\[ \Sigma_1 \times \Sigma_2 = \{ \sigma_1 \times \sigma_2 ~|~ \sigma_i \in \Sigma_i \} \]
is a fan in $(N_1)_\R \times (N_2)_\R = (N_1 \times N_2)_\R$ and $X_{\Sigma_1 \times \Sigma_2} \simeq X_{\Sigma_1} \times X_{\Sigma_2}$. 
\end{proposition}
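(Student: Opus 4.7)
The plan is to handle the two assertions in turn, exploiting the natural identifications $(N_1 \oplus N_2)_\R = (N_1)_\R \oplus (N_2)_\R$ and $M_1 \oplus M_2 = \Hom_\Z(N_1 \oplus N_2, \Z)$, under which the pairing splits as $\langle (u_1,u_2),(m_1,m_2)\rangle = \langle u_1,m_1\rangle + \langle u_2,m_2\rangle$.

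First I would verify that $\Sigma_1 \times \Sigma_2$ is a fan. If $\sigma_i = \mathrm{Cone}(S_i)$ with $S_i \subset N_i$ finite, then $\sigma_1 \times \sigma_2 = \mathrm{Cone}((S_1 \times \{0\}) \cup (\{0\} \times S_2))$, so each product cone is rational and polyhedral. Strong convexity follows because $(u_1,u_2) \in (\sigma_1 \times \sigma_2) \cap -(\sigma_1 \times \sigma_2)$ forces $u_i \in \sigma_i \cap (-\sigma_i) = \{0\}$. The key computation is that taking duals commutes with products: $(\sigma_1 \times \sigma_2)^\vee = \sigma_1^\vee \times \sigma_2^\vee$, which is immediate from the split pairing. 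From this and the description of faces via supporting hyperplanes $H_{(m_1,m_2)} = H_{m_1} \times (N_2)_\R \cap (N_1)_\R \times H_{m_2}$ intersected with $\sigma_1 \times \sigma_2$, one deduces that the faces of $\sigma_1 \times \sigma_2$ are precisely the products $\tau_1 \times \tau_2$ with $\tau_i \preceq \sigma_i$. Both fan axioms of Definition~\ref{def:fan} then reduce to the corresponding axioms for $\Sigma_1$ and $\Sigma_2$, using $(\sigma_1 \times \sigma_2) \cap (\sigma_1' \times \sigma_2') = (\sigma_1 \cap \sigma_1') \times (\sigma_2 \cap \sigma_2')$.

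Next I would identify the affine pieces. From $(\sigma_1 \times \sigma_2)^\vee = \sigma_1^\vee \times \sigma_2^\vee$ we get the semigroup identity $\S_{\sigma_1 \times \sigma_2} = \S_{\sigma_1} \oplus \S_{\sigma_2}$. Under the map $\chi^{(m_1,m_2)} \mapsto \chi^{m_1} \otimes \chi^{m_2}$, this induces a $\C$-algebra isomorphism $\C[\S_{\sigma_1 \times \sigma_2}] \simeq \C[\S_{\sigma_1}] \otimes_\C \C[\S_{\sigma_2}]$, and taking $\Specm$ gives $U_{\sigma_1 \times \sigma_2} \simeq U_{\sigma_1} \times U_{\sigma_2}$ as affine varieties. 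Restricting to the tori (the cones $\{0\} \times \{0\}$), this identification recovers the standard isomorphism $T_{N_1 \times N_2} \simeq T_{N_1} \times T_{N_2}$ and its diagonal action.

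Finally I would check that the gluings match. Given two cones $\sigma_1 \times \sigma_2,\, \sigma_1' \times \sigma_2' \in \Sigma_1 \times \Sigma_2$ with intersection $\tau_1 \times \tau_2$, I choose $m_i \in \sigma_i^\vee \cap (-\sigma_i'^\vee) \cap M_i$ with $\tau_i = \sigma_i \cap H_{m_i} = \sigma_i' \cap H_{m_i}$ as in fact~(ii) of Section~3.2. Then $(m_1,m_2)$ realises $\tau_1 \times \tau_2$ as a common face, and the localisation at $\chi^{(m_1,m_2)}$ factors as the tensor product of the localisations at $\chi^{m_1}$ and $\chi^{m_2}$, so that the abstract gluing isomorphism $\phi_{\sigma_1 \times \sigma_2,\, \sigma_1' \times \sigma_2'}$ coincides with $\phi_{\sigma_1,\sigma_1'} \times \phi_{\sigma_2,\sigma_2'}$ under our identification. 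Thus the gluing data defining $X_{\Sigma_1 \times \Sigma_2}$ is literally the product of the gluing data defining $X_{\Sigma_1}$ and $X_{\Sigma_2}$, giving the isomorphism $X_{\Sigma_1 \times \Sigma_2} \simeq X_{\Sigma_1} \times X_{\Sigma_2}$.

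The main obstacle will be the bookkeeping in the last step: one has to be careful that the element $m$ chosen for a face of a product cone can be taken of the form $(m_1,m_2)$ with each $m_i$ valid for the corresponding factor. This follows from the product structure of the dual cones and faces established above, but writing it cleanly requires being precise about the semigroup-algebra isomorphism $\C[\S_1 \oplus \S_2] \simeq \C[\S_1] \otimes_\C \C[\S_2]$ and checking that localisations of tensor products decompose as tensor products of localisations (which is routine since $\chi^{(m_1,m_2)} = \chi^{m_1} \otimes \chi^{m_2}$).
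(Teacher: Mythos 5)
The paper does not actually prove this proposition; it simply cites \cite[Prop.~3.1.14]{cox2011toric}, so there is no in-text argument to compare against. Your proof is correct and is the standard one found in that reference: show that dual cones and faces commute with products, deduce $\C[\S_{\sigma_1\times\sigma_2}]\simeq\C[\S_{\sigma_1}]\otimes_\C\C[\S_{\sigma_2}]$ on each affine piece so that $U_{\sigma_1\times\sigma_2}\simeq U_{\sigma_1}\times U_{\sigma_2}$, and then check that the transition isomorphisms factor as products of the transition maps for the two factor fans.

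One notational slip is worth fixing. You write $H_{(m_1,m_2)}=(H_{m_1}\times(N_2)_\R)\cap((N_1)_\R\times H_{m_2})$, but this is false as a statement about subsets of $(N_1\times N_2)_\R$: the left-hand side is a hyperplane, while the right-hand side equals $H_{m_1}\times H_{m_2}$ and has codimension two. What you actually use, and should say explicitly, is that for $(m_1,m_2)\in(\sigma_1\times\sigma_2)^\vee=\sigma_1^\vee\times\sigma_2^\vee$ both summands $\langle u_1,m_1\rangle$ and $\langle u_2,m_2\rangle$ are nonnegative on $\sigma_1\times\sigma_2$, so their sum vanishes there precisely when each does; this yields $(\sigma_1\times\sigma_2)\cap H_{(m_1,m_2)}=(\sigma_1\cap H_{m_1})\times(\sigma_2\cap H_{m_2})$, which is exactly the identity needed for the description of faces and for the choice of a separating character of product form in the gluing step.
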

\begin{corollary}
Let $P_1 \subset (M_1)_\R$ and $P_2 \subset (M_2)_\R$ be full-dimensional polytopes and let $P_1 \times P_2$ be the product in $(M_1 \times M_2)_\R$. We have $X_{P_1 \times P_2} \simeq X_{P_1} \times X_{P_2}$. 
\end{corollary}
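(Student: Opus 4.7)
The plan is to reduce the statement to Proposition \ref{prop:product} by identifying the normal fan of a product with the product of normal fans, i.e.\ showing
\[ \Sigma_{P_1 \times P_2} = \Sigma_{P_1} \times \Sigma_{P_2} \subset (N_1)_\R \times (N_2)_\R. \]
Once this is established, Definition \ref{def:torvarofpol} gives $X_{P_1 \times P_2} \simeq X_{\Sigma_{P_1 \times P_2}}$ and similarly for $X_{P_i}$, and Proposition \ref{prop:product} concludes the argument via $X_{\Sigma_{P_1}\times\Sigma_{P_2}} \simeq X_{\Sigma_{P_1}} \times X_{\Sigma_{P_2}}$.

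To verify the identification of normal fans, I would first analyze the face structure of $P_1 \times P_2$. Every face of $P_1 \times P_2$ is of the form $Q_1 \times Q_2$ with $Q_i \preceq P_i$, and the facets are exactly $F \times P_2$ for facets $F \preceq P_1$ and $P_1 \times G$ for facets $G \preceq P_2$. Using the minimal $H$-representations $P_i = \bigcap_{F \text{ facet of } P_i} H_{u_F,a_F}^+$, one checks directly that the primitive inward-pointing facet normals of $P_1 \times P_2$ are precisely $(u_F,0)$ and $(0,u_G)$, so the minimal $H$-representation of $P_1\times P_2$ is the concatenation of those of $P_1$ and $P_2$ viewed in $(N_1 \times N_2)_\R$.

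Next, for a face $Q_1 \times Q_2$, Definition \ref{def:normalfan} gives
\[ \sigma_{Q_1 \times Q_2} = \mathrm{Cone}\bigl(\{u_F : Q_1 \subset F\} \times \{0\} \;\cup\; \{0\} \times \{u_G : Q_2 \subset G\}\bigr), \]
which equals $\sigma_{Q_1} \times \sigma_{Q_2}$. Since every face of $P_1 \times P_2$ decomposes as such a product, this yields the claimed equality of fans. I would also note item 3 of Proposition \ref{prop:fanproperties} to remark that very ampleness (and therefore the passage from $P$ to some normal dilate $k\cdot P$ in Definition \ref{def:torvarofpol}) is compatible with products: a simultaneous dilate $k\cdot P_1 \times k\cdot P_2 = k\cdot(P_1\times P_2)$ has the same normal fan, so the choice of embedding dilate does not affect the isomorphism class of either side.

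The only mild obstacle is the combinatorial bookkeeping that facets of a product polytope are precisely products of a facet with the full polytope on the other side; this is elementary but must be done before one can cleanly identify the rays and the cone structure. Everything else reduces to Proposition \ref{prop:product}, which carries the analytic/algebraic content.
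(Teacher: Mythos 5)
Your proof is correct and follows the same route as the paper: reduce to Proposition \ref{prop:product} via the identity $\Sigma_{P_1 \times P_2} = \Sigma_{P_1} \times \Sigma_{P_2}$. The paper simply states this identity as an observation, while you spell out the verification by analyzing faces and facet normals of the product polytope.
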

\begin{proof}
This follows from Proposition \ref{prop:product} and the observation that the normal fans satisfy $\Sigma_{P_1 \times P_2} = \Sigma_{P_1} \times \Sigma_{P_2}$.
\end{proof}
\begin{example}
We have seen that $\PP^1$ is the toric variety of a line segment (Example \ref{ex:1D}) and $\PP^1 \times \PP^1$ comes from a square (Exercise \ref{ex:P1P1}), i.e.~the product of two line segments. 
\end{example}
We now discuss some properties of $X_\Sigma$ that can be read off the fan $\Sigma$. We repeat that a fan $\Sigma$ in $N_\R$ is called \emph{smooth} if all of its cones are smooth. It is called \emph{simplicial} if all of its cones are simplicial, and $\Sigma$ is \emph{complete} if its \emph{support} $|\Sigma| = \bigcup_{\sigma \in \Sigma} \sigma$ equals $N_\R$.
\begin{theorem} \label{thm:smoothcompactsimplicial}
Let $\Sigma$ be a fan in $N_\R$. The toric variety $X_\Sigma$ is 
\begin{enumerate}
\item smooth if and only if $\Sigma$ is smooth,
\item compact in the classical topology if and only if $\Sigma$ is complete,
\item an \emph{orbifold} (i.e.~it has only finite quotient singularities) if and only if $\Sigma$ is simplicial.
\end{enumerate}
\end{theorem}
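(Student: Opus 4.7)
The plan is to exploit the affine open cover $X_\Sigma = \bigcup_{\sigma \in \Sigma} U_\sigma$ from \Cref{rem:maxcones} and reduce each equivalence to a statement about a single cone. Smoothness and the orbifold property are local conditions and transfer cleanly from the affine charts to $X_\Sigma$; completeness is global and requires a separate argument via one-parameter subgroups, which will be the main obstacle.

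For (1), smoothness is local, so $X_\Sigma$ is smooth iff every $U_\sigma$ is smooth, and by \Cref{thm:smooth} this is equivalent to every cone in $\Sigma$ being smooth. Since faces of smooth cones are smooth, it suffices to check the maximal cones.

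For (3), I would show that $U_\sigma$ is an orbifold iff $\sigma$ is simplicial, and then lift via the cover. If $\sigma$ is simplicial with minimal generators $u_1,\dots,u_k$ ($k = \dim \sigma$), let $N'\subset N$ be the finite-index sublattice generated by $\{u_1,\dots,u_k\}$ together with any $\Z$-basis for a complement of $\R\sigma$ in $N$. Viewing $\sigma$ as a cone in $N'_\R$ gives a smooth $U_{\sigma,N'}\simeq \C^k\times(\C^*)^{n-k}$ by \Cref{ex:smooth}. The inclusion $N' \hookrightarrow N$ is compatible with $\sigma$ on both sides (\Cref{prop:compcone}) and induces a toric morphism $U_{\sigma,N'}\to U_{\sigma,N}$ which realizes $U_{\sigma,N}$ as the quotient of $U_{\sigma,N'}$ by the finite abelian group $\Hom_\Z(N/N',\C^*)$, generalizing \Cref{ex:sublatticesfiniteindex}. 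Hence $U_\sigma$ is an orbifold. Conversely, if $\sigma$ is not simplicial (and full-dimensional), then $\sigma$ has strictly more than $n$ rays, so the Hilbert basis of $\sigma^\vee$ has more than $n$ elements and, by the tangent space computation in the proof of \Cref{thm:smooth}, the torus-fixed point has Zariski tangent dimension $>n$; this is incompatible with being a finite quotient singularity of an $n$-dimensional variety. The non-full-dimensional case reduces to the full-dimensional one by factoring off a torus as in \Cref{ex:smooth}.

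For (2), the key observation is that a one-parameter subgroup $\lambda^u\colon \C^*\to T$ with $u\in N$ extends to a morphism $\C\to U_\sigma$ precisely when $u\in \sigma$: indeed, by \Cref{prop:semigroup}, extension exists iff $\lim_{t\to 0}\chi^m(\lambda^u(t)) = \lim_{t\to 0} t^{\langle m,u\rangle}$ exists in $\C$ for every $m\in \sigma^\vee\cap M$, i.e.~iff $\langle m,u\rangle\geq 0$ for all $m\in\sigma^\vee$, which by biduality means $u\in\sigma$. If $\Sigma$ is not complete, pick $u\in N$ with $u\notin|\Sigma|$; then $\lambda^u$ fails to extend to any $U_\sigma$, hence to $X_\Sigma$. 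But any continuous map from a punctured disk into a compact Hausdorff space must have an accumulation point as $t\to 0$, contradicting compactness. Conversely, if $\Sigma$ is complete, every $u\in N$ lies in some cone of $\Sigma$, so every one-parameter subgroup extends; to promote this to compactness of $X_\Sigma$ I would invoke the valuative criterion of properness applied to $X_\Sigma\to\Specm(\C)$, which for toric varieties reduces exactly to extending one-parameter subgroups from $\C^*$ to $\C$. This valuative step is the main obstacle, since an entirely self-contained alternative would require an explicit sequence-compactness argument on the patches that is possible but combinatorially heavy.
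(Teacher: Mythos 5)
Your approach to parts (1) and (2) is sound, and your "if" direction for (3) reproduces the paper's hint (it sketches exactly this sublattice argument but only for the "if" direction, deferring the rest to the reference). The trouble is your "only if" direction for (3).

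You claim that if $\sigma$ is not simplicial, the torus-fixed point of $U_\sigma$ has Zariski tangent dimension strictly larger than $n$, and that this is "incompatible with being a finite quotient singularity of an $n$-dimensional variety." The incompatibility does not hold: finite quotient singularities routinely have embedding dimension greater than the dimension of the variety. The simplest example appears already in \Cref{ex:sublatticesfiniteindex} of the paper: the cone $\sigma = \R^2_{\geq 0}$ viewed in the lattice $N_2$ gives the $A_1$ surface singularity $\{xz - y^2 = 0\}$, which is $\C^2/(\Z/2\Z)$ --- a textbook finite quotient singularity --- but its tangent space at the origin is 3-dimensional and its Hilbert basis has three elements. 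In fact, by the tangent-space computation in the proof of \Cref{thm:smooth}, the Zariski tangent dimension at the fixed point exceeds $n$ for \emph{every} non-smooth cone, simplicial or not, so this invariant cannot distinguish simplicial cones from non-simplicial ones. (Also, as a side remark, "$\sigma$ has more than $n$ rays" does not directly give "the Hilbert basis of $\sigma^\vee$ has more than $n$ elements" --- rays of $\sigma$ are dual to facets of $\sigma^\vee$, not to its rays --- though the conclusion happens to hold for other reasons when $\sigma$ is non-smooth.) The actual obstruction in the non-simplicial case is of a different nature; one standard route uses \Cref{prop:simpfinind}: if $\sigma$ is not simplicial then $\Pic(U_\sigma)=0$ while $\Cl(U_\sigma)$ has positive rank, so the local class group is infinite, whereas a finite quotient singularity $\C^n/G$ has finite local class group. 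That argument, or something of comparable strength, is what the deferred "only if" direction genuinely requires.

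For (2), you correctly identify that the "$\Rightarrow$" of non-completeness via unextendable one-parameter subgroups (\Cref{prop:limits}) is elementary, while the "$\Leftarrow$" requires a valuative/properness criterion not developed in these notes; that is an honest account of the gap, and it matches why the paper simply cites the reference here.
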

\begin{proof}
This is Theorem 3.1.19 in \cite{cox2011toric}. We point out that item 1 is easy to prove, since smoothness is defined locally. As a hint for point 3, let $\sigma \subset N_\R$ be a simplicial, full-dimensional cone with minimal ray generators $u_1, \ldots, u_n$. The lattice $N_1 = \Z \cdot \{ u_1, \ldots, u_n \}$ has finite index in $N$. As in Example \ref{ex:sublatticesfiniteindex}, it follows that $U_{\sigma,N}$ is the quotient of the smooth affine toric variety $U_{\sigma,N_1}$ by the action of a finite group. Hence, if $\Sigma$ is simplicial, $X_\Sigma$ locally looks like the quotient of a smooth variety by the action of a finite group. This means that it has only finite quotient singularities, so it is an orbifold.
\end{proof}
A toric variety satisfying condition 3 in Theorem \ref{thm:smoothcompactsimplicial} is called \emph{simplicial}. This will be important in our discussion of quotient constructions later. 

\begin{examplestar} \label{ex:chiaramara}
Consider the fan $\Sigma$ in $\R^2$ whose rays have generators $u_1 = (1,2), u_2 = (1,0), u_3 = (-3,-2), u_4 = (0,1)$. The surface $X_\Sigma$ can be embedded in $\PP^{88}$ via the $89$ lattice points of the polygon $P = {\rm Conv} ((0, 15), (0, 1), (2, 0), (10, 0))$ with normal fan $\Sigma$.
\begin{minted}{julia}
P = convex_hull([0 15; 0 1; 2 0; 10 0]); Σ = normal_fan(P);
X_Σ = NormalToricVariety(Σ)
\end{minted}
It is covered by 4 normal affine toric surfaces $U_{\sigma_{12}}, U_{\sigma_{23}}, U_{\sigma_{34}}, U_{\sigma_{14}}$ where $\sigma_{ij} = {\rm Cone}( u_i, u_j )$. Out of these, only $U_{14}$ is smooth. Here is how to check this in \texttt{Oscar.jl}. 
\begin{minted}{julia}
cover = affine_open_covering(X_Σ); 
[issmooth(U) for U in cover]
\end{minted}
This returns a boolean vector \texttt{[0;0;1;0]}. We embed $U_{\sigma_{12}}$ and $U_{\sigma_{23}}$ in affine space and determine $\phi_{\sigma_{12},\sigma_{23}}$ in coordinates. With \texttt{Oscar.jl} we compute the \texttt{hilbert\_basis} of the two cones $\sigma_{12}^\vee, \sigma_{23}^\vee$. These provide the embeddings
\[ U_{\sigma_{12}} \simeq Y_{12} = \{ xz - y^2 = 0 \} \subset \C^3, \qquad U_{\sigma_{23}}  \simeq Y_{23} = \{ uw - v^2 = 0 \} \subset \C^3. \]
The coordinates $x,y,z$ correspond to the blue marked lattice points in Figure \ref{fig:ex_11}, ordered from left to right. Similarly, $u,v,w$ correspond to the green squares. 
The overlap $U_{12}\cap U_{23} \subset X_\Sigma$ is given by points on $Y_{12}$ with $x \neq 0$, and points on $Y_{23}$ with $u \neq 0$.  On these open sets, 
\begin{equation*}
    \phi_{\sigma_{12},\sigma_{23}}(x,y,z) =  \left( \frac{1}{x}, \frac{y}{x^2}, \frac{y^2}{x^3} \right) = \left( \frac{1}{x}, \frac{y}{x^2}, \frac{z}{x^2} \right).
\end{equation*}
This map can be obtained from the cones $\sigma_{12}^\vee, \sigma_{23}^\vee$, writing the green lattice points in Figure \ref{fig:ex_11} as $\Z$-linear combinations of the blue ones.
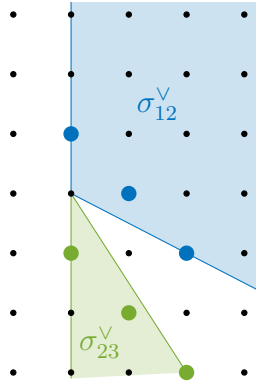
\begin{figure}[h!]
    \centering
    \begin{tikzpicture}[scale=1]
\begin{axis}[%
width=1.33in,
height=2in,
scale only axis,
xmin=-1.2,
xmax=3.2,
ymin=-3.2,
ymax=3.2,
ticks = none, 
ticks = none,
axis background/.style={fill=white},
axis x line = middle, 
axis y line = middle, 
axis line style={draw=none} 
]

\addplot [color=mycolor1,solid,fill opacity=0.2,fill = mycolor1,forget plot]
  table[row sep=crcr]{%
4   -2\\
0   0\\
0   4\\
4	4 \\
};

\addplot [color=mycolor5,solid,fill opacity=0.2,fill = mycolor5,forget plot]
  table[row sep=crcr]{%
2   -3\\
0   0\\
0   -3.1\\
};

\addplot[only marks,mark=*,mark size=1.0pt,black,
        ]  coordinates {
    (0,1) (0,2) (0,3)
    (1,0) (1,1) (1,2) (1,3)
    (2,-1) (2,0) (2,1) (2,2) (2,3)
    (3,-1) (3,0) (3,1) (3,2) (3,3)
};

\addplot[only marks,mark=*,mark size=2.5pt,thick,mycolor1,
        ]  coordinates {
    (0,1) (1,0) (2,-1)
};

\addplot[only marks,mark=*,mark size=1.0pt,black,
        ]  coordinates {
    (0,-3) (0,-2) (0,-1)
    (1,-3) (1,-2) 
    (2,-3)
};

\addplot[only marks,mark=*,mark size=2.5pt,thick,mycolor5,
        ]  coordinates {
    (0,-1) (1,-2) (2,-3)
};

\addplot[only marks,mark=*,mark size=1.0pt,black,
        ]  coordinates {
    (-1,-3) (-1,-2) (-1,-1) (-1,0) (-1,1) (-1,2) (-1,3)
    (1,-1) 
    (2,-2)
    (3,-3) (3,-2) (0,0)
};

\node[mycolor1] (P) at (axis cs:1.5,1.5) {$\sigma_{12}^\vee$};
\node[mycolor5] (P) at (axis cs:0.5,-2.5) {$\sigma_{23}^\vee$};

\end{axis}
\end{tikzpicture}
    \caption{The Hilbert bases of $\sigma_{12}^\vee$ and $\sigma_{23}^\vee$ give the isomorphism $\phi_{\sigma_{12},\sigma_{23}}$.}
    \label{fig:ex_11}
\end{figure}
\end{examplestar}

\subsection{Limits of one-parameter subgroups}

If $X$ is a separated, normal abstract toric variety with torus $T = N \otimes_\Z \C^*$, Theorem \ref{thm:classification} implies that $X \simeq X_\Sigma$ for some polyhedral fan $\Sigma$ in $N_\R$. In this section we will see that this fan can be recovered from limits of monomial curves $\lim_{t \rightarrow 0} \lambda^u(t)$. We start with an example.

\begin{example} \label{ex:limitsP1P1}
Let $X = \PP^1 \times \PP^1$. We have seen that $X = X_\Sigma$ where $\Sigma$ is the fan in Figure \ref{fig:fanP1P1} (Exercise \ref{ex:P1P1}). 
Its torus is $T = \{(1:t_1) \times (1:t_2) ~|~ (t_1,t_2) \in (\C^*)^2 \} \simeq (\C^*)^2$, with cocharacter lattice $N = \Z^2$. We fix $u = (a,b) \in N$ and consider the curves $\lambda^u(t) = (1:t^a) \times (1:t^b) \in \PP^1 \times \PP^1$. The limit $\lim_{t \rightarrow 0} \lambda^u(t)$ depends on $u$. There are nine possibilities: 
\begin{enumerate}
\item $\lim_{t \rightarrow 0} \lambda^u(t) = (1:1) \times (1:1)$ when $a = 0, b = 0$,
\item $\lim_{t \rightarrow 0} \lambda^u(t) = (1:0) \times (1:1)$ when $a>0,b = 0$,
\item $\lim_{t \rightarrow 0} \lambda^u(t) = (1:1) \times (1:0)$ when $a = 0, b >0$,
\item $\lim_{t \rightarrow 0} \lambda^u(t) = (0:1) \times (1:1)$ when $a < 0, b = 0$,
\item $\lim_{t \rightarrow 0} \lambda^u(t) = (1:1) \times (0:1)$ when $a = 0, b < 0$,
\item $\lim_{t \rightarrow 0} \lambda^u(t) = (1:0) \times (1:0)$ when $a >0 , b > 0$,
\item $\lim_{t \rightarrow 0} \lambda^u(t) = (0:1) \times (1:0)$ when $a < 0, b > 0$,
\item $\lim_{t \rightarrow 0} \lambda^u(t) = (0:1) \times (0:1)$ when $a < 0,  b < 0$,
\item $\lim_{t \rightarrow 0} \lambda^u(t) = (1:0) \times (0:1)$ when $a > 0, b < 0$.
\end{enumerate}
These correspond to the relative interiors of the 9 cones of $\Sigma$. For instance, if $u$ lies on the positive $a$-axis, which is the relative interior of the one-dimensional cone $\sigma = {\rm Cone}((1,0)) \in \Sigma$, then $\lim_{t \rightarrow 0} \lambda^u(t) = (1:0) \times (1:1)$. Below, we will call this limit point $\gamma_\sigma$, and in the \emph{orbit-cone} correspondence, we will associate the torus orbit $T \cdot \gamma_\sigma$ to $\sigma$.
\begin{figure}[h!]
\centering
\begin{tikzpicture}[scale=1]
\begin{axis}[%
width=2in,
height=2in,
scale only axis,
xmin=-3.5,
xmax=3.5,
ymin=-3.5,
ymax=3.5,
ticks = none, 
ticks = none,
axis background/.style={fill=white},
axis line style={draw=none} 
]


\addplot [color=mycolor1,solid,fill opacity=0.2,fill = mycolor1,forget plot]
  table[row sep=crcr]{%
 5 0\\
5 5\\	
0 5\\
0 0 \\
5 0 \\
};

\addplot [color=mycolor1,solid,fill opacity=0.2,fill = mycolor1,forget plot]
  table[row sep=crcr]{%
 0 0 \\
 -5 0 \\
 -5 5\\
 0 5\\
 0 0\\
};

\addplot [color=mycolor1,solid,fill opacity=0.2,fill = mycolor1,forget plot]
  table[row sep=crcr]{%
 0 0 \\
 -5 0 \\
 -5 -5\\
 0 -5\\
 0 0\\
};

\addplot [color=mycolor1,solid,fill opacity=0.2,fill = mycolor1,forget plot]
  table[row sep=crcr]{%
 0 0 \\
 5 0 \\
 5 -5\\
 0 -5\\
 0 0\\
};

\addplot [very thick, color=mycolor2,solid,fill opacity=0.2,fill = mycolor1,forget plot]
  table[row sep=crcr]{%
 0 0 \\
 5 0 \\
};

\addplot [very thick, color=mycolor2,solid,fill opacity=0.2,fill = mycolor1,forget plot]
  table[row sep=crcr]{%
 0 0 \\
 -5 0 \\
};

\addplot [very thick, color=mycolor2,solid,fill opacity=0.2,fill = mycolor1,forget plot]
  table[row sep=crcr]{%
 0 0 \\
 0 5 \\
};

\addplot [very thick, color=mycolor2,solid,fill opacity=0.2,fill = mycolor1,forget plot]
  table[row sep=crcr]{%
 0 0 \\
0 -5 \\
};

\addplot[only marks,mark=*,mark size=3.1pt,mycolor4
        ]  coordinates {
  (0,0)
};

\end{axis}
\end{tikzpicture} 
\caption{The fan of $\PP^1 \times \PP^1$.}
\label{fig:fanP1P1}
\end{figure}
\end{example}

Let $\sigma$ be a strongly convex rational cone in $N_\R \simeq \R^n$. Recall that points of $U_\sigma$ are semigroup homomorphisms $\gamma : \S_\sigma \rightarrow \C$.

\begin{definition}[Distinguished point] \label{def:distinguished}
The \emph{distinguished point} of the affine toric variety $U_\sigma$ is the semigroup homomorphism 
\[ \gamma_\sigma: m \in \S_\sigma \longmapsto \begin{cases}
1 & m \in \S_\sigma \cap \sigma^\perp \\
0 & \text{otherwise}.
\end{cases} \]
\end{definition}
One can check that $\gamma_\sigma$ is indeed a semigroup homomorphism. By Proposition \ref{prop:fixedpoint}, $\gamma_\sigma$ is fixed under the torus action if and only if $\dim \sigma = n$, in which case $\sigma^\perp = \{0\}$. Note that when $\tau \preceq \sigma$ is a face of $\sigma$, $\gamma_\tau \in U_\sigma$ since $U_\tau \subset U_\sigma$. The point $\gamma_\tau$ is a semigroup homomorphism $\S_\tau \rightarrow \C$. Viewed as a point in $U_\sigma$, it is the restriction of $\gamma_\tau$ to $\S_\sigma \subset \S_\tau$. 

\begin{proposition} \label{prop:limits}
Let $\sigma$ be a strongly convex rational cone in $N_\R$ and $u \in N$. We have that $u \in \sigma$ if and only if the limit $\lim_{t \rightarrow 0} \lambda^u(t)$ exists in $U_\sigma$. Moreover, if $u$ lies in the relative interior of $\sigma$, we have $\lim_{t\rightarrow 0} \lambda^u(t) = \gamma_\sigma$.
\end{proposition}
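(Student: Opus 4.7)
The plan is to use the intrinsic description of points of $U_\sigma$ as semigroup homomorphisms (Proposition \ref{prop:pointsarehomom}), together with the explicit form of a one-parameter subgroup on characters. Concretely, the point $\lambda^u(t) \in T \subset U_\sigma$ corresponds to the semigroup homomorphism
\[
\gamma_t : \S_\sigma \longrightarrow \C, \qquad m \longmapsto \chi^m(\lambda^u(t)) = t^{\langle m, u \rangle}.
\]
First I would fix a finite generating set $\A = \{m_1, \ldots, m_s\}$ of $\S_\sigma$, yielding an embedding $U_\sigma \simeq Y_\A \subset \C^s$ under which $\lambda^u(t)$ is the point $(t^{\langle m_1,u\rangle}, \ldots, t^{\langle m_s,u\rangle})$. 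The limit as $t \to 0$ in $\C^s$ therefore exists if and only if $\langle m_i, u \rangle \geq 0$ for all $i$. Because $\langle \cdot, u \rangle$ is $\R$-linear, this is equivalent to $\langle m, u \rangle \geq 0$ for \emph{every} $m \in \S_\sigma$, and hence, using that $\sigma^\vee = {\rm Cone}(\S_\sigma)$ (since $\S_\sigma = \sigma^\vee \cap M$ contains the primitive ray generators of the rational cone $\sigma^\vee$), to $\langle m, u \rangle \geq 0$ for all $m \in \sigma^\vee$. By double duality $(\sigma^\vee)^\vee = \sigma$, this is exactly $u \in \sigma$.

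Next I would verify that the resulting limit point of $\C^s$ actually lies in $Y_\A$, by observing that the map $\gamma_0 : m \mapsto \lim_{t \to 0} t^{\langle m, u \rangle}$ is a semigroup homomorphism $\S_\sigma \to \C$, since each coordinate limit is $1$ if $\langle m, u\rangle = 0$ and $0$ if $\langle m, u \rangle > 0$, and these two cases multiply correctly under $m + m'$. Under the correspondence of Proposition \ref{prop:pointsarehomom}, $\gamma_0$ is a well-defined point of $U_\sigma$, and it is by construction the limit of $\lambda^u(t)$ as $t \to 0$.

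For the second assertion, suppose $u$ lies in the relative interior of $\sigma$. By standard cone duality, this is equivalent to the statement that for $m \in \sigma^\vee$ one has $\langle m, u \rangle = 0$ iff $m \in \sigma^\perp$; in other words, $m$ belongs to the face of $\sigma^\vee$ dual to $\sigma$ itself (which is $\sigma^\vee \cap \sigma^\perp$ by Proposition \ref{prop:bigconesprop}(4)). Applied to $m \in \S_\sigma \subset \sigma^\vee$, the formula for $\gamma_0$ becomes
\[
\gamma_0(m) = \begin{cases} 1 & m \in \S_\sigma \cap \sigma^\perp, \\ 0 & \text{otherwise}, \end{cases}
\]
which matches Definition \ref{def:distinguished} of $\gamma_\sigma$.

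The main obstacle is really a bookkeeping point: one has to be careful that ``the limit exists in $U_\sigma$'' is checked via an embedding (so in the classical topology on some ambient $\C^s$) rather than abstractly in the Zariski topology, and one needs to confirm that this notion is independent of the generating set $\A$ chosen. Both points are easily handled once the embedding is fixed and one checks that the candidate limit, read off as a semigroup homomorphism, is intrinsic to $\S_\sigma$.
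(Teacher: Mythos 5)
Your proof is correct and takes essentially the same route as the paper: evaluate the characters $\chi^m$ along $\lambda^u(t)$ to reduce the existence of the limit to the condition $\langle u,m\rangle \geq 0$ for all $m \in \sigma^\vee$, i.e.\ $u \in (\sigma^\vee)^\vee = \sigma$, and then read off the limiting semigroup homomorphism to identify it with $\gamma_\sigma$ when $u$ is in the relative interior. You are slightly more explicit than the paper about fixing an embedding $U_\sigma \simeq Y_\A$ and verifying that the coordinate limit actually lands in $Y_\A$ and is independent of $\A$, but this is the same argument, just with the implicit bookkeeping spelled out.
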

\begin{proof}
The limit $\lim_{t \rightarrow 0} \lambda^u(t)$ exists in $U_\sigma$ if and only if 
\begin{align*}
\lim_{t \rightarrow 0} \chi^m(\lambda^u(t)) \text{ exists in $\C$ for all $m \in \sigma^\vee \cap M$} \Longleftrightarrow & \lim_{t \rightarrow 0} t^{\langle u, m \rangle} \text{ exists in $\C$ for all $m \in \sigma^\vee \cap M$} \\
\Longleftrightarrow & \langle u,m \rangle \geq 0 \text{ for all $m \in \sigma^\vee$} \\
\Longleftrightarrow & u \in (\sigma^\vee)^\vee = \sigma.
\end{align*}
If $u \in \sigma$ lies in the relative interior of $\sigma$, $\lim_{t \rightarrow 0} \lambda^u(t)$ is the semigroup homomorphism that sends $m \mapsto \lim_{t \rightarrow 0} t^{\langle u, m \rangle}$. This coincides with $\gamma_\sigma$, since $\langle u, m \rangle \geq 0$ and equality holds if and only if $m \in \sigma^\perp \cap M \subset \sigma^\vee \cap M = \S_\sigma$. 
\end{proof}
Proposition \ref{prop:limits} justifies our claim that $\Sigma$ can be recovered from limits of one-parameter subgroups in $X_\Sigma$.
\begin{exercise} \label{ex:limitsP2}
Repeat Example \ref{ex:limitsP1P1} for $X = \PP^2$. Write down the distinguished points in their homogeneous coordinates and associate them to the cones of the fan $\Sigma$ in Figure \ref{fig:fanP2}.
\end{exercise}

\subsection{Orbit-cone correspondence}

Let $\Sigma$ be a fan in $N_\R$. In this section, our goal is to show that the cones $\sigma$ in $\Sigma$ correspond to orbits $O(\sigma)$ of the torus action $T \times X_\Sigma \rightarrow X_\Sigma$. These orbits stratify $X_\Sigma$, and inclusion relations $\overline{O(\sigma)} \subset \overline{O(\tau)}$ of closed strata correspond to the reversed face relations $\tau \preceq \sigma$. 

For a cone $\sigma \in \Sigma$, we define $O(\sigma) = T \cdot \gamma_\sigma \subset U_\sigma \subset X_\Sigma$. These orbits are themselves tori. We first identify their lattices. Let $N_\sigma = \Z \cdot (\sigma \cap N)$ and $N(\sigma) = N/N_\sigma$. The dual $N(\sigma)^\vee = \Hom_\Z(N(\sigma), \Z)$ is the group $\sigma^\perp \cap M$. Therefore
\[ \Hom_\Z(\sigma^\perp \cap M, \C^*) \simeq T_{N(\sigma)}, \]
where $T_{N(\sigma)} = N(\sigma) \otimes_\Z \C^*$ is the torus with cocharacter lattice $N(\sigma)$. Explicitly, if $[u]$ is the class of $u \in N$ in $N(\sigma)$, a point $[u] \otimes t \in T_{N(\sigma)}$ gives the homomorphism of groups $\sigma^\perp \cap M \rightarrow \C^*$ given by $m \mapsto \chi^m(\lambda^u(t)) = t^{\langle u, m \rangle}$. 

\begin{lemma} \label{lem:orbittori}
 Let $\sigma \subset N_\R$ be a strongly convex rational cone. We have 
\begin{align*}
O(\sigma) &= \{ \gamma: \S_\sigma \rightarrow \C ~|~ \gamma(m) \neq 0 \Leftrightarrow m \in \sigma^\perp \cap M \} \,  \simeq \,  \Hom_\Z(\sigma^\perp \cap M, \C^*) \, \simeq \, T_{N(\sigma)}.
\end{align*}
\end{lemma}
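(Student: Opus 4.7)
The plan is to compute the orbit $T \cdot \gamma_\sigma$ pointwise using the intrinsic description of the torus action from Proposition \ref{prop:torusaction}, then match it against the set in the middle, and finally translate into the language of $\Hom_\Z$.

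First I would unpack the action. By Proposition \ref{prop:torusaction}, $t \in T$ sends $\gamma_\sigma$ to the semigroup homomorphism $m \mapsto \chi^m(t)\,\gamma_\sigma(m)$. Using the explicit form of $\gamma_\sigma$ in Definition \ref{def:distinguished}, this evaluates to $\chi^m(t)$ when $m \in \sigma^\perp \cap M$ and to $0$ otherwise. Since $\chi^m(t) \in \C^*$ for all $t \in T$, every element of $O(\sigma) = T \cdot \gamma_\sigma$ is a semigroup homomorphism $\gamma : \S_\sigma \to \C$ whose support (the set where $\gamma$ is nonzero) is exactly $\sigma^\perp \cap M$. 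This gives the inclusion ``$\subseteq$'' in the first equality.

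For the reverse inclusion, I take a semigroup homomorphism $\gamma: \S_\sigma \to \C$ with $\gamma(m) \neq 0$ iff $m \in \sigma^\perp \cap M$. The restriction $\gamma_{|\sigma^\perp \cap M}$ is a group homomorphism $\sigma^\perp \cap M \to \C^*$ (because on $\sigma^\perp \cap M$, inverses of semigroup elements lie in $\S_\sigma$, namely in $\sigma^\perp \cap M$ itself, so the semigroup homomorphism automatically becomes a group homomorphism). The main obstacle is showing this homomorphism extends to a character of $T$, i.e.\ lifts to $t \in T \simeq \Hom_\Z(M,\C^*)$ such that $\chi^m(t) = \gamma(m)$ for all $m \in \sigma^\perp \cap M$. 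This follows from the fact that $\C^*$ is a divisible abelian group, hence injective as a $\Z$-module: any group homomorphism from a subgroup $\sigma^\perp \cap M \hookrightarrow M$ into $\C^*$ extends. With such a $t$ chosen, $t \cdot \gamma_\sigma = \gamma$, which establishes equality.

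Finally, restriction to $\sigma^\perp \cap M$ defines a map $O(\sigma) \to \Hom_\Z(\sigma^\perp \cap M, \C^*)$, which is injective (an element of $O(\sigma)$ is determined by its restriction since it is $0$ off $\sigma^\perp \cap M$) and surjective by the extension argument above; thus it is a bijection, and a routine check shows it is a group isomorphism when $O(\sigma)$ is given its group structure pulled back from $T_{N(\sigma)}$. To identify $\Hom_\Z(\sigma^\perp \cap M, \C^*)$ with $T_{N(\sigma)}$, I would use that $\sigma^\perp \cap M$ is the annihilator of the saturated sublattice $N_\sigma = \Z \cdot (\sigma \cap N) \subset N$, so it equals $\Hom_\Z(N(\sigma),\Z)$, the character lattice of $T_{N(\sigma)} = N(\sigma) \otimes_\Z \C^*$. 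The explicit isomorphism is the one described before the lemma: $[u] \otimes t \mapsto (m \mapsto t^{\langle u,m \rangle})$. This completes the chain of identifications in the statement.
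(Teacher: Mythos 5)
Your proof is correct and follows essentially the same structure as the paper: compute $t\cdot\gamma_\sigma$ to get $O(\sigma)\subseteq O'$, establish the bijection $O'\simeq\Hom_\Z(\sigma^\perp\cap M,\C^*)$ via restriction/extension-by-zero, and identify the latter with $T_{N(\sigma)}$. The one place you diverge is in showing every $\gamma\in O'$ actually lies in the orbit: you invoke injectivity of $\C^*$ as a $\Z$-module to extend $\gamma_{|\sigma^\perp\cap M}$ to a point $t\in T=\Hom_\Z(M,\C^*)$ with $t\cdot\gamma_\sigma=\gamma$, whereas the paper deduces transitivity of the $T$-action on $O'$ by tensoring the surjection $N\to N(\sigma)$ with $\C^*$ (using right-exactness of $\otimes$) to get $T\twoheadrightarrow T_{N(\sigma)}$ and then checking compatibility of actions. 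Your version is arguably more direct, since it exhibits the witness $t$ explicitly rather than chasing it through the identification $T_{N(\sigma)}\simeq O'$; the paper's version makes the group-theoretic reason for transitivity (surjectivity of the induced map on tori) more visible, which is the perspective reused elsewhere in the section.
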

\begin{proof}
Let $O' = \{ \gamma: \S_\sigma \rightarrow \C ~|~ \gamma(m) \neq 0 \Leftrightarrow m \in \sigma^\perp \cap M \}$. We establish the isomorphism $O' \simeq \Hom_\Z(\sigma^\perp \cap M, \C^*)$. If $\gamma \in O'$, then $\hat{\gamma} = \gamma_{|\sigma^\perp \cap M}: \sigma^\perp \cap M \rightarrow \C^*$ is a group homomorphism. Conversely, if $\hat{\gamma}: \sigma^\perp \rightarrow \C^*$ is a group homomorphism, then 
\[ \gamma: S_\sigma \rightarrow \C \quad \text{given by} \quad  
\gamma(m) = \begin{cases} \hat{\gamma}(m) & m \in \sigma^\perp \cap M \\
0 & \text{otherwise} \end{cases} \]
is an element of $O(\sigma)$. This defines the inverse of $\gamma \mapsto \hat{\gamma}$, which establishes the isomorphism.

It remains to show that $O' = O(\sigma)$. Clearly $\gamma_\sigma \in O'$. Also, by $(t \cdot \gamma)(m) = \chi^m(t) \gamma(m)$ we see that $O'$ is $T$-invariant. We conclude $O(\sigma) \subset O'$. It remains to show that $T$ acts transitively on $O'$. This follows from tensoring the exact sequence $N \rightarrow N(\sigma) \rightarrow 0$ with $\C^*$, which gives $T \rightarrow T_{N(\sigma)} \rightarrow 1$. Hence $T$ acts transitively on $T_{N(\sigma)}$. This action is compatible with $T_{N(\sigma)} \simeq \Hom_\Z(\sigma^\perp \cap M, \C^*) \simeq O'$. We encourage the reader to check this.
\end{proof}

We are now ready to state the main result of this section. 

\begin{theorem}[The orbit-cone correspondence] \label{thm:orbitcone}
Let $X_\Sigma$ be the toric variety of a fan $\Sigma$ in $N_\R \simeq \R^n$, with dense torus $T$. 
\begin{enumerate}
\item The map $\sigma \mapsto O(\sigma)$ is a bijection of $\{$ cones $\sigma \in \Sigma$ $\}$ and $\{$ $T$-orbits in $X_\Sigma$ $\}$.
\item For each $\sigma \in \Sigma$, $\dim \sigma + \dim O(\sigma) = n$. 
\item The affine open subset $U_\sigma$ is the disjoint union of orbits $U_\sigma = \bigsqcup_{\tau \preceq \sigma} O(\tau)$. 
\item We have $\tau \preceq \sigma$ if and only if $O(\sigma) \subset \overline{O(\tau)}$ and $\overline{O(\tau)} = \bigsqcup_{\tau \preceq \sigma} O(\sigma)$. This holds for the closure in $X_\Sigma$ in both the Zariski topology and the classical topology.
\end{enumerate}
\end{theorem}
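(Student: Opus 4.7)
The strategy is to prove parts in the order 2, 3, 1, 4, since part 3 does most of the structural work and the other parts follow once it is in hand.

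For part 2, the plan is to invoke Lemma \ref{lem:orbittori} directly: since $O(\sigma)\simeq T_{N(\sigma)}$ and $N(\sigma)=N/N_\sigma$ has rank $n-\dim\sigma$ (the sublattice $N_\sigma = \Z\cdot(\sigma\cap N)$ spans the linear span of $\sigma$), we get $\dim O(\sigma) = n-\dim\sigma$.

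For part 3, the heart of the argument, I would characterize the points of $U_\sigma$ combinatorially via the ``support'' of a semigroup homomorphism. Given any $\gamma\colon \S_\sigma\to\C$, set $F(\gamma) = \{m \in \S_\sigma \mid \gamma(m)\neq 0\}$. First I would verify that $F(\gamma)$ is a \emph{face} of the semigroup $\S_\sigma$, meaning it is closed under addition and whenever $m+m'\in F(\gamma)$ with $m,m'\in \S_\sigma$ we have $m,m'\in F(\gamma)$; both properties follow immediately from $\gamma$ being a semigroup homomorphism into the multiplicative monoid $\C$. Next, the classical correspondence between faces of a rational cone and faces of its dual (Proposition \ref{prop:bigconesprop}(4)) gives a unique face $\tau\preceq\sigma$ with $F(\gamma) = \tau^\perp\cap \S_\sigma$. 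By Lemma \ref{lem:orbittori} this places $\gamma$ in $O(\tau)$. Conversely, every face $\tau\preceq\sigma$ yields $\gamma_\tau\in U_\sigma$ (since $U_\tau\subset U_\sigma$ by Example \ref{ex:affineopensubsets}), so $O(\tau)\subset U_\sigma$. This gives the disjoint union decomposition $U_\sigma = \bigsqcup_{\tau\preceq\sigma}O(\tau)$.

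For part 1, I would combine part 3 with the covering $X_\Sigma = \bigcup_{\sigma\in\Sigma}U_\sigma$. Every point of $X_\Sigma$ lies in some $U_\sigma$, hence in some $O(\tau)$ with $\tau\preceq\sigma$, so the orbits $O(\sigma)$ for $\sigma\in\Sigma$ exhaust the $T$-orbits. Distinctness: if $O(\sigma_1) = O(\sigma_2)$, then they share a common point $\gamma$ lying in $U_{\sigma_1}\cap U_{\sigma_2}\supseteq U_{\sigma_1\cap\sigma_2}$; the uniqueness in part 3 (applied inside each $U_{\sigma_i}$) forces $\sigma_1=\sigma_2$.

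For part 4, the plan is to use one-parameter subgroup limits from Proposition \ref{prop:limits}. If $\tau\preceq\sigma$, pick $u\in N$ in the relative interior of $\sigma$, so that $\lim_{t\to 0}\lambda^u(t) = \gamma_\sigma \in U_\sigma$. Translating by an arbitrary $\gamma\in O(\tau)$ and using continuity of the torus action gives $\lim_{t\to 0}\lambda^u(t)\cdot\gamma \in \overline{O(\tau)}$; a short computation using the explicit action of Proposition \ref{prop:torusaction} identifies this limit as a point of $O(\sigma)$, and $T$-invariance of $\overline{O(\tau)}$ then gives $O(\sigma)\subset\overline{O(\tau)}$. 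Conversely, since $\overline{O(\tau)}\subset \bigcup_{\sigma\in\Sigma}U_\sigma$ and $\overline{O(\tau)}$ is $T$-stable, part 3 and the disjointness from part 1 show $\overline{O(\tau)}$ is a disjoint union of some $O(\sigma)$'s; checking which ones reduces to the limit computation just given. Finally, because the Zariski and classical closures of a constructible set agree in this setting (for instance $\overline{O(\tau)}$ is irreducible and finite-dimensional), the identification of closures in the two topologies coincides. The main obstacle I anticipate is the face-support argument in part 3, where care is needed to check that $F(\gamma)$ really has the form $\tau^\perp\cap \S_\sigma$; the key point is that $\S_\sigma$ generates $\sigma^\vee$ and that the faces of $\sigma^\vee$ are precisely $\tau^\perp\cap\sigma^\vee$ for $\tau\preceq\sigma$.
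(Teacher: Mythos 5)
Your proposal follows essentially the same route as the paper's proof, with a sensible reordering (2, 3, 1, 4 rather than 1, 2, 3, 4): the core tools are identical, namely Lemma~\ref{lem:orbittori}, the face-support decomposition of a semigroup homomorphism $\gamma\colon\S_\sigma\to\C$, and limits of one-parameter subgroups via Proposition~\ref{prop:limits}. Your more explicit treatment of the face-support step in part 3, and your explicit injectivity argument in part 1 (via $U_{\sigma_1}\cap U_{\sigma_2}=U_{\sigma_1\cap\sigma_2}$ and the uniqueness in part 3), are both welcome additions that the paper leaves implicit.

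There is, however, a genuine gap in your handling of the converse implication in part 4. You write that once $\overline{O(\tau)}$ is known to be a disjoint union of orbits, ``checking which ones reduces to the limit computation just given.'' But the limit computation only establishes the inclusion $\bigsqcup_{\tau\preceq\sigma}O(\sigma)\subset\overline{O(\tau)}$; it says nothing that prevents some extra orbit $O(\sigma)$ with $\tau\not\preceq\sigma$ from lurking in the closure. To rule that out you need a separate argument: if $O(\sigma)\subset\overline{O(\tau)}$, then since $U_\sigma$ is open and contains $O(\sigma)$, the closed set $X_\Sigma\setminus U_\sigma$ cannot contain $O(\tau)$ (otherwise it would contain $\overline{O(\tau)}\supset O(\sigma)$, a contradiction). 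Hence $O(\tau)\cap U_\sigma\neq\emptyset$, and because $U_\sigma$ is $T$-invariant and $O(\tau)$ is a single orbit, $O(\tau)\subset U_\sigma$; part 3 then gives $\tau\preceq\sigma$. Without this step both the ``if'' direction of the equivalence $\tau\preceq\sigma\Leftrightarrow O(\sigma)\subset\overline{O(\tau)}$ and the equality $\overline{O(\tau)}=\bigsqcup_{\tau\preceq\sigma}O(\sigma)$ remain unproved. Fill that in and the argument is complete.
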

\begin{proof}
1.~Let $O \subset X_\Sigma$ be a $T$-orbit. Since $X_\Sigma$ is covered by the $T$-invariant affine toric varieties $U_\sigma$, $\sigma \in \Sigma$, there is a minimal cone $\sigma$ (with respect to inclusion) such that $O \subset U_\sigma$. Point 1 will follow from $O = O(\sigma)$. For $\gamma \in O$, there is a face $\tau \preceq \sigma$ for which
\[ \{ m \in \S_\sigma ~|~ \gamma(m) \neq 0 \} = \sigma^\vee \cap \tau^\perp \cap M.\]
This means that $\gamma$ extends to a semigroup homomorphism $\S_\tau \rightarrow \C$, and hence $O \subset U_\tau$, which implies $\tau = \sigma$ by minimality of $\sigma$. Then $O = O(\sigma)$ by Lemma \ref{lem:orbittori}.

2.~This is a straightforward consequence of Lemma \ref{lem:orbittori}.

3.~Since $U_\sigma$ is $T$-invariant, it is a disjoint union of orbits. If $\tau \preceq \sigma$, we have $O(\tau) \subset U_\tau \subset U_\sigma$. The proof of part 1 implies that any orbit $O \subset U_\sigma$ is $O(\tau)$ for some face $\tau \preceq \sigma$. 

4.~We show that $\tau \preceq \sigma$ if and only if $O(\sigma) \subset \overline{O(\tau)}$ in the classical topology. For the `only if' direction, note that $\overline{O(\tau)}$ is $T$-invariant by continuity of $T \times O(\tau) \rightarrow O(\tau)$. Hence, it suffices to show that $\overline{O(\tau)} \cap O(\sigma) \neq \emptyset$. We prove this using limits of one-parameter subgroups. Pick $u \in \sigma$ in the relative interior of $\sigma$ and define $\gamma(t) = \lambda^u(t) \cdot \gamma_\tau$. For $t \in \C^*$, this semigroup homomorphism in $O(\tau) \subset U_\sigma$ sends $m \in \S_\sigma$ to $t^{\langle u, m \rangle} \gamma_\tau(m)$. Therefore, $\lim_{t \rightarrow 0} \gamma(t) $ sends $m \in (\sigma^\vee \setminus \sigma^\perp) \cap M$ to 0, and $m \in \sigma^\perp \cap M$ to $\gamma_\tau(m) = 1$, since $\sigma^\perp \subset \tau^\perp$. Hence $\gamma(0) = \lim_{t \rightarrow 0} \gamma(t) = \gamma_\sigma \in O(\sigma)$ and by construction $\gamma(0) \in \overline{O(\tau)}$. 

For the `if' direction, suppose that $O(\sigma) \subset \overline{O(\tau)}$. This implies $O(\tau) \subset U_\sigma$. Indeed, otherwise $U_\sigma \cap O(\tau) = \emptyset$ ($U_\sigma$ is $T$-invariant), but then $U_\sigma \cap \overline{O(\tau)} = \emptyset$ ($X_\Sigma \setminus U_\sigma$ is closed and contains $O(\tau)$, so it must contain $\overline{O(\tau)}$). By part 1, $O(\tau) \subset U_\sigma$ implies $\tau \preceq \sigma$. For the proof of point 4 in the Zariski topology, see \cite[Thm.~3.2.6]{cox2011toric}.
\end{proof}

Note that, as a particular case of part 4 of Theorem \ref{thm:orbitcone}, for $\tau = \{0\}$ we have 
\[ X_\Sigma = \bigsqcup_{\sigma \in \Sigma} O(\sigma). \]
We reiterate that the cones $\sigma \in \Sigma$ give an \emph{affine open covering}, as well as a \emph{stratification} of $X_\Sigma$. A cone $\sigma \in \Sigma$ corresponds to 
\begin{enumerate}
\item an affine open subset $U_\sigma$ of dimension $n$ and
\item a torus orbit $O(\sigma) \subset U_\sigma$ of dimension $n - \dim(\sigma)$.
\end{enumerate}

In case $X_\Sigma \simeq X_P$ is the toric variety of the full-dimensional polytope $P \subset \R^n$, cones of $\sigma$ also correspond to faces of $P$. By Theorem \ref{thm:orbitcone}, this means that torus orbits correspond to faces of $P$. Moreover, the face $F_\sigma$ of an orbit $O(\sigma)$ has the same dimension $\dim O(\sigma) = n - \dim(\sigma)$, and $F_\sigma \subset F_\tau \Longleftrightarrow \tau \preceq \sigma  \Longleftrightarrow O(\sigma) \subset \overline{O(\tau)}$. 
\begin{example} \label{ex:squarefromfan}
The surface $\PP^1 \times \PP^1$ is $X_P$, where $P = [0,1]^2 \subset \R^2$ is the square. Figure \ref{fig:P1P1square} illustrates the orbit-cone(-face) correspondence. The vertices of $P$ correspond to the full-dimensional cones (blue), and to the torus fixed points. Two such points are connected by a one-dimensional orbit if they lie on an edge of $P$ (orange). All orbits lie in the closure of the dense orbit corresponding $\sigma = \{0\}$, and therefore to the `dense' face of $P$ (purple).
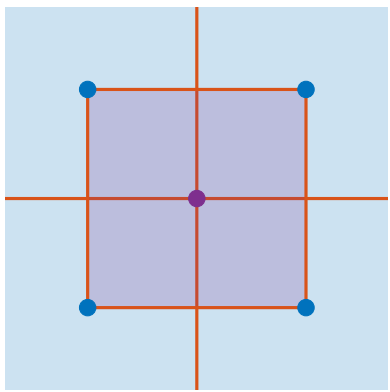
\begin{figure}
\centering
\begin{tikzpicture}[scale=1]
\begin{axis}[%
width=2in,
height=2in,
scale only axis,
xmin=-3.5,
xmax=3.5,
ymin=-3.5,
ymax=3.5,
ticks = none, 
ticks = none,
axis background/.style={fill=white},
axis line style={draw=none} 
]


\addplot [color=mycolor1,solid,fill opacity=0.2,fill = mycolor1,forget plot]
  table[row sep=crcr]{%
 5 0\\
5 5\\	
0 5\\
0 0 \\
5 0 \\
};

\addplot [color=mycolor1,solid,fill opacity=0.2,fill = mycolor1,forget plot]
  table[row sep=crcr]{%
 0 0 \\
 -5 0 \\
 -5 5\\
 0 5\\
 0 0\\
};

\addplot [color=mycolor1,solid,fill opacity=0.2,fill = mycolor1,forget plot]
  table[row sep=crcr]{%
 0 0 \\
 -5 0 \\
 -5 -5\\
 0 -5\\
 0 0\\
};

\addplot [color=mycolor1,solid,fill opacity=0.2,fill = mycolor1,forget plot]
  table[row sep=crcr]{%
 0 0 \\
 5 0 \\
 5 -5\\
 0 -5\\
 0 0\\
};

\addplot [very thick, color=mycolor2,solid,fill opacity=0.2,fill = mycolor1,forget plot]
  table[row sep=crcr]{%
 0 0 \\
 5 0 \\
};

\addplot [very thick, color=mycolor2,solid,fill opacity=0.2,fill = mycolor1,forget plot]
  table[row sep=crcr]{%
 0 0 \\
 -5 0 \\
};

\addplot [very thick, color=mycolor2,solid,fill opacity=0.2,fill = mycolor1,forget plot]
  table[row sep=crcr]{%
 0 0 \\
 0 5 \\
};

\addplot [very thick, color=mycolor2,solid,fill opacity=0.2,fill = mycolor1,forget plot]
  table[row sep=crcr]{%
 0 0 \\
0 -5 \\
};

\addplot[only marks,mark=*,mark size=3.1pt,mycolor4
        ]  coordinates {
  (0,0)
};

\addplot [color=mycolor1,solid,fill opacity=0.2,fill = mycolor4,forget plot]
  table[row sep=crcr]{%
  2 2\\
2 -2\\	
-2 -2\\
-2 2\\
2 2\\
};

\addplot[only marks,mark=*,mark size=3.1pt,mycolor1
        ]  coordinates {
  (2,2) (2,-2) (-2,2) (-2, -2)
};

\addplot [very thick, color=mycolor2,solid,fill opacity=0.2,fill = mycolor2,forget plot]
  table[row sep=crcr]{%
2 2 \\
2 -2 \\
};

\addplot [very thick, color=mycolor2,solid,fill opacity=0.2,fill = mycolor2,forget plot]
  table[row sep=crcr]{%
2 -2 \\
-2 -2 \\
};

\addplot [very thick, color=mycolor2,solid,fill opacity=0.2,fill = mycolor2,forget plot]
  table[row sep=crcr]{%
-2 -2 \\
-2 2 \\
};

\addplot [very thick, color=mycolor2,solid,fill opacity=0.2,fill = mycolor2,forget plot]
  table[row sep=crcr]{%
-2 2 \\
2 2 \\
};

\end{axis}
\end{tikzpicture} 
\caption{Faces of $P$ correspond to torus orbits of $X_P$: an illustration for $\PP^1 \times \PP^1$.}
\label{fig:P1P1square}
\end{figure}
\end{example}

\begin{exercise}
Repeat Example \ref{ex:squarefromfan} for $X_P = \PP^2$. Note that, because our convention is to use \emph{inward} pointing facet normals, we usually get a rotated version of $P$ in pictures like Figure \ref{fig:P1P1square}. This is not seen in Figure \ref{fig:P1P1square} because of symmetry.
\end{exercise}

\begin{example} \label{ex:removecones}
By Theorem \ref{thm:orbitcone}, if we remove cones from the fan $\Sigma$ to obtain a new fan $\Sigma'$, we obtain $X_{\Sigma'}$ by removing the corresponding torus orbits from $X_\Sigma$. For instance, $\C^2 = X_\Sigma$ where $\Sigma$ is the nonnegative quadrant and all its faces. Removing the 2-dimensional cone, we see that $\C^2 \setminus \{0\}$ is the toric variety of the fan $\Sigma' = \{ \{0\}, {\rm Cone}((1,0)), {\rm Cone}((0,1)) \}$. 
\end{example}

\subsection{Orbit closures as toric varieties}
Following \cite{cox2011toric}, we introduce the notation $V(\tau) = \overline{O(\tau)}$ for the orbit closures. We now characterize $V(\tau)$ as a normal toric variety. We have seen in the previous section that the dense open subset $O(\tau) \simeq T_{N(\tau)}$ of $V(\tau)$ is a torus. Moreover, The $T$-orbits in 
\[ V(\tau) = \bigsqcup_{\tau \preceq \sigma} O(\sigma) \]
can be viewed as $T_{N(\tau)}$-orbits via the commuting diagram
\begin{center}
\begin{tikzcd}
T \times O(\sigma) \ar[r] \ar[dr]
    & O(\sigma)  \\
    & T_{N(\tau)} \times O(\sigma) \ar[u]
\end{tikzcd}.
\end{center}
Here the diagonal arrow sends $(u \otimes t) \times p \in (N \otimes_\Z \C^*) \times O(\sigma) \simeq T \times O(\sigma)$ to $([u] \otimes t) \times p$, where $[u]$ is the class of $u$ in $N(\tau) = N/N_\tau$. The following is \cite[Exer.~3.2.7, Prop.~3.2.7]{cox2011toric}.

\begin{proposition} \label{prop:orbitfan}
Let $\overline{\sigma}$ be the image of $\sigma \in \Sigma$ under $N_\R \rightarrow N(\tau)_\R$. Consider the collection of cones ${\rm Star}(\tau) = \{\overline{\sigma} \subset N(\tau)_\R ~|~ \tau \preceq \sigma \in \Sigma\}$. The set ${\rm Star}(\tau)$ is a fan in $N(\tau)_\R$ and 
\[ V(\tau) = \overline{O(\tau)} = X_{{\rm Star}(\tau)}. \]
\end{proposition}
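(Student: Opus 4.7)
The plan is to verify the fan axioms for ${\rm Star}(\tau)$ and then construct an isomorphism $V(\tau) \simeq X_{{\rm Star}(\tau)}$ by matching the standard affine covers on both sides.

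For the first claim, let $\pi: N_\R \to N(\tau)_\R$ be the quotient. For each $\sigma \in \Sigma$ with $\tau \preceq \sigma$, the image $\overline{\sigma} = \pi(\sigma)$ is a rational polyhedral cone since $\pi$ is $\Z$-linear. To see strong convexity, I would identify the dual $\overline{\sigma}^\vee$ in $N(\tau)^\vee_\R = \tau^\perp \otimes_\Z \R \subset M_\R$ with $\sigma^\vee \cap \tau^\perp$: this is the face of $\sigma^\vee$ dual to $\tau$ under the correspondence in Proposition \ref{prop:bigconesprop}, and a dimension count gives $\dim(\sigma^\vee \cap \tau^\perp) = n - \dim \tau = \dim \tau^\perp$, so $\overline{\sigma}^\vee$ is full-dimensional in $\tau^\perp \otimes_\Z \R$ and $\overline{\sigma}$ is strongly convex by Definition \ref{def:strongconvex}. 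Next, the faces of $\overline{\sigma}$ are exactly the images $\overline{\eta}$ of faces $\tau \preceq \eta \preceq \sigma$, obtained by dualizing the bijection between faces of $\overline{\sigma}^\vee = \sigma^\vee \cap \tau^\perp$ and faces of $\sigma^\vee$ containing it. Finally, if $\tau \preceq \sigma_1, \sigma_2 \in \Sigma$, then $\eta = \sigma_1 \cap \sigma_2$ is a common face of both containing $\tau$, and $\overline{\sigma_1} \cap \overline{\sigma_2} = \overline{\eta}$ is a face of each $\overline{\sigma_i}$, verifying the fan axioms.

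For the identification $V(\tau) = X_{{\rm Star}(\tau)}$, I will cover $V(\tau)$ by the open pieces $V(\tau) \cap U_\sigma$ for $\sigma \in \Sigma$ with $\tau \preceq \sigma$, which suffice by Remark \ref{rem:maxcones} and the orbit decomposition in Theorem \ref{thm:orbitcone}. Each $V(\tau) \cap U_\sigma$ is the Zariski closure in $U_\sigma$ of the orbit $O(\tau)$. By Lemma \ref{lem:orbittori}, on $O(\tau)$ the character $\chi^m$ vanishes identically when $m \in (\sigma^\vee \setminus \tau^\perp) \cap M$ and restricts to a non-vanishing character of $T_{N(\tau)}$ when $m \in \sigma^\vee \cap \tau^\perp \cap M$. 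Consequently the kernel of $\C[\sigma^\vee \cap M] \twoheadrightarrow \C[V(\tau) \cap U_\sigma]$ is generated by $\{\chi^m : m \in (\sigma^\vee \setminus \tau^\perp) \cap M\}$ (this uses that the complement of $\sigma^\vee \cap \tau^\perp$ in $\sigma^\vee$ is an ideal in the semigroup $\S_\sigma$, which follows from item 3 of Proposition \ref{prop:bigconesprop}). Thus the coordinate ring is the semigroup algebra $\C[\sigma^\vee \cap \tau^\perp \cap M] = \C[\overline{\sigma}^\vee \cap M(\tau)]$, where $M(\tau) = \tau^\perp \cap M$ is the character lattice of $T_{N(\tau)}$, giving $V(\tau) \cap U_\sigma \simeq U_{\overline{\sigma}}$, the affine chart of $X_{{\rm Star}(\tau)}$ attached to $\overline{\sigma}$.

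To finish, these chart-wise isomorphisms must patch into a global isomorphism. On the overlap $V(\tau) \cap U_{\sigma_1} \cap U_{\sigma_2}$ with $\eta = \sigma_1 \cap \sigma_2$, both identifications with open subsets of $U_{\overline{\sigma_i}}$ are induced by localization at the character $\chi^m$ for an $m \in \sigma_1^\vee \cap (-\sigma_2^\vee) \cap M$ cutting out $\eta = \sigma_1 \cap H_m = \sigma_2 \cap H_m$; since $m$ restricted to $\tau^\perp$ cuts out $\overline{\eta}$ from $\overline{\sigma_i}$ in the same way, the localizations intertwine with the gluing morphism $\phi_{\overline{\sigma_1},\overline{\sigma_2}}$ of Section \ref{subsec:gluing}. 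The main technical obstacle will be keeping the dualities straight: verifying precisely that $\overline{\sigma}^\vee = \sigma^\vee \cap \tau^\perp$ as cones in $\tau^\perp \otimes_\Z \R$, and that the face/intersection correspondences are preserved by $\pi$. Once these are established, separatedness of $V(\tau)$ (as a closed subvariety of the separated $X_\Sigma$) and normality (inherited from the affine pieces $U_{\overline{\sigma}}$) give $V(\tau) \simeq X_{{\rm Star}(\tau)}$, consistent with Theorem \ref{thm:classification}.
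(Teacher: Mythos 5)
Your approach — verify the fan axioms for ${\rm Star}(\tau)$, then identify $V(\tau) \cap U_\sigma$ with $U_{\overline{\sigma}}$ chart by chart and match the gluing data — is exactly the standard argument for this result (which the paper cites from \cite{cox2011toric} without giving its own proof), and the outline is correct. Two steps are asserted a bit too quickly. First, to conclude that the kernel of $\C[\S_\sigma] \twoheadrightarrow \C[V(\tau)\cap U_\sigma]$ is \emph{exactly} $I = \langle \chi^m : m \in \S_\sigma \setminus \tau^\perp \rangle$, your parenthetical only establishes $I \subseteq \ker$ (the complement is a semigroup ideal); for the reverse containment, decompose any $f$ in the kernel as $f_0 + f_1$ with $f_0$ supported in $\S_\sigma \cap \tau^\perp$ and $f_1 \in I$, and note that $f_0$ is then a Laurent polynomial on $\C[M(\tau)]$ vanishing identically on the torus $O(\tau) \simeq T_{N(\tau)}$, hence zero. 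Second, $\overline{\sigma_1} \cap \overline{\sigma_2} = \overline{\sigma_1 \cap \sigma_2}$ is not automatic and needs a line: if $\pi(v_1) = \pi(v_2)$ with $v_i \in \sigma_i$, write $v_1 - v_2 = w_+ - w_-$ with $w_\pm \in \tau$ (possible since $\mathrm{span}(\tau) = \tau - \tau$), so $v_1 + w_- = v_2 + w_+ \in \sigma_1 \cap \sigma_2$ maps to $\pi(v_1)$. With these two details filled in, the proof is complete.
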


\begin{example}
Let $\tau$ be the ray $\R_{\geq 0} \cdot (1,1)$ of the fan from Example \ref{ex:blowupC2}. The closure $V(\tau)$ of the corresponding one-dimensional torus orbit has cocharacter lattice $N(\tau) \simeq \Z$, and its fan in $N(\tau)_R \simeq \R$ is complete. This is illustrated in Figure \ref{fig:fanBl0C2proj}. By Example \ref{ex:1D} and Proposition \ref{prop:orbitfan}, $V(\tau) \simeq \PP^1$. This is the exceptional divisor of ${\rm Bl}_0(\C^2)$. 
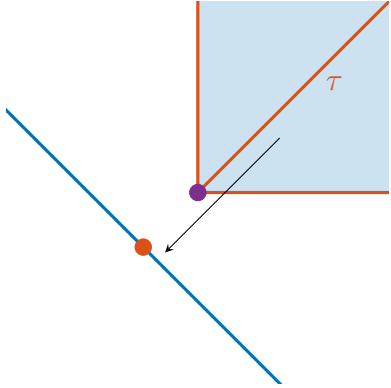
\begin{figure}
\centering
\begin{tikzpicture}[scale=1]
\begin{axis}[%
width=2in,
height=2in,
scale only axis,
xmin=-3.5,
xmax=3.5,
ymin=-3.5,
ymax=3.5,
ticks = none, 
ticks = none,
axis background/.style={fill=white},
axis line style={draw=none} 
]


\addplot [color=mycolor1,solid,fill opacity=0.2,fill = mycolor1,forget plot]
  table[row sep=crcr]{%
 5 0\\
5 5\\	
0 0 \\
5 0 \\
};

\addplot [color=mycolor1,solid,fill opacity=0.2,fill = mycolor1,forget plot]
  table[row sep=crcr]{%
0 5\\
5 5\\	
0 0 \\
0 5\\
};

\addplot [very thick, color=mycolor2,solid,fill opacity=0.2,fill = mycolor1,forget plot]
  table[row sep=crcr]{%
 0 0 \\
 5 0 \\
};

\addplot [very thick, color=mycolor2,solid,fill opacity=0.2,fill = mycolor1,forget plot]
  table[row sep=crcr]{%
 0 0 \\
 0 5 \\
};

\addplot [very thick, color=mycolor2,solid,fill opacity=0.2,fill = mycolor1,forget plot]
  table[row sep=crcr]{%
 0 0 \\
 5 5 \\
};

\addplot[only marks,mark=*,mark size=3.1pt,mycolor4
        ]  coordinates {
  (0,0)
};

\addplot [very thick, color=mycolor1,solid,fill opacity=0.2,fill = mycolor1,forget plot]
  table[row sep=crcr]{%
 -6 4\\
 4 -6 \\
};

\addplot[only marks,mark=*,mark size=3.1pt,mycolor2
        ]  coordinates {
  (-1,-1)
};

\node (P) at (axis cs:2.5,2) {\textcolor{mycolor2}{$\tau$}};

\draw[->,>=stealth] (axis cs:1.5,1.0) -- (axis cs:-0.6,-1.1);

\end{axis}
\end{tikzpicture} 
\caption{The exceptional divisor in ${\rm Bl}_0(\C_2)$ is the toric variety $V(\tau)$.}
\label{fig:fanBl0C2proj}
\end{figure}
\end{example}
Proposition \ref{prop:orbitfan} has the following corollary in the case where $X_\Sigma \simeq X_P$.
\begin{proposition} \label{prop:facesorbits}
If $\Sigma = \Sigma_P$ is the normal fan of a full-dimensional polytope $P \subset M_\R$, then torus orbits correspond to faces $Q \preceq P$ and $V(\sigma_Q) \simeq X_Q$, where $Q$ is considered as a full-dimensional polytope in its affine span. 
\end{proposition}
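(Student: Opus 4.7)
The first claim is essentially a bookkeeping statement: by the orbit-cone correspondence (Theorem \ref{thm:orbitcone}(1)), the $T$-orbits of $X_P = X_{\Sigma_P}$ are indexed by the cones of $\Sigma_P$, and Definition \ref{def:normalfan} puts these cones in bijection with the faces of $P$ via $Q \mapsto \sigma_Q$. Moreover, Proposition \ref{prop:fanproperties}(1) combined with Theorem \ref{thm:orbitcone}(2) gives $\dim O(\sigma_Q) = n - \dim \sigma_Q = \dim Q$, matching dimensions as expected.

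For the isomorphism $V(\sigma_Q) \simeq X_Q$, my plan is to reduce both sides to toric varieties of fans and check that the two fans agree. On one side, Proposition \ref{prop:orbitfan} gives $V(\sigma_Q) = X_{\mathrm{Star}(\sigma_Q)}$, a toric variety built from a fan in $N(\sigma_Q)_\R$. On the other side, $X_Q = X_{\Sigma_Q}$ by Definition \ref{def:torvarofpol}, where $\Sigma_Q$ is the normal fan of $Q$ viewed as a full-dimensional lattice polytope in its affine span. Thus it suffices to produce a lattice identification under which $\mathrm{Star}(\sigma_Q) = \Sigma_Q$.

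The lattice identification step is the cleanest part. Using Proposition \ref{prop:fanproperties}(3) I translate $Q$ by a chosen vertex $v \in Q \cap M$ so that $Q - v$ sits in a linear subspace $L \subset M_\R$, without changing the normal fan. By the definition $\sigma_Q = \mathrm{Cone}(\{u_F : Q \subset F\})$, a vector $m \in M_\R$ lies in $\sigma_Q^\perp$ iff $m$ is parallel to every supporting hyperplane containing $Q$, which holds iff $m \in L$. Hence $L \cap M = \sigma_Q^\perp \cap M$, and this common lattice is precisely the character lattice of the torus $T_{N(\sigma_Q)}$ (Lemma \ref{lem:orbittori}), i.e.\ the dual of the ambient lattice $N(\sigma_Q)$ for both fans.

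The main obstacle is checking that the cones of $\mathrm{Star}(\sigma_Q)$ and $\Sigma_Q$ coincide. A face $Q' \preceq Q$ of $Q$ is in particular a face of $P$, and the inclusion-reversing face correspondence gives $\sigma_Q \preceq \sigma_{Q'}$, so $\overline{\sigma_{Q'}} \in \mathrm{Star}(\sigma_Q)$; on the $\Sigma_Q$ side, the same $Q'$ produces a cone $\sigma'_{Q'}$ generated by the primitive inward facet normals of $Q$ corresponding to facets of $Q$ containing $Q'$. The crux is a rays-to-rays match: every facet $F'$ of $Q$ has the form $F' = F \cap Q$ for some facet $F$ of $P$ with $Q \not\subset F$, and I expect to show that the image of $u_F$ under the projection $N_\R \to N(\sigma_Q)_\R$ is precisely the primitive inward facet normal of $F'$ in $Q$ (with respect to the quotient lattice $N(\sigma_Q)$). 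Once this is established, comparing the generating sets of $\overline{\sigma_{Q'}}$ and $\sigma'_{Q'}$ — namely the images $\overline{u_F}$ for facets $F$ of $P$ containing $Q'$ but not $Q$ versus the inward normals to facets of $Q$ containing $Q'$ — yields equality of the two cones. Verifying primitivity in the quotient lattice and handling the case where several facets $F$ of $P$ share the same intersection $F \cap Q$ is the fiddly bookkeeping that I anticipate being the hardest part of the argument.
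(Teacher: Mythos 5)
The paper does not give a proof of this proposition; it simply states it as a corollary of Proposition~\ref{prop:orbitfan} (with the underlying fact deferred to \cite{cox2011toric}). So there is no paper argument to compare against, but your plan is the standard and correct one: combine $V(\sigma_Q) = X_{\mathrm{Star}(\sigma_Q)}$ with the identification $N(\sigma_Q)^\vee = \sigma_Q^\perp \cap M$, which after translating $Q$ by a vertex becomes the lattice of the linear span of $Q$, and then check that $\mathrm{Star}(\sigma_Q)$ equals the normal fan of $Q$ in that span. Your lattice identification step is clean (the key observation $L = \sigma_Q^\perp$ follows from a dimension count once you note $L \subseteq \sigma_Q^\perp$), and your reduction of the cone comparison to a rays-to-rays match via $F \mapsto F \cap Q$ is the right strategy.

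One correction to the bookkeeping, however, which is slightly more than fiddly: the generating set of $\overline{\sigma_{Q'}}$ is $\{\bar u_F : Q' \subseteq F, \ Q \not\subseteq F\}$, but for such an $F$ the face $F \cap Q$ of $Q$ need not be a \emph{facet} of $Q$ — it can be any proper face of $Q$ containing $Q'$ (in degenerate configurations $F$ meets $Q$ in a low-dimensional face). So the generating sets of $\overline{\sigma_{Q'}}$ and $\sigma'_{Q'}$ do not literally match, and showing "$F \mapsto F \cap Q$ hits the facets" plus "$\bar u_F$ becomes the primitive normal" only gives one inclusion, namely $\sigma'_{Q'} \subseteq \overline{\sigma_{Q'}}$ (since every facet $G$ of $Q$ with $Q' \subseteq G$ does arise as $F \cap Q$ for some facet $F$ of $P$, and that $F$ automatically contains $Q'$). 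For the reverse inclusion you must argue that the extra generators are harmless: for \emph{any} facet $F$ with $Q' \subseteq F$, $Q \not\subseteq F$, the functional $\langle\bar u_F, \cdot\rangle$ is minimized on $Q - v$ precisely along the face $(F \cap Q) - v$, hence $\bar u_F$ lies in the normal cone $\sigma'_{F\cap Q}$ of that face, which is contained in $\sigma'_{Q'}$ because $Q' \preceq F \cap Q$. This gives $\overline{\sigma_{Q'}} \subseteq \sigma'_{Q'}$ and closes the argument. Also note that the primitivity concern you raise is immaterial: cones are compared as sets, so non-primitive generators do not affect the conclusion.
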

\vspace{0.5cm}
\begin{minipage}{0.3 \linewidth}
\centering
\tdplotsetmaincoords{70}{110}
\begin{tikzpicture}[baseline=(A.base),scale=1.5,tdplot_main_coords]
\coordinate (A) at (0,0,0);
\coordinate (B) at (1,0,0);
\coordinate (C) at (0,1,0);
\coordinate (D) at (0,0,2);
\coordinate (E) at (1,0,2);
\coordinate (F) at (0,1,2);
			
\draw[fill opacity=0.2,fill = mycolor2,] (A)--(B)--(C)--cycle;
\draw[fill opacity=0.2,fill = mycolor2,] (D)--(E)--(F)--cycle;
\draw[fill opacity=0.2,fill = mycolor2,] (A)--(B)--(E)--(D)--cycle;
\draw[fill opacity=0.2,fill = mycolor2,] (A)--(C)--(F)--(D)--cycle; 
\draw[fill opacity=0.2,fill = mycolor2,] (B)--(C)--(F)--(E)--cycle; 
\end{tikzpicture}
\end{minipage}
\begin{minipage}{0.6 \linewidth}
\begin{exercise}
Consider the triangular prism $P$ shown on the left. Show that $X_P \simeq \PP^2 \times \PP^1$. Check that $X_P$ has five two-dimensional torus orbits, whose closures are isomorphic to either $\PP^2$ or $\PP^1 \times \PP^1$. Describe these orbits in homogeneous coordinates.
\end{exercise}
\end{minipage}

\subsection{Toric morphisms} \label{sec:tormorph}
In this section, we study toric morphisms between abstract toric varieties. 
\begin{definition}[Toric morphism]
Let $X_{\Sigma_1}, X_{\Sigma_2}$ be normal toric varieties, with dense tori $T_1, T_2$ respectively. A morphism $\phi: X_{\Sigma_1} \rightarrow X_{\Sigma_2}$ is called \emph{toric} if $\phi(T_1) \subset T_2$ and $\phi_{|T_1}$ is a group homomorphism. 
\end{definition}
A morphism $\phi: X_{\Sigma_1} \rightarrow X_{\Sigma_2}$ is called \emph{equivariant} if $\phi(t \cdot p) = \phi(t) \cdot \phi(p)$ for $t \in T_1, p \in X_{\Sigma_1}$. 
\begin{proposition} \label{prop:equivariance}
Toric morphisms $\phi: X_{\Sigma_1} \rightarrow X_{\Sigma_2}$ are equivariant.
\end{proposition}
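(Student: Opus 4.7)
The plan is to reduce this statement to the affine case (Proposition \ref{prop:equivariantaff}) combined with a density argument, exactly in the spirit of the proof given there. The two maps under comparison are
\[ \mu_1, \mu_2 : T_1 \times X_{\Sigma_1} \longrightarrow X_{\Sigma_2}, \qquad \mu_1(t,p) = \phi(t \cdot p), \qquad \mu_2(t,p) = \phi(t) \cdot \phi(p), \]
both of which are morphisms of abstract varieties. Equivariance is the assertion $\mu_1 = \mu_2$.

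First I would verify that $\mu_1$ and $\mu_2$ agree on the Zariski dense open subset $T_1 \times T_1 \subset T_1 \times X_{\Sigma_1}$. On this subset, $\mu_1$ factors through $\phi_{|T_1}$ and the multiplication $T_1 \times T_1 \to T_1$, while $\mu_2$ factors through $\phi_{|T_1} \times \phi_{|T_1}$ and the multiplication $T_2 \times T_2 \to T_2$. Since $\phi_{|T_1}$ is a group homomorphism, the square
\begin{center}
\begin{tikzcd}
T_1 \times T_1 \ar[r] \ar[d,"\phi_{|T_1} \times \phi_{|T_1}"] & T_1 \ar[d,"\phi_{|T_1}"] \\
T_2 \times T_2 \ar[r] & T_2
\end{tikzcd}
\end{center}
commutes, which gives $\mu_1 = \mu_2$ on $T_1 \times T_1$.

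Next I would extend this agreement from $T_1 \times T_1$ to all of $T_1 \times X_{\Sigma_1}$. Because $X_{\Sigma_1}$ is irreducible (part of Definition \ref{def:TV}) and contains $T_1$ as a dense open subset, the product $T_1 \times X_{\Sigma_1}$ is irreducible with $T_1 \times T_1$ as a dense open subset. The standard fact that two morphisms from an irreducible variety to a separated variety (recall Theorem \ref{thm:classification} and the discussion of separatedness) that agree on a nonempty open subset must agree everywhere then yields $\mu_1 \equiv \mu_2$, which is equivariance.

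The only genuine subtlety is ensuring the density/separatedness argument is valid in the abstract setting, so I would be careful to invoke the separatedness of $X_{\Sigma_2}$ (a property of toric varieties built from fans, stated just after Remark \ref{rem:maxcones}) to conclude that the locus where $\mu_1 = \mu_2$ is closed in $T_1 \times X_{\Sigma_1}$. Once this is in place, irreducibility of $T_1 \times X_{\Sigma_1}$ closes the argument immediately, and there is no need to descend to affine charts — although, alternatively, one could cover $X_{\Sigma_1}$ by the affine opens $U_\sigma$, cover $X_{\Sigma_2}$ similarly, and apply Proposition \ref{prop:equivariantaff} chart by chart, checking compatibility on overlaps via the gluing data.
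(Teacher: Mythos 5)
Your proof is correct and follows the same route as the paper: the paper simply states that the proof is identical to Proposition \ref{prop:equivariantaff}, which is precisely the density argument you spell out (agreement on the dense open $T_1 \times T_1$, then extension using irreducibility). Your explicit attention to the separatedness of $X_{\Sigma_2}$ is a useful clarification that the paper leaves implicit, but the underlying argument is the same.
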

\begin{proof}
The proof is identical to that of Proposition \ref{prop:equivariantaff}.
\end{proof}

Similar to what we saw in Proposition \ref{prop:compcone}, it turns out that all toric morphisms between toric varieties correspond to linear maps that respect their fans. We now make this precise. 
\begin{definition}[Compatible]
Let $\Sigma_i$ be a fan in $(N_i)_\R$ for $i = 1, 2$. A $\Z$-linear map $\overline{\phi}: N_1 \rightarrow N_2$ is \emph{compatible} with $\Sigma_1, \Sigma_2$ if for each $\sigma_1 \in \Sigma_1$, there is $\sigma_2 \in \Sigma_2$ such that $\overline{\phi}_\R(\sigma_1) \subset \sigma_2$. Here, we used some notation from Section \ref{subsec:toricmorphismsaff}.
\end{definition}

\begin{example} \label{ex:compatible}
Consider the fan $\Sigma_1$ in $\R^2$ shown in Figure \ref{fig:H2} and the complete fan $\Sigma_2$ in $\R$.
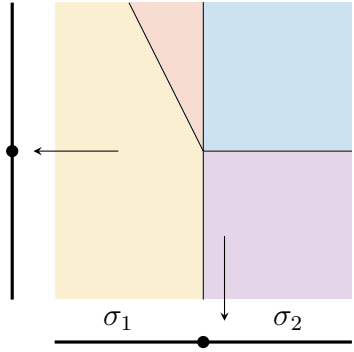
\begin{figure}
\centering
\begin{tikzpicture}[scale=1]
\begin{axis}[%
width=2in,
height=2in,
scale only axis,
xmin=-5.5,
xmax=3.5,
ymin=-5.5,
ymax=3.5,
ticks = none, 
ticks = none,
axis background/.style={fill=white},
axis line style={draw=none} 
]


\addplot [color=white,solid,fill opacity=0.2,fill = mycolor1,forget plot]
  table[row sep=crcr]{%
 5 0\\
5 5\\	
0 5\\
0 0 \\
5 0 \\
};

\addplot [color=white,solid,fill opacity=0.2,fill = mycolor2,forget plot]
  table[row sep=crcr]{%
 0 0 \\
 -3.5 7 \\
 0 7\\
 0 0\\
};

\addplot [color=white,solid,fill opacity=0.2,fill = mycolor3,forget plot]
  table[row sep=crcr]{%
 0 0 \\
 -3.5 7 \\
 -3.5 -3.5\\
 0 -3.5\\
 0 0\\
};

\addplot [color=white,solid,fill opacity=0.2,fill = mycolor4,forget plot]
  table[row sep=crcr]{%
 0 0 \\
 5 0 \\
 5 -3.5\\
 0 -3.5\\
 0 0\\
};

\addplot [color=black,solid,fill opacity=0.2,fill = mycolor1,forget plot]
  table[row sep=crcr]{%
 0 0 \\
 5 0 \\
};

\addplot [color=black,solid,fill opacity=0.2,fill = mycolor1,forget plot]
  table[row sep=crcr]{%
 0 0 \\
 0 5 \\
};

\addplot [color=black,solid,fill opacity=0.2,fill = mycolor1,forget plot]
  table[row sep=crcr]{%
 0 0 \\
 -1.75  3.5 \\
};

\addplot [color=black,solid,fill opacity=0.2,fill = mycolor1,forget plot]
  table[row sep=crcr]{%
 0 0 \\
 0  -3.5 \\
};

\addplot [very thick, color=black,solid,fill opacity=0.2,fill = mycolor1,forget plot]
  table[row sep=crcr]{%
 -3.5 -4.5 \\
 3.5  -4.5 \\
};

\addplot [very thick, color=black,solid,fill opacity=0.2,fill = mycolor1,forget plot]
  table[row sep=crcr]{%
 -4.5 -3.5 \\
 -4.5  3.5 \\
};

\addplot[only marks,mark=*,mark size=2.1pt,black
        ]  coordinates {
    (0,-4.5)
    (-4.5,0)
};

\node (P) at (axis cs:-2,-4) {$\sigma_1$};
\node (P) at (axis cs:2,-4) {$\sigma_2$};

\draw[->,>=stealth] (axis cs:0.5,-2) -- (axis cs:0.5,-4);
\draw[->,>=stealth] (axis cs:-2,0.0) -- (axis cs:-4,-0.0);


\end{axis}
\end{tikzpicture} 
\caption{A compatible and a non-compatible projection of the fan of a Hirzebruch surface.}
\label{fig:H2}
\end{figure}
The varieties $X_{\Sigma_1}$ and $X_{\Sigma_2}$ are a Hirzebruch surface and $\PP^1$ respectively. 
The coordinate projection $\overline{\phi} (x,y) = x$ is compatible with $\Sigma_1, \Sigma_2$. The orange and the yellow cone of $\Sigma_1$, as well as all their faces, are mapped into $\sigma_1$. The blue and purple cones are mapped into $\sigma_2$. The other coordinate projection $\overline{\phi} (x,y) = y$ is not compatible with $\Sigma_1, \Sigma_2$, as the image of the yellow cone is not contained in any of the cones of $\Sigma_2$. 
\end{example}

\begin{definition}[Glued morphisms] \label{def:gluedmorphism}
Let $X = \bigcup_{i} U_i$ and $Y$ be varieties, and let $\phi_i : U_i \rightarrow Y$ be morphisms. A morphism $\phi: X \rightarrow Y$ is said to be \emph{glued} from the $\phi_i$ if $\phi_{|U_i} = \phi_i$ for all $i$.
\end{definition}

\begin{lemma} \label{lem:gluedmorphism}
Let $X, Y, U_i, \phi_i$ be as in Definition \ref{def:gluedmorphism}. A morphism $\phi: X \rightarrow Y$ glued from $\phi_i$ exists if and only if $(\phi_i)_{|U_i \cap U_j} = (\phi_j)_{|U_i \cap U_j}$ for all $i, j$. 
\end{lemma}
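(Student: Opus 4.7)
The forward direction is essentially tautological: if $\phi : X \to Y$ exists with $\phi_{|U_i} = \phi_i$, then for any $i,j$ we have
\[ (\phi_i)_{|U_i \cap U_j} = \phi_{|U_i \cap U_j} = (\phi_j)_{|U_i \cap U_j}, \]
so the compatibility condition is automatic.

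For the converse, the plan is to build $\phi$ pointwise and then verify it is a morphism. First I would define a set-theoretic map $\phi : X \to Y$ by $\phi(p) = \phi_i(p)$ whenever $p \in U_i$. The compatibility hypothesis $(\phi_i)_{|U_i \cap U_j} = (\phi_j)_{|U_i \cap U_j}$ is exactly what is needed to see that this assignment is independent of the chosen index $i$, so $\phi$ is well-defined.

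Next I would check that $\phi$ is a morphism of abstract varieties. By the very definition of morphism recalled in Section~\ref{subsec:gluing}, it suffices to check that $\phi$ is Zariski-continuous and that its restriction to each affine open piece of $X$ (and, in turn, to preimages of affine opens of $Y$) is a morphism. Since $\{U_i\}$ is an open cover of $X$, a subset $V \subset X$ is open if and only if $V \cap U_i$ is open in $U_i$ for every $i$; similarly a map into $Y$ is a morphism if and only if its restriction to each member of an open cover is a morphism (this is the local nature of being a morphism, already used implicitly in the glueing construction). Applying this with the cover $\{U_i\}$ and using $\phi_{|U_i} = \phi_i$, which is a morphism by hypothesis, gives that $\phi$ is a morphism.

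The only mildly subtle point — and the one I would spell out carefully — is the well-definedness step, because in the glueing construction points of $X$ are equivalence classes and the sets $U_i \cap U_j$ are implicitly identified via the glueing isomorphisms; one should verify that the compatibility hypothesis is formulated with respect to these identifications, so that $\phi_i(p) = \phi_j(p)$ genuinely makes sense as an equality in $Y$ for each $p \in U_i \cap U_j$. Once this is observed, the rest of the argument is formal, and the main obstacle is essentially bookkeeping rather than any substantive geometric input.
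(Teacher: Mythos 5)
Your proof is correct. Note that the paper does not actually supply a proof of this lemma — it is explicitly deferred to \cite[Exer.~3.3.1]{cox2011toric} — so there is no internal argument to compare against. Your argument is the standard one: the forward direction is immediate from the definition of restriction, and for the converse the compatibility hypothesis gives well-definedness of a set-theoretic map, after which one uses the local nature of continuity (a set is open iff its intersection with every member of an open cover is open) and of the morphism condition (the defining condition in Section~\ref{subsec:gluing} is checked chart by chart) together with $\phi_{|U_i} = \phi_i$ to conclude that $\phi$ is a morphism. Your closing remark about the implicit identifications via the gluing isomorphisms $\phi_{ij}$ is well placed; it is precisely the point that would need spelling out in a fully formal write-up, but it causes no difficulty here.
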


The proof of Lemma \ref{lem:gluedmorphism} is left as an exercise \cite[Exer.~3.3.1]{cox2011toric}. We are now ready to state the main result of this section. 

\begin{theorem} \label{thm:toricmorph}
Let $\Sigma_i$ be a fan in $(N_i)_\R, i = 1, 2$ and let $X_{\Sigma_1}, X_{\Sigma_2}$ be the corresponding toric varieties with dense tori $T_1$ and $T_2$ respectively. 
\begin{enumerate}
\item If $\overline{\phi}: N_1 \rightarrow N_2$ is a $\Z$-linear map that is compatible with $\Sigma_1, \Sigma_2$, then there is a toric morphism $\phi: X_{\Sigma_1} \rightarrow X_{\Sigma_2}$ satisfying $\phi_{|T_1} = \overline{\phi} \otimes {\rm id} : u \otimes t \longmapsto \overline{\phi}(u) \otimes t \in T_2$.
\item If $\phi: X_{\Sigma_1} \rightarrow X_{\Sigma_2}$ is a toric morphism, then $\phi$ induces a $\Z$-linear map $\overline{\phi} : N_1 \rightarrow N_2$, compatible with $\Sigma_1, \Sigma_2$, such that $\phi_{|T_1} = \overline{\phi} \otimes {\rm id}$.
\end{enumerate}
\end{theorem}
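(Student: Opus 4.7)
The plan is to handle the two directions separately, using Proposition \ref{prop:compcone} and the gluing formalism of Section \ref{subsec:gluing} for (1), and the orbit-cone correspondence of Theorem \ref{thm:orbitcone} combined with Proposition \ref{prop:limits} for (2).

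For (1), I use compatibility to select, for each $\sigma_1 \in \Sigma_1$, a cone $\sigma_2(\sigma_1) \in \Sigma_2$ with $\overline{\phi}_\R(\sigma_1) \subset \sigma_2(\sigma_1)$. Proposition \ref{prop:compcone} then yields an affine toric morphism $\phi_{\sigma_1}\colon U_{\sigma_1} \to U_{\sigma_2(\sigma_1)} \hookrightarrow X_{\Sigma_2}$ extending $\overline{\phi}\otimes \mathrm{id}$. To glue these via Lemma \ref{lem:gluedmorphism}, observe that $U_{\sigma_1} \cap U_{\sigma_1'} = U_{\sigma_1 \cap \sigma_1'}$ by item (ii) at the start of the fan construction. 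The two restrictions $\phi_{\sigma_1}|_{U_{\sigma_1 \cap \sigma_1'}}$ and $\phi_{\sigma_1'}|_{U_{\sigma_1 \cap \sigma_1'}}$ are toric morphisms out of the affine toric variety $U_{\sigma_1 \cap \sigma_1'}$ whose pullbacks on $\C[\S_{\sigma_1 \cap \sigma_1'}]$ are both induced by the common lattice map $\overline{\phi}$; hence they coincide, the glued morphism $\phi$ is toric, and $\phi|_{T_1} = \overline{\phi} \otimes \mathrm{id}$ by construction.

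For (2), the restriction $\phi|_{T_1}\colon T_1 \to T_2$ is a group homomorphism of tori and therefore comes from a unique $\Z$-linear map $\overline{\phi}\colon N_1 \to N_2$ (obtained by composing with one-parameter subgroups), satisfying $\phi|_{T_1} = \overline{\phi}\otimes\mathrm{id}$. The substantial content is compatibility. Fix $\sigma_1 \in \Sigma_1$ and let $\sigma_2 \in \Sigma_2$ be the unique cone with $\phi(\gamma_{\sigma_1}) \in O(\sigma_2)$, given by the orbit-cone correspondence. I claim $\overline{\phi}(\sigma_1 \cap N_1) \subset \sigma_2$; since $\sigma_1 \cap N_1$ spans $\sigma_1$ over $\R_{\geq 0}$ and $\sigma_2$ is convex, this implies $\overline{\phi}_\R(\sigma_1) \subset \sigma_2$.

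To prove the claim, take $u \in \sigma_1 \cap N_1$ lying in the relative interior of a face $\tau_1 \preceq \sigma_1$, and let $\tau_2 \in \Sigma_2$ be the cone with $\phi(\gamma_{\tau_1}) \in O(\tau_2)$. By Proposition \ref{prop:limits} and continuity of $\phi$, the limit $\lim_{t \to 0} \lambda^{\overline{\phi}(u)}(t) = \phi(\gamma_{\tau_1})$ exists in $X_{\Sigma_2}$, forcing $\overline{\phi}(u) \in |\Sigma_2|$; moreover, this limit equals the distinguished point of the unique cone $\tau_2'$ containing $\overline{\phi}(u)$ in its relative interior, and this distinguished point lies in $O(\tau_2) \cap O(\tau_2')$, so $\tau_2 = \tau_2'$ and $\overline{\phi}(u) \in \tau_2$. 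It remains to show $\tau_2 \preceq \sigma_2$. Since $\tau_1 \preceq \sigma_1$, part 4 of Theorem \ref{thm:orbitcone} gives $\gamma_{\sigma_1} \in \overline{O(\tau_1)}$. Equivariance (Proposition \ref{prop:equivariance}) together with $\phi(\gamma_{\tau_1}) \in O(\tau_2)$ yields $\phi(O(\tau_1)) \subset O(\tau_2)$, so continuity gives $\phi(\gamma_{\sigma_1}) \in \overline{O(\tau_2)}$. Combined with $\phi(\gamma_{\sigma_1}) \in O(\sigma_2)$, this forces $O(\sigma_2) \subset \overline{O(\tau_2)}$, hence $\tau_2 \preceq \sigma_2$ by orbit-cone once more, and finally $\overline{\phi}(u) \in \sigma_2$. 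The main obstacle is precisely this last step of (2): ensuring that all of $\sigma_1$ maps into a \emph{single} target cone rather than merely the fan support $|\Sigma_2|$. It is resolved only by the interplay between limits of one-parameter subgroups, the orbit-cone correspondence, and equivariance of $\phi$.
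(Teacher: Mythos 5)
Your part (1) is essentially the paper's argument verbatim; no comment needed there beyond noting that its correctness rests on the fact that the local extensions $\phi_{\sigma_1}$ all come from the single pullback $m \mapsto F^\top m$, which you state correctly.

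For part (2), your proof is correct but routes the final step differently. Both you and the paper reduce the heart of the matter to the same face relation $\tau_2 \preceq \sigma_2$, established by the same chain: equivariance (Proposition \ref{prop:equivariance}) gives $\phi(O(\tau_1)) \subset O(\tau_2)$, continuity gives $\phi(\overline{O(\tau_1)}) \subset \overline{O(\tau_2)}$, and torus-invariance of $\overline{O(\tau_2)}$ combined with the orbit-cone correspondence (Theorem \ref{thm:orbitcone}) forces $\tau_2 \preceq \sigma_2$. Where you diverge is in \emph{how} you pass from this combinatorial relation to the cone inclusion $\overline{\phi}_\R(\sigma_1) \subset \sigma_2$: the paper observes that the orbit decompositions $U_{\sigma_i} = \bigsqcup_{\tau_i \preceq \sigma_i} O(\tau_i)$ yield $\phi(U_{\sigma_1}) \subset U_{\sigma_2}$ at the level of open sets, and then quotes the affine criterion Proposition \ref{prop:compcone} in the converse direction to conclude. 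You instead fix a lattice point $u \in \sigma_1 \cap N_1$, use Proposition \ref{prop:limits} together with continuity to show that $\lim_{t\to 0}\lambda^{\overline{\phi}(u)}(t)$ exists and equals $\gamma_{\tau_2'}$ for the unique $\tau_2' \in \Sigma_2$ whose relative interior contains $\overline{\phi}(u)$, match $\tau_2' = \tau_2$ by disjointness of orbits, and then let convexity and rationality of $\sigma_1$ finish the job. Your version bypasses Proposition \ref{prop:compcone} entirely and is more self-contained at the cost of being a touch longer; the paper's version is more economical because it re-uses the affine statement it already proved. Both are valid, and your proof in particular makes the geometric role of Proposition \ref{prop:limits} (which the paper's proof of this theorem does not invoke) transparent.
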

\begin{proof}
1. Fix $\sigma_1 \in \Sigma_1$. By compatibility, there is a cone $\sigma_2 \in \Sigma_2$ such that $\overline{\phi}(\sigma_1) \subset \sigma_2$. By Proposition \ref{prop:compcone}, $\overline{\phi}$ induces a morphism $\phi_{\sigma_1}: U_{\sigma_1} \rightarrow U_{\sigma_2}$. If $\overline{\phi}$ is given by a $n_2 \times n_1$ integer matrix $F$, $\phi_{\sigma_1}$ comes from $\widehat{\phi}: m \mapsto F^\top m$. The maps $\{ \phi_{\sigma_1} \}_{\sigma_1 \in \Sigma_1}$ agree on overlaps, so by Lemma \ref{lem:gluedmorphism}, they glue to $\phi: X_{\Sigma_1} \rightarrow X_{\Sigma_2}$. The glued morphism $\phi$ is toric since $\phi_{\{0\}}: T_1 \rightarrow T_2$ is the group homomorphism $\overline{\phi} \otimes {\rm id}: N_1 \otimes_\Z \C^* \rightarrow N_2 \otimes_\Z \C^*$. In coordinates, this is given by $t \mapsto (t^{F_{i,:}})_{i = 1, \ldots, n_2}$, see Example \ref{ex:F}.

2. By definition, $\phi_{|T_1}$ is a group homomorphism. It induces $\overline{\phi}: N_1 \rightarrow N_2$ by sending $u \in N_1$ to the cocharacter $\phi_{|T_1} \circ \lambda^u: \C^* \rightarrow T_2$. Since $\phi$ is equivariant, it sends the orbit $O(\sigma_1)$ into an orbit $O(\sigma_2)$, where $\sigma_i \in \Sigma_i$. By Proposition \ref{prop:compcone}, to show that $\overline{\phi}_\R(\sigma_1) \subset \sigma_2$, it suffices to show that $\phi(U_{\sigma_1}) \subset U_{\sigma_2}$. By the orbit-cone correspondence (Theorem \ref{thm:orbitcone}), we have $U_{\sigma_i} = \bigsqcup_{\tau_i \preceq \sigma_i} O(\tau_i), i = 1, 2$. We need to show that for each $\tau_1 \preceq \sigma_1$, $\phi(O(\tau_1)) \subset O(\tau_2)$ for some face $\tau_2 \preceq \sigma_2$. Let $\tau_2$ be such that $\phi(O(\tau_1)) \subset O(\tau_2)$. By Theorem \ref{thm:orbitcone}, $O(\sigma_1) \subset \overline{O(\tau_1)}$. Since $\phi$ is continuous, we have $\phi(\overline{O(\tau_1)}) \subset \overline{O(\tau_2)}$. We conclude that $\phi(O(\sigma_1)) \subset O(\sigma_2) \cap \overline{O(\tau_2)}$, and since $\overline{O(\tau_2)}$ is torus invariant, we must have $O(\sigma_2) \subset \overline{O(\tau_2)}$, which implies $\tau_2 \preceq \sigma_2$ by the orbit-cone correspondence.
\end{proof}

\begin{example}
Let $N_i = \Z^2, i = 1, 2$ and $\overline{\phi}: u \mapsto \ell \cdot u$. This is compatible with $\Sigma_1 = \Sigma_2 = \Sigma_{\PP^2}$ (the fan in Figure \ref{fig:fanP2}). The matrix representing $\overline{\phi}$ is $F = \begin{bmatrix}
\ell & 0 \\ 0 & \ell 
\end{bmatrix}$, so that $\phi_{|(\C^*)^2}$ is $(t_1, t_2) \mapsto(t_1^\ell, t_2^\ell)$. Globally, $\phi$ is given by 
$\phi((x_0:x_1:x_2)) = (x_0^\ell:x_1^\ell:x_2^\ell)$. 
\end{example}

\begin{exercise}
Let $\phi: X_{\Sigma_1} \rightarrow \PP^1$ be the toric morphism corresonding to the compatible coordinate projection from Example \ref{ex:compatible}. Embed $X_{\Sigma_1}$ in projective space and compute the map $\phi$ in homogeneous coordinates. 
\end{exercise}

The correspondence between cones of $\Sigma_1$ and $\Sigma_2$ induced by the compatible map $\overline{\phi}: N_1 \rightarrow N_2$ mirrors a correspondence of torus orbits of $X_{\Sigma_1}$ and $X_{\Sigma_2}$ induced by $\phi: X_{\Sigma_1} \rightarrow X_{\Sigma_2}$. Here is the precise statement, which uses some notation from Definition \ref{def:distinguished}.
\begin{lemma} \label{lem:imagedistpoints}
Let $\phi: X_{\Sigma_1} \rightarrow X_{\Sigma_2}$ be the toric morphism coming from $\overline{\phi}:N_1 \rightarrow N_2$. Given $\sigma_1 \in \Sigma_1$, let $\sigma_2 \in \Sigma_2$ be the minimal cone such that $\overline{\phi}_\R(\sigma_1) \subset \sigma_2$. We have 
\begin{itemize}
\item[(a)] $\phi(\gamma_{\sigma_1}) = \gamma_{\sigma_2}$, 
\item[(b)] $\phi(O(\sigma_1)) \subset O(\sigma_2)$ and $\phi(V(\sigma_1)) \subset V(\sigma_2)$, 
\item[(c)] $\phi_{|V(\sigma_1)}: V(\sigma_1) \rightarrow V(\sigma_2)$ is a toric morphism. 
\end{itemize}
\end{lemma}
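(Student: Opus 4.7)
The plan is to address the three statements in order, each building on the previous. Throughout, I will use limits of one-parameter subgroups (Proposition \ref{prop:limits}), equivariance (Proposition \ref{prop:equivariance}), and the identity $\phi \circ \lambda^{u} = \lambda^{\overline{\phi}(u)}$ which is immediate from Theorem \ref{thm:toricmorph}.

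For part (a), I would pick $u_1 \in N_1$ in the relative interior of $\sigma_1$, so that $\gamma_{\sigma_1} = \lim_{t \to 0}\lambda^{u_1}(t)$. Continuity of $\phi$ and the identity above give
\[
\phi(\gamma_{\sigma_1}) \;=\; \lim_{t\to 0} \phi(\lambda^{u_1}(t)) \;=\; \lim_{t\to 0}\lambda^{\overline{\phi}(u_1)}(t).
\]
The key sub-claim is that $\overline{\phi}(u_1)$ lies in the relative interior of $\sigma_2$. If not, it would belong to a proper face $\tau\prec\sigma_2$; since $\overline{\phi}_\R$ is linear and $u_1$ is in the relative interior of $\sigma_1$, a whole relative neighborhood of $u_1$ (inside the relative interior of $\sigma_1$) would map into $\tau$, forcing $\overline{\phi}_\R(\sigma_1)\subset\tau$ and contradicting the minimality of $\sigma_2$. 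With the sub-claim in hand, Proposition \ref{prop:limits} applied inside $U_{\sigma_2}$ gives $\lim_{t\to 0}\lambda^{\overline{\phi}(u_1)}(t) = \gamma_{\sigma_2}$.

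For part (b), since $O(\sigma_1) = T_1\cdot\gamma_{\sigma_1}$, equivariance together with part (a) yield
\[
\phi(O(\sigma_1)) \;=\; \phi(T_1)\cdot\phi(\gamma_{\sigma_1}) \;\subset\; T_2\cdot\gamma_{\sigma_2} \;=\; O(\sigma_2).
\]
Because $\phi$ is continuous and $V(\sigma_i)=\overline{O(\sigma_i)}$, taking closures gives $\phi(V(\sigma_1))\subset V(\sigma_2)$.

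For part (c), I would apply Proposition \ref{prop:orbitfan}, which identifies $V(\sigma_i) \simeq X_{{\rm Star}(\sigma_i)}$ as a normal toric variety with cocharacter lattice $N(\sigma_i) = N_i/(N_i)_{\sigma_i}$. Since $\overline{\phi}_\R(\sigma_1)\subset \sigma_2$, we have $\overline{\phi}((N_1)_{\sigma_1})\subset (N_2)_{\sigma_2}$, so $\overline{\phi}$ descends to a $\Z$-linear map $\overline{\phi}'\colon N(\sigma_1)\to N(\sigma_2)$. The main obstacle is checking compatibility of $\overline{\phi}'$ with ${\rm Star}(\sigma_1)$ and ${\rm Star}(\sigma_2)$. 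Given $\tilde{\sigma}_1\in\Sigma_1$ with $\sigma_1\preceq\tilde{\sigma}_1$, choose $\tilde{\sigma}_2\in\Sigma_2$ with $\overline{\phi}_\R(\tilde{\sigma}_1)\subset\tilde{\sigma}_2$. Then $\overline{\phi}_\R(\sigma_1)\subset \sigma_2\cap\tilde{\sigma}_2$, and this intersection is a face of $\sigma_2$ by the fan axioms (Definition \ref{def:fan}); minimality of $\sigma_2$ then forces $\sigma_2 = \sigma_2\cap\tilde{\sigma}_2$, i.e., $\sigma_2\preceq\tilde{\sigma}_2$, so $\overline{\tilde{\sigma}_2}\in {\rm Star}(\sigma_2)$. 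Passing to the quotient, $\overline{\phi}'_\R(\overline{\tilde{\sigma}_1})\subset \overline{\tilde{\sigma}_2}$, establishing compatibility. Theorem \ref{thm:toricmorph} produces a toric morphism $\psi\colon V(\sigma_1)\to V(\sigma_2)$; it agrees with $\phi_{|V(\sigma_1)}$ on the dense torus $O(\sigma_1)\subset V(\sigma_1)$ (both restrict there to the group homomorphism induced by $\overline{\phi}'$), and since $V(\sigma_1)$ is irreducible and separated, two morphisms agreeing on a nonempty open subset must coincide. Hence $\phi_{|V(\sigma_1)}=\psi$ is toric.
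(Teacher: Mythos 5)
Your proof is correct and follows the paper's approach for parts (a) and (b), but part (c) takes a genuinely different route worth noting. The paper proves (c) in one line: by equivariance, $\phi_{|O(\sigma_1)}: O(\sigma_1)\to O(\sigma_2)$ is a group homomorphism between the dense tori of $V(\sigma_1)$ and $V(\sigma_2)$, and this is precisely the definition of a toric morphism for abstract toric varieties. You instead construct the induced lattice map $\overline{\phi}': N(\sigma_1)\to N(\sigma_2)$, verify its compatibility with ${\rm Star}(\sigma_1)$ and ${\rm Star}(\sigma_2)$ (a nice little argument using minimality and the fan axiom that $\sigma_2\cap\tilde\sigma_2\preceq\sigma_2$), invoke Theorem \ref{thm:toricmorph} to produce a toric morphism $\psi$, and then identify $\psi$ with $\phi_{|V(\sigma_1)}$ by agreement on the dense torus. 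Your route is longer, but it buys something real: it exhibits the concrete lattice map $\overline{\phi}'$ and the fan compatibility that realize $\phi_{|V(\sigma_1)}$ as a toric morphism in the sense of Proposition \ref{prop:orbitfan}, which is useful if one later wants to compute with this restriction. One minor remark on your sub-claim in (a): the cleanest justification is not a neighborhood argument but the observation that if $\tau = \sigma_2\cap H_m$ with $m\in\sigma_2^\vee$, then the linear functional $u\mapsto\langle m,\overline{\phi}_\R(u)\rangle$ is nonnegative on $\sigma_1$ and vanishes at the relative interior point $u_1$, hence vanishes identically on $\sigma_1$, giving $\overline{\phi}_\R(\sigma_1)\subset\tau$ directly. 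Your conclusion is the same; the phrasing about a relative neighborhood mapping into $\tau$ is true but reads as if it were the cause rather than a consequence.
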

\begin{proof}
(a) If $u$ is in the relative interior of $\sigma_1$, $\overline{\phi}(u)$ is in the relative interior of $\sigma_2$. This follows from minimality of $\sigma_2$. Therefore, using Proposition \ref{prop:limits},
\[ \phi(\gamma_{\sigma_1}) = \phi \left ( \lim_{t \rightarrow 0} \lambda^u(t) \right ) = \lim_{t \rightarrow 0} \phi(\lambda^u(t)) = \lim_{t \rightarrow 0} \lambda^{\overline{\phi}(u)}(t) = \gamma_{\sigma_2}. \]
Point (b) follows directly from (a), as $\phi$ is equivariant (Proposition \ref{prop:equivariance}) and continuous. By equivariance, $\phi_{|O(\sigma_1)} : O(\sigma_1) \rightarrow O(\sigma_2)$ is a group homomorphism, which implies (c). 
\end{proof}

As applications of toric morphisms, we discuss quotients arising from sublattices of finite index, torus factors, and blow-ups from refinements. 

\subsubsection{Sublattices of finite index}
The observation in Example \ref{ex:sublatticesfiniteindex} is an instance of the following statement \cite[Prop.\ 3.3.7]{cox2011toric}.
\begin{proposition}
Let $N' \subset N$ be a sublattice of finite index. Let $\Sigma$ be a fan in $(N')_\R = N_\R$ and $G = N/N'$. Then $\overline{\phi}: N' \hookrightarrow N$ is compatible with $\Sigma, \Sigma$ and induces the toric morphism $\phi: X_{\Sigma,N'} \rightarrow X_{\Sigma,N}$. 
The finite group $G \simeq \Hom_\Z(G,\C^*) \subset T_{N'}$ acts on $X_{\Sigma,N'}$.
The morphism $\phi$ is constant on $G$-orbits and presents $X_{\Sigma,N}$ as the quotient $X_{\Sigma,N'}/G$. 
\end{proposition}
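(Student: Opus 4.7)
The plan is to reduce the claim to the affine charts $U_\sigma$, identify the subring of $\widehat{G}$-invariants on each chart, invoke the classical theorem on affine quotients by finite groups, and then glue.

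First I would construct the action. Applying $\Hom_\Z(-,\Z)$ to $0 \to N' \to N \to G \to 0$ and using that $G$ is finite (so $\Hom_\Z(G,\Z) = 0$ and $\mathrm{Ext}^1_\Z(G,\Z) \simeq G$) produces a dual short exact sequence of character lattices
\[ 0 \longrightarrow M \longrightarrow M' \longrightarrow G \longrightarrow 0. \]
Applying $\Hom_\Z(-,\C^*)$, which is exact because $\C^*$ is a divisible abelian group, yields
\[ 1 \longrightarrow \widehat{G} \longrightarrow T_{N'} \xrightarrow{\,\phi_{|T_{N'}}\,} T_N \longrightarrow 1, \]
where $\widehat{G} := \Hom_\Z(G,\C^*)$. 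For a finite abelian group $G$ one has $\widehat{G} \simeq G$ (non-canonically), and the inclusion $\widehat{G} \hookrightarrow T_{N'}$ provides the desired action of $G$ on $X_{\Sigma,N'}$ by restriction of the toric $T_{N'}$-action.

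Next I would analyse each affine chart. Fix $\sigma \in \Sigma$. By Proposition \ref{prop:torusaction}, an element $g \in \widehat{G} \subset T_{N'}$ sends $\chi^{m'} \in \C[\sigma^\vee \cap M']$ to $\chi^{m'}(g)\,\chi^{m'}$, and this scalar depends only on the class $[m'] \in M'/M$. The weight-space decomposition
\[ \C[\sigma^\vee \cap M'] \;=\; \bigoplus_{m' \in \sigma^\vee \cap M'} \C \cdot \chi^{m'} \]
together with perfectness of the pairing $\widehat{G} \times M'/M \to \C^*$ (which holds because $M'/M \simeq G$ is finite abelian) shows that $\chi^{m'}$ is $\widehat{G}$-invariant if and only if $[m']=0$, i.e.~$m' \in M$. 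Hence
\[ \C[\sigma^\vee \cap M']^{\widehat{G}} \;=\; \C[\sigma^\vee \cap M], \]
which is precisely the image of the pullback $\phi_\sigma^* : \C[U_{\sigma,N}] \hookrightarrow \C[U_{\sigma,N'}]$ obtained in the proof of Proposition \ref{prop:compcone} from the inclusion $M \hookrightarrow M'$.

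By the classical theorem on finite-group quotients of affine varieties in characteristic zero, the induced map $U_{\sigma,N'} \to \Specm(\C[\sigma^\vee \cap M']^{\widehat{G}}) = U_{\sigma,N}$ is a geometric quotient, its fibres being precisely the $\widehat{G}$-orbits. Since $\widehat{G} \subset T_{N'}$, the action preserves each $T_{N'}$-invariant chart $U_{\sigma,N'}$, and the local quotient maps are compatible with the inclusions $U_{\tau,N'} \subset U_{\sigma,N'}$ for $\tau \preceq \sigma$: these come from localisations at characters $\chi^m$ with $m \in M \subset M'$, which are $\widehat{G}$-fixed, so taking invariants commutes with the localisation and one gets $(\C[\sigma^\vee \cap M']_{\chi^m})^{\widehat{G}} = \C[\tau^\vee \cap M]$. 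The local quotients therefore glue to $\phi$, showing that $\phi$ is constant on $\widehat{G}$-orbits and presents $X_{\Sigma,N}$ as $X_{\Sigma,N'}/\widehat{G} \simeq X_{\Sigma,N'}/G$. The main obstacle I expect is the invariant-ring computation, which rests on perfectness of the pairing $\widehat{G} \times M'/M \to \C^*$; once that is established, the finite-quotient theorem and the gluing are essentially formal.
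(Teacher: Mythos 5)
The paper states this result with a reference to \cite[Prop.~3.3.7]{cox2011toric} and offers only a one-line hint about the inclusion $\Hom_\Z(G,\C^*)\subset T_{N'}$; it gives no proof. Your write-up is a complete and correct argument filling that gap, and it follows what is essentially the standard route: dualise $0\to N'\to N\to G\to 0$ to the character-lattice sequence $0\to M\to M'\to G\to 0$ (using $\Hom_\Z(G,\Z)=0$ and $\mathrm{Ext}^1_\Z(G,\Z)\simeq G$), apply the exact functor $\Hom_\Z(-,\C^*)$ to get $1\to\widehat{G}\to T_{N'}\to T_N\to 1$, identify the $\widehat{G}$-invariant subalgebra of each chart as $\C[\sigma^\vee\cap M']^{\widehat{G}}=\C[\sigma^\vee\cap M]$ via the perfect pairing $\widehat{G}\times(M'/M)\to\C^*$, invoke the affine quotient theorem for finite groups over $\C$, and glue using that localisation at $\chi^m$ with $m\in M$ commutes with taking $\widehat{G}$-invariants. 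All of these steps are sound. One small remark: the paper's hint asks you to apply $\Hom_\Z(-,\C^*)$ directly to the cocharacter sequence $0\to N'\to N\to G\to 0$, but under the convention $T_{N'}=N'\otimes_\Z\C^*$ this produces a torus on the wrong side of the duality; the honest version of that shortcut is to tensor the cocharacter sequence with $\C^*$ and identify $\mathrm{Tor}^{\Z}_1(G,\C^*)\simeq\widehat{G}$. Your detour through the character-lattice sequence accomplishes the same thing and has the advantage of making the pairing with $M'/M$, and hence the invariant-ring computation, immediate.
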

The inclusion $ \Hom_\Z(G,\C^*) \subset T_{N'} \subset T_{N'}$ can be seen from taking $\Hom_\Z( -, \C^*)$ of 
\[ 0  \longrightarrow N' \longrightarrow N \longrightarrow N/N' \longrightarrow 0. \]

\subsubsection{Torus factors}
A variety $X$ is said to have a \emph{torus factor} if $X \simeq X' \times (\C^*)^r$ for some $r >0$. For toric varieties, this happens when the rays of the fan span a lower dimensional space. The proof is a nice application of Theorem \ref{thm:toricmorph}.
\begin{theorem} \label{thm:torusfactor}
Let $\Sigma$ be a fan in $N_\R$. The following are equivalent: 
\begin{enumerate}
\item[(a)] $X_\Sigma$ has a torus factor.
\item[(b)] There is a non-constant morphism $X_\Sigma \rightarrow \C^*$.
\item[(c)] The rays $\rho \in \Sigma(1)$ do not span $N_\R$. 
\end{enumerate}
\begin{proof}
(a) $\Rightarrow$ (b). A non-constant morphism is obtained from composing the projection $X_\Sigma \rightarrow (\C^*)^r$ with a non-constant character $\chi^m$ of $(\C^*)^r$.

(b) $\Rightarrow$ (c). If $\phi: X_\Sigma \rightarrow \C^*$ is a non-constant morphism, then $\phi_{|T}:T \rightarrow \C^*$ is non-constant. Therefore, $\phi_{|T} = c \cdot \chi^m$ for some $c \in \C^*, m \neq 0$. Multiplying by $c^{-1}$, we may assume that $\phi_{|T} = \chi^m$, and thus that $\phi$ is toric. The corresponding map $\overline{\phi}$ is given by $\overline{\phi}(u) = \langle u, m \rangle$. Since $\overline{\phi}$ is compatible with $\Sigma$ and $\{0\}$, we have $u_\rho \in \ker \overline{\phi}$, for all $\rho \in \Sigma(1)$. Therefore, the ray generators $u_\rho$ are $\R$-linearly dependent. 

(c) $\Rightarrow$ (a). See \cite[Prop.~3.3.9]{cox2011toric}.
\end{proof}
\end{theorem}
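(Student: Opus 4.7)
The plan is to prove the remaining implication (c) $\Rightarrow$ (a) by explicitly constructing the torus factor from a saturated sublattice determined by the rays of $\Sigma$. First I would set $V = {\rm span}_\R\{u_\rho : \rho \in \Sigma(1)\} \subset N_\R$ and $N' = N \cap V$. Since $N'$ is the intersection of $N$ with a real subspace of $N_\R$, it is saturated, so the quotient $N/N'$ is torsion-free and hence a free $\Z$-module of some rank $r$. By assumption (c), $V$ is a proper subspace of $N_\R$, so $r > 0$. Choose a complement $N''$ with $N = N' \oplus N''$, so that $N'' \simeq \Z^r$.

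Next I would observe that every cone $\sigma \in \Sigma$ is generated by its ray generators $u_\rho \in N' \subset N$, so $\sigma \subset N'_\R$. Thus $\Sigma$, reinterpreted as a fan $\Sigma'$ in $N'_\R$, consists of the same underlying cones as $\Sigma$. Under the direct sum decomposition $N_\R = N'_\R \oplus N''_\R$, each cone of $\Sigma$ has the form $\sigma \times \{0\}$ with $\sigma \in \Sigma'$. Writing $\{0\}$ for the trivial fan in $N''_\R$ consisting only of the zero cone, this gives an equality of fans
\[ \Sigma \;=\; \Sigma' \times \{0\} \qquad \text{in}\qquad N_\R = N'_\R \oplus N''_\R. \]

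Finally, I would invoke Proposition \ref{prop:product} to conclude
\[ X_\Sigma \;\simeq\; X_{\Sigma'} \times X_{\{0\}}, \]
and identify $X_{\{0\}} = \Specm(\C[N''^{\vee}]) = T_{N''} \simeq (\C^*)^r$. Setting $X' = X_{\Sigma'}$ gives the desired torus factor $X_\Sigma \simeq X' \times (\C^*)^r$.

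The only delicate point, and thus the main obstacle, is justifying the splitting $\Sigma = \Sigma' \times \{0\}$: one must be sure that the cones of $\Sigma$, which are generated by their rays, really lie entirely in the summand $N'_\R$ and that this produces a bona fide product of fans in the sense of Proposition \ref{prop:product}. Once the lattice-theoretic splitting of $N$ is in place, however, this is immediate because the ray generators all lie in $N'$ by construction, and the rest is a formal application of already-established results.
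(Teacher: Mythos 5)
Your proof of (c) $\Rightarrow$ (a) is correct and, as far as I can tell, it is essentially the argument behind the citation the paper gives to \cite[Prop.~3.3.9]{cox2011toric}; the paper itself does not spell out this implication, so you are supplying the omitted argument rather than taking a different route. The key points are all in order: $N' = N \cap V$ is saturated because $V$ is a linear subspace, so $N/N'$ is free and the short exact sequence $0 \to N' \to N \to N/N' \to 0$ splits, giving $N = N' \oplus N''$; every (strongly convex) cone of $\Sigma$ is the conical hull of its rays, hence lies in $V = N'_\R$, so $\Sigma = \Sigma' \times \{0\}$ as fans in $N'_\R \oplus N''_\R$; and Proposition~\ref{prop:product} together with $X_{\{0\}} = T_{N''} \simeq (\C^*)^r$ finishes it. One small clarification worth making explicit: $N'_\R = V$ precisely because $V$ is a \emph{rational} subspace (it is spanned by the lattice vectors $u_\rho$), so $N \cap V$ really does span $V$ over $\R$; without rationality this identification would fail.
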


\subsubsection{Refinements}
A fan $\Sigma'$ in $N_\R$ is said to \emph{refine} the fan $\Sigma$ in $N_\R$ if every cone of $\Sigma'$ is contained in a cone of $\Sigma$, and the supports of $\Sigma'$ and $\Sigma$ coincide: $|\Sigma'| = | \Sigma|$. If $\Sigma'$ refines $\Sigma$, then the identity map $\overline{\phi} = \id$ is compatible with $\Sigma', \Sigma$. This leads to toric morphisms that are \emph{blow-downs}. 
\begin{example}
The fan in Figure \ref{fig:fanBl0C2} refines the fan of $\C^2$, i.e.\ the nonnegative quadrant $\R_{\geq 0}^2$ and all its faces. The corresponding toric morphism is the blow-down ${\rm Bl}_0(\C^2) \rightarrow \C^2$. 
\end{example}
We now discuss how this generalizes. Let $\sigma$ be an $n$-dimensional smooth cone of $\Sigma$. The \emph{star subdivision} $\Sigma^*(\sigma)$ of $\Sigma$ along $\sigma$ is given by $\Sigma \setminus \sigma \cup \Sigma'(\sigma)$, where $\Sigma'(\sigma)$ is constructed as follows. Let $\sigma = {\rm Cone}(u_1, \ldots, u_n)$, where the $u_i$ are primitive ray generators. Set $u_0 = u_1 + \cdots + u_n$. The fan $\Sigma'(\sigma)$ consists of all cones ${\rm Cone}(u_0, \ldots, \hat{u}_i, \ldots, u_n), i = 1, \ldots, n$, where $\hat{u}_i$ indicates that $u_i$ is omitted, and their faces. 
\begin{proposition}
The star subdivision $\Sigma^*(\sigma)$ of $\Sigma$ along $\sigma$ refines $\Sigma$, and the induced toric morphism $\phi: X_{\Sigma^*(\sigma)} \rightarrow X_\Sigma$ makes $X_{\Sigma^*(\sigma)}$ the blow-up of $X_\Sigma$ at the distinguished point $\gamma_\sigma$ (Definition \ref{def:distinguished}). 
\end{proposition}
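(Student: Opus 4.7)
The statement splits into two parts: $\Sigma^*(\sigma)$ refines $\Sigma$, and $\phi$ realizes the blow-up of $X_\Sigma$ at $\gamma_\sigma$. For the combinatorial part, smoothness gives that $u_1, \ldots, u_n$ is a $\Z$-basis of $N$; since $u_0 = u_1 + \cdots + u_n$, so is $\{u_0\} \cup \{u_j : j \neq i\}$ for each $i$, which makes every new cone $\sigma_i = {\rm Cone}(u_0, u_1, \ldots, \hat{u}_i, \ldots, u_n)$ itself smooth. A routine combinatorial verification of Definition \ref{def:fan} then follows from the fact that $\sigma_i \cap \sigma_j = {\rm Cone}(u_0, \{u_k : k \neq i, j\})$ is a common face, while proper faces of $\sigma$ belong to both $\Sigma$ and $\Sigma^*(\sigma)$. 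Refinement is immediate because $u_0$ lies in the relative interior of $\sigma$, so $\sigma = \bigcup_{i=1}^n \sigma_i$ with each $\sigma_i \subset \sigma \in \Sigma$ (for $u = \sum a_j u_j \in \sigma$, choose $i$ minimizing $a_i$).

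For the geometric part, $\overline{\phi} = \id_N$ induces a toric morphism $\phi$ that is the identity on the dense torus and hence birational. Every cone $\tau \in \Sigma$ not containing $\sigma$ as a face lies in $\Sigma^*(\sigma)$ as well, so $\phi$ restricts to an isomorphism $U_\tau \to U_\tau$ on each such open. Since $\dim \sigma = n$, the orbit $O(\sigma) = \{\gamma_\sigma\}$ is a single point by Theorem \ref{thm:orbitcone}, and these $U_\tau$'s cover $X_\Sigma \setminus \{\gamma_\sigma\}$. It therefore suffices to identify the local morphism $\phi^{-1}(U_\sigma) \to U_\sigma$ with the classical blow-up ${\rm Bl}_0(\C^n) \to \C^n$, after identifying $U_\sigma \simeq \C^n$ by Example \ref{ex:smooth} (choosing $u_1, \ldots, u_n$ as the coordinate basis, so that $\gamma_\sigma$ becomes the origin).

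For this identification, I would compute $\sigma_i^\vee \cap \Z^n = \N e_i + \sum_{j \neq i} \N (e_j - e_i)$, using the smooth dual basis of $\{u_0\} \cup \{u_j : j \neq i\}$. This gives
\[ U_{\sigma_i} \simeq \Specm \C[x_i,\, x_j/x_i : j \neq i] \simeq \C^n, \]
with $\phi_{|U_{\sigma_i}}$ induced by $x_j \mapsto (x_j/x_i) \cdot x_i$ for $j \neq i$ and $x_i \mapsto x_i$. These are precisely the standard affine charts of ${\rm Bl}_0(\C^n) = \{(x, [y]) \in \C^n \times \PP^{n-1} : x_i y_j = x_j y_i\}$ in the chart $y_i \neq 0$ (with $y_j/y_i$ playing the role of $x_j/x_i$), and the corresponding blow-down map. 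The gluing morphisms $\phi_{\sigma_i, \sigma_k}$ from Section \ref{subsec:gluing}, induced by the common face ${\rm Cone}(u_0, \{u_j : j \neq i, k\})$, then match the classical transition maps between these affine charts.

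The main obstacle will be the bookkeeping in this last step: keeping straight which character corresponds to the base coordinate $x_i$ versus the projective direction coordinate $y_j/y_i$ in each chart, and verifying that all $\binom{n}{2}$ transition morphisms agree with the standard ones on ${\rm Bl}_0(\C^n)$. The difficulty is notational rather than conceptual, but a careful choice of names is required to make the isomorphism transparent across all $n$ charts simultaneously.
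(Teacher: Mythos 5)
Your proposal is correct and takes essentially the same route as the paper's proof, which is only a sketch: the paper reduces to $X_\Sigma = U_\sigma \simeq \C^n$ (citing Example \ref{ex:smooth}) and then appeals to the gluing computation in Example \ref{ex:blowupC2} to identify $X_{\Sigma^*(\sigma)}$ with ${\rm Bl}_0(\C^n)$, deferring details to \cite[Prop.~3.3.15]{cox2011toric}. You fill in the details the paper omits: the unimodularity argument for why each $\sigma_i$ is smooth, the verification that $\sigma = \bigcup_i \sigma_i$ (via the index minimizing $a_i$), the observation that $\phi$ is an isomorphism over $X_\Sigma \setminus \{\gamma_\sigma\}$ because every cone of $\Sigma$ other than $\sigma$ is retained in $\Sigma^*(\sigma)$, and the explicit computation of the dual basis giving $U_{\sigma_i} \simeq \Specm \C[x_i,\, x_j/x_i : j \neq i]$ with the blow-down map $x_j \mapsto (x_j/x_i)\cdot x_i$. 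All of these are correct; your dual-cone calculation $\sigma_i^\vee \cap \Z^n = \N e_i + \sum_{j \neq i} \N(e_j - e_i)$ is exactly right, and your final identification with the standard charts $\{y_i \neq 0\}$ of ${\rm Bl}_0(\C^n)$ is what the paper's Example \ref{ex:blowupC2} does in the $n=2$ case. The only thing you leave implicit, as does the paper, is the full verification that the transition morphisms $\phi_{\sigma_i, \sigma_k}$ match those of the classical blow-up — but you correctly flag this as bookkeeping rather than a conceptual obstacle.
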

\begin{proof}
We sketch the proof. For details, see \cite[Prop.~3.3.15]{cox2011toric}. It is clear that $\Sigma^*(\sigma)$ refines $\Sigma$. By restricting $\phi$, we may assume that $X_\Sigma = U_\sigma$. By Example \ref{ex:smooth}, $U_\sigma \simeq \C^n$. As in Example \ref{ex:blowupC2}, the gluing construction of $X_{\Sigma^*(\sigma)}$ shows that $X_{\Sigma^*(\sigma)} \simeq {\rm Bl}_0(\C^n)$.
\end{proof}

\section{Divisors on toric varieties} \label{sec:divisors}
In this section, we develop the theory of divisors on normal toric varieties. The discussion features the groups of Weil divisors, Cartier divisors, principal divisors, the class group and the Picard group of a toric variety. We will see how these groups have a very explicit description in terms of the fan. We also discuss sheaves associated to divisors. Their global sections will correspond to graded pieces of the Cox rings discussed in Section \ref{sec:homogeneous}. 

\subsection{Background on divisors} \label{subsec:backgrounddiv}
Let $X$ be an irreducible (abstract) variety. A \emph{prime divisor} $D \subset X$ is an irreducible subvariety of codimension 1. To a rational function $f \in \C(X)^* = \C(X) \setminus \{0\}$, we want to associate a $\Z$-linear combination of divisors $\sum a_i D_i$ encoding the `order of vanishing of $f$ along $D_i$'. We will see that this works best when $X$ is normal, which is one of the main reasons for our emphasis on \emph{normal} toric varieties. 

For a prime divisor $D \subset X$, we define 
\[ {\cal O}_{X,D} = \{ f \in \C(X)~|~ f \text{ is defined on } U, \text{ with } U \cap D \neq \emptyset \}. \]
These are the rational functions defined \emph{somewhere} on $D$, and therefore \emph{almost everywhere} on $D$. Since $X$ is irreducible, if $U \subset X$ is open and nonempty, we have $\C(X) = \C(U)$. Moreover, if $U \cap D \neq \emptyset$, ${\cal O}_{X,D} = {\cal O}_{U, U \cap D}$. Therefore, to describe ${\cal O}_{X,D}$ we may assume that $X = \Specm(R)$ is affine. In this case, we have
\[ \{ \text{ prime divisors of $X$ } \} \overset{1:1}{\longleftrightarrow} \{ \text{ codimension 1 prime ideals of $R$ } \}. \]
If a prime divisor $D \subset X$ corresponds to the prime ideal $\p \subset R$, the ring ${\cal O}_{X,D}$ is 
\[ {\cal O}_{X,D} = \Bigl \{ \frac{g}{h} \in K ~|~ g,h \in R, \, h \notin \p \Bigr \} = R_\p, \]
where $K$ is the field of fractions of $R$. This is a local ring with maximal ideal $\p R_\p$. We state the following commutative algebra result without proof. See \cite[Sec.~4.0]{cox2011toric} for details. 

\begin{proposition} \label{prop:DVR}
Let $R$ be a \emph{normal} domain and $\p \subset R$ a codimension one prime ideal with prime divisor $D = V(\p) \subset X$. There is a group homomorphism $\nu_D: K^* \rightarrow \Z$ such that 
\begin{itemize}
\item[(i)] $\nu_D(f + g) \geq \min(\nu_D(f), \nu_D(g))$, whenever $f, g, f+g \in K^*$, 
\item[(ii)] $R_\p = \{ f \in K^* ~|~ \nu_D(f) \geq 0 \} \cup \{ 0 \}$.
\end{itemize}
\end{proposition}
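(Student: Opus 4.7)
The plan is to reduce the statement to the well-known structure theorem: a one-dimensional Noetherian local normal domain is a discrete valuation ring (DVR). First, I would localize at $\p$ and observe that $R_\p$ inherits normality from $R$, is Noetherian (assuming $R$ is finitely generated, as is the case for coordinate rings), and has Krull dimension one, since $\p$ has codimension one in $R$ (so the only prime ideals of $R_\p$ are $(0)$ and $\p R_\p$). Its maximal ideal is $\p R_\p$ and its field of fractions equals $K$, the field of fractions of $R$.

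Next, I would invoke (or sketch a proof of) the structure theorem: a one-dimensional Noetherian local normal domain $(A, \mathfrak{m})$ is a DVR. The key steps in the classical argument are: use Krull's intersection theorem to see $\bigcap_n \mathfrak{m}^n = (0)$; pick any nonzero $a \in \mathfrak{m}$ and use primary decomposition plus dimension one to find $n$ with $\mathfrak{m}^n \subset (a)$ but $\mathfrak{m}^{n-1} \not\subset (a)$; pick $b \in \mathfrak{m}^{n-1} \setminus (a)$; then normality forces $\pi := a/b \in \mathfrak{m}$ to be a generator of $\mathfrak{m}$ (otherwise $b/a$ would be integral over $A$, contradicting $b \notin (a)$). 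This $\pi$ is the uniformizer.

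Once $R_\p$ is a DVR with uniformizer $\pi$, every nonzero $f \in K^*$ can be written uniquely as $f = u \pi^n$ with $u \in R_\p^*$ and $n \in \Z$. I would then \emph{define} $\nu_D : K^* \to \Z$ by $\nu_D(u \pi^n) = n$. This is a group homomorphism because $(u\pi^n)(v\pi^m) = (uv)\pi^{n+m}$ and $uv$ is again a unit. Property (ii) is immediate from the description $R_\p = \{u\pi^n : u \in R_\p^*, n \geq 0\} \cup \{0\}$. For property (i), assume without loss of generality $\nu_D(f) = m \leq n = \nu_D(g)$; writing $f = u\pi^m$, $g = v \pi^n$, we get $f + g = \pi^m(u + v\pi^{n-m})$, and since the second factor lies in $R_\p$, we have $\nu_D(f+g) \geq m = \min(\nu_D(f), \nu_D(g))$.

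The main obstacle is the DVR structure theorem itself, which is the non-trivial commutative-algebra input. The role of normality is precisely there: without it, a one-dimensional Noetherian local domain need not be a DVR (e.g.\ the localization of $\C[t^2, t^3]$ at the origin). The toric analogue of this phenomenon is exactly why we restricted to normal toric varieties throughout Sections~\ref{sec:affineTV}--\ref{sec:abstractTV}. Everything else in the proof is formal manipulation of the uniformizer.
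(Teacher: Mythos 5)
Your proof is correct and follows the standard route — the paper itself states this result without proof and cites \cite[Sec.~4.0]{cox2011toric}, and then immediately invokes, as you do, that $R_\p$ is a discrete valuation ring (the paper phrases it as a PID with principal maximal ideal) whose uniformizer defines $\nu_D$. The only thing worth making explicit, which you correctly flag in passing, is that the DVR structure theorem requires $R$ to be Noetherian; this hypothesis is implicit in the proposition's context (coordinate rings of affine varieties) but not written in the statement.
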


A homomorphism $\nu_D: K^* \rightarrow \Z$ satisfying (i) in Proposition  \ref{prop:DVR} is called a \emph{discrete valuation}. The ring $\{ f \in K^* ~|~ \nu_D(f) \geq 0 \} \cup \{ 0 \}$ is the corresponding \emph{discrete valuation ring}. 

We now describe the map $\nu_D$ from Proposition \ref{prop:DVR} explicitly. If $R, \p, D$ are as in Proposition \ref{prop:DVR}, $R_\p$ is a principal ideal domain and all its ideals are of the form $\langle \pi^k \rangle$, where $\pi$ is a generator of the maximal ideal $\p R_\p$.
For $f \in R_\p \setminus \{0\}$ we have $\nu_D(f) = k$ where $k$ is the largest integer for which $f \in \langle \pi^k \rangle$. In particular, we indeed have
\[ \p R_\p = \langle \pi \rangle = \{ f \in R_\p ~|~ \nu_D(f) > 0 \} \cup \{ 0 \}.\] 
If $f \in K^* \setminus R_\p$, then $\nu_D(f) = -k$ where $k$ is the largest integer for which $f^{-1} \in \langle \pi^k \rangle$. 

We now formulate Proposition \ref{prop:DVR} in the general case where $X$ is not necessarily affine. 

\begin{corollary} \label{cor:DVR}
Let $X$ be a normal variety and $D \subset X$ a prime divisor. There is a discrete valuation $\nu_D: \C(X)^* \rightarrow \Z$ with discrete valuation ring ${\cal O}_{X,D}$. 
\end{corollary}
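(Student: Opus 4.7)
The plan is to reduce Corollary \ref{cor:DVR} to the affine statement Proposition \ref{prop:DVR} by passing to an affine open chart that meets $D$. First I would pick any affine open subset $U \subset X$ such that $U \cap D \neq \emptyset$; such a $U$ exists because $X$ is covered by affine opens and $D$ is nonempty. Since $X$ is irreducible, $U$ is dense, hence $\C(U) = \C(X)$, and since $X$ is normal, so is $U$. The intersection $D \cap U$ is a nonempty open subset of the irreducible codimension-one subvariety $D$, so $D \cap U$ is itself a prime divisor of $U$. Moreover, by the defining property of ${\cal O}_{X,D}$ (rational functions defined on some open meeting $D$) together with the fact that $U$ is open in $X$, we have ${\cal O}_{X,D} = {\cal O}_{U, U \cap D}$.

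Next I would write $U = \Specm(R)$ with $R$ normal and let $\p \subset R$ be the codimension-one prime ideal corresponding to $D \cap U$, so that ${\cal O}_{U, U \cap D} = R_\p$. Proposition \ref{prop:DVR} then produces a discrete valuation $\nu : K^* \to \Z$, where $K$ is the field of fractions of $R$. Since $K = \C(U) = \C(X)$, this is exactly a map $\nu_D : \C(X)^* \to \Z$, and property (ii) of Proposition \ref{prop:DVR} identifies its valuation ring with $R_\p = {\cal O}_{X,D}$, while property (i) is preserved verbatim.

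The only thing left to check is that $\nu_D$ does not depend on the choice of affine chart $U$. If $U' = \Specm(R')$ is another such chart, with corresponding prime $\p' \subset R'$, then both $R_\p$ and $R'_{\p'}$ equal the intrinsically defined ring ${\cal O}_{X,D}$; since the valuation on $K^*$ attached to a DVR is uniquely determined by its valuation ring (via the length of fractional ideals, or equivalently the exponent of a uniformizer), the two valuations agree.

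I do not expect any genuine obstacle here: the content of the corollary is essentially bookkeeping on top of Proposition \ref{prop:DVR}. The mildly subtle point is making precise the equality ${\cal O}_{X,D} = {\cal O}_{U, U \cap D}$ and the consequent well-definedness of $\nu_D$, but both follow straightforwardly from irreducibility of $X$ and the fact that rational functions are insensitive to restriction to a dense open subset.
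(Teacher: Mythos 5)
Your argument is exactly the paper's: choose an affine chart $\Specm(R)$ meeting $D$, identify $\C(X)$ with the fraction field of $R$ and ${\cal O}_{X,D}$ with $R_\p$, and invoke Proposition \ref{prop:DVR}. The additional remark on well-definedness of $\nu_D$ (that a DVR determines its valuation) is a useful clarification but not a departure from the paper's route.
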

\begin{proof}
Choose an affine chart $\Specm(R) \subset X$ such that $U \cap D \neq \emptyset$. Then $\C(X) = K$ is the field of fractions of $R$, and ${\cal O}_{X,D} = R_\p$ for a prime ideal $\p$. Now apply Proposition \ref{prop:DVR}.
\end{proof}
For a nonzero rational function $f \in \C(X)^*$, we say that $f$ \emph{vanishes with order} $\nu_D(f)$ along $D$ if $\nu_D(f) >0$, or that $f$ \emph{has a pole of order} $-\nu_D(f)$ along $D$ if $\nu_D(f)<0$.

Let $\Div(X)$ be the free abelian group generated by the prime divisors on $X$. A \emph{Weil divisor} is an element $E = \sum_{D \subset X} a_{D} D$ of $\Div(X)$, where the sum is over all prime divisors $D$ on $X$. Only finitely many integer coefficients $a_{D}$ are allowed to be nonzero. A Weil divisor $E$ is \emph{effective} if all coefficients are nonnegative. In this case we write $E \geq 0$.  The \emph{support} of $E$ is the subvariety ${\rm Supp}(E) = \cup_{a_{D} \neq 0}  \, D  \subset X$. Prime divisors are Weil divisors with only one non-zero coefficient, and we often write $D = \sum a_i D_i$ for a Weil divisor $D \in \Div(X)$, where it is understood that the sum is over a finite index set and the $D_i$ are prime. 

If $X$ is normal, the map $\nu_D$ from Corollary \ref{cor:DVR} gives a way of associating a Weil divisor to a rational function $f \in \C(X)^*$. We define 
\[ \div(f) = \sum_{D \subset X} \nu_D(f) \cdot D. \]
This is a Weil divisor since only finitely many coefficients are nonzero \cite[Lem.~4.0.9]{cox2011toric}. Weil divisors arising in this way are called \emph{principal divisors}. From the observations 
\[ \div(fg) = \div(f) + \div(g), \quad \div(f^{-1}) = - \div(f), \]
it is clear that they form a subgroup. We denote this group by $\Div_0(X) \subset \Div(X)$. 
\begin{example}
Let $f = (x-a_1)^{m_1} \cdots (x-a_r)^{m_r} \in \C[x]$, where $a_i$ are distinct complex numbers. Viewed as a rational function on $\C$, $f$ gives a divisor $\div(f) = \sum_{i=1}^r m_i \cdot \{ a_i \} \in \Div_0(\C)$. Viewed as a rational function on $\PP^1$, $\div(f) = \sum_{i=1}^r m_i \cdot \{a_i\} - (\sum_{i=1}^r m_i) \cdot \{ \infty \}$.
\end{example}
\begin{example} \label{ex:divPn}
On $X = \PP^n$, let $D_i$ be the zero locus of the $i$-th homogeneous coordinate function $x_i$, $i = 1, \ldots, n+1$. Rational functions on $X$ are fractions of homogeneous polynomials of the same degree. One checks that $D_i - D_j = \div(x_i/x_j)$ is a principal divisor, but $D_i \in \Div(X)\setminus \Div_0(X)$ for any $i,j \in \{1, \ldots, n+1\}$.
\end{example}
For an open subset $U \subset X$ and a Weil divisor $D = \sum a_i D_i \in \Div(X)$, we define the \emph{restriction} of $D$ to $U$ as 
\[ D_{|U} = \Bigl ( \sum a_i D_i \Bigr )_{|U} = \sum_{D_i \cap U \neq \emptyset} a_i \cdot (D_i \cap U) \in \Div(U). \]
The divisor $D \in \Div(X)$ is called \emph{Cartier} if it is \emph{locally principal}, meaning that there is an open covering $X = \bigcup_i U_i$ such that $D_{|U_i} \in \Div_0(U_i)$ is the divisor of a rational function on $U_i$. The Cartier divisors form a subgroup of $\Div(X)$, denoted $\CDiv(X) \subset \Div(X)$, and all principal divisors are Cartier: 
\[ \Div_0(X) \subset \CDiv(X) \subset \Div(X).\]
The last inclusion is an equality if $X$ is smooth, see \cite[Thm.~4.0.22(b)]{cox2011toric}.
\begin{theorem} \label{thm:CarisWeil}
If $X$ is smooth, all Weil divisors are Cartier. That is, $\CDiv(X) = \Div(X)$.
\end{theorem}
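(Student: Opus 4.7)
The plan is to reduce to the case of a prime divisor and then invoke a local structure theorem at each smooth point. Since $\CDiv(X)$ is a subgroup of $\Div(X)$ and $\Div(X)$ is freely generated by prime divisors, it suffices to prove that every prime Weil divisor $D \subset X$ is Cartier. Being Cartier is a local condition, so the goal is: for each $p \in X$, find an open neighborhood $U_p$ of $p$ and a rational function $f_p \in \C(X)^*$ such that $D_{|U_p} = \div(f_p)_{|U_p}$.

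First I would dispose of the easy points. If $p \notin D$, then on any affine open $U_p \subset X \setminus D$ we have $D_{|U_p} = 0 = \div(1)_{|U_p}$, so $D$ is trivially principal there. The nontrivial case is $p \in D$, and I will work in an affine neighborhood $\Specm(R)$ of $p$ with $D$ corresponding to a height-one prime $\mathfrak{p} \subset R$. The key structural input is that smoothness of $X$ at $p$ means the local ring $\OO_{X,p} = R_{\mathfrak{m}_p}$ is a regular local ring, and by the Auslander--Buchsbaum theorem every regular local ring is a unique factorization domain. Consequently the height-one prime $\mathfrak{p} R_{\mathfrak{m}_p}$ is principal, generated by some $f_p \in R_{\mathfrak{m}_p}$. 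Clearing denominators and shrinking $\Specm(R)$ so that $f_p \in R$, we obtain a regular function on a neighborhood of $p$.

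Next I would check that $\div(f_p)$ agrees with $D$ after possibly further shrinking. By construction $\nu_D(f_p) = 1$ (since $f_p$ generates the maximal ideal of the discrete valuation ring $\OO_{X,D} = R_{\mathfrak{p}}$, using Proposition \ref{prop:DVR}). The worry is that $\div(f_p)$ may contain other prime components $D'$ with $\nu_{D'}(f_p) > 0$. Because $\div(f_p)$ is a locally finite sum of prime divisors, only finitely many such $D'$ can pass through $p$, and each such $D'$ is a closed subvariety of $U_p$ not passing through $p$ (else $f_p$ would lie in the prime of $D'$ inside $R_{\mathfrak{m}_p}$, but the only height-one prime of $R_{\mathfrak{m}_p}$ containing $f_p$ is $\mathfrak{p} R_{\mathfrak{m}_p}$, i.e.\ $D' = D$). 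Removing these finitely many unwanted components from $U_p$ yields an open neighborhood on which $\div(f_p) = D$. Covering $X$ by $X \setminus D$ together with the $U_p$ for $p \in D$ exhibits $D$ as locally principal, hence Cartier.

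The main obstacle in this argument is the invocation of Auslander--Buchsbaum (regular implies UFD); everything else is essentially bookkeeping. The subtlety that $f_p$ might pick up spurious zero-loci is real but harmless, since those extra components avoid $p$ and can be cut away by shrinking the neighborhood. The construction is genuinely local, and no global hypothesis beyond smoothness is needed.
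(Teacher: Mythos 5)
The paper does not prove this theorem; it simply cites \cite[Thm.~4.0.22(b)]{cox2011toric}, so there is no in-paper argument to compare against. Your reconstruction is correct and is the standard proof: smoothness makes $\OO_{X,p}$ a regular local ring, Auslander--Buchsbaum makes it a UFD, height-one primes in a UFD are principal, and your care in excising the extraneous components of $\div(f_p)$ near $p$ (which cannot pass through $p$ because $f_p$ is prime in $\OO_{X,p}$) is exactly the refinement needed to conclude that $D$ is locally principal.
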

\begin{example} \label{ex:cartierP1}
Let $X = \PP^1 = U_x \cup U_y$, where $U_x = \PP^1 \setminus \{ x = 0 \}, U_y = \PP^1 \setminus \{ y  = 0 \}$ are as in Example \ref{ex:glueP1}. We have seen that  $D_x = \{ x = 0 \}$ is not principal (Example \ref{ex:divPn}). However, it is locally principal. Indeed, this is implied by Theorem \ref{thm:CarisWeil}, and seen from
\[ (D_x)_{|U_x} = \div(1), \quad (D_x)_{|U_y} = \div \Bigl ( \frac{x}{y} \Bigr ). \qedhere \]
\end{example}
In what follows, we will use the terminology \emph{divisor} for Weil divisors. Two divisors $D, E \in \Div(X)$ are said to be \emph{linearly equivalent} if $D-E \in \Div_0(X)$. This equivalence relation gives two important quotient groups. 
\begin{definition}[Class and Picard group]
The \emph{divisor class group} of a normal variety $X$ is $\Cl(X) = \Div(X)/\Div_0(X)$. The \emph{Picard group} of $X$ is $\Pic(X) = \CDiv(X)/\Div_0(X)$.
\end{definition}

Here is an important result on the class group of certain affine varieties which will help us deal with more complicated cases later.

\begin{theorem} \label{thm:importantzeroclass}
Let $R$ be a unique factorization domain and $X = \Specm(R)$. We have 
\begin{itemize}
\item[(a)] $R$ is normal and every codimension 1 prime ideal is principal. 
\item[(b)] $\Cl(X) = 0$. 
\end{itemize}
\end{theorem}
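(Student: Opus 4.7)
The plan is to prove (a) first and then bootstrap (b) from it.

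For part (a), I would prove normality by the classical argument for UFDs: given $x = a/b \in K$ (the field of fractions) with $\gcd(a,b) = 1$ satisfying a monic relation $x^d + r_{d-1} x^{d-1} + \cdots + r_0 = 0$, clearing denominators gives $a^d = -b(r_{d-1} a^{d-1} + \cdots + r_0 b^{d-1})$, so $b$ divides $a^d$ in $R$. Since $R$ is a UFD and $\gcd(a,b) = 1$, every prime factor of $b$ would have to divide $a$, forcing $b$ to be a unit, and hence $x \in R$. For the statement about codimension one primes, let $\p \subset R$ be a prime ideal of codimension one. Pick any nonzero $f \in \p$ and factor it as $f = u \cdot p_1 \cdots p_k$ with $u \in R^*$ and each $p_i$ irreducible. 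In a UFD, irreducibles are prime, so each $\langle p_i \rangle$ is a nonzero prime ideal. Since $f \in \p$ and $\p$ is prime, some $p_i \in \p$, which gives $\{0\} \subsetneq \langle p_i \rangle \subset \p$. The codimension one assumption forces $\langle p_i \rangle = \p$.

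For part (b), I would show that every prime divisor on $X$ is principal, from which linearity of $\div$ gives the result. Let $D \subset X$ be a prime divisor, corresponding to a codimension one prime ideal $\p \subset R$. By (a), $\p = \langle f \rangle$ for some $f \in R$, and I claim $\div(f) = D$. To see this, recall that $R_\p$ is the discrete valuation ring ${\cal O}_{X,D}$ from Corollary \ref{cor:DVR}, and $\p R_\p = \langle f \rangle R_\p$ is the maximal ideal with $f$ as a uniformizer; hence $\nu_D(f) = 1$. For any other prime divisor $D' = V(\p') \neq D$, I need $\nu_{D'}(f) = 0$, i.e.~$f \in R_{\p'} \setminus \p' R_{\p'}$. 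If instead $f \in \p'$, then $\p = \langle f \rangle \subset \p'$; since both ideals have codimension one, this forces $\p = \p'$, a contradiction. Thus $\div(f) = D$ as claimed.

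To finish (b), take an arbitrary Weil divisor $E = \sum_{i} a_i D_i \in \Div(X)$. Writing each $D_i = \div(f_i)$ as above, additivity of $\nu$ on $\C(X)^*$ gives
\[ E = \sum_i a_i \div(f_i) = \div \left( \prod_i f_i^{a_i} \right) \in \Div_0(X), \]
so $\Cl(X) = \Div(X)/\Div_0(X) = 0$.

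The only subtle step is verifying $\div(f) = D$ when $\p = \langle f \rangle$; specifically the fact that $f$ is a uniformizer of $R_\p$ and that $f$ avoids every other height one prime. Both reduce to the containment $\p = \langle f \rangle \subset \p'$ forcing equality for height one primes, which is a clean one-liner. Everything else is essentially bookkeeping with the definition of $\nu_D$ from Proposition \ref{prop:DVR}.
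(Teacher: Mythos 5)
Your proof is correct and follows essentially the same route as the paper: for (a), the classical UFD-implies-normal argument and factoring any nonzero element of a height-one prime to find a prime generator; for (b), writing each prime divisor as $\div(f)$ for the generator $f$ of its prime ideal and concluding by additivity. You are slightly more careful than the paper's proof in explicitly verifying that $\nu_{D'}(f)=0$ for $D'\neq D$ (the paper takes this for granted when it writes $\div(\prod f_i^{a_i})=\sum a_i\nu_{D_i}(f_i)D_i$), which is a worthwhile bit of bookkeeping.
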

\begin{proof}
(a) It is a standard exercise in commutative algebra to show that every unique factorization domain is normal. Let $\p$ be a codimension 1 prime ideal and $f \in \p \setminus \{0\}$. Then $f = c \, \prod_{i=1}^s f_i^{a_i}$, where the $f_i$ are prime and $c$ is a unit. Since $\p$ is prime, $f_i \in \p$ for some $i$. This means that $\langle f_i \rangle \subset \p$, and since $\p$ is codimension 1, $\p = \langle f_i \rangle$. 

(b) Let $D_i$ be a prime divisor. Its prime ideal $\p_i \subset R$ is principal by part (a). We write $\p_i = \langle f_i \rangle$. It follows that if $D = \sum_{i=1}^s a_i D_i$, then $D = \div(\prod_{i=1}^s f_i^{a_i})$. Indeed, $ \div(\prod_{i=1}^s f_i^{a_i} )= \sum_{i = 1}^s a_i \nu_{D_i}(f_i) D_i$ and $\nu_{D_i}(f_i) = 1$ since $f_i$ generates $\p_i R_{\p_i}$. 
\end{proof}
\begin{example} \label{ex:zeroclass}
The class group of the affine space $\C^n$ is 0, and so is that of a torus $(\C^*)^n$.
\end{example}
In what follows, we will write $[D]$ for the residue class of $D \in \Div(X)$ in $\Cl(X)$. Our next theorem uses the exercise below.
\begin{exercise} \label{ex:check}
Check that the restriction $\Div(X) \rightarrow \Div(U): D \mapsto D_{|U}$ induces a well-defined map $\Cl(X) \rightarrow \Cl(U): [D] \mapsto [D_{|U}]$. 
\end{exercise}

\begin{theorem} \label{thm:importantES}
Let $X$ be a normal variety and $U \subset X$ a nonempty open subset. Let $D_1, \ldots, D_s$ be the irreducible components of $X \setminus U$ that are prime divisors. Then 
\[ \bigoplus_{i=1}^s \Z \cdot D_i \longrightarrow \Cl(X) \longrightarrow \Cl(U) \longrightarrow 0 \]
is exact. The maps are $\sum_{i=1}^s a_i D_i \mapsto [\sum_{i=1}^s a_i D_i]$ and $[D] \mapsto [D_{|U}]$ (Exercise \ref{ex:check}).
\end{theorem}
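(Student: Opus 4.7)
The plan is to verify the three conditions required for exactness at the indicated spots, in the order: (i) surjectivity of $\Cl(X) \to \Cl(U)$, (ii) the composition $\bigoplus_{i=1}^s \Z \cdot D_i \to \Cl(U)$ is zero, (iii) the kernel of $\Cl(X) \to \Cl(U)$ is contained in the image of $\bigoplus_{i=1}^s \Z \cdot D_i$. Step (ii) is essentially a tautology and step (i) is a standard closure argument; step (iii) is the substantive content and it is where I would focus the effort.

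For surjectivity, I would first note that every prime divisor $D' \subset U$ is the restriction $\overline{D'} \cap U$ of a prime divisor $\overline{D'} \subset X$ (its Zariski closure in $X$ is irreducible of codimension 1, hence a prime divisor, and it restricts back to $D'$). Extending $\Z$-linearly, any Weil divisor on $U$ lifts to one on $X$, giving surjectivity at the level of $\Div$ and hence at the level of $\Cl$. For step (ii), if $D = \sum a_i D_i$ with each $D_i \subset X \setminus U$, then $D_{|U} = 0$ in $\Div(U)$, so its class is zero in $\Cl(U)$.

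The main step is (iii). Suppose $[D] \in \Cl(X)$ satisfies $[D_{|U}] = 0$ in $\Cl(U)$, i.e. $D_{|U} = \div(f)$ for some $f \in \C(U)^* = \C(X)^*$ (using that $X$ is irreducible and $U$ is nonempty open, so the function fields agree). Consider the divisor $D' = D - \div(f) \in \Div(X)$, which is linearly equivalent to $D$. Its restriction $D'_{|U} = D_{|U} - \div(f)_{|U} = D_{|U} - \div(f) = 0$, which forces every prime component of $D'$ to be disjoint from $U$, i.e. contained in $X \setminus U$. Any irreducible subvariety of $X$ of codimension 1 that is contained in $X \setminus U$ must be one of the $D_i$ (the prime-divisor irreducible components of the closed set $X \setminus U$), since a prime divisor is, in particular, an irreducible closed subset of codimension 1 of $X \setminus U$ and therefore lies in some top-dimensional component. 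Hence $D' = \sum_{i=1}^s a_i D_i$ for some integers $a_i$, and $[D] = [D'] \in \Cl(X)$ is in the image of $\bigoplus_{i=1}^s \Z \cdot D_i$.

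The only non-routine point is the last sentence of the previous paragraph: one must argue carefully that a prime divisor $E \subset X$ contained in $X \setminus U$ coincides with one of the $D_i$ rather than being contained in a strictly higher-codimension component of $X \setminus U$. This follows because $\dim E = \dim X - 1$, so $E$ cannot be a proper subset of any irreducible subvariety of $X$ of codimension $\geq 1$ except itself; thus $E$ is a maximal irreducible closed subset of $X \setminus U$ of codimension 1 in $X$, i.e.\ one of the $D_i$. This is the one place where I would take care, and it is the main (mild) obstacle.
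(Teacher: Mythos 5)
Your proof is correct and follows essentially the same route as the paper's: surjectivity at $\Cl(U)$ via Zariski closure of prime divisors, the composition being obviously zero, and exactness at $\Cl(X)$ by subtracting $\div(f)$ and observing the difference is supported in $X\setminus U$. The only difference is that you spell out more carefully why a prime divisor contained in $X\setminus U$ must coincide with one of the $D_i$ (a codimension/dimension comparison), which the paper leaves implicit in the phrase ``supported in $\cup_{i=1}^s D_i$''; this is a worthwhile clarification but not a different argument.
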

\begin{proof}
We sketch the proof. Exactness at $\Cl(U)$ follows from the fact that $D' = \sum a_i D_i' \in \Div(U)$ is the restriction of $D = \sum a_i D_i \in \Div(X)$, where $D_i = \overline{D_i'}$ is the Zariski closure of $D_i'$ in $X$. It remains to show exactness at $\Cl(X)$. Clearly the composition is zero. Suppose $[D]$ restricts to zero in $\Cl(U)$. Then $D_{|U}$ is the divisor of some $f \in \C(U)^*$. Since $\C(X) = \C(U)$, $f$ defines a divisor $\div(f) \in \Div(X)$ which restricts to $D_{|U}$. Hence $(D - \div(f))_{|U} = D_{|U} - \div(f)_{|U} = 0$, and $D-\div(f)$ is supported in $\cup_{i=1}^s D_i$. This means that $D$ is linearly equivalent to an element $ \sum_{i=1}^s a_i D_i \in \bigoplus_{i=1}^s \Z \cdot D_i$, so that $[D] = [\sum_{i=1}^s a_i D_i]$.
\end{proof}
Here is a first example of how Theorems \ref{thm:importantzeroclass} and \ref{thm:importantES} can help us compute class groups. 
\begin{example} \label{ex:classP1}
Let $X = \PP^1$ and $U = U_y = \PP^1 \setminus D_y \simeq \C$ is the open set from Example \ref{ex:cartierP1}. By Theorem \ref{thm:importantES}, we have the exact sequence
\[ \Z \cdot D_y \longrightarrow \Cl(\PP^1) \longrightarrow \Cl(\C) \longrightarrow 0. \]
In this case, the first map is injective, since $k \cdot D_y$ is principal if and only if $k = 0$. By Theorem \ref{thm:importantzeroclass}, $\Cl(\C) = 0$, so that $\Cl(\PP^1) \simeq \Z$.
\end{example}
We conclude with a lemma that will be useful later. For a proof, see \cite[Prop.~4.0.16]{cox2011toric}.

\begin{lemma} \label{lem:useful}
Let $X$ be a normal variety and $f \in \C(X)^*$. 
\begin{itemize}
\item[(a)] The divisor $\div(f)$ is effective if and only if $f: X \rightarrow \C$ is a morphism.
\item[(b)] We have $\div(f) = 0$ if and only if $f: X \rightarrow \C^*$ is a morphism. 
\end{itemize}
\end{lemma}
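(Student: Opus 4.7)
The plan is to prove (a) first and then deduce (b) from (a) applied to both $f$ and $f^{-1}$. The forward direction of (a) is essentially immediate: if $f\colon X\to\C$ is a morphism and $D\subset X$ is any prime divisor, then $f$ is regular on an open set meeting $D$, so $f\in\mathcal{O}_{X,D}$, and by Corollary \ref{cor:DVR}(ii) every $\nu_D(f)\geq 0$. Hence all coefficients of $\div(f)$ are nonnegative.

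For the converse direction of (a), I would work affine-locally: it suffices to pick any affine open $U=\Specm(R)\subset X$ (with $R$ a normal domain) and prove $f\in R$, because then the resulting regular functions automatically agree on overlaps and glue to a morphism $X\to\C$. Every height-one prime $\p\subset R$ cuts out a prime divisor $D_\p\subset U$, and $D_\p$ is the restriction of a prime divisor on $X$. The hypothesis $\div(f)\geq 0$ therefore gives $\nu_{D_\p}(f)\geq 0$, and Proposition \ref{prop:DVR}(ii) then places $f\in R_\p$. Thus $f$ lies in the intersection $\bigcap_{\mathrm{ht}\,\p=1}R_\p\subset\C(X)$.

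The main obstacle is precisely the classical commutative algebra fact that for a normal noetherian domain this intersection equals $R$ itself. This is the one genuinely nontrivial ingredient, and I would invoke it as a black box, since the excerpt's Proposition \ref{prop:DVR} is exactly designed to package the normality hypothesis into such discrete-valuation statements. Once this is granted, the affine-local argument closes immediately.

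Part (b) is then formal. If $\div(f)=0$ then $\div(f)\geq 0$ and $\div(f^{-1})=-\div(f)\geq 0$, so by (a) both $f$ and $f^{-1}$ extend to morphisms $X\to\C$; since their product is the constant function $1$, $f$ vanishes nowhere, and hence factors through $\C^*$. Conversely, if $f\colon X\to\C^*$ is a morphism then so is $1/f$, and applying (a) to each yields $\div(f)\geq 0$ and $-\div(f)=\div(1/f)\geq 0$, forcing $\div(f)=0$.
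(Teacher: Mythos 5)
Your proof is correct. The paper itself does not prove Lemma~\ref{lem:useful}; it simply refers to \cite[Prop.~4.0.16]{cox2011toric}, and your argument reproduces the standard proof found there. For (a), the forward direction via Proposition~\ref{prop:DVR}(ii) is right, and the converse correctly reduces to the classical algebraic Hartogs-type fact that a noetherian normal domain $R$ satisfies $R=\bigcap_{\mathrm{ht}\,\p=1}R_\p$ inside its fraction field (Matsumura, Thm.~11.5; this is exactly the result that normality is designed to feed into, so invoking it is appropriate). The glueing step is immediate since each chart produces the same rational function $f$. Part~(b) is the correct formal reduction to (a) applied to $f$ and $f^{-1}$, using $\div(f^{-1})=-\div(f)$ and the fact that a function and its inverse being regular forces the function to be nowhere vanishing.

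One small stylistic remark: when you say ``it suffices to pick any affine open $U$,'' what you really mean is that the argument works uniformly for every affine chart in a cover, and the resulting regular functions trivially agree on overlaps because they are all equal to $f\in\C(X)$. That is what you go on to do, so the content is fine, but the phrasing ``any'' could be read as ``one'' rather than ``all,'' and only the latter suffices to build a global morphism.
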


\subsection{Weil divisors on toric varieties}
We turn back to the case where $X = X_\Sigma$ is a normal toric variety coming from a fan $\Sigma$ in $N_\R \simeq \R^n$. Let $\Sigma(1)$ be the set of rays of $\Sigma$. By the orbit-cone correspondence (Theorem \ref{thm:orbitcone}), this gives torus invariant prime divisors $V(\rho) = \overline{O(\rho)}$, $\rho \in \Sigma(1)$. To emphasize that we think of these as divisors, we write $D_\rho = V(\rho)$. Our first result gives the order of vanishing of a character along the boundary of the torus $T \subset X_\Sigma$. That is, we compute the valuation map $\nu_\rho = \nu_{D_\rho}: \C(X_\Sigma)^* \rightarrow \Z$ from Proposition \ref{prop:DVR} on characters. This makes sense because characters are rational functions on $X_\Sigma$. They are defined on an open subset containing $T$. 

\begin{proposition}
Let $u_\rho$ be the primitive ray generator of $\rho \in \Sigma(1)$. For any $m \in M$, we have $\nu_\rho(\chi^m) = \langle u_\rho, m \rangle$.
\end{proposition}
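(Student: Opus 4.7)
The plan is to reduce to a local computation on the affine chart $U_\rho = \Specm(\C[\S_\rho])$, where $\S_\rho = \rho^\vee \cap M$, and then use a change of basis to recognize $U_\rho$ as a product of an affine line with a torus, in which the uniformizer along $D_\rho$ is a character.

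First, I would observe that the valuation $\nu_\rho$ depends only on the local ring ${\cal O}_{X_\Sigma, D_\rho}$ at the generic point of $D_\rho$. Since $O(\rho) \subset U_\rho \subset X_\Sigma$ and $D_\rho \cap U_\rho$ is a nonempty open subset of $D_\rho$, we may compute $\nu_\rho$ on the affine toric variety $U_\rho$. So it suffices to identify the valuation along $D_\rho \cap U_\rho$ on characters.

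Next, I would exploit primitivity of $u_\rho$. Because $u_\rho$ is a primitive lattice vector, it can be extended to a $\Z$-basis $u_\rho = e_1', e_2', \ldots, e_n'$ of $N$. Let $e_1, \ldots, e_n$ be the dual basis of $M$. In these coordinates, $\rho = \R_{\geq 0} \cdot e_1'$, hence $\rho^\vee = \{m \in M_\R : \langle u_\rho, m\rangle \geq 0\} = H_{e_1}^+$ and $\S_\rho = \rho^\vee \cap M = \N\,e_1 \oplus \Z\,e_2 \oplus \cdots \oplus \Z\,e_n$. By Example~\ref{ex:smooth},
\[
U_\rho \simeq \Specm\bigl(\C[x_1, x_2^{\pm 1}, \ldots, x_n^{\pm 1}]\bigr) \simeq \C \times (\C^*)^{n-1},
\]
with $x_i = \chi^{e_i}$.

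Then I would identify $D_\rho \cap U_\rho$ as the vanishing locus of the prime ideal $\p = \langle x_1 \rangle$. Indeed, by Theorem~\ref{thm:orbitcone}, $D_\rho = \overline{O(\rho)}$ and $O(\rho) \subset U_\rho$; under the orbit-cone correspondence, the distinguished point $\gamma_\rho$ sends $m = (a_1, \ldots, a_n) \in \S_\rho$ to $1$ when $a_1 = 0$ (i.e.~$m \in \rho^\perp$) and to $0$ otherwise, so the ideal of $D_\rho \cap U_\rho$ is generated by $\chi^{e_1} = x_1$. The localization $\C[x_1, x_2^{\pm 1}, \ldots, x_n^{\pm 1}]_\p$ is the discrete valuation ring ${\cal O}_{X_\Sigma, D_\rho}$, with uniformizer $x_1$, since $x_2, \ldots, x_n$ are already units.

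Finally, for $m = a_1 e_1 + \cdots + a_n e_n \in M$, we have $\chi^m = x_1^{a_1} x_2^{a_2} \cdots x_n^{a_n}$; the units $x_2, \ldots, x_n$ contribute $0$ to the valuation, so
\[
\nu_\rho(\chi^m) = a_1 = \langle u_\rho, m \rangle.
\]
The only delicate step is the change of basis, which rests on primitivity of $u_\rho$; without this, the dual basis would not give integer coordinates and one could not identify the uniformizer with a single character. Everything else is a clean local computation on a smooth affine toric chart.
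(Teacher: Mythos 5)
Your proof is correct and follows essentially the same route as the paper's: extend the primitive vector $u_\rho$ to a $\Z$-basis of $N$, identify $U_\rho \simeq \C \times (\C^*)^{n-1}$ via the dual basis, observe that $x_1 = \chi^{e_1}$ is a uniformizer for the discrete valuation ring ${\cal O}_{X_\Sigma, D_\rho}$, and read off the valuation of a character from its $x_1$-exponent. The only small difference is that you spell out (via the orbit--cone correspondence and distinguished points) why $D_\rho \cap U_\rho$ is cut out by $x_1$, where the paper simply asserts it; this is a welcome elaboration, not a different argument.
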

\begin{proof}
Let $e_1 = u_\rho, e_2, \ldots, e_n$ be a basis of the cocharacter lattice $N$. Note that $u_\rho$ can be extended to such a basis because $u_\rho$ is primitive. The affine toric variety $U_\rho$ is isomorphic to 
\begin{align}  \label{eq:locring}
\C \times (\C^*)^{n-1} = \Specm(\C[x_1,x_2^{\pm 1}, \ldots, x_n^{\pm 1}])
\end{align}
(see Example \ref{ex:smooth}) and $U_\rho \cap D_\rho$ is defined by $x_1 = 0$. Therefore ${\cal O}_{X_\Sigma,D_\rho} = {\cal O}_{U_\rho, U_\rho \cap D_\rho} = \C[x_1, \ldots, x_n]_{\langle x_1 \rangle}$ and $\nu_\rho(f) = k$ where $k$ is such that $f = x_1^k \frac{g}{h}$ for $g, h \in \C[x_1, \ldots, x_n] \setminus \langle x_1 \rangle$. Let $m_1, \ldots, m_n$ be the dual basis of $M$ with respect to $e_1, \ldots, e_n$. The variables $x_i$ in \eqref{eq:locring} correspond to the characters $\chi^{m_i}$. A character $m \in M$ can be written as $\sum_{i=1}^n a_i m_i$, so that $\langle e_i, m \rangle = a_i$. The character $\chi^m = \chi^{\sum_{i=1}^n a_i m_i} = \prod_{i=1}^n (\chi^{m_i})^{a_i}$ is the restriction of $x_1^{a_1}\cdots x_n^{a_n}$ to the torus, so that $\nu_\rho(\chi^m) = a_1 = \langle e_1, m \rangle = \langle u_\rho, m \rangle$. 
\end{proof}
As a consequence, we get the following elegant formula for the divisor of a character. 
\begin{corollary} \label{cor:divchar}
For $m \in M$, the divisor $\div(\chi^m)$ is given by $\sum_{\rho \in \Sigma(1)} \langle u_\rho, m \rangle \, D_\rho$.
\end{corollary}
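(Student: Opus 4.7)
The plan is to expand the definition of $\div(\chi^m)$ and check which prime divisors contribute nonzero valuations. Recall that $\div(\chi^m) = \sum_{D} \nu_D(\chi^m) \cdot D$, where the sum ranges over all prime divisors $D \subset X_\Sigma$. The key observation will be that only the torus-invariant prime divisors $D_\rho$ for $\rho \in \Sigma(1)$ give nonzero coefficients, and for these the previous proposition immediately yields the coefficient $\langle u_\rho, m \rangle$.

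First I would partition prime divisors into two classes: those contained in the boundary $X_\Sigma \setminus T$ and those meeting $T$ in a nonempty (hence dense) open subset. By the orbit-cone correspondence (Theorem \ref{thm:orbitcone}), $X_\Sigma \setminus T = \bigsqcup_{\sigma \neq \{0\}} O(\sigma)$, and the closure $V(\sigma)$ has codimension $\dim \sigma$. Therefore the codimension-one irreducible components of $X_\Sigma \setminus T$ are exactly the orbit closures $D_\rho = V(\rho)$ for $\rho \in \Sigma(1)$. In particular, any prime divisor contained in $X_\Sigma \setminus T$ is one of these $D_\rho$, and the preceding proposition gives $\nu_\rho(\chi^m) = \langle u_\rho, m \rangle$.

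Next I would handle the remaining prime divisors, i.e.\ those $D$ such that $D \cap T$ is nonempty (and hence an open dense subset of $D$). The character $\chi^m$ defines a morphism $T \rightarrow \C^*$, so by Lemma \ref{lem:useful}(b), the divisor of $\chi^m$ viewed as a rational function on $T$ is zero. Equivalently, $\chi^m$ is a unit in $\C[T] = \C[M]$, so it lies in $\mathcal{O}_{X_\Sigma,D} = \mathcal{O}_{T, D \cap T}$ with inverse also in this ring; thus $\nu_D(\chi^m) = 0$ for every such $D$.

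Combining the two cases, only the $D_\rho$ with $\rho \in \Sigma(1)$ contribute, with coefficients $\langle u_\rho, m \rangle$, giving the claimed formula. There is no real obstacle here, as the content is entirely packaged in the previous proposition together with the orbit-cone correspondence; the only mild point to be careful about is the identification of the codimension-one pieces of $X_\Sigma \setminus T$ with the $D_\rho$, which relies on part 2 of Theorem \ref{thm:orbitcone}.
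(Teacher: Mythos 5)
Your proof is correct and follows the same route as the paper: the one-line justification given in the text is precisely that $\div(\chi^m)_{|T} = 0$ because $\chi^m\colon T \to \C^*$ is a morphism (Lemma \ref{lem:useful}(b)), so only the prime divisors in $X_\Sigma \setminus T$ (which by the orbit-cone correspondence are exactly the $D_\rho$, $\rho \in \Sigma(1)$) can appear, with coefficients supplied by the preceding proposition. You have simply fleshed out the details that the paper leaves implicit.
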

This uses the fact that $\div(\chi^m)_{|T} = 0$, since $\chi^m: T \rightarrow \C^*$ is a morphism (Lemma \ref{lem:useful}). Divisors supported on $\bigcup_{\rho \in \Sigma(1)} D_\rho$ are called \emph{torus invariant divisors}. They form a free subgroup $\Div_T(X_\Sigma) = \bigoplus_{\rho \in \Sigma(1)} \Z \cdot D_\rho \subset \Div(X_\Sigma)$ of finite rank $k = |\Sigma(1)|$. 
\begin{theorem} \label{thm:classgroup}
Let $M \rightarrow \Div_T(X)$ be the map that sends $m \mapsto \div(\chi^m)$ and $\Div_T(X_\Sigma) \rightarrow \Cl(X_\Sigma)$ sends a divisor to its class. There is an exact sequence 
\begin{equation} \label{eq:firstES}
M \longrightarrow \Div_T(X_\Sigma) \longrightarrow \Cl(X_\Sigma) \longrightarrow 0. 
\end{equation}
Furthermore, this extends to a short exact sequence 
\begin{equation} \label{eq:secondES}
0 \longrightarrow M \longrightarrow \Div_T(X_\Sigma) \longrightarrow \Cl(X_\Sigma) \longrightarrow 0. 
\end{equation}
if and only if the rays of $\Sigma$ span $N_\R$, i.e.\ $X_\Sigma$ has no torus factor (Theorem \ref{thm:torusfactor}). 
\end{theorem}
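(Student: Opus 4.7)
The approach will be to specialize Theorem \ref{thm:importantES} to the open set $U = T$ inside $X_\Sigma$. By the orbit-cone correspondence (Theorem \ref{thm:orbitcone}), $X_\Sigma \setminus T$ decomposes as $\bigsqcup_{\sigma \neq \{0\}} O(\sigma)$, and the codimension-one irreducible components of this complement are precisely the prime divisors $D_\rho = V(\rho)$ for $\rho \in \Sigma(1)$. Plugging this into Theorem \ref{thm:importantES} yields an exact sequence $\Div_T(X_\Sigma) \to \Cl(X_\Sigma) \to \Cl(T) \to 0$. Since $T \simeq (\C^*)^n$ has coordinate ring $\C[M] \simeq \C[t_1^{\pm 1}, \ldots, t_n^{\pm 1}]$, which is a UFD, Theorem \ref{thm:importantzeroclass} gives $\Cl(T) = 0$ and hence surjectivity of $\Div_T(X_\Sigma) \to \Cl(X_\Sigma)$.

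Next I will identify $\ker(\Div_T(X_\Sigma) \to \Cl(X_\Sigma))$ with $\im(M \to \Div_T(X_\Sigma))$. The containment $\supseteq$ is Corollary \ref{cor:divchar}, which shows $\div(\chi^m) \in \Div_T(X_\Sigma)$ and is principal by construction. For $\subseteq$, suppose $D = \sum_\rho a_\rho D_\rho = \div(f)$ for some $f \in \C(X_\Sigma)^*$. Since $D$ is supported away from $T$, the restriction $\div(f)_{|T}$ vanishes, so Lemma \ref{lem:useful}(b) forces $f_{|T}: T \to \C^*$ to be a morphism, i.e., a unit of $\C[M]$. The units of $\C[M]$ are exactly $c \, \chi^m$ with $c \in \C^*$ and $m \in M$, so $D = \div(c\chi^m) = \div(\chi^m)$. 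This establishes exactness at $\Div_T(X_\Sigma)$ and completes the proof of \eqref{eq:firstES}.

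For the final claim, the map $m \mapsto \div(\chi^m) = \sum_\rho \langle u_\rho, m \rangle D_\rho$ is injective if and only if $m \mapsto (\langle u_\rho, m \rangle)_{\rho \in \Sigma(1)}$ is injective on $M$. Its kernel is $M \cap W^\perp$, where $W \subset N_\R$ denotes the $\R$-span of the ray generators $\{u_\rho\}$. If $W = N_\R$, then $W^\perp = 0$ and injectivity holds. If instead $W \subsetneq N_\R$, then $W^\perp \subset M_\R$ is a nontrivial subspace cut out by equations with integer coefficients, so it contains a nonzero element of $M$, and injectivity fails. Theorem \ref{thm:torusfactor} then translates the condition $W = N_\R$ into the statement that $X_\Sigma$ has no torus factor.

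The main obstacle is the kernel identification in the second paragraph: one must pass from a \emph{principal} torus-invariant divisor to an actual character. The crucial input is normality of $X_\Sigma$, which underlies both Corollary \ref{cor:DVR} (so that $\div(f)$ is well defined) and Lemma \ref{lem:useful}(b) (so that a rational function with no zeros or poles on $T$ is forced to be a unit of $\C[M]$). Everything else is bookkeeping on fans and lattices.
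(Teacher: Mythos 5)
Your proof follows the paper's argument exactly: specialize Theorem \ref{thm:importantES} to $U = T$, invoke Theorem \ref{thm:importantzeroclass} to get $\Cl(T) = 0$ since $\C[M]$ is a UFD, and use Lemma \ref{lem:useful}(b) to deduce that a rational function whose divisor is torus-invariant must be $c\,\chi^m$. The only difference is that you spell out the injectivity-iff-the-rays-span claim (correctly, via the rationality of $W^\perp$), which the paper dismisses as an easy exercise.
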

\begin{proof}
Theorem \ref{thm:importantES} gives an exact sequence 
\[ \Div_T(X_\Sigma) \longrightarrow \Cl(X_\Sigma) \longrightarrow \Cl(T) \longrightarrow 0. \]
By Theorem \ref{thm:importantzeroclass}, $\Cl(T) = 0$, which proves exactness at $\Cl(X_\Sigma)$ in \eqref{eq:firstES}. We now show exactness at $\Div_T(X_\Sigma)$. The composition is clearly zero, since divisors of characters are principal. Suppose $[D] = 0$ for some $D \in \Div_T(X_\Sigma)$. Then $D = \div(f)$ and $\div(f)_{|T} = 0$. By Lemma \ref{lem:useful}, $f: T \rightarrow \C^*$ is a morphism, and hence $f = c \, \chi^m$ for some $m \in M$ and some $c \in \C^*$ (we used this in the proof of Proposition \ref{prop:characters}). We conclude that $D = \div(f) = \div(c \, \chi^m) = \div(\chi^m)$. Exactness of \eqref{eq:secondES} at $M$ if $\{u_\rho\}_{\rho \in \Sigma(1)}$ span $N_\R$ is an easy exercise.
\end{proof}

An important consequence of Theorem \ref{thm:classgroup} is that, when $X_\Sigma$ has no torus factors, $\Cl(X_\Sigma)$ can be computed using elementary linear algebra over $\Z$. After fixing coordinates, we may assume $M = N = \Z^n$. Let $F = [ u_1 ~ \cdots ~ u_k ] \in \Z^{n \times k}$ be the matrix whose columns are the primitive ray generators of the rays in $\Sigma(1)$. By Corollary \ref{cor:divchar}, the map $M \rightarrow \Div_T(X_\Sigma)$ is $F^\top: \Z^n \rightarrow \Z^k$. By Theorem \ref{thm:classgroup} the class group is given by $\Cl(X_\Sigma) \simeq \Z^k/\im F^\top$. 

\begin{example} \label{ex:divisorsP2}
If $X_\Sigma = \PP^2$, the matrix $F$ containing the rays in the fan of Figure \ref{fig:fanP2} is
\[ F = \begin{bmatrix}
1 &0 & -1 \\ 0 &1 & -1
\end{bmatrix}. \]
The class group is $\Z \cdot [D_1] + \Z \cdot [D_2] + \Z \cdot [D_3]$ with relations $[D_1] - [D_3] = 0$ (first row of $F$) and $[D_2] - [D_3] = 0$ (second row of $F$). It is generated by $[D_3]$, as $[a D_1 + b D_2 + c D_3] = (a+b+c)[D_3]$. We conclude $\Cl(\PP^2) \simeq \Z$.
\end{example}
\begin{example} \label{ex:divisorsP1P1}
If $X_\Sigma = \PP^1 \times \PP^1$, i.e.\ $\Sigma$ is the fan of Figure \ref{fig:fanP1P1}, we obtain 
\[ F = \begin{bmatrix}
1 & 0 & -1 & 0\\ 0 & 1 & 0 & -1
\end{bmatrix}. \]
The class group $\Cl(\PP^1 \times \PP^1)$ is isomorphic to $\Z^2$, and it is generated by $[D_1]$ and $[D_3]$, as $[a D_1 + b D_2 + c D_3 + d D_4] = (a+c)[D_1] + (b+d)[D_3]$.
\end{example}
\begin{exercise} \label{ex:doublepillow}
Consider the complete fan $\Sigma$ in $\R^2$ with ray generators given by 
\[ F = \begin{bmatrix}
1 &-1 &-1 &1\\
1 & 1 & -1 & -1
\end{bmatrix}.\]
This is the normal fan of a diamond. Show that $\Cl(X_\Sigma)$ is isomorphic to $\Z^2 \oplus \Z/2\Z$. 
\end{exercise}

\begin{examplestar} \label{ex:chiara8} 
The class group $\Cl(X_\Sigma)$ can be computed in \texttt{Oscar.jl} using the command \texttt{class\_group}. For $X_\Sigma$ from Example* \ref{ex:chiaramara} we find $\Cl(X_\Sigma) = \Z^2$, see Figure \ref{fig:output3}.
\begin{figure}
\centering
\includegraphics[scale=0.27]{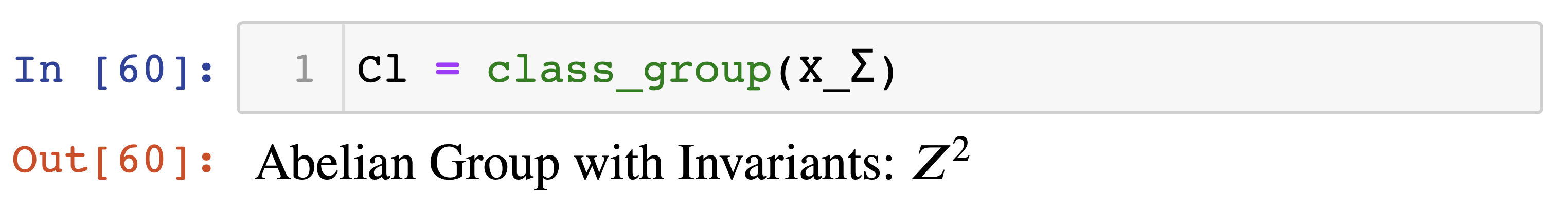}
\caption{Computing class groups in \texttt{Oscar.jl}.}
\label{fig:output3}
\end{figure}
\end{examplestar}

\subsection{Cartier divisors on toric varieties}
We now study Cartier divisors on $X_\Sigma$. We start with the analog of Theorem \ref{thm:classgroup}. Among the torus invariant divisors in $\Div_T(X_\Sigma)$, those that are Cartier form a subgroup denoted $\CDiv_T(X_\Sigma) = \Div_T(X_\Sigma) \cap \CDiv(X_\Sigma)$. Clearly, $\CDiv_T(X_\Sigma)$ contains all divisors of characters.
\begin{theorem}
Let $M \rightarrow \CDiv_T(X_\Sigma)$ be the map that sends $m \mapsto \div(\chi^m)$ and $\CDiv_T(X_\Sigma) \rightarrow \Pic(X_\Sigma)$ sends a divisor to its class. There is an exact sequence
\begin{equation} \label{eq:firstEScart}
M \longrightarrow \CDiv_T(X_\Sigma) \longrightarrow \Pic(X_\Sigma) \longrightarrow 0. 
\end{equation}
Furthermore, this extends to a short exact sequence 
\begin{equation} \label{eq:secondEScart}
0 \longrightarrow M \longrightarrow \CDiv_T(X_\Sigma) \longrightarrow \Pic(X_\Sigma) \longrightarrow 0. 
\end{equation}
if and only if the rays of $\Sigma$ span $N_\R$, i.e.\ $X_\Sigma$ has no torus factor (Theorem \ref{thm:torusfactor}). 
\end{theorem}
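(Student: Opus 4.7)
The proof will parallel that of Theorem \ref{thm:classgroup}, with the additional observation that we can upgrade Weil divisor statements to Cartier ones using that principal divisors are Cartier. Exactness at $M$ is formally identical: the map $m \mapsto \div(\chi^m) = \sum_{\rho \in \Sigma(1)} \langle u_\rho, m \rangle D_\rho$ (Corollary \ref{cor:divchar}) vanishes exactly when $\langle u_\rho, m \rangle = 0$ for every $\rho \in \Sigma(1)$, and this forces $m = 0$ if and only if the $u_\rho$ span $N_\R$.

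For exactness at $\CDiv_T(X_\Sigma)$, I would reuse the argument from Theorem \ref{thm:classgroup}: if $D \in \CDiv_T(X_\Sigma)$ has trivial class in $\Pic(X_\Sigma)$, then $D = \div(f)$ is principal, hence also trivial in $\Cl(X_\Sigma)$, so by exactness of \eqref{eq:firstES} at $\Div_T(X_\Sigma)$ we have $D = \div(\chi^m)$ for some $m \in M$. Conversely, divisors of characters are by definition principal, so they map to zero in $\Pic(X_\Sigma)$.

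The key step is exactness at $\Pic(X_\Sigma)$, i.e.~surjectivity of $\CDiv_T(X_\Sigma) \to \Pic(X_\Sigma)$. The plan is: given any $D \in \CDiv(X_\Sigma)$, view $[D] \in \Cl(X_\Sigma)$ and apply exactness of \eqref{eq:firstES} at $\Cl(X_\Sigma)$ to find $D' = \sum_\rho a_\rho D_\rho \in \Div_T(X_\Sigma)$ with $[D] = [D']$ in $\Cl(X_\Sigma)$. Then $D - D' = \div(f)$ for some $f \in \C(X_\Sigma)^*$, so $D' = D - \div(f)$ is a difference of two Cartier divisors, hence itself Cartier. Therefore $D' \in \CDiv_T(X_\Sigma)$ and its class in $\Pic(X_\Sigma)$ equals $[D]$.

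The only subtle point I would need to be careful about is the well-definedness of the map $\CDiv_T(X_\Sigma) \to \Pic(X_\Sigma)$: we want this to send a torus-invariant Cartier divisor to its class in $\Pic(X_\Sigma) = \CDiv(X_\Sigma)/\Div_0(X_\Sigma)$, which is immediate because $\CDiv_T(X_\Sigma) \subset \CDiv(X_\Sigma)$. I do not expect any real obstacle; the whole proof amounts to combining Theorem \ref{thm:classgroup} with the elementary observation that $\CDiv(X_\Sigma)$ is closed under subtracting principal divisors.
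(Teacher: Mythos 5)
Your proof is correct and follows the same strategy as the paper: reduce everything to Theorem \ref{thm:classgroup} and observe that linear equivalence preserves the Cartier property. The one place where you are slightly more careful than the paper is worth noting: for exactness at $\Pic(X_\Sigma)$, the paper simply asserts that the torus-invariant representative $D' = \sum_\rho a_\rho D_\rho$ lies in $\CDiv_T(X_\Sigma)$, whereas you spell out the justification — that $D' = D - \div(f)$ is a difference of two Cartier divisors and hence Cartier. That step is exactly what makes the reduction work, and making it explicit is an improvement rather than a deviation.
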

\begin{proof}
By Theorem \ref{thm:classgroup}, every divisor $D \in \CDiv(X_\Sigma)$ is linearly equivalent to a torus invariant divisor $\sum_{\rho \in \Sigma(1)} a_\rho D_\rho \in \CDiv_T(X_\Sigma)$, which shows exactness at $\Pic(X_\Sigma)$. Also, by Theorem \ref{thm:classgroup} those elements of $\CDiv_T(X_\Sigma)$ whose class is $[0]$ are precisely the ones coming from characters, which shows exactness at $\CDiv_T(X_\Sigma)$. Injectivity of $M \rightarrow \CDiv_T(X_\Sigma)$ under the extra assumption on the rays is an easy exercise.
\end{proof}

Our next goal is to describe the groups $\CDiv_T(X_\Sigma)$ and $\Pic(X_\Sigma)$ explicitly. It turns out this is quite easy if $X_\Sigma = U_\sigma$ is affine, see \cite[Prop.~4.2.2]{cox2011toric}.
\begin{proposition} \label{prop:picaffine}
Let $\sigma \subset N_\R$ be a strongly convex rational cone. We have $\Pic(U_\sigma) = 0$, i.e., $M \rightarrow \CDiv_T(X_\Sigma)$ is surjective. 
\end{proposition}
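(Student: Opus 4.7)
The plan is to reduce to the case where $\sigma$ has full dimension $n$, and then use the unique torus-fixed point $\gamma_\sigma$ to leverage the Cartier hypothesis together with torus invariance.

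First I would reduce to $\dim \sigma = n$. If $\sigma$ spans only a $d$-dimensional subspace of $N_\R$, then in the notation of Example \ref{ex:smooth} we have $U_\sigma \simeq U_{\sigma'} \times (\C^*)^{n-d}$ for $\sigma'$ the cone $\sigma$ regarded inside the sublattice $N_\sigma$. Applying Theorem \ref{thm:importantES} to the open immersion $U_{\sigma'} \times T' \hookrightarrow U_{\sigma'} \times U'$ for a chosen compactification, together with $\Cl$ of a torus being zero (Example \ref{ex:zeroclass}), reduces the computation of $\Pic(U_\sigma)$ to that of $\Pic(U_{\sigma'})$. So from now on I assume $\dim \sigma = n$, and in particular $U_\sigma$ has a unique torus-fixed point $\gamma_\sigma$ by Corollary \ref{cor:smoothptsigma}.

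Given any $D \in \CDiv(U_\sigma)$, surjectivity of $\CDiv_T(U_\sigma) \to \Pic(U_\sigma)$ from \eqref{eq:firstEScart} (applied to the fan of $\sigma$ and its faces) reduces us to showing that every torus-invariant Cartier divisor $D = \sum_{\rho \in \sigma(1)} a_\rho D_\rho$ is principal. In view of Corollary \ref{cor:divchar}, it suffices to produce $m \in M$ with $\langle u_\rho, m\rangle = a_\rho$ for all $\rho \in \sigma(1)$, since then $D = \div(\chi^m)$.

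To produce $m$, I would exploit the Cartier hypothesis at $\gamma_\sigma$: there is an open neighborhood $U \ni \gamma_\sigma$ and $f \in \C(U_\sigma)^\ast$ with $D_{|U} = \div(f)_{|U}$. The crucial claim is that $f$ is, up to a nonzero scalar, a character $\chi^m$. Once this is shown, $\div(\chi^m)$ and $D$ are both torus-invariant divisors that coincide on $U$, and since $U$ meets every $D_\rho$ (because $\gamma_\sigma \in \overline{O(\rho)} = D_\rho$ for every $\rho \preceq \sigma$ by Theorem \ref{thm:orbitcone}), they must coincide globally, yielding the desired identity $\langle u_\rho, m\rangle = a_\rho$.

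The main obstacle is proving that $f$ is a character up to scalar. My approach is to use the weight decomposition of the local ring $\mathcal{O}_{U_\sigma, \gamma_\sigma}$ under the torus action: this ring is the localization of $\C[S_\sigma]$ at the maximal ideal $\langle \chi^m : m \in S_\sigma \setminus \{0\}\rangle$, and it decomposes into $T$-eigenspaces indexed by $M$, each of dimension at most one (consisting of scalar multiples of $\chi^m$). Since $D$ is $T$-invariant, for each $t \in T$ the function $t^\ast f$ also cuts out $D$ near $\gamma_\sigma$, so $t^\ast f / f$ is a regular nonvanishing function on $U \cap t^{-1} U$. Taking the germ at $\gamma_\sigma$, this yields a cocycle $\alpha : T \to \mathcal{O}_{U_\sigma, \gamma_\sigma}^\ast$ with $t^\ast f = \alpha(t) f$; but $\mathcal{O}_{U_\sigma, \gamma_\sigma}^\ast = \C^\ast$ (units in the local ring of a fixed point are just scalars, as any other unit would have nontrivial $\chi^m$-component in its leading form), so $\alpha$ is a character of $T$, forcing $f$ to lie in the corresponding weight space, which is $\C^\ast \cdot \chi^m$. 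This gives the required $m$ and completes the proof.
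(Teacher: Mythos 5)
The key step—showing that a local defining equation $f$ for $D$ near $\gamma_\sigma$ is, up to scalar, a character—has a genuine gap: the claim $\mathcal{O}_{U_\sigma,\gamma_\sigma}^\ast = \C^\ast$ is false. Units of that local ring are germs of regular functions not vanishing at $\gamma_\sigma$, and these are far more than scalars; for example $1 + \chi^m$ is a unit for every $m \in \S_\sigma \setminus \{0\}$. (A unit has scalar leading term, but nothing forbids a nontrivial higher-order part.) Consequently you cannot conclude that $\alpha(t) = (t\cdot f)/f$ takes values in $\C^\ast$, and indeed it need not: for $U_\sigma = \C^2$, $D = V(x_2)$, and the legitimate local defining equation $f = x_2(1+x_1)$, one computes $\alpha(t) = t_2^{-1}\,(1 + t_1^{-1}x_1)/(1 + x_1)$, which genuinely depends on $x_1$. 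Note also that $t\cdot f = \alpha(t)\,f$ yields the crossed-homomorphism identity $\alpha(ts) = (s\cdot \alpha(t))\,\alpha(s)$, so interpreting $\alpha$ as a character $T\to\C^\ast$ already presupposes $\alpha$ is scalar-valued—the very thing being argued.

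The missing idea is to use the torus action to select $f$, rather than starting from an arbitrary local equation. The standard route (essentially \cite[Prop.~4.2.2]{cox2011toric}) views $\OO_{U_\sigma}(D)$ as a $T$-equivariant line bundle; its fiber at the fixed point $\gamma_\sigma$ is a one-dimensional $T$-representation, hence a character $\chi^{m_0}$. Twist $D$ by $\div(\chi^{\pm m_0})$ to make this weight trivial. Since $U_\sigma$ is affine, the $M$-graded module of global sections (a direct sum of characters by Proposition \ref{prop:globalsec}) surjects $T$-equivariantly onto the fiber, and the only graded piece that can hit a trivial-weight fiber is weight $0$; so $\chi^0 = 1$ generates the stalk at $\gamma_\sigma$. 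Its non-generating locus is a closed $T$-invariant subset of $U_\sigma$ missing $\gamma_\sigma$, hence empty because $\gamma_\sigma$ lies in the closure of every $T$-orbit (Theorem \ref{thm:orbitcone}); this forces the twisted divisor to be $0$, so $D$ is principal. Your observation that two $T$-invariant divisors agreeing near $\gamma_\sigma$ agree on all of $U_\sigma$ is correct, and so is the reduction to $\dim\sigma = n$ in spirit, though it is cleaner to note directly that every $T$-invariant divisor on $U_{\sigma'}\times(\C^\ast)^{n-\dim\sigma}$ is pulled back from $U_{\sigma'}$, rather than to invoke Theorem \ref{thm:importantES}, which concerns $\Cl$ rather than $\Pic$.
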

\begin{examplestar} \label{ex:chiara9} 
Let $\Sigma$ be the fan consisting of the 2-dimensional cone $\sigma = {\rm Cone}( (-1, -2), (1,0) ) \subset \R^2$ and all its faces, illustrated in Figure \ref{fig:torsionPic}. The corresponding toric variety $X_\Sigma = U_\sigma$ is affine. By Proposition \ref{prop:picaffine}, its Picard group is trivial: $\Pic(X_\Sigma) = 0$. The map $M \rightarrow \Div_T(X_\Sigma)$ is represented by the matrix of ray generators $F^\top = \begin{bmatrix}
-1 & 1 \\ -2 & 0
\end{bmatrix}$, whose image has index 2 in $\Z^2$. By Theorem \ref{thm:classgroup}, this implies that $\Cl(X_\Sigma) \simeq \Z/2 \Z$. We now remove the full-dimensional cone $\sigma$ from $\Sigma$, to obtain the fan $\Sigma'$ on the right side of Figure \ref{fig:torsionPic}. All cones in $\Sigma'$ are smooth, so that by Theorem \ref{thm:CarisWeil} we have $\Cl(X_{\Sigma'}) = \Pic(X_{\Sigma'})$. The class group only depends on the rays of the fan, so that $\Pic(X_{\Sigma'}) \simeq \Z/2 \Z$.
\begin{figure}
\centering
\begin{tikzpicture}[scale=1]
\begin{axis}[%
width=2in,
height=1in,
scale only axis,
xmin=-3.5,
xmax=3.5,
ymin=-3.5,
ymax=0.5,
ticks = none, 
ticks = none,
axis background/.style={fill=white},
axis line style={draw=none} 
]


\addplot [color=mycolor1,solid,fill opacity=0.2,fill = mycolor1,forget plot]
  table[row sep=crcr]{%
 5 0\\
5 -5\\	
-2.5 -5\\
0 0 \\
5 0 \\
};

\addplot [very thick, color=mycolor2,solid,fill opacity=0.2,fill = mycolor1,forget plot]
  table[row sep=crcr]{%
 0 0 \\
 5 0 \\
};

\addplot [very thick, color=mycolor2,solid,fill opacity=0.2,fill = mycolor1,forget plot]
  table[row sep=crcr]{%
 0 0 \\
 -2.5 -5 \\
};

\addplot[only marks,mark=*,mark size=1.1pt,black
        ]  coordinates {
(-2,0) (-1,0) (0,0) (1,0) (2,0) (3,0) 
(-2,-1) (-1,-1) (0,-1) (1,-1) (2,-1) (3,-1) 
(-2,-2) (-1,-2) (0,-2) (1,-2) (2,-2) (3,-2) 
(-2,-3) (-1,-3) (0,-3) (1,-3) (2,-3) (3,-3) 
};

\addplot[only marks,mark=*,mark size=3.1pt,mycolor4
        ]  coordinates {
  (0,0)
};

\end{axis}
\end{tikzpicture} 
\qquad 
\begin{tikzpicture}[scale=1]
\begin{axis}[%
width=2in,
height=1in,
scale only axis,
xmin=-3.5,
xmax=3.5,
ymin=-3.5,
ymax=0.5,
ticks = none, 
ticks = none,
axis background/.style={fill=white},
axis line style={draw=none} 
]


\addplot [very thick, color=mycolor2,solid,fill opacity=0.2,fill = mycolor1,forget plot]
  table[row sep=crcr]{%
 0 0 \\
 5 0 \\
};

\addplot [very thick, color=mycolor2,solid,fill opacity=0.2,fill = mycolor1,forget plot]
  table[row sep=crcr]{%
 0 0 \\
 -2.5 -5 \\
};

\addplot[only marks,mark=*,mark size=1.1pt,black
        ]  coordinates {
(-2,0) (-1,0) (0,0) (1,0) (2,0) (3,0) 
(-2,-1) (-1,-1) (0,-1) (1,-1) (2,-1) (3,-1) 
(-2,-2) (-1,-2) (0,-2) (1,-2) (2,-2) (3,-2) 
(-2,-3) (-1,-3) (0,-3) (1,-3) (2,-3) (3,-3) 
};

\addplot[only marks,mark=*,mark size=3.1pt,mycolor4
        ]  coordinates {
  (0,0)
};

\end{axis}
\end{tikzpicture} 
\caption{The fans $\Sigma$ and $\Sigma'$ from Example* \ref{ex:chiara9}.}
\label{fig:torsionPic}
\end{figure}
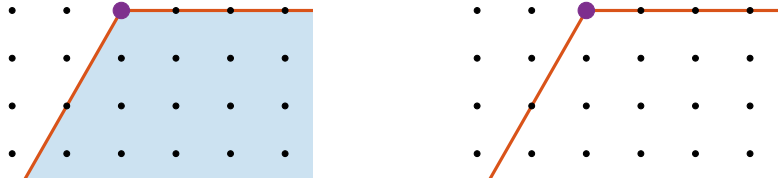

We verify this using \texttt{Oscar.jl} with the following commands.
\begin{minted}{julia}
Σ = positive_hull([-1 -2; 1 0]);
ΣΣ = PolyhedralFan([-1 -2; 1 0], IncidenceMatrix([[1],[2]]));
X_Σ = NormalToricVariety(Σ); X_ΣΣ = NormalToricVariety(ΣΣ);
Pic1 = picard_group(X_Σ); Pic2 = picard_group(X_ΣΣ);
\end{minted}
Note that $X_{\Sigma'}$ is obtained from $X_{\Sigma}$ by removing the torus fixed point (Example \ref{ex:removecones}).
\end{examplestar}

Example* \ref{ex:chiara9} shows that the Picard group of a normal toric variety may have torsion. The following proposition provides an easy criterion to exclude such a scenario. 

\begin{proposition}
If $\Sigma$ is a fan in $N_\R \simeq \R^n$ containing a cone of dimension $n$, then $\Pic(X_\Sigma)$ is a free abelian group.
\end{proposition}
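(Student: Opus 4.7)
The plan is to combine the short exact sequence
\[ 0 \longrightarrow M \longrightarrow \CDiv_T(X_\Sigma) \longrightarrow \Pic(X_\Sigma) \longrightarrow 0 \tag{$\ast$} \]
with the fact that Picard groups of affine toric varieties vanish (Proposition \ref{prop:picaffine}) to exhibit $\Pic(X_\Sigma)$ as a direct summand of the free abelian group $\CDiv_T(X_\Sigma) \subset \Div_T(X_\Sigma) = \bigoplus_{\rho \in \Sigma(1)} \Z \cdot D_\rho$.

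First, I would justify that $(\ast)$ actually holds under the hypothesis: any full-dimensional strongly convex cone $\sigma \in \Sigma$ has rays that span $N_\R$, so a fortiori the rays of $\Sigma$ span $N_\R$, and the second statement of the theorem preceding Proposition \ref{prop:picaffine} applies. Next, I would fix such a cone $\sigma$ and construct a retraction $r: \CDiv_T(X_\Sigma) \to M$ of the map $m \mapsto \div(\chi^m)$. Given $D \in \CDiv_T(X_\Sigma)$, the restriction $D_{|U_\sigma}$ is a torus invariant Cartier divisor on $U_\sigma$, so by Proposition \ref{prop:picaffine} we may write $D_{|U_\sigma} = \div(\chi^m)$ for some $m \in M$. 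Define $r(D) = m$.

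The key technical step is checking that $r$ is well-defined, and this is precisely where full-dimensionality of $\sigma$ is used. By Corollary \ref{cor:divchar}, $\div(\chi^m)_{|U_\sigma} = \sum_{\rho \preceq \sigma} \langle u_\rho, m \rangle D_\rho$, which vanishes if and only if $m$ is orthogonal to every ray generator of $\sigma$; since $\sigma$ is $n$-dimensional, its ray generators span $N_\R$, forcing $m = 0$. Hence the $m$ in the definition of $r(D)$ is uniquely determined, $r$ is a group homomorphism, and clearly $r(\div(\chi^m)) = m$, so $r$ is a left inverse of $M \hookrightarrow \CDiv_T(X_\Sigma)$.

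Finally, I would conclude: a split short exact sequence of abelian groups yields $\CDiv_T(X_\Sigma) \simeq M \oplus \Pic(X_\Sigma)$, so $\Pic(X_\Sigma)$ is isomorphic to a direct summand of $\CDiv_T(X_\Sigma)$. Since $\CDiv_T(X_\Sigma)$ is a subgroup of the finitely generated free abelian group $\Div_T(X_\Sigma) = \bigoplus_{\rho \in \Sigma(1)} \Z \cdot D_\rho$, it is itself free abelian, and any direct summand of a free abelian group is free abelian. Therefore $\Pic(X_\Sigma)$ is free. I do not anticipate a serious obstacle here; the only subtle point is the well-definedness of the splitting, which hinges on the chosen cone being $n$-dimensional — exactly the hypothesis we are given.
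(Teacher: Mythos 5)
Your argument is correct, and it packages the same core computation differently from the paper. The paper proves that $\Pic(X_\Sigma)$ is \emph{torsion-free}: given $k[D]=0$, it writes $kD = \div(\chi^m)$, restricts $D$ to $U_\sigma$, uses $\Pic(U_\sigma)=0$ to find $m'$ with $D_{|U_\sigma} = \div(\chi^{m'})_{|U_\sigma}$, and then the injectivity of $m \mapsto (\langle u_\rho, m\rangle)_{\rho \in \sigma(1)}$ (from $\dim\sigma = n$) forces $km'=m$, hence $D = \div(\chi^{m'})$; freeness then follows from finite generation. You instead construct an explicit retraction $r:\CDiv_T(X_\Sigma)\to M$ of $m\mapsto\div(\chi^m)$, concluding that the short exact sequence \eqref{eq:secondEScart} splits and $\Pic(X_\Sigma)$ is a direct summand of the finitely generated free abelian group $\CDiv_T(X_\Sigma)$. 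Both arguments hinge on the identical technical input — restriction to the chart $U_\sigma$, Proposition \ref{prop:picaffine}, and the injectivity furnished by full-dimensionality of $\sigma$ — but yours exhibits a splitting directly rather than deducing it after the fact, and thereby sidesteps the (implicit, standard) appeal to finite generation of $\Pic(X_\Sigma)$ that the torsion-freeness route requires. The two are logically equivalent in the end: a short exact sequence ending in a free module splits, so the paper's conclusion recovers your isomorphism $\CDiv_T(X_\Sigma) \simeq M \oplus \Pic(X_\Sigma)$, and your splitting recovers torsion-freeness. A small cosmetic point: where you write $D_{|U_\sigma} = \div(\chi^m)$, it is cleaner to write $D_{|U_\sigma} = \div(\chi^m)_{|U_\sigma}$ to make explicit that both sides are divisors on $U_\sigma$; this is the convention the paper uses.
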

\begin{proof}
By \eqref{eq:secondEScart}, it suffices to show that if $k D = \div(\chi^m)$ for some $D \in \CDiv_T(X_\Sigma), k \in \Z_{>0}$, then there exists $m' \in M$ such that $D = \div(\chi^{m'})$.

Let $D = \sum_{\rho \in \Sigma(1)} a_\rho D_\rho$ and $\sigma \in \Sigma(n)$. We have $D_{|U_\sigma} \in \CDiv_T(U_\sigma)$, so that
\[ D_{|U_\sigma} = \sum_{\rho \in \sigma(1)} a_\rho D_\rho = \div(\chi^{m'})_{|U_\sigma} \]
for some $m' \in M$, by Proposition \ref{prop:picaffine}. Therefore, by Corollary \ref{cor:divchar}, $a_\rho = \langle u_\rho, m' \rangle$ for $\rho \in \sigma(1)$. On the other hand, $k D = \div(\chi^m)$ implies $k a_\rho = \langle u_\rho, m \rangle$ for $\rho \in \Sigma(1)$. Since $\sigma$ has dimension $n$, the map $m \mapsto (\langle u_\rho, m \rangle)_{\rho \in \sigma(1)}$ is injective, so that
\[ \langle u_\rho, k m' \rangle = \langle u_\rho, m \rangle \quad \text{ for } \quad \rho \in \sigma(1) \]
implies $km' = m$ and $\div(\chi^{m'}) = \sum_{\rho \in \Sigma(1)} \langle u_\rho, m' \rangle D_\rho = \sum_{\rho \in \Sigma(1)} k^{-1} \langle u_\rho, m \rangle D_\rho = D$.
\end{proof}

If the fan $\Sigma$ is smooth, Theorem \ref{thm:CarisWeil} implies that $\Cl(X_\Sigma) = \Pic(X_\Sigma)$. This is an `if and only if' in our setting. 
\begin{proposition} \label{prop:smoothiffcarisweil}
The toric variety $X_\Sigma$ is smooth if and only if $\CDiv(X_\Sigma) = \Div(X_\Sigma)$. 
\end{proposition}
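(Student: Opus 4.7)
One direction is already available: if $X_\Sigma$ is smooth then $\CDiv(X_\Sigma) = \Div(X_\Sigma)$ by Theorem~\ref{thm:CarisWeil}. The content of the statement is the converse, and the plan is to localize the equality of divisor groups on the affine pieces $U_\sigma$ and then show, cone by cone, that $U_\sigma$ must be smooth. Since smoothness is a local property, $X_\Sigma$ is smooth iff every $U_\sigma$ is smooth, and by Theorem~\ref{thm:smooth} this is in turn equivalent to every cone $\sigma \in \Sigma$ being a smooth rational cone.

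First I would show that the hypothesis descends to every affine piece: if $\CDiv(X_\Sigma) = \Div(X_\Sigma)$, then $\CDiv(U_\sigma) = \Div(U_\sigma)$ for every $\sigma \in \Sigma$. Given $D \in \Div(U_\sigma)$, take the Zariski closures of its prime components in $X_\Sigma$ to produce $\overline{D} \in \Div(X_\Sigma)$ restricting to $D$; by hypothesis $\overline{D}$ is locally principal on an open cover of $X_\Sigma$, and the restriction of this data to $U_\sigma$ exhibits $D$ as Cartier. This reduces the problem to the purely local claim: if $\CDiv(U_\sigma) = \Div(U_\sigma)$ for a strongly convex rational cone $\sigma \subset N_\R$, then $\sigma$ is smooth.

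For the local claim, I would first handle the case where $\sigma$ is full-dimensional, since this is where the combinatorics is cleanest. Assume $\dim\sigma = n$. Then the rays $\{u_\rho\}_{\rho \in \sigma(1)}$ span $N_\R$, so Theorem~\ref{thm:classgroup} yields the short exact sequence
\[
0 \longrightarrow M \longrightarrow \bigoplus_{\rho \in \sigma(1)} \Z \cdot D_\rho \longrightarrow \Cl(U_\sigma) \longrightarrow 0,
\]
in which the left map sends $m$ to $(\langle u_\rho, m \rangle)_\rho$. On the other hand, Proposition~\ref{prop:picaffine} gives $\Pic(U_\sigma) = 0$, and the hypothesis $\CDiv(U_\sigma) = \Div(U_\sigma)$ upgrades this to $\Cl(U_\sigma) = 0$. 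Hence the map $M \to \bigoplus_{\rho \in \sigma(1)} \Z \cdot D_\rho$ is an isomorphism. Comparing ranks forces $|\sigma(1)| = n$, so $\sigma$ has exactly $n$ rays. Moreover, after fixing bases, that isomorphism is represented by the integer matrix whose rows are $u_{\rho_1}, \ldots, u_{\rho_n}$; being invertible over $\Z$, this matrix lies in $\mathrm{GL}_n(\Z)$, so its rows form a $\Z$-basis of $N$. By definition, $\sigma$ is smooth.

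The remaining obstacle is the lower-dimensional case $\dim\sigma = k < n$, which is where the argument needs the most care. The plan is to separate off a torus factor: let $N_1 \subset N$ be the saturation of the sublattice $\Z(\sigma \cap N)$, so that $N = N_1 \oplus N_2$ with $\sigma$ full-dimensional in $(N_1)_\R$. Then $U_\sigma \simeq U_{\sigma, N_1} \times T_{N_2}$, and because $T_{N_2}$ contributes no prime divisors and has trivial Picard and class groups (Theorem~\ref{thm:importantzeroclass} and Lemma~\ref{lem:useful}), the groups $\Div$, $\CDiv$, $\Cl$, and $\Pic$ of $U_\sigma$ coincide with those of $U_{\sigma, N_1}$. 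The previous paragraph applied in $N_1$ then shows the ray generators of $\sigma$ form a $\Z$-basis of $N_1$, and completing this basis with any $\Z$-basis of $N_2$ exhibits them as part of a $\Z$-basis of $N$, so $\sigma$ is smooth. Running this for every cone of $\Sigma$ finishes the proof.
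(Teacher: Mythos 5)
Your proof is correct and shares the central mechanism with the paper's — localize to each $U_\sigma$, use $\Pic(U_\sigma) = 0$ from Proposition~\ref{prop:picaffine}, and deduce that the ray generators of $\sigma$ extend to a $\Z$-basis of $N$. The direct closure argument you use to descend $\CDiv = \Div$ from $X_\Sigma$ to $U_\sigma$ is a perfectly good substitute for the paper's appeal to Theorem~\ref{thm:importantES}, and in fact re-proves the surjectivity that theorem provides.

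The one place where you work harder than necessary is the split into full-dimensional and lower-dimensional cones, with the torus-factor peeling. The paper avoids this entirely by noticing that surjectivity of $M \to \Div_T(U_\sigma)$ — not injectivity, not a short exact sequence — is all that is needed. From $\Pic(U_\sigma) = 0$ you get that $M \to \CDiv_T(U_\sigma)$ is surjective, and from $\CDiv_T(U_\sigma) = \Div_T(U_\sigma)$ you get that $M \to \Div_T(U_\sigma)$ is surjective. In coordinates this map is $F^\top: \Z^n \to \Z^{|\sigma(1)|}$ with rows $u_\rho$, and a surjective integer matrix is precisely one whose Smith normal form has all invariant factors equal to $1$; equivalently, its rows (the $u_\rho$) are part of a $\Z$-basis of $\Z^n$. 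That is the definition of $\sigma$ being smooth, with no hypothesis on $\dim \sigma$. Your isomorphism argument in the full-dimensional case is extracting extra information ($|\sigma(1)| = n$ and injectivity) that the conclusion does not need, and it is exactly that extra information which forces you to separate off a torus factor when $\dim\sigma < n$. Dropping the isomorphism claim in favor of plain surjectivity collapses the two cases into one.
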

\begin{proof}
We show the `if' direction. Suppose $\CDiv(X_\Sigma) = \Div(X_\Sigma)$ and let $\sigma$ be any cone of $\Sigma$. The restriction $\Cl(X_\Sigma) \rightarrow \Cl(U_\sigma)$ is surjective by Theorem \ref{thm:importantES}, hence $\CDiv(U_\sigma) = \Div(U_\sigma)$. We also have $\Pic(U_\sigma) = 0$ (Proposition \ref{prop:picaffine}), so that $M \mapsto \Div_T(U_\sigma)$ is surjective. Choosing coordinates, this map is represented by a matrix $F^\top$ whose rows are the primitive ray generators $u_\rho$ for $\rho \in \sigma(1)$. Surjectivity of $F^\top$ means that $\{u_\rho\}_{\rho \in \sigma(1)}$ can be extended to a $\Z$-basis of $N$, which means that $\sigma$ is smooth.
\end{proof}
The proof of Proposition \ref{prop:smoothiffcarisweil} is easily adapted to show the following \cite[Prop.~4.2.7]{cox2011toric}.

\begin{proposition} \label{prop:simpfinind}
$\Pic(X_\Sigma) \subset \Cl(X_\Sigma)$ has finite index if and only if $\Sigma$ is simplicial.
\end{proposition}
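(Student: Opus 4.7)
The strategy is to compare the two short exact sequences \eqref{eq:firstES} and \eqref{eq:firstEScart} to reduce the statement to a purely combinatorial question about the ray generators of each cone $\sigma \in \Sigma$, which is then handled cone by cone using Proposition \ref{prop:picaffine}.

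First I would set up the commutative diagram of exact sequences
\[
\begin{tikzcd}
M \arrow[r] \arrow[d,equal] & \CDiv_T(X_\Sigma) \arrow[r] \arrow[d,hookrightarrow] & \Pic(X_\Sigma) \arrow[r] \arrow[d] & 0 \\
M \arrow[r] & \Div_T(X_\Sigma) \arrow[r] & \Cl(X_\Sigma) \arrow[r] & 0
\end{tikzcd}
\]
and use it (or the snake lemma) to identify the cokernel of $\Pic(X_\Sigma) \hookrightarrow \Cl(X_\Sigma)$ with $\Div_T(X_\Sigma)/\CDiv_T(X_\Sigma)$. Since $\Div_T(X_\Sigma) = \bigoplus_{\rho \in \Sigma(1)} \Z\, D_\rho$ is finitely generated, this quotient is finite if and only if it is torsion, i.e., if and only if for every $D \in \Div_T(X_\Sigma)$ some nonzero multiple $kD$ lies in $\CDiv_T(X_\Sigma)$. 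So the goal becomes: show that this holds for every $D$ precisely when every cone $\sigma \in \Sigma$ is simplicial.

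The next step is a local Cartier criterion. Fix a maximal cone $\sigma \in \Sigma$, fix coordinates, and let $\phi_\sigma : M \to \Z^{\sigma(1)}$ be the map $m \mapsto (\langle u_\rho, m\rangle)_{\rho \in \sigma(1)}$, whose matrix has the primitive ray generators $u_\rho$ as rows. Given $D = \sum_\rho a_\rho D_\rho \in \Div_T(X_\Sigma)$, restricting to $U_\sigma$ and invoking Proposition \ref{prop:picaffine} together with the affine version of Theorem \ref{thm:classgroup} (the class group of $U_\sigma$ is $\Div_T(U_\sigma)/\phi_\sigma(M)$), I would show that $D_{|U_\sigma}$ is Cartier if and only if $(a_\rho)_{\rho \in \sigma(1)} \in \phi_\sigma(M)$. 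Combined with the standard fact that $D$ is Cartier on $X_\Sigma$ iff $D_{|U_\sigma}$ is Cartier for every maximal $\sigma$, this reduces the problem to: for every $(a_\rho)_{\rho \in \sigma(1)} \in \Z^{\sigma(1)}$, some multiple $k(a_\rho)$ lies in $\phi_\sigma(M)$, for every maximal $\sigma$.

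The last step is a linear algebra observation: $\phi_\sigma$ has finite cokernel if and only if its matrix has full row rank, equivalently the $u_\rho$ for $\rho \in \sigma(1)$ are $\R$-linearly independent, equivalently $\sigma$ is simplicial. Thus if every $\sigma$ is simplicial, for each maximal $\sigma$ a suitable multiple $k_\sigma(a_\rho)$ lies in $\phi_\sigma(M)$; taking $k = \mathrm{lcm}(k_\sigma)$ makes $kD$ Cartier. Conversely, if some $\sigma$ is not simplicial, $\phi_\sigma(M)$ sits in a proper $\Q$-subspace of $\Q^{\sigma(1)}$, so choosing $(a_\rho)$ outside this subspace gives a torus-invariant divisor no multiple of which is Cartier, forcing $\Div_T/\CDiv_T$ to have infinite order. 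The main obstacle is just the clean bookkeeping of the local-to-global passage between Cartier on $X_\Sigma$ and Cartier on each $U_\sigma$, which is essentially done once one applies Proposition \ref{prop:picaffine}; everything else is a standard cokernel/rank computation parallel to the proof of Proposition \ref{prop:smoothiffcarisweil}.
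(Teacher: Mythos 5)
Your proposal is correct and spells out exactly the adaptation of the proof of Proposition \ref{prop:smoothiffcarisweil} that the paper alludes to but does not write out: the roles of surjectivity, extension to a $\Z$-basis of $N$, and smoothness are replaced by finite cokernel, $\R$-linear independence, and simpliciality, with the diagram-chase identification $\Cl(X_\Sigma)/\Pic(X_\Sigma) \simeq \Div_T(X_\Sigma)/\CDiv_T(X_\Sigma)$ and the lcm over maximal cones supplying the bookkeeping. Since the paper defers entirely to \cite[Prop.~4.2.7]{cox2011toric}, your sketch is a correct and complete fleshing-out of the intended argument.
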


\begin{examplestar} \label{ex:chiara10}
Let $\Sigma$ be the fan from Example* \ref{ex:chiaramara}. We check using \texttt{Oscar.jl} that $D_3$ is not Cartier as follows.
\begin{minted}{julia}
X_Σ = NormalToricVariety(Σ)
D3 = ToricDivisor(X_Σ, [0,0,1,0])
iscartier(D3)
\end{minted}
Since $\Sigma$ is simplicial, Proposition \ref{prop:simpfinind} guarantees that there exists $\ell \in \Z_{>0}$ such that $\ell \cdot D_3$ is Cartier. We compute $\ell$ using \texttt{Oscar.jl}. The output of the following code
\begin{minted}{julia}
i = 0; l = 1;
while i < 1
    DD = iscartier(ToricDivisor(X_Σ, [0,0,l,0]))
    if DD == false
        global l += 1
    else
        print("l = ", l)
        i = 1
    end
end
\end{minted}
is $\ell = 6$, hence $6\cdot D_3$ is Cartier.
\end{examplestar}

Let $D \in \CDiv_T(X_\Sigma)$ be a torus invariant Cartier divisor. On each affine toric open subset $U_\sigma \subset X_\Sigma$, $D_{|U_\sigma}$ is a principal divisor corresponding to a character (Proposition \ref{prop:picaffine}). We now investigate which characters $m_\sigma \in M$ are such that $D_{|U_\sigma} = \div(\chi^{m_\sigma})_{|U_\sigma}$.

\begin{theorem} \label{thm:cartiercriterion}
Let $D = \sum_{\rho \in \Sigma(1)} a_\rho D_\rho \in \Div_T(X_\Sigma)$ be a Weil divisor. The following are equivalent: 
\begin{enumerate}
\item[(a)] $D$ is Cartier, 
\item[(b)] $D$ is principal on $U_\sigma$ for all $\sigma \in \Sigma$, 
\item[(c)] For each $\sigma \in \Sigma$, there is $m_\sigma \in M$ such that $\langle u_\rho, m_\sigma \rangle + a_\rho = 0$, $\rho \in \sigma(1)$,
\item[(d)] For each $\sigma \in \Sigma_{\max}$, there is $m_\sigma \in M$ such that $\langle u_\rho, m_\sigma \rangle + a_\rho = 0$, $\rho \in \sigma(1)$.
\end{enumerate}
Here $\Sigma_{\max}$ is the set of maximal cones of $\sigma$ with respect to inclusion. Moreover, if $D \in \CDiv_T(X_\Sigma)$ and the lattice points $\{ m_\sigma \}_{\sigma \in \Sigma}$ are as in (c), then 
\begin{enumerate}
\item[(1)] $m_\sigma$ is unique modulo $M(\sigma) = \sigma^\perp \cap M$, and
\item[(2)] if $\tau \preceq \sigma$ is a face, then $m_\sigma = m_\tau$ modulo $M(\tau)$. 
\end{enumerate}
\end{theorem}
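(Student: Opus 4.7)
My plan is to run the cycle (a) $\Rightarrow$ (b) $\Rightarrow$ (c) $\Rightarrow$ (a), handle the easy equivalence (c) $\Leftrightarrow$ (d) separately, and finally read off (1) and (2) from the linear conditions appearing in (c).

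For (a) $\Rightarrow$ (b), the Cartier property is local, so $D_{|U_\sigma}$ is a Cartier divisor on the affine toric variety $U_\sigma$ for each $\sigma \in \Sigma$. Since $\Pic(U_\sigma) = 0$ by Proposition \ref{prop:picaffine}, every Cartier divisor on $U_\sigma$ is principal, which is (b). For (c) $\Rightarrow$ (a), condition (c) together with Corollary \ref{cor:divchar} gives $D_{|U_\sigma} = \div(\chi^{-m_\sigma})_{|U_\sigma}$ on each affine chart, so $D$ is principal on an open cover of $X_\Sigma$, hence Cartier.

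The heart of the argument, and the main obstacle, is (b) $\Rightarrow$ (c). Fix $\sigma \in \Sigma$ and write $D_{|U_\sigma} = \div(f)$ with $f \in \C(U_\sigma)^*$. Since $D \in \Div_T(X_\Sigma)$ is supported on the prime divisors $D_\rho$, the restriction $\div(f)_{|T}$ vanishes. By Lemma \ref{lem:useful}(b), $f_{|T} : T \to \C^*$ is a morphism, and by Proposition \ref{prop:characters} (more precisely, the description of morphisms $(\C^*)^n \to \C^*$ cited in its proof) this forces $f_{|T} = c \cdot \chi^m$ for some $c \in \C^*$ and $m \in M$. Zariski density of $T$ in the irreducible variety $U_\sigma$ upgrades this to the equality $f = c \cdot \chi^m$ of rational functions, so $D_{|U_\sigma} = \div(\chi^m)_{|U_\sigma}$. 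Comparing coefficients along $D_\rho$, $\rho \in \sigma(1)$, via Corollary \ref{cor:divchar} yields $\langle u_\rho, m \rangle = a_\rho$; setting $m_\sigma = -m$ gives $\langle u_\rho, m_\sigma \rangle + a_\rho = 0$, as required.

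The equivalence (c) $\Leftrightarrow$ (d) is formal: restricting the family $\{m_\sigma\}$ to maximal cones shows (c) $\Rightarrow$ (d), and conversely every $\sigma \in \Sigma$ is a face of some $\sigma' \in \Sigma_{\max}$, so the inclusion $\sigma(1) \subseteq \sigma'(1)$ lets us take $m_\sigma := m_{\sigma'}$. Finally, (1) and (2) reduce to linear algebra using that a strongly convex rational cone $\sigma$ is generated as a cone by its primitive ray vectors $\{u_\rho\}_{\rho \in \sigma(1)}$: if $m, m' \in M$ both satisfy the equations in (c) for $\sigma$, then $m - m'$ pairs to zero with every $u_\rho$, $\rho \in \sigma(1)$, hence with every element of $\sigma$, placing $m - m' \in \sigma^\perp \cap M = M(\sigma)$, which is (1). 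For (2), $\tau \preceq \sigma$ forces $\tau(1) \subseteq \sigma(1)$, so $m_\sigma$ is automatically an admissible choice for $\tau$, and applying (1) with $\tau$ in place of $\sigma$ yields $m_\sigma \equiv m_\tau \pmod{M(\tau)}$.
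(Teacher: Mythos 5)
Your proof is correct and follows essentially the same route as the paper. The only stylistic difference is in (b) $\Rightarrow$ (c): the paper appeals directly to the surjectivity of $M \to \CDiv_T(U_\sigma)$ from Proposition~\ref{prop:picaffine}, whereas you unwind that surjectivity by hand via Lemma~\ref{lem:useful}(b), the classification of morphisms $T \to \C^*$, and Zariski density of $T$ in $U_\sigma$ — a more explicit but logically equivalent argument. Your treatments of (c) $\Leftrightarrow$ (d), (1), and (2) match the paper's.
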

\begin{proof}
The implications (a) $\Leftrightarrow$ (b) $\Leftrightarrow$ (c) $\Rightarrow$ (d) follow from Proposition \ref{prop:picaffine}. To show (d) $\Rightarrow$ (c), observe that if $\sigma$ is maximal and $m_\sigma$ is as in $(d)$, then for $\tau \preceq \sigma$ one chooses $m_\tau = m_\sigma$. 

For (1), suppose $\langle u_\rho, m_\sigma \rangle = \langle u_\rho, m_{\sigma'} \rangle = - a_\rho$ for all $\rho \in \sigma(1)$. Then $\langle u_\rho, m_\sigma - m_{\sigma'} \rangle = 0$ for all $\rho \in \sigma(1)$ means that $m_\sigma - m_{\sigma'} \in \sigma^\perp \cap M = M(\sigma)$. Point (1) and our observation in part (d) imply that, when $\tau \preceq \sigma$, $m_\sigma = m_\tau$ modulo $M(\tau)$, which is (2). 
\end{proof}

In the notation of Theorem \ref{thm:cartiercriterion}, the torus invariant Cartier divisor $D$ is locally given by $D_{|U_\sigma} = \div(\chi^{-m_\sigma})$. The minus sign comes from our notation \eqref{eq:Hrep1} for facet inequalities of polytopes. The tuples $\{ (U_\sigma, m_\sigma) \}_{\sigma \in \Sigma}$ form the \emph{Cartier data} for $D$, as they encode how $D$ is locally principal. Note also that part (1) of the theorem implies that $m_\sigma \in M$ is uniquely determined if and only if $\dim \sigma = n$.

We can regard $m_\sigma$ as a unique element of $M/M(\sigma)$. If $\tau \preceq \sigma$, the canonical map $M/M(\sigma) \rightarrow M/M(\tau)$ sends $m_\sigma$ to $m_\tau$. This gives an explicit descripition of $\CDiv_T(X_\Sigma)$ as follows (see \cite[Prop.~4.2.9]{cox2011toric}). Let $\Sigma_{\max} = \{\sigma_1, \ldots, \sigma_r \}$ and define the map
\[ \phi : \bigoplus_{i=1}^r M /M(\sigma_i) \rightarrow \bigoplus_{1 \leq i < j \leq r} M/M(\sigma_i \cap \sigma_j), \quad \text{given by} \quad (m_i)_i \mapsto (m_i - m_j)_{i,j}. \]
\begin{proposition}
With the notation introduced above, we have $\CDiv_T(X_\Sigma) \simeq \ker \phi$. 
\end{proposition}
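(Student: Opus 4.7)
The plan is to set up mutually inverse group homomorphisms between $\CDiv_T(X_\Sigma)$ and $\ker \phi$, using Theorem~\ref{thm:cartiercriterion} as the main engine. Let $\Sigma_{\max} = \{\sigma_1, \ldots, \sigma_r\}$.

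First I would define $\Psi : \CDiv_T(X_\Sigma) \to \bigoplus_i M/M(\sigma_i)$ as follows. Given $D = \sum_{\rho \in \Sigma(1)} a_\rho D_\rho \in \CDiv_T(X_\Sigma)$, Theorem~\ref{thm:cartiercriterion}(c) produces for each maximal cone $\sigma_i$ an element $m_i \in M$ satisfying $\langle u_\rho, m_i \rangle = -a_\rho$ for every $\rho \in \sigma_i(1)$, and part~(1) of the theorem says that its class in $M/M(\sigma_i)$ is unique. Set $\Psi(D) = ([m_1], \ldots, [m_r])$; this is clearly $\Z$-linear. To see that $\Psi(D) \in \ker \phi$, let $\tau = \sigma_i \cap \sigma_j$: by Theorem~\ref{thm:cartiercriterion}(2) the classes $[m_i], [m_j]$ both reduce to $m_\tau \in M/M(\tau)$, so $m_i - m_j \in M(\tau) = M(\sigma_i \cap \sigma_j)$.

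Next I would construct the inverse $\Theta : \ker \phi \to \CDiv_T(X_\Sigma)$. Given $(m_i)_i \in \ker \phi$, define for each $\rho \in \Sigma(1)$
\[ a_\rho = -\langle u_\rho, m_i \rangle \quad \text{for any } i \text{ with } \rho \in \sigma_i(1). \]
This is well-defined: every ray $\rho$ is a face of some maximal cone (so at least one such $i$ exists), and if $\rho \in \sigma_i(1) \cap \sigma_j(1)$ then $u_\rho \in \sigma_i \cap \sigma_j$, hence the kernel condition $m_i - m_j \in M(\sigma_i \cap \sigma_j) = (\sigma_i \cap \sigma_j)^\perp \cap M$ forces $\langle u_\rho, m_i - m_j \rangle = 0$. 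Set $\Theta((m_i)_i) = \sum_{\rho \in \Sigma(1)} a_\rho D_\rho$. This is manifestly torus invariant and $\Z$-linear in $(m_i)_i$, and by construction each $m_i$ witnesses the criterion of Theorem~\ref{thm:cartiercriterion}(d) on $\sigma_i$, hence $\Theta((m_i)_i) \in \CDiv_T(X_\Sigma)$.

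Finally I would verify $\Psi \circ \Theta = \id$ and $\Theta \circ \Psi = \id$. For $\Psi \circ \Theta$: starting from $(m_i)_i \in \ker \phi$, the tuple read off by $\Psi$ on each $\sigma_i$ is characterized by the same equations $\langle u_\rho, \cdot \rangle = -a_\rho$ on $\sigma_i(1)$, so by uniqueness modulo $M(\sigma_i)$ (Theorem~\ref{thm:cartiercriterion}(1)) it coincides with $[m_i]$. For $\Theta \circ \Psi$: starting from $D = \sum a_\rho D_\rho$, the recipe for $\Theta$ recovers the coefficients $a_\rho = -\langle u_\rho, m_i \rangle$ on each $\sigma_i(1)$, exactly matching $D$. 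There is no real obstacle here; the only point requiring attention is that $\Theta$ is well-defined, which is precisely what the kernel condition is engineered to guarantee, and that every ray of $\Sigma$ belongs to some maximal cone of $\Sigma$ so that $\Theta$ is defined on all of $\Sigma(1)$.
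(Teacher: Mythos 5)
Your proof is correct, and it is essentially the standard argument that the paper outsources to \cite[Prop.~4.2.9]{cox2011toric}: the two inverse maps $\Psi$ and $\Theta$ are exactly the passage between Cartier data $\{m_\sigma\}$ and coefficient vectors $(a_\rho)$, with Theorem~\ref{thm:cartiercriterion}(c),(d),(1),(2) supplying, respectively, the existence, the restriction to maximal cones, the uniqueness modulo $M(\sigma_i)$, and the compatibility on intersections that makes $\Psi$ land in $\ker\phi$. The only place one must be a little careful is the well-definedness of $\Theta$ on classes in $M/M(\sigma_i)$ and on rays lying in several maximal cones, and you handle both correctly via $u_\rho \in \sigma_i \cap \sigma_j$ and the kernel condition.
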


\begin{example}[The divisor of a polytope] \label{ex:divfrompol}
Let $P = \{ m \in M_\R ~|~ \langle u_Q, m \rangle + a_Q \geq 0, Q \in {\cal Q} \}$ be a full-dimensional polytope in $\R^n$ whose set of facets is ${\cal Q}$ and $u_Q$ is the primitive inward pointing normal vector to $Q \in {\cal Q}$. The toric variety associated to $P$ is $X_P \simeq X_{\Sigma_P}$, where $\Sigma_P$ is the normal fan of $P$. The maximal cones $\sigma_v \in (\Sigma_P)_{\max}$ correspond to the vertices $v \in P$, and a vertex $v \in P$ satisfies 
\begin{equation} \label{eq:vertexeq}
\langle u_Q, v \rangle + a_Q = 0, \quad \text{for all } Q \ni v. 
\end{equation}
Hence $\{ (U_{\sigma_v}, v) \}_{v \text{ vertex of } P}$ are the Cartier data for $D_P = \sum_{Q \in {\cal Q}} a_Q D_Q \in \Div_T(X_{\Sigma_P})$. 

Translating our polytope $P$ by a lattice point $m$ does not change its normal fan. Therefore, as pointed out in Section \ref{sec:normalfans}, $P$ and $P + m$ give the same toric variety $X_P \simeq X_{P + m} \simeq X_{\Sigma_P}$. However, they give different divisors on it. In fact, one checks easily that $D_{P + m} = D_P + \div(\chi^m)$, so that $[D_P] = [D_{P + m}]$. By Theorem \ref{thm:classgroup}, the divisor class $[D_P]$ corresponds to such translates of $P$. 

As an explicit example, we consider the triangle in Figure \ref{fig:triang}, whose normal fan is shown in Figure \ref{fig:fanP2}.
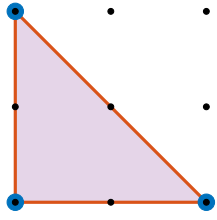
\begin{figure}
\centering
\begin{tikzpicture}[scale=1]
\begin{axis}[%
width=1.5in,
height=1.5in,
scale only axis,
xmin=-0.5,
xmax=2.5,
ymin=-0.5,
ymax=2.5,
ticks = none, 
ticks = none,
axis background/.style={fill=white},
axis line style={draw=none} 
]


\addplot [color=mycolor1,solid,fill opacity=0.2,fill = mycolor4,forget plot]
  table[row sep=crcr]{%
 0 0\\
0 2\\	
2 0\\
0 0 \\
};

\addplot [very thick, color=mycolor2,solid,fill opacity=0.2,fill = mycolor1,forget plot]
  table[row sep=crcr]{%
 0 0 \\
 0 2\\
};

\addplot [very thick, color=mycolor2,solid,fill opacity=0.2,fill = mycolor1,forget plot]
  table[row sep=crcr]{%
 0 0 \\
 2 0 \\
};

\addplot [very thick, color=mycolor2,solid,fill opacity=0.2,fill = mycolor1,forget plot]
  table[row sep=crcr]{%
 0 2\\
 2 0 \\
};

\addplot[only marks,mark=*,mark size=3.1pt,mycolor1
        ]  coordinates {
  (0,0) (2,0) (0,2)
};


\addplot[only marks,mark=*,mark size=1.1pt,black
        ]  coordinates {
   (0,0) (1,0) (2,0) (0,1) (1,1) (2,1) (0,2) (1,2) (2,2)
};

\end{axis}
\end{tikzpicture} 
\caption{A triangle in $\R^2$ whose normal fan is shown in Figure \ref{fig:fanP2}.}
\label{fig:triang}
\end{figure}
The three blue vertices correspond to the three blue maximal cones. If the lower left corner of the triangle is $(0,0)$, we see that 
\[ P = \left \{ m \in \R^2 ~ \Big | ~ \begin{bmatrix}
1 & 0 \\ 0 & 1 \\ -1 & -1 
\end{bmatrix} \begin{bmatrix}
m_1 \\ m_2
\end{bmatrix} + \begin{bmatrix}
0 \\ 0 \\ 2
\end{bmatrix} \geq 0 \right \}. \]
This means $D_P = 2 D_3$, where $D_3$ is as in Example \ref{ex:divisorsP2}. The vertex $v = (2,0)$ of $P$ corresponds to the cone $\sigma_1 \in \Sigma$. This cone contains the rays spanned by rows 1 and 3 in $F^\top$. We have
\[ \begin{bmatrix}
0 & 1 \\ -1 & -1 
\end{bmatrix} \begin{bmatrix}
v_1 \\ v_2
\end{bmatrix} + \begin{bmatrix}
0 \\ 2
\end{bmatrix} = 0, \]
which is \eqref{eq:vertexeq} for this vertex. For the ray $\rho = \R_{\geq 0} \cdot (-1,-1)$, one can choose $m_\rho$ to be any of the lattice points on the affine line containing the edge ${\rm Conv}((2,0),(0,2))$ of $\rho$. 
\end{example}

\begin{exercise}
Compute the coefficients of $D_P = a_1 D_1 + \cdots + a_4 D_4$, where the $D_i$ are as in Example \ref{ex:divisorsP1P1} and $P$ is the rectangle $[0,2] \times [0,7] \subset \R^2$.
\end{exercise}

\subsection{Sheaves of divisors}
The \emph{structure sheaf} of a normal variety $X$ is the sheaf of $\OO_X$-modules given by 
\[ U \longmapsto \OO_X(U) = \{ f \in \C(X)^* ~|~ \div(f)_{|U} \geq 0 \}. \]
A Weil divisor $D$ on $X$ defines a twisted version $\OO_X(D)$ of this sheaf, such that 
\[ U \longmapsto \OO_X(D)(U) = \{ f \in \C(X)^*~|~ (\div(f) + D)_{|U} \geq 0 \}. \]
To avoid double parentheses, we will use the notation $\Gamma(U,\OO_X(D)) = \OO_X(D)(U)$. These rational functions are called \emph{sections} of the sheaf $\OO_X(D)$ on $U$, and $\Gamma(X, \OO_X(D))$ consists of the \emph{global sections} of $\OO_X(D)$. For normal toric varieties, they have a nice, explicit description. 

Here is a lemma that we need to prove the main result of this section, see \cite[Lem.~1.1.16]{cox2011toric}. It involves a torus $T$ and its character lattice $M$.

\begin{lemma} \label{lem:stablesubspace}
Consider the action of $T$ on $\C[M]$ given by 
$t \cdot f = ( p \mapsto f(t^{-1} \cdot p))$. If $V \subset \C[M]$ is a $\C$-subspace which is stable under this action, then 
\[ V = \bigoplus_{\chi^m \in V} \C \cdot \chi^m. \]
\end{lemma}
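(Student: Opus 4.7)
The plan is to exploit the fact that each character $\chi^m$ is an eigenvector for the $T$-action. Concretely, for $t \in T$ and $m \in M$, one computes
\[ (t \cdot \chi^m)(p) = \chi^m(t^{-1} p) = \chi^m(t^{-1})\, \chi^m(p) = \chi^{-m}(t)\, \chi^m(p), \]
so $t \cdot \chi^m = \chi^{-m}(t) \chi^m$. Any $f \in V$ admits a unique finite decomposition $f = \sum_{m \in S} c_m \chi^m$ with $c_m \in \C^*$, where $S \subset M$ is finite. The plan is to show that each $\chi^m$ for $m \in S$ already lies in $V$. Since $\C[M] = \bigoplus_{m \in M} \C \cdot \chi^m$ by Proposition~\ref{prop:characters} (characters being $\Z$-linearly independent as functions on $T$), this will immediately yield the desired direct sum decomposition $V = \bigoplus_{\chi^m \in V} \C \cdot \chi^m$.

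The next step uses $T$-stability: for every $t \in T$,
\[ t \cdot f = \sum_{m \in S} c_m\, \chi^{-m}(t)\, \chi^m \, \in \, V. \]
So all these vectors, as $t$ varies, lie in the finite-dimensional subspace $W = \bigoplus_{m \in S} \C \cdot \chi^m$ and in fact in $V \cap W$. The goal is to argue that the family $\{t \cdot f\}_{t \in T}$ spans the whole of $W$, so that each individual $c_m \chi^m$ (hence $\chi^m$) lies in $V$. This reduces to showing that one can choose $t_1, \ldots, t_{|S|} \in T$ making the matrix
\[ A = \bigl(\chi^{-m}(t_i)\bigr)_{1 \le i \le |S|,\, m \in S} \]
invertible: inverting $A$ expresses each $c_m \chi^m$ as a linear combination of the $t_i \cdot f \in V$.

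The crux is therefore the linear independence of distinct characters of $T$ as $\C$-valued functions. This is a standard fact (Dedekind's lemma on characters of a group), and in our toric setting it is especially transparent: after identifying $T \simeq (\C^*)^n$, the characters $\chi^{-m}$ become Laurent monomials, which are linearly independent as functions on $(\C^*)^n$. Concretely, if no such invertible $A$ existed, the rows $(\chi^{-m}(t))_{m \in S}$ for $t \in T$ would lie in a proper subspace of $\C^{|S|}$, meaning there is a nonzero $(a_m)_{m \in S}$ with $\sum_{m \in S} a_m \chi^{-m}(t) = 0$ for all $t$, contradicting linear independence of Laurent monomials.

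I expect no real obstacle beyond invoking this linear independence correctly; once it is in hand, the argument is pure linear algebra. A minor subtlety worth being careful about is that $S$ is finite, so we are only ever dealing with finite-dimensional spaces and finitely many evaluation points $t_i$, sidestepping any issue about infinite sums or topology on $\C[M]$.
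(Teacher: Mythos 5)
Your proof is correct. The paper itself does not give an argument for this lemma; it simply cites \cite[Lem.~1.1.16]{cox2011toric}, so there is no in-text proof to compare against. What you have written is essentially the standard proof of that CLS lemma: observe that each $\chi^m$ is a $T$-eigenvector with eigencharacter $\chi^{-m}$, restrict to the finite-dimensional subspace $W=\bigoplus_{m\in S}\C\chi^m$ spanned by the exponents occurring in a given $f\in V$, and use linear independence of distinct characters (equivalently, injectivity of $m\mapsto \chi^{-m}$, or a Vandermonde-type argument) to show that the translates $t\cdot f$ span $W$, forcing each $c_m\chi^m$, hence each $\chi^m$ with $m\in S$, to lie in $V$. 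From there $V=\sum_{\chi^m\in V}\C\chi^m$ and the sum is automatically direct since the $\chi^m$ form a $\C$-basis of $\C[M]$. Two small cosmetic points: the basis claim $\C[M]=\bigoplus_{m\in M}\C\chi^m$ comes from the definition of the semigroup algebra rather than from Proposition~\ref{prop:characters}, and you should note the trivial case $f=0$ (where $S=\emptyset$) separately to justify writing $c_m\in\C^*$. Neither affects the substance.
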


\begin{proposition} \label{prop:globalsec}
Let $\Sigma$ be a fan in $N_\R \simeq \R^n$ and let $D \in \Div_T(X_\Sigma)$. We have 
\[ \Gamma(X_\Sigma, \OO_{X_\Sigma}(D)) = \bigoplus_{\div(\chi^m) + D \geq 0 } \C \cdot \chi^m. \]
\end{proposition}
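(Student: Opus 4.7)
The inclusion $\supseteq$ is immediate: if $m \in M$ satisfies $\div(\chi^m)+D\geq 0$ on $X_\Sigma$, then $\chi^m$ is by definition a section of $\OO_{X_\Sigma}(D)$ on $X_\Sigma$, and any finite $\C$-linear combination of such characters is as well. The work is in the reverse inclusion.

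My plan is to exhibit $\Gamma(X_\Sigma,\OO_{X_\Sigma}(D))$ as a $T$-stable subspace of $\C[M]$ and then apply Lemma \ref{lem:stablesubspace}. First, I would restrict sections to the dense torus $T\subset X_\Sigma$. Any $f\in \Gamma(X_\Sigma,\OO_{X_\Sigma}(D))$ lies in $\C(X_\Sigma)^*=\C(T)$, and since $D\in\Div_T(X_\Sigma)$ is supported on $X_\Sigma\setminus T$, the restriction $D_{|T}=0$. Hence the condition $(\div(f)+D)_{|T}\geq 0$ becomes $\div(f)_{|T}\geq 0$, so by Lemma \ref{lem:useful} the function $f\colon T\to\C$ is a morphism, i.e.~$f\in \C[T]=\C[M]$. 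This gives an inclusion
\[ \Gamma(X_\Sigma,\OO_{X_\Sigma}(D))\subset \C[M]. \]

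Next I would verify that this subspace is stable under the $T$-action $t\cdot f=(p\mapsto f(t^{-1}\cdot p))$ on $\C[M]$. Because $D$ is torus-invariant, for every $t\in T$ the automorphism $p\mapsto t^{-1}\cdot p$ of $X_\Sigma$ preserves $D$, and thus $\div(t\cdot f)+D=t\cdot(\div(f)+D)$ is effective whenever $\div(f)+D$ is. So $t\cdot f\in \Gamma(X_\Sigma,\OO_{X_\Sigma}(D))$ whenever $f$ is, and the subspace is $T$-stable. Lemma \ref{lem:stablesubspace} then yields
\[ \Gamma(X_\Sigma,\OO_{X_\Sigma}(D)) \;=\; \bigoplus_{\chi^m\in \Gamma(X_\Sigma,\OO_{X_\Sigma}(D))} \C\cdot \chi^m, \]
and by definition the condition $\chi^m\in \Gamma(X_\Sigma,\OO_{X_\Sigma}(D))$ is exactly $\div(\chi^m)+D\geq 0$, which gives the claimed description.

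The main technical point I expect to have to be careful about is the compatibility between the abstract torus action on $\C[M]\subset \C(X_\Sigma)$ and effectiveness of divisors; once that is spelled out, everything else is bookkeeping. The key structural ingredients are that $D$ is torus-invariant (so it disappears on $T$, allowing the restriction step) and that torus-stable subspaces of $\C[M]$ are spanned by characters (Lemma \ref{lem:stablesubspace}).
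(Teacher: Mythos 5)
Your proof is correct and takes essentially the same approach as the paper: restrict to the dense torus to land in $\C[M]$ via Lemma \ref{lem:useful}, observe $T$-stability from torus-invariance of $D$, and conclude with Lemma \ref{lem:stablesubspace}. You spell out a couple of steps the paper leaves implicit (e.g.\ that $D_{|T}=0$), but the structure of the argument is identical.
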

\begin{proof}
A rational function $f$ belongs to $\Gamma(X_\Sigma, \OO_{X_\Sigma}(D))$ if and only if $\div(f) + D \geq 0$. Then in particular $(\div(f) + D)_{|T} \geq 0$, and $f \in \C[M]$ by Lemma \ref{lem:useful}. Hence $\Gamma(X_\Sigma, \OO_{X_\Sigma}(D)) \subset \C[M]$ is a subspace. It is stable under the action of $T$ on $\C[M]$ since the divisor $D$ is torus invariant. The statement now follows from Lemma \ref{lem:stablesubspace}.
\end{proof}
Let $D  = \sum_{\rho \in \Sigma(1) } a_\rho D_\rho \in \Div_T(X_\Sigma)$. The direct sum in Proposition \ref{prop:globalsec} is over all lattice points $m \in M$ such that $\div(\chi^m) + D$ is effective. By Corollary \ref{cor:divchar}, these are the lattice points in the polyhedron
\[ P_D = \{ m \in M_\R ~|~ \langle u_\rho, m \rangle + a \geq 0 \} .\] 
In Example \ref{ex:divfrompol} we saw how to associate a divisor $D_P$ to a polytope $P$. This section shows how to associate a polyhedron $P_D$ (i.e., a finite intersection of half spaces which is not necessarily bounded) to a torus invariant divisor $D$. The vertices of this polyhedron are not necessarily lattice points, see Exercise \ref{ex:PDnonlattice}. The following example and Exercise \ref{ex:invdivfrompol} show how the constructions $D \mapsto P_D$ and $P \mapsto D_P$ are, to some extent, inverse to each other.
\begin{example} \label{ex:grading0}
Let $X_\Sigma = \PP^2$ and $D = 2 D_3$, where $D_3$ is as in Example \ref{ex:divisorsP2}. One checks easily that the polyhedron $P_D$ is the triangle from Figure \ref{fig:triang}. By Proposition \ref{prop:globalsec}, the global sections of $\OO_{\PP^2}(D)$ are Laurent polynomials supported in this triangle. These correspond to rational functions on $\PP^2$ with poles of order up to 2 along $D_3$, and no poles elsewhere. If $D_3$ is given by the vanishing of the homogeneous coordinate $x_3$, these are of the form 
\[ \frac{a_0 x_3^2 + a_1 x_1x_3 + a_2 x_2x_3 + a_3 x_1x_2 + a_4 x_1^2 + a_5 x_2^2}{x_3^2} \sim a_0 + a_1 t_1  + a_2 t_2 + a_3 t_1t_2 + a_4 t_1^2 + a_5 t_2^2.\]
Note that here $X_{\Sigma} = X_{P_D}$ and $D_{P_D} = D$. See Exercise \ref{ex:invdivfrompol} for a general statement. 
\end{example}
If, in Example \ref{ex:grading0}, we use $D' = 2D_1$ instead of $D = 2D_3$, $P_{D'}$ is a translated version of the triangle in Figure \ref{fig:triang}. The vector space $\Gamma(X,\OO_X(D'))$ is isomorphic to $\Gamma(X,\OO_X(D))$, via multiplication with $\chi^{(2,0)}$. This is an instance of the following fact \cite[Prop.~4.0.29]{cox2011toric}. 
\begin{proposition}
If $D, D' \in \Div_T(X_\Sigma)$ are linearly equivalent, i.e.\ $[D - D'] = 0$, we have $\Gamma(X_\Sigma, \OO_{X_\Sigma}(D)) \simeq \Gamma(X_\Sigma, \OO_{X_\Sigma}(D'))$.
\end{proposition}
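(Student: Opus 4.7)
The plan is to produce an explicit $\C$-linear isomorphism by multiplying by a suitable character. The hypothesis $[D-D']=0$ places $D-D'$ in $\Div_0(X_\Sigma)$, but we can do better: since $D,D'$ are both torus invariant, so is $D-D'$, and the kernel of $\Div_T(X_\Sigma)\to\Cl(X_\Sigma)$ in the exact sequence \eqref{eq:firstES} of Theorem \ref{thm:classgroup} is exactly the image of $M\to\Div_T(X_\Sigma)$. So first I would invoke Theorem \ref{thm:classgroup} to fix $m\in M$ with $D-D'=\div(\chi^m)$.

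Next I would define
\[
\varphi_m:\Gamma(X_\Sigma,\OO_{X_\Sigma}(D))\longrightarrow\Gamma(X_\Sigma,\OO_{X_\Sigma}(D')),\qquad f\longmapsto\chi^m\cdot f,
\]
and verify that this is well defined. Concretely, for $f\in\Gamma(X_\Sigma,\OO_{X_\Sigma}(D))$ we have $\div(f)+D\geq 0$, and then
\[
\div(\chi^m f)+D'=\div(\chi^m)+\div(f)+D'=(D-D')+\div(f)+D'=\div(f)+D\geq 0,
\]
so $\chi^m f\in\Gamma(X_\Sigma,\OO_{X_\Sigma}(D'))$. The map $\varphi_m$ is $\C$-linear, and multiplication in the field $\C(X_\Sigma)$ by the nonzero element $\chi^m$ is clearly injective. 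Finally, I would observe that the analogously defined $\varphi_{-m}$ associated to the character $\chi^{-m}$ (using $D'-D=\div(\chi^{-m})$) is an inverse: $\varphi_{-m}\circ\varphi_m=\id$ and $\varphi_m\circ\varphi_{-m}=\id$, because $\chi^m\cdot\chi^{-m}=1$. Hence $\varphi_m$ is the desired isomorphism.

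There is no real obstacle here; the only subtle point is that we need a character rather than an arbitrary rational function with divisor $D-D'$, and this is exactly what the toric exact sequence \eqref{eq:firstES} provides. If one wanted to make the isomorphism canonical, one could alternatively describe it via Proposition \ref{prop:globalsec}: the basis monomials $\chi^{m'}$ of $\Gamma(X_\Sigma,\OO_{X_\Sigma}(D))$ are indexed by lattice points of $P_D$, while those of $\Gamma(X_\Sigma,\OO_{X_\Sigma}(D'))$ are indexed by $P_{D'}=P_D-m$, and translation by $m$ in $M$ realizes the same isomorphism on the level of these indexing sets.
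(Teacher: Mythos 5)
Your proof is correct, and it is essentially the standard argument — the paper itself defers to \cite[Prop.~4.0.29]{cox2011toric}, whose proof (valid on any normal variety) is exactly ``multiply by a rational function with divisor $D - D'$''. Your refinement — invoking the exactness of \eqref{eq:firstES} to take that rational function to be a character $\chi^m$, since $D - D'$ is torus invariant and principal — is not strictly necessary for the isomorphism, but it is a nice toric specialization and is what makes the polytope picture in your final paragraph possible.

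One small sign slip in that final parenthetical remark: with your convention $D - D' = \div(\chi^m)$ and the facet inequalities $\langle u_\rho, m' \rangle + a_\rho \geq 0$ as in \eqref{eq:Hrep1}, one gets $a'_\rho = a_\rho - \langle u_\rho, m \rangle$, hence $m' \in P_{D'}$ iff $m' - m \in P_D$, i.e.\ $P_{D'} = P_D + m$, not $P_D - m$. This is also the version consistent with the rest of your sentence: $\varphi_m$ sends $\chi^{m'}$ to $\chi^{m' + m}$, so the indexing map is $m' \mapsto m' + m$, which carries $P_D \cap M$ onto $(P_D + m) \cap M = P_{D'} \cap M$. (You can sanity-check this against Example \ref{ex:grading0}: there $D = 2D_3$, $D' = 2D_1$, $m = (-2,0)$, and $P_{D'}$ is indeed the triangle $P_D$ translated by $(-2,0)$.)
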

We end the section by hinting at a nice connection with \emph{tropical geometry} \cite{maclagan2009introduction}.
\begin{example}[Tropical hypersurfaces] \label{ex:tropical}
Let $P \subset \R^n$ be a full-dimensional convex lattice polytope and let $D_P$ be the corresponding divisor on $X_P$, see Example \ref{ex:divfrompol}. The vector space $\Gamma(X_P, \OO_{X_P}(D_P))$ consists of all Laurent polynomials whose exponents are contained in $P$. Let $f \in \Gamma(X_P, \OO_{X_P}(D_P))$ be such a Laurent polynomial. We write $f$ as 
\[ f = \sum_{m \in \A} c_m \, t^m, \quad \text{ with } \A \subset P \cap M \text{ and } c_m \in \C^*. \]
This defines a hypersurface $V(f) \in (\C^*)^n$ given by $V(f) = \{ t \in (\C^*)^n ~|~ f(t) = 0 \}$. The \emph{initial form} of $f$ with respect to a weight vector $w \in \R^n$ is 
\[ {\rm in}_w(f) = \sum_{m \in \A_w} c_m \, t^m, \quad \text{ with } \A_w = \{ m \in \A ~|~ \langle w, m \rangle = \min_{m' \in \A} \langle w, m' \rangle \}. \]
In tropical geometry, $f$ defines a \emph{tropical hypersurface} in $\R^n$ given by 
\[ {\rm Trop}(V(f)) = \{ w \in \R^n ~|~ {\rm in}_w(f) \text{ has $\geq 2$ terms} \}. \]
One easily checks that if all vertices of $P$ belong to $\A$, ${\rm Trop}(V(f))$ is the support of the $(n-1)$-skeleton $\Sigma_{P,n-1} = \{ \sigma \in \Sigma_P ~|~ \dim \sigma \leq n-1 \}$ of the normal fan $\Sigma_P$. In this case, the \emph{tropical compactification} of the hypersurface $V(f)$ is its closure in $X_{\Sigma_{P,n-1}}$. Fore more details on such compactifications and their desirable properties, see \cite[Sec.~1.8 and 6.4]{maclagan2009introduction}.
\end{example}
\begin{exercise}
With the notation of Example \ref{ex:divisorsP1P1}, describe $\Gamma(\PP^1 \times \PP^1, \OO_{\PP^1 \times \PP^1}(2 D_1 + 5D_3))$. 
\end{exercise}
\begin{exercise} \label{ex:PDnonlattice}
Show that $D = D_1 + D_2 \in \Div_T(X_\Sigma) \setminus \CDiv_T(X_\Sigma)$, where $X_\Sigma$ is the toric surface from Exercise \ref{ex:doublepillow}. Describe $P_D$ and $\Gamma(X_\Sigma,\OO_{X_\Sigma}(D))$. 
\end{exercise}
\begin{exercise} \label{ex:invdivfrompol}
Show that if $X_\Sigma \simeq X_{P_D}$ for some divisor $D \in \Div_T(X_\Sigma)$ such that $P_D$ is full-dimensional, then $D_{P_D} = D$. Conversely, if $P$ is a full-dimensional lattice polytope and $D_P \in \CDiv_T(X_P)$ is the corresponding divisor, then $P_{D_P} = P$.  
\end{exercise}
\section{Homogeneous coordinates on toric varieties} \label{sec:homogeneous}
Fix a fan $\Sigma$ in $N_\R$ and let $X_\Sigma$ be the corresponding normal toric variety. We will sometimes drop the index $\Sigma$ and write $X = X_\Sigma$ to simplify the notation where there is no danger for confusion. In this section we describe $X$ as the quotient of a quasi-affine space by the action of a reductive group. This allows us to define global coordinates on $X$, which are useful for describing its subvarieties. The construction generalizes the well-known homogeneous coordinates on $\PP^n$. Homogeneous coordinates correspond to the variables of the \emph{homogeneous coordinate ring} or \emph{Cox ring} of $X$, which is a polynomial ring endowed with a particular grading and a distinguished ideal called the \emph{irrelevant ideal}. The discussion follows \cite[Sec.~5.4-5.5]{telen2020thesis}. For more details, the reader can consult \cite[Ch.~5]{cox2011toric}. We start with an example which identifies the relevant geometric and algebraic objects for the projective plane $X = \PP^2$. 

\begin{example} \label{ex:P2Cox}
The projective plane $\PP^2$ is defined as
\begin{equation} \label{eq:coxP2}
\PP^2 = \frac{\C^3 \setminus \{0\}}{\C^*},
\end{equation}
where the quotient is by the action $\C^* \times (\C^3 \setminus \{0\}) \rightarrow (\C^3 \setminus \{0\})$ given by $(\lambda, (x_1,x_2,x_3)) \mapsto (\lambda x_1, \lambda x_2, \lambda x_3)$. This action extends trivially to an action on $\C^3$. Subvarieties of $\PP^2$ are given by homogeneous ideals in the polynomial ring $S = \C[x_1,x_2,x_3]$. Here `homogeneous' is with respect to the $\Z$-grading 
$$ S = \bigoplus_{\alpha \in \Z} S_\alpha, $$
which is such that for $f \in S$ homogeneous, $V_{\C^3}(f)$ is stable under the $\C^*$-action. Equivalently, $V_{\C^3}(f)$ is a union of $\C^*$-orbits. In the ring $S$, the ideal $B = \langle x_1,x_2,x_3 \rangle$ plays a special role: its variety in $\PP^2$ is the empty set. The interplay between algebra and geometry in this construction is summarized in the following table. 
\[
\begin{matrix}
\text{Algebra} & & \text{Geometry} \\ \hline
S & \overset{\Specm(\cdot)}{\longrightarrow} & \C^3 \\
\Irrel & \overset{V_{\C^3}(\cdot)}{\longrightarrow} & \{0\} \\
\Z & \overset{\Hom_\Z(\cdot,\C^*)}{\longrightarrow} & \C^*
\end{matrix}
\]
For the purpose of generalizing this construction, we make the following observation. The quotient \eqref{eq:coxP2} comes from a toric morphism $\pi: \C^3 \setminus \{0\} \rightarrow \PP^2$ which is constant on $\C^*$-orbits. A toric morphism comes from a lattice homomorphism $N' \rightarrow N$ that is compatible with fans $\Sigma'$ and $\Sigma$ in $N_\R'$ and $N_\R$ respectively (see Section \ref{sec:tormorph}). In our case $\Sigma'$ is the fan of $\C^3 \setminus \{0\}$ and $\Sigma$ is the fan of $\PP^2$. The lattices are $N' = \Z^3$ and $N = \Z^2$, and the morphism $\pi$ comes from $F: N' \rightarrow N$ where $F$ is a $2 \times 3$ integer matrix whose columns are the primitive ray generators of $\Sigma(1)$. The fans and the matrix $F$ are shown in Figure \ref{fig:latticehom}.
\begin{figure} 
\centering
\tdplotsetmaincoords{60}{120}
\begin{tikzpicture}[baseline=(O.base),scale=1.5,tdplot_main_coords]
\coordinate (O) at (0,0,0);
\coordinate (A) at (2,0,0);
\coordinate (B) at (0,2,0);
\coordinate (C) at (0,0,2);
\coordinate (X) at (1.5,0,0);
\coordinate (Y) at (0,1.5,0);
\coordinate (Z) at (0,0,1.5);
							
\draw[opacity=0.2,fill = mycolor2,] (O)--(A)--(C)--cycle;
\draw[opacity=0.2,fill = mycolor3,] (O)--(A)--(B)--cycle;
\draw[opacity=0.2,fill = mycolor1,] (O)--(B)--(C)--cycle;
							
\draw[-latex, very thick, red] (O) -- (X) node[anchor=west] {};
\draw[-latex, very thick, red] (O) -- (Y) node[anchor=south] {};
\draw[-latex, very thick, red] (O) -- (Z) node[anchor=south] {};
			
\draw[-latex] (O) -- (A) node[anchor=west] {};
\draw[-latex] (O) -- (B) node[anchor=south] {};
\draw[-latex] (O) -- (C) node[anchor=south] {};
			
\draw (0,0.5,0.5) node{};
\draw (0.5,0,0.5) node{};
\draw (0.5,0.5,0) node{};
\end{tikzpicture} 
\Large $\xrightarrow{\quad F = \normalsize \begin{bmatrix}
1 & 0 & -1\\ 0 & 1 & -1
\end{bmatrix} \quad}$ \quad \normalsize
\begin{tikzpicture}[baseline=-1 cm,scale=1.0]
\coordinate (O) at (0,0);
\coordinate (u1) at (1,0);
\coordinate (uu1) at (2,0);
\coordinate (u12) at (2,-2);
\coordinate (u2) at (-1,-1);
\coordinate (uu2) at (-2,-2);
\coordinate (u23) at (-2,2);
\coordinate (u3) at (0,1);
\coordinate (uu3) at (0,2);
\coordinate (u31) at (2,2);
\coordinate (X) at (1.5,0);
\coordinate (Y) at (0,1.5);
\draw[-latex] (O) -- (X) node[anchor=west]{} ;
\draw[-latex] (O) -- (Y) node[anchor=south]{} ;
\draw[-latex, very thick, red] (O) -> (u1); 
\draw[-latex, very thick, red] (O) -> (u2);
\draw[-latex, very thick, red] (O) -> (u3); 
\draw[opacity=0.2,fill = mycolor1,] (O)--(uu1)--(u12)--(uu2)--cycle;
\draw[opacity=0.2,fill = mycolor2,] (O)--(uu2)--(u23)--(uu3)--cycle;
\draw[opacity=0.2,fill = mycolor3,] (O)--(uu3)--(u31)--(uu1)--cycle;
\draw (0.6,-1) node{};
\draw (-1,0.5) node{};
\draw (1,1) node{};
\end{tikzpicture} 
\caption{An illustration of the $\Z$-linear map $F: N' \rightarrow N$ from Example \ref{ex:P2Cox}. The ray generators of $\Sigma'(1), \Sigma(1)$ are depicted as red arrows and the two dimensional cones are colored in blue, orange and yellow. 
}
\label{fig:latticehom}
\end{figure}
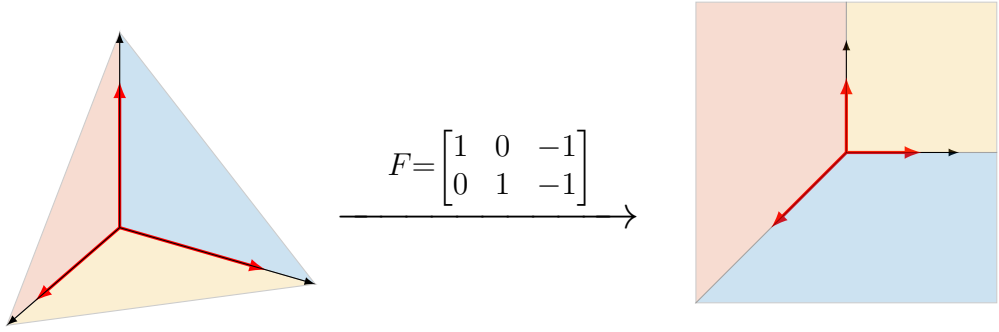
The \emph{compatibility} of the map $F$ with the fans $\Sigma'$ and $\Sigma$ comes down to the fact that each cone of $\Sigma'$ is mapped (under the $\R$-map $F_\R = F \otimes_\Z \R$ associated to $F$) into a cone of $\Sigma$. In Figure \ref{fig:latticehom} the 2-dimensional cones have matching colors according to this association. Note that the three dimensional cone $\sigma = {\rm Cone}(e_1,e_2,e_3)$ of the positive orthant in $\R^3$ is not mapped to a cone of $\Sigma$ under $F_\R$. Therefore, this cone does not belong to $\Sigma'$. Taking this three dimensional cone out of $\Sigma'$ corresponds to taking the origin out of $\C^3$ (Example \ref{ex:removecones}). Hence $\C^3 \setminus \{0\} = X_{\Sigma'}$.
\end{example}

\subsection{Toric varieties as GIT quotients} \label{subsec:coxconstr}
In this subsection we describe the construction of a toric variety $X$ as the image of a map
$$ \pi: \C^k \setminus Z \rightarrow X,$$
where $Z \subset \C^k$ is a subvariety and $\pi$ is invariant under an algebraic group action $G \times (\C^k \setminus Z) \rightarrow (\C^k \setminus Z)$. This construction is due to Cox \cite{cox1995homogeneous}. It generalizes Example \ref{ex:P2Cox} to toric varieties $X = X_\Sigma$ without torus factors. In the case of $X = \PP^n$,  we have $k = n+1$, $Z = \{0\}$ and $G = \C^*$. More generally, we will see that
\begin{itemize}
\item $k = |\Sigma(1)|$ is the number of rays in $\Sigma$, 
\item $Z$ is a union of coordinate subspaces,
\item $G$ is a quasi-torus acting algebraically on $\C^k \setminus Z$.
\end{itemize}
We should mention that the result had been described in the analytic category by Audin, Delzant and Kirwan, see \cite[Chapter 6]{audin2012topology} and references therein. 

In what follows, it is instructive to keep Example \ref{ex:P2Cox} in mind as a reference. Let $\Sigma(1) = \{\rho_1, \ldots, \rho_k\}$ and let $u_i \in N$ be the primitive ray generator of $\rho_i$. We collect the $u_i$ in a matrix 
$$ F = [u_1 ~ \cdots ~ u_k] \in \Z^{n \times k}.$$
This gives a lattice homomorphism $F: N' \rightarrow N$ where $N' = \Z^k$. Consider the fan given by the positive orthant in $\R^k$ and all its faces. We let $\Sigma'$ be the subfan of all the cones whose image under $F_\R$ is contained in a cone of $\Sigma$. By construction, the lattice homomorphism $F$ is compatible with the fans $\Sigma'$ and $\Sigma$ in $N_\R'$ and $N_\R$ respectively. It follows that $F$ gives a toric morphism $\pi: X_{\Sigma'} \rightarrow X_\Sigma$, where $X_{\Sigma'} = \C^k \setminus Z$ and $Z$ is a union of coordinate subspaces. We now give a description of $Z$ as the affine variety of a radical monomial ideal. Let $S =  \C[x_1,\ldots,x_k]$ be the coordinate ring of $\C^k$ and for each $\sigma \in \Sigma$, consider the monomial 
\begin{equation} \label{eq:xsigmahat}
x^{\hat{\sigma}} = \prod_{\rho_i \not \subset \sigma} x_i,
\end{equation}
where the product ranges over all $i \in \{1, \ldots, k\}$ such that $\rho_i \not \subset \sigma$.
\begin{proposition}
The subvariety $Z \subset \C^k$ for which $X_{\Sigma'}  = \C^k \setminus Z$ is $Z = V_{\C^k}(\Irrel)$ with 
\begin{equation} \label{eq:irrelideal}
 \Irrel = \left \langle x^{\hat{\sigma}} ~|~ \sigma \in \Sigma  \right \rangle = \left \langle x^{\hat{\sigma}} ~|~ \sigma \in \Sigma_{\max} \right \rangle.
 \end{equation}
\end{proposition}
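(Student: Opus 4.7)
The plan is to combine the orbit--cone correspondence (Theorem \ref{thm:orbitcone}) with a direct combinatorial translation between vanishing of the monomials $x^{\hat\sigma}$ and membership in cones of $\Sigma'$. First I would fix some notation: for $I \subseteq \{1,\ldots,k\}$ let $\tau_I = {\rm Cone}(e_i : i \in I)$, a face of the positive orthant, and for $p \in \C^k$ let $I(p) = \{i : p_i = 0\}$. Under the standard identification $\C^k = U_{\R^k_{\geq 0}}$, the orbit--cone correspondence says that the torus orbit in $\C^k$ attached to $\tau_I$ is exactly $O(\tau_I) = \{p \in \C^k : I(p) = I\}$, and $\C^k$ is the disjoint union of these orbits as $I$ ranges over subsets of $\{1,\ldots,k\}$.

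Next I would unfold the definition of $\Sigma'$. By construction $\tau_I \in \Sigma'$ iff $F_\R(\tau_I) = {\rm Cone}(u_i : i \in I)$ is contained in some cone of $\Sigma$, equivalently iff there exists $\sigma \in \Sigma$ with $\rho_i \preceq \sigma$ for all $i \in I$. By Remark \ref{rem:maxcones} and the orbit--cone correspondence applied to $\Sigma'$ inside the ambient fan $\R^k_{\geq 0}$, we have
\[ X_{\Sigma'} = \bigsqcup_{\tau_I \in \Sigma'} O(\tau_I), \qquad Z = \C^k \setminus X_{\Sigma'} = \bigsqcup_{\tau_I \notin \Sigma'} O(\tau_I). \]
Hence $p \in Z$ iff $I(p)$ is \emph{not} contained in the ray set of any cone of $\Sigma$.

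Now I would compute $V_{\C^k}(\Irrel)$ set-theoretically and match it with the description of $Z$ above. Directly from \eqref{eq:xsigmahat}, one has $x^{\hat\sigma}(p) \neq 0$ iff $p_i \neq 0$ for every $i$ with $\rho_i \not\preceq \sigma$, i.e.\ iff $\{i : \rho_i \not\preceq \sigma\} \cap I(p) = \emptyset$, i.e.\ iff $\rho_i \preceq \sigma$ for all $i \in I(p)$. Therefore $p \notin V_{\C^k}(\Irrel)$ iff there exists $\sigma \in \Sigma$ with $\rho_i \preceq \sigma$ for all $i \in I(p)$. Comparing with the previous paragraph, $p \notin V_{\C^k}(\Irrel)$ iff $p \notin Z$, so $Z = V_{\C^k}(\Irrel)$.

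Finally, for the second equality in \eqref{eq:irrelideal}, I would use that if $\tau \preceq \sigma$ are cones of $\Sigma$, then every ray of $\tau$ is a ray of $\sigma$, so $\{i : \rho_i \not\preceq \sigma\} \subseteq \{i : \rho_i \not\preceq \tau\}$; consequently $x^{\hat\sigma}$ divides $x^{\hat\tau}$ in $S$, making $x^{\hat\tau}$ redundant as a generator. Since every cone of $\Sigma$ is a face of some maximal cone, this shows the ideal is already generated by $\{x^{\hat\sigma} : \sigma \in \Sigma_{\max}\}$. The main conceptual step is the translation in the third paragraph between the combinatorial condition ``$I(p)$ lies in the ray set of some cone of $\Sigma$'' and the algebraic condition ``some $x^{\hat\sigma}$ is nonzero at $p$''; once that is in place, the rest is bookkeeping.
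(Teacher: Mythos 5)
Your proof is correct. The paper does not supply an argument at all for the first equality—it simply refers the reader to \cite[Prop.~5.1.9]{cox2011toric}—so your self-contained derivation via the orbit--cone correspondence is a genuine addition. Your key translation is exactly right: $x^{\hat\sigma}(p)\neq 0$ iff $\rho_i\subset\sigma$ for all $i\in I(p)$, which matches the membership criterion $\tau_{I(p)}\in\Sigma'$. One point worth making explicit, which you use implicitly: for $\rho_i\in\Sigma(1)$ and $\sigma\in\Sigma$, the inclusion $\rho_i\subset\sigma$ is equivalent to $\rho_i\preceq\sigma$ (since $\rho_i\cap\sigma$ must be a face of both in a fan), and therefore $F_\R(\tau_I)=\operatorname{Cone}(u_i:i\in I)\subset\sigma$ is equivalent to $\rho_i\subset\sigma$ for all $i\in I$. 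Without this the ``equivalently'' in your second paragraph is slightly too quick. The argument for the second equality (passing to maximal cones by divisibility of the monomials) is precisely the observation the paper records and is fine as written. As an alternative route closer in spirit to what is used later in Section \ref{sec:homog}, one could instead invoke Lemma \ref{lem:imagedistpoints} to see $\pi^{-1}(U_\sigma)=\C^k\setminus V_{\C^k}(x^{\hat\sigma})$ for each maximal $\sigma$, and take the union over $\Sigma_{\max}$; this avoids the orbit enumeration at the price of appealing to the structure of the toric morphism $\pi$. Both are valid; yours is arguably more elementary because it only needs the orbit--cone correspondence for $\C^k$ itself.
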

\begin{proof}
See \cite[Prop.~5.1.9]{cox2011toric}. Note that $\left \langle x^{\hat{\sigma}} ~|~ \sigma \in \Sigma  \right \rangle = \left \langle x^{\hat{\sigma}} ~|~ \sigma \in \Sigma_{\max} \right \rangle$ simply follows from the fact that $x^{\hat{\sigma}}$ for $\sigma \in \Sigma_{\max}$ are the minimal generators.
\end{proof}
A geometric description of $Z$ as a union of coordinate subspaces uses \emph{primitive collections}.
\begin{definition}[Primitive collection]
A subset $C \subset \Sigma(1)$ of rays is a \emph{primitive collection} if 
\begin{enumerate}
\item $C \not \subset \sigma(1)$ for all $\sigma \in \Sigma$ and 
\item for every proper subset $C' \subsetneq C$, there is $\sigma \in \Sigma$ for which $C' \subset \sigma(1)$. 
\end{enumerate}
\end{definition}
\begin{proposition} \label{prop:primcol}
The subvariety $Z \subset \C^k$ for which $X_{\Sigma'}  = \C^k \setminus Z$ is 
\[ Z = \bigcup_{C \text{ primitive collection}} V_{\C^k}(~x_i ~|~ \rho_i \in C~). \]
\end{proposition}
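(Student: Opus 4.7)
My plan is to derive the description of $Z$ in terms of primitive collections directly from the ideal-theoretic description $Z = V_{\C^k}(\Irrel)$ with $\Irrel = \langle x^{\hat\sigma} \mid \sigma \in \Sigma \rangle$ given in \eqref{eq:irrelideal}. The key observation is that for a point $p \in \C^k$, vanishing conditions on the monomials $x^{\hat\sigma}$ translate into combinatorial conditions on the set
\[
C(p) = \{ \rho_i \in \Sigma(1) \mid p_i = 0 \}.
\]
Specifically, since $x^{\hat\sigma} = \prod_{\rho_i \not\subset \sigma} x_i$, we have $x^{\hat\sigma}(p) = 0$ if and only if there is some $\rho_i \in \Sigma(1) \setminus \sigma(1)$ with $\rho_i \in C(p)$, i.e.\ $C(p) \not\subset \sigma(1)$. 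Hence
\[
p \in Z \iff C(p) \not\subset \sigma(1) \text{ for every } \sigma \in \Sigma.
\]

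On the other hand, $p$ lies in the right-hand side of the claimed equality if and only if there exists a primitive collection $C \subset C(p)$. So the proposition reduces to the combinatorial equivalence: \emph{$C(p) \not\subset \sigma(1)$ for every $\sigma \in \Sigma$ if and only if $C(p)$ contains a primitive collection.}

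I would establish this equivalence in the obvious two directions. For $(\Leftarrow)$: if a primitive collection $C \subset C(p)$ exists, then by definition of primitive collection $C \not\subset \sigma(1)$ for any $\sigma \in \Sigma$, and so $C(p) \supset C$ also fails to be contained in any $\sigma(1)$. For $(\Rightarrow)$: assume $C(p) \not\subset \sigma(1)$ for every $\sigma \in \Sigma$, and take $C \subset C(p)$ minimal with respect to inclusion among subsets satisfying the same property (such a $C$ exists since $C(p)$ itself works and $\Sigma(1)$ is finite). Then $C$ satisfies condition 1 of the definition of a primitive collection by construction, while minimality of $C$ ensures that any proper subset $C' \subsetneq C$ is contained in $\sigma(1)$ for some $\sigma \in \Sigma$, which is condition 2. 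Thus $C$ is primitive.

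The main (minor) obstacle is being careful that the set $C$ produced by minimality really lives in $C(p)$ and really is primitive in both senses of the definition; once that combinatorial step is in hand, the geometric statement follows by taking $p$ to range over $Z$ and recognizing both sides as unions of coordinate subspaces indexed by the relevant subsets of $\Sigma(1)$. No further input from the fan structure is needed beyond \eqref{eq:irrelideal}.
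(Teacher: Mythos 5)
The paper gives no proof of its own here, simply deferring to \cite[Prop.~5.1.6]{cox2011toric}, so there is no in-paper argument to compare against. Your proof is correct and is the standard one: it reduces the geometric claim to the combinatorial equivalence that the set $C(p) = \{\rho_i : p_i = 0\}$ fails to be contained in $\sigma(1)$ for every $\sigma\in\Sigma$ if and only if $C(p)$ contains a primitive collection, with the forward implication supplied by choosing an inclusion-minimal subset of $C(p)$ not lying in any $\sigma(1)$; this is essentially the argument in the cited reference as well.
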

\begin{proof}
See \cite[Prop.~5.1.6]{cox2011toric}.
\end{proof}
\begin{example} \label{ex:baselocusP2}
Let $X = \PP^2$ and let $F$ be as in Example \ref{ex:P2Cox}. With $\sigma_i$ as in Figure \ref{fig:fanP2} we have 
\[ x^{\hat{\sigma}_0} = x_3, \quad  x^{\hat{\sigma}_1} = x_1, \quad  x^{\hat{\sigma}_2} = x_2, \]
and hence $B = \langle x_1, x_2, x_3 \rangle$ as expected. The only primitive collection is $C = \Sigma(1)$. 
\end{example}
\begin{exercise} \label{ex:baselocusP1P1}
With $F$ as in Example \ref{ex:divisorsP1P1}, show that for $X = \PP^1 \times \PP^1$ we have $B = \langle x_1x_2, x_1x_4, x_3x_2, x_3x_4 \rangle$. Identify the primitive collections and verify Proposition \ref{prop:primcol}.
\end{exercise}
The morphism $\pi$ is an extension of a map of tori $\pi_{|(\C^*)^k}: (\C^*)^k \rightarrow (\C^*)^n$, which has an easy description based on the matrix $F$. It is given by the Laurent monomial map
\begin{equation} \label{eq:monmap}
\pi |_{(\C^*)^k} = F \otimes_\Z \C^* : (\C^*)^k \rightarrow (\C^*)^n \quad \text{where} \quad (t_1,\ldots,t_k) \mapsto (t^{F_{1,:}}, \ldots, t^{F_{n,:}}).
\end{equation}
This uses the short notation $z^a = z_1^{a_i}\cdots z_k^{a_k}$ and $F_{i,:}$ for the $i$-th row of $F$. The kernel of $\pi |_{(\C^*)^k}$ (as a group homomorphism) is given by 
\begin{equation} \label{eq:G}
G = \{ g \in (\C^*)^k ~|~ g^{F_{1,:}} = \cdots = g^{F_{n,:}} = 1 \}.
\end{equation}
This is a subgroup $G \subset (\C^*)^k$ which acts on $\C^k$ by 
$$ (g_1, \ldots, g_k) \cdot (x_1, \ldots, x_k) \mapsto (g_1x_1, \ldots, g_k x_k)$$
(this is the restriction of the action of $(\C^*)^k$ on $\C^k$ to $G$). 
\begin{example}
For $X = \PP^2$, $G = \{ (g_1,g_2,g_3) \in (\C^*)^3 ~|~ g_1 g_3^{-1} = g_2 g_3^{-1} = 1 \} = \{ (\lambda,\lambda,\lambda)  ~|~ \lambda \in \C^* \} \simeq \C^*$, and the action is the usual coordinate-wise scaling. 
\end{example}
The morphism $\pi$ is constant on $G$-orbits in $\C^k \setminus Z$. Indeed, by equivariance (Proposition \ref{prop:equivariance}), $\pi(g \cdot x) = \pi(g) \cdot \pi(x) = \pi(x)$ for all $g \in G$.  The following theorem uses some terminology for geometric invariant theory (GIT) quotients from \cite[Sec.~5.0]{cox2011toric}. 
\begin{theorem} \label{thm:cox}
The morphism $\pi: \C^k \setminus Z \rightarrow X_\Sigma$ coming from $F = [u_1 ~ \cdots ~ u_k]$ is an almost geometric quotient for the action of $G$ on $\C^k \setminus Z$. The open subset $U \subset X_\Sigma$ for which $\pi |_{\pi^{-1}(U)}$ is a geometric quotient is such that $(X_\Sigma \setminus U)$ has codimension at least 3 in $X_\Sigma$.
\end{theorem}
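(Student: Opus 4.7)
The plan is to work locally on the affine open cover $X_\Sigma = \bigcup_{\sigma \in \Sigma_{\max}} U_\sigma$ and verify the GIT quotient conditions one cone at a time. For each $\sigma \in \Sigma$, I would first identify the preimage $\pi^{-1}(U_\sigma)$. By the construction of $\Sigma'$, the cone $\hat\sigma = \mathrm{Cone}(e_i \mid \rho_i \subset \sigma) \subset \R^k$ belongs to $\Sigma'$, and $U_{\hat\sigma} \simeq \C^{|\sigma(1)|} \times (\C^*)^{k-|\sigma(1)|}$ is precisely the localization of $\C^k$ at the monomial $x^{\hat\sigma}$ from \eqref{eq:xsigmahat}. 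Using Example \ref{ex:affineopensubsets} together with compatibility of $F$ with $\Sigma'$ and $\Sigma$, I would show $\pi^{-1}(U_\sigma) = U_{\hat\sigma} = (\C^k \setminus Z)_{x^{\hat\sigma}}$.

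Next I would check that $\pi$ realizes $U_\sigma$ as a categorical quotient of $U_{\hat\sigma}$ by $G$. Since the action of $G \subset (\C^*)^k$ is by restriction of the diagonal scaling, the ring $\C[U_{\hat\sigma}] = \C[x_1,\ldots,x_k]_{x^{\hat\sigma}}$ is $M'$-graded where $M' = \Z^k/F^\top M$ is the character group of $G$ (this uses the exact sequence $0 \to M \to \Z^k \to M' \to 0$ dual to $G \hookrightarrow (\C^*)^k \twoheadrightarrow T_N$). The $G$-invariants are exactly the Laurent monomials $x^a$ with $a \in \ker(\Z^k \to M') = F^\top M$, i.e.\ the characters $\chi^m \circ \pi$ for $m \in \sigma^\vee \cap M$. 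This identifies $\C[U_{\hat\sigma}]^G = \C[S_\sigma] = \C[U_\sigma]$, so $\pi^*$ is an isomorphism onto invariants. Surjectivity of $\pi$ on closed points follows because every semigroup homomorphism $S_\sigma \to \C$ defining a point of $U_\sigma$ can be lifted to one of the coordinate semigroups of $U_{\hat\sigma}$, and gluing these local statements yields the almost-geometric quotient structure globally.

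The remaining step is to identify the largest open $U \subset X_\Sigma$ over which $\pi$ is a \emph{geometric} quotient, i.e.\ the fibers coincide with $G$-orbits. Here the key local fact is that over $U_\sigma$ the $G$-action on $\pi^{-1}(U_\sigma)$ has orbits equal to the fibers of $\pi$ if and only if the cone $\sigma$ is simplicial: in coordinates, lifting a point of $U_\sigma$ amounts to inverting the linear map $F_\sigma$ whose columns are the $u_\rho$, $\rho \in \sigma(1)$, and this inversion determines the lift up to the $G$-action exactly when $F_\sigma$ has full column rank, i.e.\ when $\sigma$ is simplicial. Therefore $U = \bigcup_{\sigma \text{ simplicial}} U_\sigma$ and its complement is the closed set $\bigcup_{\sigma \text{ not simplicial}} V(\sigma) = \bigcup_\sigma \overline{O(\sigma)}$. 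Since every $1$- and $2$-dimensional cone is automatically simplicial, any non-simplicial $\sigma$ satisfies $\dim \sigma \geq 3$, and by the orbit–cone correspondence (Theorem \ref{thm:orbitcone}) the closure $V(\sigma)$ has codimension $\dim \sigma \geq 3$ in $X_\Sigma$.

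The main obstacle I anticipate is the computation of $G$-invariants $\C[U_{\hat\sigma}]^G$: identifying the grading on $\C[x_1,\ldots,x_k]_{x^{\hat\sigma}}$ correctly and matching the degree-zero piece with $\C[\sigma^\vee \cap M]$ requires care with the dualization of $0 \to M \to \Z^k \to M' \to 0$ and with which monomials are invertible after localization at $x^{\hat\sigma}$. Once this local identification is in place, the global statements and the codimension bound follow from the orbit–cone dictionary already developed in Section \ref{sec:abstractTV}.
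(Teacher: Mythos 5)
The paper gives no proof of Theorem~\ref{thm:cox} beyond a citation to \cite[Thm.~2.1]{cox1995homogeneous} and \cite[Thm.~5.1.11]{cox2011toric}, so your attempt must be assessed on its own. Your outline is essentially the standard Cox/CLS argument: reduce to the affine charts $U_\sigma$, identify $\pi^{-1}(U_\sigma) = (\C^k \setminus Z)_{x^{\hat\sigma}}$ via compatibility of $F$ with $\Sigma'$ and $\Sigma$, compute $G$-invariants to recover $\C[\S_\sigma]$, glue the resulting affine good quotients, and read off the geometric locus and the codimension bound from the orbit--cone correspondence. The invariant-theory step you flag as the main obstacle is actually under control: the degree-zero monomials in $\C[x_1,\ldots,x_k]_{x^{\hat\sigma}}$ for the $\Cl(X_\Sigma)$-grading are exactly $x^{F^\top m}$ with $a = F^\top m$ satisfying $a_i \geq 0$ for $\rho_i \subset \sigma$, i.e.\ $m \in \sigma^\vee \cap M$, and injectivity of $F^\top$ (no torus factor) gives the isomorphism with $\C[\S_\sigma]$.

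The step that needs more work is the one you treat as transparent: that over $U_\sigma$ fibers coincide with $G$-orbits iff $\sigma$ is simplicial. Your ``inverting $F_\sigma$'' phrasing restates the desired conclusion rather than proving it. A genuine argument should analyze the fiber over the distinguished point $\gamma_\sigma$ (assuming $\dim\sigma = n$): a point $x \in \pi^{-1}(U_\sigma)$ lies over $\gamma_\sigma$ iff the set $J = \{i : \rho_i \subset \sigma, x_i = 0\}$ satisfies $\sigma^\vee \cap \{u_i : i \in J\}^\perp \cap M = \{0\}$. When $\sigma$ is simplicial this forces $J = \sigma(1)$, so the fiber is the single closed $G$-orbit $\{0\} \times (\C^*)^{k - |\sigma(1)|}$ of dimension $k-n$; when $\sigma$ is not simplicial there exist proper $J \subsetneq \sigma(1)$ with $\{u_i : i \in J\}$ spanning $N_\R$, producing extra non-closed orbits in the fiber, so the quotient is not geometric there. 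Filling in this fiber computation would complete the proof; the rest, including the codimension bound from $\dim O(\sigma) = n - \dim\sigma \leq n-3$ for non-simplicial $\sigma$, is correct.
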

\begin{proof}
See \cite[Thm.~2.1]{cox1995homogeneous} or \cite[Thm.~5.1.11]{cox2011toric}.
\end{proof}
Here is a longer, equivalent formulation of Theorem \ref{thm:cox} which uses less terminology. 
\begin{theorem} \label{thm:coxsimple}
Consider the action of the group $G$ in \eqref{eq:G} on $\C^k \setminus Z$. There is a one-to-one correspondence 
$$ \{\text{ closed $G$-orbits in $\C^k \setminus Z$ } \} \longleftrightarrow \{ \text{ points in $X_\Sigma$ } \}.$$
Moreover, there is an open subset $U \subset X_\Sigma$ which is such that ${\rm codim}_{X_\Sigma} (X_\Sigma \setminus U) \geq 3$ for which there is a one-to-one correspondence 
$$ \{\text{ $G$-orbits in $\pi^{-1}(U)$ } \} \longleftrightarrow \{ \text{ points in $U$ } \}.$$
These correspondences are realized by the toric morphism $\pi: \C^k \setminus Z \rightarrow X_\Sigma$ coming from $F$.
\end{theorem}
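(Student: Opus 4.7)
The plan is to first translate the lattice/fan picture into a precise algebraic description of the group $G$ and the map $\pi$, then to check the statement locally on affine pieces, and finally to glue. The setup proceeds as follows. Because $X_\Sigma$ is assumed (implicitly, via the existence of $Z$ as a proper subset) to have no torus factor, Theorem~\ref{thm:classgroup} gives the short exact sequence $0 \to M \xrightarrow{F^\top} \Z^k \to \Cl(X_\Sigma) \to 0$. Applying $\Hom_\Z(-,\C^*)$ dualizes this to $1 \to G \to (\C^*)^k \to T \to 1$, where the middle map is $\pi_{|(\C^*)^k}$ from \eqref{eq:monmap}, and simultaneously identifies $G = \Hom_\Z(\Cl(X_\Sigma),\C^*)$, which is visibly a quasi-torus acting algebraically on $\C^k \setminus Z$.

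The heart of the proof is a local analysis. For each $\sigma \in \Sigma$, I would define $\sigma' = \mathrm{Cone}(e_i : \rho_i \in \sigma(1)) \subset \R^k$, the unique cone in $\Sigma'$ whose rays are mapped bijectively by $F_\R$ onto $\sigma(1)$. Then I claim $\pi^{-1}(U_\sigma) = U_{\sigma'}$. One inclusion is clear from compatibility of $F$ with the fans (Proposition~\ref{prop:compcone}); the reverse uses Remark~\ref{rem:maxcones} together with the fact that any cone of $\Sigma'$ mapping into $\sigma$ is a face of $\sigma'$. A direct inspection of $(\sigma')^\vee$ then shows $U_{\sigma'} = \Specm\bigl(\C[x_1,\dots,x_k]_{x^{\hat\sigma}}\bigr)$, matching the description via the irrelevant ideal \eqref{eq:irrelideal}.

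Next I would prove that $U_\sigma \simeq U_{\sigma'}/G$ as an affine GIT quotient, by checking at the level of rings that
\begin{equation*}
\C[\S_\sigma] \;=\; \C[\S_{\sigma'}]^G.
\end{equation*}
The action of $G \subset (\C^*)^k$ on $\C[\S_{\sigma'}] \subset \C[\Z^k]$ is $g \cdot \chi^a = g^a \chi^a$, so a monomial $\chi^a$ is $G$-invariant iff $a$ lies in $\im F^\top$, by the short exact sequence above. An element $a \in (\sigma')^\vee \cap \Z^k$ of the form $a = F^\top m$ satisfies $\langle u_i, m\rangle = a_i \geq 0$ for all $\rho_i \in \sigma(1)$, hence $m \in \sigma^\vee \cap M = \S_\sigma$. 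Thus the $G$-invariant monomials are precisely $\{\chi^{F^\top m} : m \in \S_\sigma\}$, which is exactly the image of $\pi^*: \C[\S_\sigma] \hookrightarrow \C[\S_{\sigma'}]$ from Proposition~\ref{prop:compcone}. This establishes the one-to-one correspondence between closed $G$-orbits in $U_{\sigma'}$ and points of $U_\sigma$ (standard GIT for reductive quotients of affine varieties), and these local correspondences are compatible on overlaps $U_{\sigma_1} \cap U_{\sigma_2} = U_{\sigma_1 \cap \sigma_2}$, so they glue to the global bijection asserted in the first half of Theorem~\ref{thm:coxsimple}.

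For the almost-geometric-quotient assertion, the plan is to use the orbit-cone correspondence (Theorem~\ref{thm:orbitcone}) on both $X_{\Sigma'}$ and $X_\Sigma$. By Lemma~\ref{lem:imagedistpoints}, for each $\tau' \in \Sigma'$ contained in $\sigma'$, $\pi$ sends $O(\tau')$ into $O(\tau)$ where $\tau \in \Sigma$ is the minimal cone containing $F_\R(\tau')$; moreover this induced map of orbit tori is the one coming from $N'(\tau') \to N(\tau)$. When $\sigma$ is simplicial, the rays $\{u_i : \rho_i \in \sigma(1)\}$ are $\R$-linearly independent, so $F_\R$ restricts to a bijection $\sigma' \to \sigma$ of faces, the orbit $O(\tau)$ has a unique preimage orbit $O(\tau')$, and the fiber of $\pi$ over each point of $O(\tau)$ is a single $G$-orbit; all $G$-orbits in $U_{\sigma'}$ are then closed. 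Let $U = \bigcup\{U_\sigma : \sigma \in \Sigma \text{ simplicial}\}$. The complement $X_\Sigma \setminus U$ is a union of orbit closures $V(\sigma)$ for non-simplicial $\sigma \in \Sigma$; since every cone of dimension $\leq 2$ is simplicial, any such $\sigma$ has $\dim \sigma \geq 3$, and by Theorem~\ref{thm:orbitcone}(2), $\dim V(\sigma) = n - \dim\sigma \leq n-3$, giving the claimed codimension bound.

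\textbf{Main obstacle.} The ring-theoretic identification $\C[\S_\sigma] = \C[\S_{\sigma'}]^G$ and the lattice bookkeeping for $\pi^{-1}(U_\sigma) = U_{\sigma'}$ are straightforward once the exact sequences are set up. The subtle point is the analysis of what goes wrong at non-simplicial cones: one must check that the categorical (GIT) quotient $U_{\sigma'}/\!/G$ still agrees set-theoretically with $U_\sigma$ via the closed-orbit correspondence, while accepting that distinct strata $O(\tau'_1), O(\tau'_2) \subset U_{\sigma'}$ may map to the same $O(\tau) \subset U_\sigma$ and get identified only via orbit closures—this is the precise reason the quotient is almost but not quite geometric outside $U$.
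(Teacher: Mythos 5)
The paper does not prove this theorem; it is stated as a reformulation of Theorem~\ref{thm:cox}, whose proof is deferred to \cite[Thm.~2.1]{cox1995homogeneous} and \cite[Thm.~5.1.11]{cox2011toric}. Your sketch reconstructs the standard argument, and most of it is sound. In particular, the identification $\pi^{-1}(U_\sigma)=U_{\sigma'}$ with $\sigma'=\mathrm{Cone}(e_i:\rho_i\in\sigma(1))$ (valid because any ray of $\Sigma$ contained in $\sigma$ is a ray of $\sigma$, so cones of $\Sigma'$ mapping into $\sigma$ are faces of $\sigma'$), the computation $\C[\S_\sigma]=\C[\S_{\sigma'}]^G$ from the short exact sequence \eqref{eq:SEScox}, the compatibility on overlaps via $\sigma_1(1)\cap\sigma_2(1)=(\sigma_1\cap\sigma_2)(1)$, and the codimension bound via $\dim\tau\geq 3$ for $\tau$ non-simplicial all check out.

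The genuine gap is in the claim that the quotient is geometric over the simplicial locus $U$. You deduce that ``the fiber of $\pi$ over each point of $O(\tau)$ is a single $G$-orbit'' from the observation that, for $\sigma$ simplicial, $F_\R$ identifies the faces of $\sigma'$ with the faces of $\sigma$, so $O(\tau)$ has a unique preimage orbit $O(\tau')$. But that is a statement about the decomposition of $U_{\sigma'}$ into orbits of the big torus $(\C^*)^k$; it tells you $\pi^{-1}(O(\tau))\cap U_{\sigma'}=O(\tau')$, not that $\pi$ restricted to $O(\tau')$ has fibers that are single $G$-orbits. On $O(\tau')\simeq T_{N'(\tau')}$ the map $\pi$ is a morphism of tori to $T_{N(\tau)}$, and what must be shown is that its kernel coincides with the image of $G$ under $T_{N'}\twoheadrightarrow T_{N'(\tau')}$ --- a separate lattice diagram chase in which the $\R$-linear independence of $\{u_i:\rho_i\in\sigma(1)\}$ is used in an essential way (compare \cite[Lem.~5.1.10]{cox2011toric}). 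Without this, the assertion that all $G$-orbits in $U_{\sigma'}$ are closed, and hence that $\pi$ is a geometric quotient over $U$, is unsupported. I would also note that your closing ``main obstacle'' paragraph locates the subtlety in the wrong place: the set-theoretic agreement of the categorical quotient with $U_\sigma$ at non-simplicial cones is the routine part (it is what affine GIT for a reductive group gives you once $\C[\S_\sigma]=\C[\S_{\sigma'}]^G$ is established), whereas the nontrivial assertion is precisely the one you glossed over, namely that simpliciality forces all orbits to be closed.
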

\begin{remark} \label{rem:U}
The open subset $U \subset X_\Sigma$ in Theorems \ref{thm:cox} and \ref{thm:coxsimple} is the toric variety of the largest simplicial subfan of $\Sigma$, see for instance the proof of Theorem 5.1.11 in \cite{cox2011toric}. That is, \[X_\Sigma \setminus U = \bigcup_{\sigma \text{ non-simplicial}} O(\sigma) \] 
It follows immediately that $X_\Sigma \setminus U$ has codimension at least 3 in $X_\Sigma$: all cones of dimension $\leq 2$ are simplicial. If $\Sigma$ is a simplicial fan, then $\pi: \C^k \setminus Z \rightarrow X_\Sigma$ is a \emph{geometric quotient}, meaning that the nicest possible correspondence holds: $G$-orbits in $\C^k \setminus Z$ are points in $X_\Sigma$.
\end{remark}
\begin{example} \label{ex:pointsinWPS}
The toric variety $X = \PP^2$ is simplicial, so $U = X$. The matrix $F$, the variety $Z$ and the ideal $\Irrel$ for $X_\Sigma = \PP^2$ were computed in previous examples. The (real part of the) closure of three $G$-orbits in $\C^3$ are shown in Figure \ref{fig:Gorbits}. This corresponds to the familiar fact that points in $\PP^2$ are lines through the origin in $\C^3$.
\begin{figure}[h!]
\centering
\begin{minipage}{0.45\textwidth}
\centering
\includegraphics[scale = 1.2]{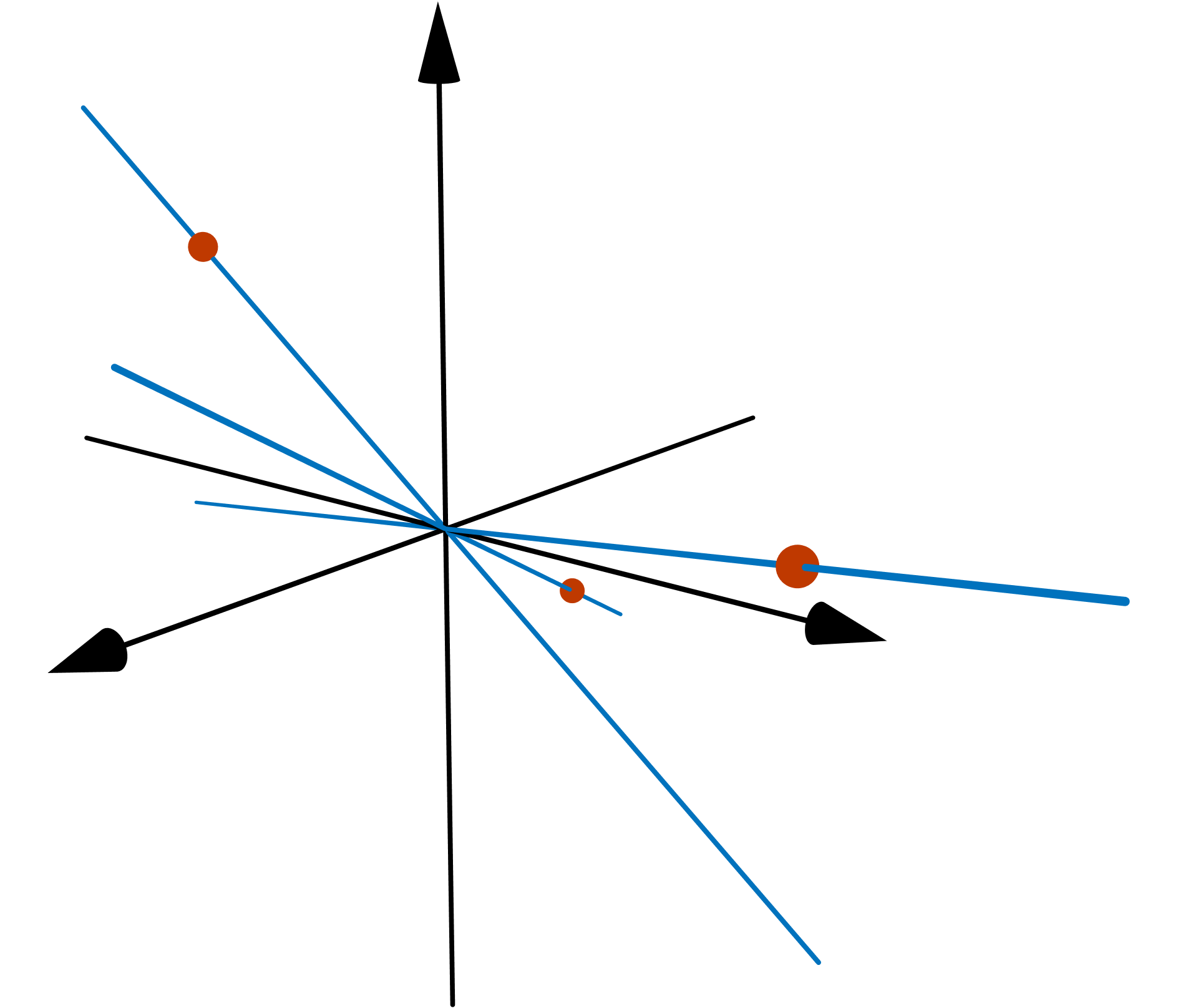}
\end{minipage}
\begin{minipage}{0.45\textwidth}
\centering
\includegraphics[scale = 1.2]{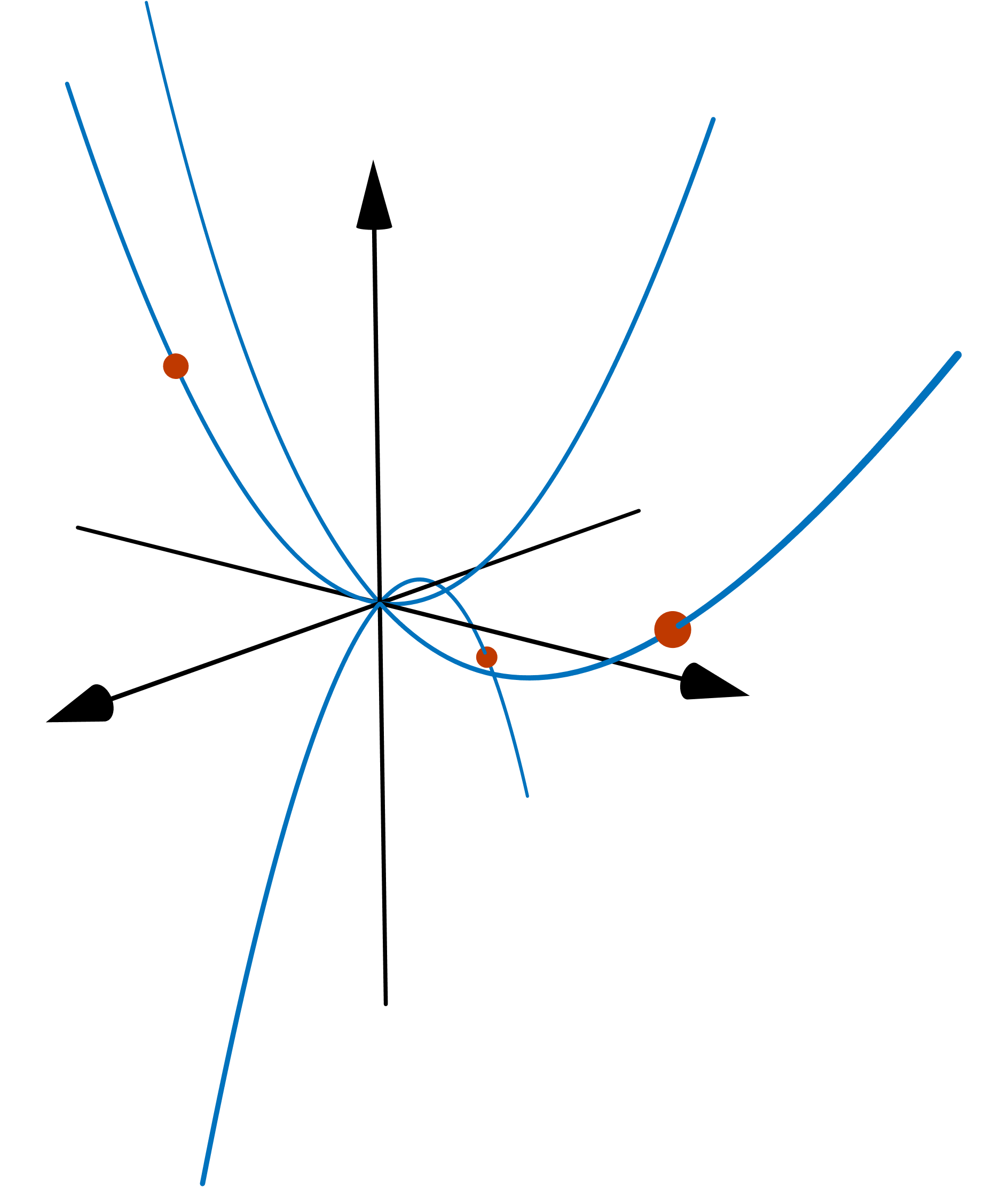}
\end{minipage}
\caption{Real $G$-orbits (closures in $\R^3$) of three points (orange dots) in the quotient construction of $\PP^2$ (left) and $\PP_{(1,2,1)}$ (right). }
\label{fig:Gorbits}
\end{figure}
We now consider the complete fan $\Sigma$ in $\R^2$ whose rays are given by 
$$ F = \begin{bmatrix}
1 & 0 & -1\\
0 & 1 & -2
\end{bmatrix}.$$
One checks that $Z = \{0\}$ and $G = \{(\lambda, \lambda^2, \lambda) ~|~ \lambda \in \C^*\} \simeq \C^*$. Some orbits are shown in the right part of Figure \ref{fig:Gorbits}. The toric variety $X_\Sigma$ is the \emph{weighted projective space} $\PP_{(1,2,1)}$. The figure suggests that we can think of points in $\PP_{(1,2,1)}$ as `curves through the origin in $\C^3$'.
\end{example}

\begin{exercise}[A non-simplicial example] \label{ex:nonsimplicial}
Example 3.1 in \cite{duff2020polyhedral} considers the toric threefold $X = X_\Sigma$ corresponding to a pyramid in $\R^3$ whose normal fan $\Sigma$ has rays with generators
$$ F = \begin{bmatrix}
0 & 1 & 0 & -1 & 0 \\ 0 & 0 & 1 & 0 & -1 \\ 1 & -1 & -1 & -1 & -1 
\end{bmatrix} = \begin{bmatrix}
u_1 & u_2 & u_3 & u_4 & u_5
\end{bmatrix}.$$
Find  $B$, $Z$ and identify the subset $U$ using Remark \ref{rem:U} by describing its fan.  
\end{exercise}

\subsection{The Cox ring of a toric variety}
In order to associate the ring $S$ (with its distinguished ideal $\Irrel$) to our toric variety $X_\Sigma$, we will equip it with a grading such that homogeneous elements in $S$ define varieties in $\C^k$ which are stable under the action of $G$. The grading will be by the \emph{(divisor) class group} $\Cl(X_\Sigma)$ of $X_\Sigma$, which is the group of Weil divisors modulo linear equivalence \cite[Ch.~4]{cox2011toric}. For toric varieties, the class group is easy to describe explicitly. We start by recalling the procedure in the notation of this section and give an alternative description of the group $G$. 

Let $D_1, \ldots, D_k$ be the torus invariant prime divisors on $X_\Sigma$ corresponding to $\rho_1, \ldots, \rho_k \in \Sigma(1)$ respectively. These are the closures of the codimension 1 torus orbits of $X_\Sigma$, see Theorem \ref{thm:orbitcone}. The divisors $D_1, \ldots, D_k$ generate the free group $\Div_T(X_\Sigma)$. Identifying $\Div_T(X_\Sigma) \simeq \Z^k$, Theorem \ref{thm:classgroup} gives a short exact sequence 
\begin{equation} \label{eq:SEScox}
 0 \longrightarrow M \overset{F^\top}{\longrightarrow} \Z^k \longrightarrow \Cl(X_\Sigma) \longrightarrow 0
\end{equation}
where the first map is $F^\top = \div$ and the second map takes a torus invariant divisor to its class in $\Cl(X_\Sigma)$. This gives $\Cl(X_\Sigma) \simeq \Z^k/\im F^\top$. Note that taking $\Hom_\Z(-, \C^*)$ of \eqref{eq:SEScox} gives us back the map of tori $\pi_{|(\C^*)^k}: (\C^*)^k \rightarrow (\C^*)^n$ from the geometric construction discussed above. We conclude that the group $G$ is isomorphic to $\Hom_\Z(\Cl(X_\Sigma), \C^*) \subset (\C^*)^k$. 

For an element $\alpha = [\sum_{i=1}^k a_i D_i] \in \Cl(X_\Sigma)$, we define the $\C$-vector subspace 
\begin{equation} \label{eq:gradedpieces}
 S_\alpha = \bigoplus_{F^\top m + a \geq 0} \C \cdot x^{F^\top m + a} \subset S,
 \end{equation}
where the sum ranges over all $m \in M$ satisfying $\langle u_i,m \rangle + a_i \geq 0$, for $i = 1, \ldots, k$. One can check that this definition is independent of the chosen representative for $\alpha$: setting $a' = a + F^\top m'$ for some $m' \in M$ gives the same vector subspace $S_\alpha$. 
We consider the grading 
\begin{equation} \label{eq:grading}
S = \bigoplus_{\alpha \in \Cl(X_\Sigma)} S_\alpha 
\end{equation} 
\begin{definition}[Cox ring]
The ring $S$ with its \emph{irrelevant ideal} \eqref{eq:irrelideal} and the grading \eqref{eq:grading} is called the \emph{homogeneous coordinate ring, total coordinate ring} or \emph{Cox ring} of $X_\Sigma$. 
\end{definition}
If $f = \sum_{F^\top m + a \geq 0} c_m x^{F^\top m + a} \in S_\alpha$ is homogeneous of degree $\alpha$, then for $g \in G \subset (\C^*)^k$,
$$ f(g \cdot x) = \sum_{F^\top m + a \geq 0} c_m (g \cdot x)^{F^\top m + a} = g^a f(x),$$
where we use that $g^{F^\top m } = 1$ by definition of $G$. Hence $V_{\C^k}(f)$ is stable under the action of $G$.
We define the \emph{variety defined by $f$ in $X$} as
\begin{equation} \label{eq:subhyper}
V_{X_\Sigma}(f) = \pi( V_{\C^k}(f)  \cap (\C^k \setminus Z)) = \{ p \in X_\Sigma ~|~ f(x) = 0 \textup{ for some } x \in \pi^{-1}(p) \}.
\end{equation}
The generalized definition for homogeneous ideals $I \subset S$ is 
\begin{equation} \label{eq:subvar}
V_{X_\Sigma}(I) = \{ p \in X_\Sigma ~|~ f(x) = 0 \textup{ for some } x \in \pi^{-1}(p) \text{ for all } f \in I \}.
\end{equation}
When $X$ is simplicial, Theorem \ref{thm:coxsimple} and the observation that $V_{\C^k}(I)$ is stable under $G$ imply $\pi^{-1}(V_X(I)) = V_{\C^k}(I)  \cap (\C^k \setminus Z)$. In the non-simplicial case, the inclusion $\supset$ might be strict.
\begin{example}
For the toric variety from Exercise \ref{ex:nonsimplicial}, the ideal $I = \langle x_2, x_3, x_4, x_5 \rangle \subset S = \C[x_1, x_2, x_3, x_4, x_5]$ is homogeneous, and $V_{X}(I) = \{ p \} = X \setminus U$ is a point. The fiber $\pi^{-1}(p)$ contains a unique closed $G$-orbit, equal to $V_X(I) \cap (\C^k \setminus Z)$. However, it contains many more non-closed $G$-orbits, see \cite[Example 3.1]{duff2020polyhedral}.
\end{example}
We generalize the table in Example \ref{ex:P2Cox} for toric varieties $X_\Sigma$ and add some terminology:
\[
\begin{matrix}
&\text{Algebra} & & \text{Geometry} \\ \hline
\text{Cox ring} &S & \overset{\Specm(\cdot)}{\longrightarrow} & \C^k & \text{total coordinate space} \\
\text{irrelevant ideal}& \Irrel & \overset{V_{\C^k}(\cdot)}{\longrightarrow} & Z & \text{base locus} \\
\text{class group} &\Cl(X_\Sigma) & \overset{\Hom_\Z(\cdot,\C^*)}{\longrightarrow} & G & \text{reductive group}
\end{matrix}
\]
We point out that under our assumption that $\Sigma$ is complete, all of the graded pieces $S_\alpha, \alpha \in \Cl(X_\Sigma)$ are finite dimensional $\C$-vector spaces \cite[Prop.~4.3.8]{cox2011toric}.
\begin{remark}
In this construction, there is a one-to-one correspondence between 
\begin{enumerate}
\vspace{-0.0cm}\item the variables $x_1, \ldots, x_k$ of $S$, 
\vspace{-0.2cm}\item the rays $\rho_1, \ldots, \rho_k$ of $\Sigma(1)$, 
\vspace{-0.2cm}\item the columns $u_1, \ldots, u_k$ of $F$,
\vspace{-0.2cm}\item the torus invariant prime divisors $D_1,\ldots, D_k$,
\vspace{-0.2cm}\item[5.] (if $\Sigma = \Sigma_P$) the facets of $P$.
\end{enumerate}
We have that $D_i = V_{X_\Sigma}(x_i)$ and $\pi(x) \in D_i \Leftrightarrow x_i = 0$.
\end{remark}
\begin{example} \label{ex:gradingP2}
Let $X_\Sigma = \PP^2$. The class group is $\Cl(\PP^2) \simeq \Z^3/\im F^\top \simeq \Z$, see Example \ref{ex:divisorsP2}. 
Using, for instance, the identification $\Z^3/\im F^\top \rightarrow \Z$ given by $(a_1,a_2,a_3) + \im F^\top = (0,0,a_1+a_2+a_3) + \im F^\top \mapsto a_1 + a_2 + a_3 \in \Z$ (the divisors $a_1D_1 + a_2D_2 + a_3D_3$ and $(a_1+a_2+a_3)D_3$ are linearly equivalent), we see that the $\Z$-grading on $S$ is the usual grading of the homogeneous coordinate ring of $\PP^2$: $\deg(x_1^{a_1}x_2^{a_2}x_3^{a_3}) = a_1 + a_2 + a_3$ and 
$$S_{[d D_3]} = \bigoplus_{\substack{m_1 \geq 0\\m_2 \geq 0\\ d - m_1 - m_2 \geq 0}} \C \cdot x_1^{m_1} x_2^{m_2} x_3^{d-m_1-m_2}$$
is spanned by monomials of `degree' $d$, in the classical sense. 
\end{example}
\begin{example} \label{ex:hirz3}
The Hirzebruch surface $\mathscr{H}_2$ is the toric variety of the fan from Figure \ref{fig:H2}. The matrix $F$ is 
$$ F = \begin{bmatrix}
1&0&-1&0\\0&1&2&-1
\end{bmatrix}.$$
The Cox ring $S = \C[x_1, x_2,x_3,x_4]$ is graded by $ \Cl(\mathscr{H}_2)\simeq \Z^4/\im F^\top \simeq \Z^2$, with $\deg(x^a) = \deg(x_1^{a_1}x_2^{a_2}x_3^{a_3}x_4^{a_4}) = (a_1 - 2a_2+a_3, a_2 + a_4)$. The reductive group and base locus are given by 
$G = \{(\lambda, \mu, \lambda, \lambda^2 \mu) ~|~ (\lambda,\mu)\in (\C^*)^2 \} \subset (\C^*)^4$ and $Z = \V_{\C^4}(x_1,x_3) \cup \V_{\C^4}(x_2,x_4) \subset \C^4$ respectively.
Since $\mathscr{H}_2$ is smooth, it is simplicial (in the notation from above, $U = \mathscr{H}_2$).
\end{example}
\begin{exercise}
Show that the Cox ring of $\PP^1 \times \PP^1$ is $\C[x_0,x_1,y_0,y_1]$, where $\deg(x_i) = (1,0) \in \Z^2$ and $\deg(y_i) = (0,1) \in \Z^2$. 
\end{exercise}

\begin{remark}
Some limited functionality regarding Cox rings is available in \texttt{Oscar.jl} (v0.8.1), see for instance the command \texttt{cox\_ring}. It would be interesting to extend this text with more examples in a next version, when this has been further developed. 
\end{remark}

\subsection{Homogenization} \label{sec:homog}
Homogeneous ideals in the Cox ring $S$ of a toric variety $X$ give closed subvarieties via \eqref{eq:subvar}. In fact, all closed subvarieties in this way \cite{cox1995homogeneous}. Here, we focus on codimension 1 subvarieties obtained in the following way. Let $\hat{f} \in \C[M]$ be a non-zero Laurent polynomial. This defines a hypersurface $V_T(\hat{f})$ in the torus $T \simeq (\C^*)^n$. Its closure in $X \supset T$ is denoted $\overline{ V_T(\hat{f}) } \subset X$. We are interested in finding the defining equation of $\overline{ V_T(\hat{f}) } \subset X$. More precisely, we will determine a homogeneous polynomial $f \in S$ such that $\overline{ V_T(\hat{f}) } = V_X(f)$, see \eqref{eq:subhyper}. The construction mirrors the usual \emph{homogenization} when $X = \PP^n$.

\begin{example} \label{ex:homogenizationP2}
Let $M = \Z^2$, $\C[M] = \C[t_1^{\pm 1}, t_2^{\pm 1} ]$, $T = (\C^*)^2$ and consider
\begin{equation} \label{eq:fP2}
\hat{f} = a_0 + a_1 t_1 + a_2 t_2 + a_3 t_1 t_2 + a_4 t_1^2 + a_5 t_2^2 \quad \in \C[M]. 
\end{equation}
The coefficients $a_i$ are complex numbers, not all equal to zero. This defines a curve $V_T(\hat{f}) \subset T$. Its closure in $\PP^2 \supset T$ is the zero locus $V_{\PP^2}(f)$ of the homogeneous polynomial 
\[ f = a_0 x_3^2 + a_1 x_1x_3 + a_2 x_2x_3 + a_3 x_1 x_2 + a_4 x_1^2 + a_5 x_2^2 \quad \in S_2. \]
Here $S_2 = S_{[2 D_3]}$, see Example \ref{ex:gradingP2}. \emph{Homogenization} is passing from $\hat{f}$ to $f$. 

Consider the affine open covering $\PP^2 = U_{\sigma_1} \cup U_{\sigma_2} \cup U_{\sigma_3}$ from Example \ref{ex:coveringP2}. The affine variety $V_{\PP^2}(f) \cap U_{\sigma_3} \subset U_{\sigma_3}$ is obtained via \emph{de-homogenization}: 
\[ \hat{f}_3 = \frac{f}{x_3^2} = a_0 + a_1 \left ( \frac{x_1}{x_3} \right) + a_2 \left (\frac{x_2}{x_3} \right) + a_3 \left (\frac{x_1}{x_3} \right) \left ( \frac{x_2}{x_3} \right) + a_4 \left (\frac{x_1}{x_3} \right)^2 + a_5 \left ( \frac{x_2}{x_3} \right)^2. \]
The polynomial $\hat{f}_3$ is to be interpreted as an element of $\C[U_{\sigma_3}] = \C[t_1, t_2]$ where $t_1 = x_1/x_3, t_2 = x_2/x_3$, and $V_{\PP^2}(f) \cap U_{\sigma_3} = V_{U_{\sigma_3}}(\hat{f}_3)$. Repeating this for the chart $U_{\sigma_1}$, we define
\[ \hat{f}_1 = \frac{f}{x_1^2} = a_0 \left ( \frac{x_3}{x_1} \right )^2 + a_1 \left ( \frac{x_3}{x_1} \right ) + a_2 \left ( \frac{x_2}{x_1} \right ) \left ( \frac{x_3}{x_1} \right ) + a_3 \left ( \frac{x_2}{x_1} \right ) + a_4 + a_5 \left ( \frac{x_2}{x_1} \right )^2, \]
which is an element of $\C[U_{\sigma_1}] = \C[t_1^{-1}, t_1^{-1}t_2]$, as $t_1^{-1} = x_3/x_1, t_1^{-1}t_2 = x_2/x_1$. The dehomogenization $\hat{f}_2$ is interpreted analogously as an element of $\C[U_{\sigma_2}] = \C[t_1t_2^{-1}, t_2^{-1}]$. An important observation is that, when restricted to $U_{\sigma_1 \cap \sigma_3} = U_{\sigma_1} \cap U_{\sigma_3}$, we have the equality 
\begin{equation} \label{eq:agreeonoverlap}
\hat{f_1}_{|U_{\sigma_1 \cap \sigma_3}} = \frac{1}{t_1^2} \, \hat{f_3}_{|U_{\sigma_1 \cap \sigma_3}}. 
\end{equation}
Here dividing by $t_1^2$ makes sense because $\C[U_{\sigma_1 \cap \sigma_3}] = \C[U_{\sigma_3}]_{t_1}$. This means that, although $\hat{f}_1$ and $\hat{f}_3$ do \emph{not} glue to a function on $U_{\sigma_1} \cup U_{\sigma_3}$ (note the factor $t_1^{-2}$), they do agree on \emph{where they vanish} in the overlap $U_{\sigma_1} \cap U_{\sigma_3}$. Indeed, Equation \eqref{eq:agreeonoverlap} shows that for $p \in U_{\sigma_1} \cap U_{\sigma_3}$, $\hat{f}_1(p) = 0$ if and only if $\hat{f}_3(p) = 0$. This means that the set 
\[ V_{\PP^2}(\hat{f}) = \{ \, p \in \PP^2 ~|~ \hat{f}_i(p) = 0 \text{ for any $i$ such that $p \in U_{\sigma_i}$ } \} \]
is well-defined. Moreover, it is not hard to check that $V_{\PP^2}(\hat{f}) = V_{\PP^2}(f) = \overline{V_T(\hat{f})}$. For the reader familiar with vector bundles, we point out that this describes $V_{T}(\hat{f})$ as the zero locus of a global section of a line bundle. Locally, the section is given by $\hat{f}_i: U_{\sigma_i} \rightarrow \C$, and the transition functions are Laurent monomials, such as $t_1^{-2}$ in \eqref{eq:agreeonoverlap}.
\end{example}

We generalize Example \ref{ex:homogenizationP2} to the following setting. Let $\hat{f} \in \C[M] = \C[\Z^n] = \C[t_1^{\pm 1}, \ldots, t_n^{\pm 1}]$ be a Laurent polynomial, given by 
\[ \hat{f} = \sum_{m \in \A} c_m \, t^m, \quad c_m \in \C^*.\]
Its \emph{Newton polytope} $P$ is defined as the convex hull of $\A$. That is, $P = {\rm Conv}(\A) \subset \R^n$. We assume $P$ to be full-dimensional. We consider the toric variety $X_P$ associated to $P$, and determine $\overline{V_T(\hat{f})}$, where the closure is in $X_P \supset T$. Moreover, we determine a homogeneous element $f \in S$ in the Cox ring $S$ of $X_P$ such that $\overline{V_T(\hat{f})} = V_{X_P}(f)$. 

Choose $\ell \in \Z_{>0}$ such that $\ell \cdot P$ is very ample, see Definition \ref{def:veryample}. For each vertex $v \in P \cap M$, define the affine semigroup $\S_v = \N \cdot \{ (\ell \cdot P \cap M) - \ell v  \}$ and $\hat{f}_v = t^{-v} \cdot \hat{f} \in \C[\S_v]$. 
\begin{exercise} \label{ex:fv}
Show that $\S_v$ is saturated in $M$ and $\C[\S_v] = \C[U_{\sigma_v}]$, where $\sigma_v \in \Sigma_P$ is the cone in the normal fan $\Sigma_P$ corresponding to $v$. Check that $\hat{f}_v \in \C[\S_v]$, and that $\hat{f}_i, i = 1, 2, 3$ from Example \ref{ex:homogenizationP2} correspond to the vertices $v$ of the Newton polytope of \eqref{eq:fP2}. 
\end{exercise}
Let $v, v' \in P \cap M$ be two vertices of $P$ and let $\tau = \sigma_v \cap \sigma_{v'}$ be the intersection of the corresponding cones in the normal fan of $P$. Restricting $f_v$ and $f_{v'}$ to $U_\tau$, we find that 
\[ (\hat{f}_{v'})_{|U_\tau} = t^{v-v'} \cdot (\hat{f}_v)_{|U_{\tau}}, \quad \text{ so that } \quad \hat{f}_{v'}(p) = 0 \Longleftrightarrow \hat{f}_v(p) = 0 \text{ for $p \in U_\tau$}.\]
Dividing by $t^{v'-v}$ is justified by $\C[U_\tau] = \C[U_{\sigma_v}]_{t^{v'-v}}$, see Proposition \ref{prop:normalfanglue}. We set
\[ V_{X_P}(\hat{f}) = \{ \, p \in X_P ~|~ f_v(p) = 0 \text{ for some } v \text{ such that } p \in U_{\sigma_v} \, \}. \]
\begin{proposition}
With the notation introduced above, we have $V_{X_P}(\hat{f})= \overline{V_T(\hat{f})}$.
\end{proposition}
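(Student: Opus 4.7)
The first step is to verify that $V_{X_P}(\hat{f})$ is a well-defined closed subvariety of $X_P$. By Exercise~\ref{ex:fv} the local function $\hat{f}_v$ is regular on $U_{\sigma_v}$, so each local vanishing locus is closed. On an overlap $U_{\sigma_v}\cap U_{\sigma_{v'}} \simeq U_\tau$ with $\tau = \sigma_v \cap \sigma_{v'}$ we have $\hat{f}_{v'} = t^{\,v-v'}\,\hat{f}_v$, and the monomial $t^{v-v'}$ is a unit in $\C[U_\tau]$ by Proposition~\ref{prop:normalfanglue}; the local loci therefore agree on overlaps and glue to a closed subvariety. Since $t^{-v}$ is invertible on $T$, one immediately has $V_{X_P}(\hat{f})\cap T = V_T(\hat{f})$, and closedness of $V_{X_P}(\hat{f})$ yields the easy inclusion $\overline{V_T(\hat{f})}\subseteq V_{X_P}(\hat{f})$.

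For the reverse inclusion I proceed chart-by-chart: fixing a vertex $v$, it suffices to show that $V_{U_{\sigma_v}}(\hat{f}_v)$ equals the Zariski closure in $U_{\sigma_v}$ of its intersection with the dense torus $T\subset U_{\sigma_v}$, since that closure is contained in $\overline{V_T(\hat{f})}$. Equivalently, no irreducible component of $V_{U_{\sigma_v}}(\hat{f}_v)$ may lie in the boundary $U_{\sigma_v}\setminus T$. By the orbit--cone correspondence (Theorem~\ref{thm:orbitcone}) this boundary is the union of the irreducible codimension-one subsets $D_\rho\cap U_{\sigma_v}$ for $\rho\in\sigma_v(1)$. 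Full-dimensionality of $P$ prevents $\hat f$ from being a monomial, so $\hat{f}_v$ is a non-zero non-unit on the irreducible variety $U_{\sigma_v}$, and Krull's principal ideal theorem forces every component of $V_{U_{\sigma_v}}(\hat{f}_v)$ to have codimension one. A codimension-one component contained in the codimension-one union $\bigcup_\rho (D_\rho\cap U_{\sigma_v})$ must coincide with one of the $D_\rho\cap U_{\sigma_v}$, so the task reduces to ruling this out for every ray $\rho\in\sigma_v(1)$.

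The main obstacle -- and the heart of the argument -- is this last exclusion, which is exactly where the vertex property of $v$ is used. Because $v\in\A$, the Laurent polynomial $\hat{f}_v = \sum_{m\in\A} c_m\,\chi^{m-v}$ contains the nonzero constant term $c_v$, while every other exponent $m-v$ lies in $\S_v\setminus\{0\}$. By Proposition~\ref{prop:fanproperties}, $\dim\sigma_v = n$, hence $\sigma_v^\perp = \{0\}$, and the distinguished point $\gamma_{\sigma_v}$ of Definition~\ref{def:distinguished} satisfies $\chi^0(\gamma_{\sigma_v})=1$ and $\chi^{m-v}(\gamma_{\sigma_v})=0$ for $m\neq v$; thus $\hat{f}_v(\gamma_{\sigma_v}) = c_v \neq 0$. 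On the other hand, each ray $\rho\in\sigma_v(1)$ satisfies $\rho\preceq\sigma_v$, so the orbit--cone correspondence gives $\gamma_{\sigma_v}\in O(\sigma_v)\subseteq \overline{O(\rho)} = D_\rho$; in particular every $D_\rho\cap U_{\sigma_v}$ contains a point at which $\hat{f}_v$ is non-zero. Hence no $D_\rho\cap U_{\sigma_v}$ sits inside $V_{U_{\sigma_v}}(\hat{f}_v)$, the reduction of the previous paragraph is complete on every chart, and the reverse inclusion $V_{X_P}(\hat{f})\subseteq \overline{V_T(\hat{f})}$ follows.
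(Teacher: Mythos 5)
Your proof is correct, and it takes a genuinely different route from the one in the text. Both arguments reduce chart-by-chart to the claim that $V_{U_{\sigma_v}}(\hat f_v)$ equals the closure of $V_T(\hat f)$ in $U_{\sigma_v}$, but they justify this differently. The paper argues algebraically: it fixes $m$ in the interior of $\sigma_v^\vee$, uses $\C[M]=\C[U_{\sigma_v}]_{t^m}$, and asserts that $\langle \hat f_v\rangle$ is the contraction of $\langle \hat f\rangle$ under this localization, so that taking $V(\cdot)$ immediately produces the desired closure. You argue geometrically: Krull's principal ideal theorem on the Noetherian domain $\C[U_{\sigma_v}]$ forces every irreducible component of $V_{U_{\sigma_v}}(\hat f_v)$ to have codimension one, the boundary $U_{\sigma_v}\setminus T$ is the union of the codimension-one sets $D_\rho\cap U_{\sigma_v}$, and you rule out each $D_\rho\cap U_{\sigma_v}$ as a component by evaluating at the distinguished point $\gamma_{\sigma_v}\in\bigcap_{\rho\in\sigma_v(1)} D_\rho$, where $\hat f_v(\gamma_{\sigma_v})=c_v\neq 0$ precisely because $v$ is a vertex and $\sigma_v$ is full-dimensional. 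Your approach makes transparent exactly where the vertex property enters and leans on material already developed (orbit--cone correspondence, distinguished points, dimension theory) rather than on the slightly delicate assertion that $\langle \hat f_v\rangle$ is a contraction; the price is a longer argument. One small detail worth making explicit: the inference from ``$\hat f$ is not a monomial'' to ``$\hat f_v$ is a non-unit'' uses that the units of $\C[\S_v]=\C[\sigma_v^\vee\cap M]$ are exactly the nonzero constants, which holds because $\sigma_v^\vee$ is strongly convex (and if $\hat f_v$ were a unit the chart contributes nothing, so the argument is unaffected either way).
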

\begin{proof}
The set $V_{X_P}(\hat{f})$ is closed in $X_P$. Its intersection with $U_{\sigma_v}$ is $V_{U_{\sigma_v}}(\hat{f}_v)$. The statement follows from the fact that $V_{U_{\sigma_v}}(\hat{f}_v)$ is the closure of $V_T(\hat{f})$ in $U_\sigma$. To see this, let $m \in M$ be a lattice point in the interior of $\sigma_v^\vee$. We have $\C[M] = \C[U_{\sigma_v}]_{t^m}$, and $\langle \hat{f}_v \rangle \subset \C[U_{\sigma_v}]$ is the \emph{contraction} of $\langle \hat{f} \rangle \subset \C[M]$ in $\C[U_{\sigma_v}]$ (i.e., the largest ideal that localizes to $\langle \hat{f} \rangle$).
\end{proof}
Now comes homogenization. Our task is to find $f \in S_\alpha$ for some $\alpha \in \Cl(X_P)$ such that $V_{X_P}(f) = V_{X_P}(\hat{f})$. We first determine $\alpha$. Recall that the torus invariant divisor on $X_P$ associated to $P$ is $D_P = \sum_{\rho \in \Sigma_P(1)} a_\rho D_\rho$, where $a_\rho$ comes from the minimal facet representation
\[ P = \{ m \in \R^n ~|~ \langle u_\rho, m \rangle + a_\rho \geq 0 \} = \{ m \in \R^n ~|~ F^\top m + a \geq 0 \}. \]
Here $F \in \Z^{n \times k}$ is an integer matrix with the primitive ray generators $u_\rho$ for its columns. This was described in Example \ref{ex:divfrompol}. We have also seen that 
\[ \Gamma(X_P, \OO_{X_P}(D_P)) = \bigoplus_{m \in P \cap M} \C \cdot t^m, \]
so that $\hat{f} \in \Gamma(X_P, \OO_{X_P}(D_P))$. The sum in this description of the global sections of $\OO_{X_P}(D_P)$ is similar to the sum in \eqref{eq:gradedpieces}, where we defined the graded pieces of $S$. In particular, for $\alpha = [D_P] \in \Cl(X_P)$ we have 
\[ S_{\alpha} = \bigoplus_{m \in P \cap M} \C \cdot x^{F^\top m + a } .\]
The natural linear map $\Gamma(X_P, \OO_{X_P}(D_P))  \rightarrow S_{\alpha}$ given by $t^m \mapsto x^{F^\top m + a}$  is \emph{homogenization}. In particular, the image of our Laurent polynomial $\hat{f}$ is $f = \sum_{m \in \A} c_m \, x^{F^\top m + a} \in S_\alpha$.

\begin{exercise}
Check that homogenization is invariant under translations of $P$. That is, the homogenization of $\hat{f}$ is equal to that of $t^m \cdot \hat{f}$, for any Laurent monomial $t^m$. 
\end{exercise} 

One can define homogenization for torus invariant divisors on a normal toric variety $X_\Sigma$.
\begin{definition}[Homogenization] \label{def:homogenization}
Let $X_\Sigma$ be a normal toric variety with no torus factor and let $D \in \Div_T(X_\Sigma)$ be a torus invariant divisor. \emph{Homogenization} with respect to $D$ is
\[\eta_D : \Gamma(X_\Sigma, \OO_{X_\Sigma}(D)) \longrightarrow S_{[D]} , \quad t^m \longmapsto x^{F^\top m + a}. \]
\end{definition}

In the rest of this section, we keep using the homogenization $f = \eta_{D_P}(\hat{f})$.
It remains to show that $V_{X_P}(f) = V_{X_P}(\hat{f})$. We will check this locally, on the affine charts $U_{\sigma_v}$, where $v \in P \cap M$ is a vertex. Let $x^{\hat{\sigma}_v}$ be as in \eqref{eq:xsigmahat}. By the orbit-cone correspondence (Theorem \ref{thm:orbitcone}), the toric variety $\C^k \setminus V(x^{\hat{\sigma}_v}) \subset \C^k \setminus Z$ corresponds to the fan $\Sigma'_v$ 
consisting of the cone ${\rm Cone}(e_i ~|~ \rho_i \in \sigma_v(1))$ and all its faces. The map $F : \R^k \rightarrow \R^n$ is compatible with $\Sigma'_v$ and the fan of $U_{\sigma_v}$. The corresponding toric morphism is the restriction $\pi_{\sigma_v} = \pi_{|\C^k \setminus V(x^{\hat{\sigma}_v})}: \C^k \setminus V(x^{\hat{\sigma}_v}) \rightarrow U_{\sigma_v}$. At the level of $\C$-algebras, this map is 
\[ \pi_{\sigma_v}^* : \C[U_{\sigma_v}] \longrightarrow S_{x^{\hat{\sigma}_v}}, \quad t^m \mapsto x^{F^\top m}. \]
Here $m \in \S_v$ ensures that only the variables of $x^{\hat{\sigma}_v}$ appear in the denominator of $x^{F^\top m}$.

\begin{proposition}
With the notation introduced above, we have $V_{X_P}(f) = V_{X_P}(\hat{f}) = \overline{V_T(\hat{f})}$.
\end{proposition}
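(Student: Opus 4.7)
The second equality $V_{X_P}(\hat{f})=\overline{V_T(\hat{f})}$ is the content of the preceding proposition, so the task is to establish $V_{X_P}(f)=V_{X_P}(\hat{f})$. The plan is to verify this locally on the affine cover $X_P=\bigcup_{v} U_{\sigma_v}$, where the union ranges over the vertices $v\in P\cap M$ (justified by Proposition \ref{prop:projcover}). For each such $v$, I will show that
\[
V_{X_P}(f)\cap U_{\sigma_v}\;=\;V_{U_{\sigma_v}}(\hat{f}_v),
\]
which coincides with $V_{X_P}(\hat{f})\cap U_{\sigma_v}$ by the definition of the right-hand side.

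The key computation is the comparison of $f$ and $\pi_{\sigma_v}^*(\hat{f}_v)$ on the affine chart $\C^k\setminus V(x^{\hat{\sigma}_v})$ of $\C^k\setminus Z$. Using $\hat{f}_v=t^{-v}\hat{f}$ and the formula $\pi_{\sigma_v}^*(t^m)=x^{F^\top m}$, one computes
\[
\pi_{\sigma_v}^*(\hat{f}_v)=\sum_{m\in\A}c_m\,x^{F^\top(m-v)}=x^{-F^\top v-a}\cdot f.
\]
The point is that the monomial $x^{F^\top v+a}$ is supported precisely on the variables appearing in $x^{\hat{\sigma}_v}$: indeed, because $v$ is a vertex of $P$, the facet inequalities satisfy $\langle u_\rho,v\rangle+a_\rho=0$ for $\rho\in\sigma_v(1)$ and $\langle u_\rho,v\rangle+a_\rho>0$ otherwise, so $(F^\top v+a)_i=0$ when $\rho_i\in\sigma_v(1)$ and is a positive integer when $\rho_i\notin\sigma_v(1)$. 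Hence $x^{F^\top v+a}$ is an invertible regular function on $\C^k\setminus V(x^{\hat{\sigma}_v})$, and $f$ and $\pi_{\sigma_v}^*(\hat{f}_v)$ have the same zero locus there.

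To finish, I will translate this equality of vanishing loci downstairs through $\pi_{\sigma_v}$. By definition \eqref{eq:subhyper}, a point $p\in U_{\sigma_v}$ lies in $V_{X_P}(f)$ iff $f$ vanishes at some $x\in\pi^{-1}(p)$; since such $x$ lies in $\C^k\setminus V(x^{\hat{\sigma}_v})$ (as $p\in U_{\sigma_v}$ forces $x^{\hat{\sigma}_v}(x)\neq 0$), this happens iff $\pi_{\sigma_v}^*(\hat{f}_v)(x)=0$, i.e.\ iff $\hat{f}_v(p)=0$. Combining over all vertices yields $V_{X_P}(f)=V_{X_P}(\hat{f})$, and hence the proposition. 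The only subtle step is the sign/support bookkeeping for $x^{F^\top v+a}$, which is exactly where the choice of $v$ as a vertex of $P$ (rather than an arbitrary lattice point) is essential.
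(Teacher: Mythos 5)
Your proposal is correct and follows essentially the same route as the paper's proof: pull back $\hat f_v$ to get $\pi_{\sigma_v}^*(\hat f_v)=f/x^{F^\top v+a}$, observe that $x^{F^\top v+a}$ is a unit on $\pi^{-1}(U_{\sigma_v})=\C^k\setminus V(x^{\hat\sigma_v})$, and conclude locally. You spell out explicitly why the support of $x^{F^\top v+a}$ matches that of $x^{\hat\sigma_v}$ (the vertex inequalities), a detail the paper leaves implicit, but the argument is otherwise the same.
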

\begin{proof}
We have $V_{X_P}(\hat{f}) \cap U_{\sigma_v} = V_{U_{\sigma_v}}(\hat{f}_v)$. The pullback of $\hat{f}_v$ under $\pi_{\sigma_v}$ is 
\begin{equation} \label{eq:proof64}
 \pi_{\sigma_v}^*(\hat{f}_v) = \pi_{\sigma_v}^* \left ( \sum_{m \in \A} c_m\,  t^{m -v} \right ) = \sum_{m \in \A} c_m \, x^{F^\top (m-v)} = \frac{f}{x^{F^\top v + a}}. 
\end{equation}
This means that, when $\pi(x) \in U_{\sigma_v}$, $\hat{f}_v(\pi(x)) = f(x)/x^{F^\top v + a}$. If $p \in V_{X_P}(f)$, then there is $x \in \pi^{-1}(p)$ such that $f(x) = 0$. Let $v$ be any vertex of $P$ such that such that $p \in U_{\sigma_v}$. By Lemma \ref{lem:imagedistpoints}, $\pi^{-1}(U_{\sigma_v}) = \C^k \setminus V(x^{\hat{\sigma}_v})$, so that $x_i \neq 0$ for all $i$ such that $\rho_i \notin \sigma(1)$. This means that we have $x^{F^\top v + a} \neq 0$, and hence $\hat{f}_v(\pi(x)) = \hat{f}_v(p) = 0$ by \eqref{eq:proof64}, so that $p \in V_{X_P}(\hat{f})$. This shows $V_{X_P}(f) \subset V_{X_P}(\hat{f})$. The proof of the reverse inclusion is analogous.
\end{proof}

\begin{example} \label{ex:hirz}
Let $\hat{f} = 1 + t_1 + t_2 + t_1t_2 + t_1^2t_2 + t_1^3t_2 \in \C[t_1^{\pm 1}, t_2^{\pm 1}]$. The toric variety $X_P$ is the Hirzebruch surface from Example \ref{ex:hirz3}. In the notation used there, $\hat{f}$ homogenizes to 
\[ f = x_3x_4 + x_1x_4 + x_2x_3^3 + x_1 x_2 x_3^2 + x_1^2x_2x_3 + x_1^3x_2. \]
Its degree is $[D_3 + D_4] \in \Cl(\mathscr{H}_2)$. 
\end{example}

\subsection{Complete intersections on toric varieties}
Let $M = \Z^n$ and let $\hat{f}_1, \ldots, \hat{f}_n \in \C[M] = \C[t_1^{\pm 1},\ldots, t_n^{\pm 1}]$ be Laurent polynomials. In this section, we consider the system of equations 
\begin{equation} \label{eq:hatsys}
\hat{f}_1 = \cdots = \hat{f}_n = 0
\end{equation}
with solutions $V_T(\hat{f}_1, \ldots, \hat{f}_n) = \{ p \in T ~|~ \hat{f}_i(p) = 0, i = 1, \ldots, n \}$. Under suitable assumptions, $V_T(\hat{f}_1, \ldots, \hat{f}_n)$ consists of finitely many points. We let $\A_i \subset \Z^n$ be such that 
\begin{equation} \label{eq:Ai}
 \hat{f}_i = \sum_{m \in \A_i} c_{i,m} \, t^m, \quad i = 1, \ldots, n. 
 \end{equation}
We are particularly interested in the number $\delta$ of solutions to \eqref{eq:hatsys}, when the exponents $\A_i$ are fixed. B\'ezouts theorem gives a full answer when $\A_i = (d_i \cdot \Delta_n) \cap \Z^n$ is a dilate of the standard simplex. The case $\A_1 = \cdots = \A_n$ was dealt with in Section \ref{sec:kush}: it is covered by Kushnirenko's theorem. Both results rely on the fact that if the question is reformulated over a different, compact solution space $X \supset T$, the number $\delta$ is the same for \emph{all} systems with finitely many solutions (if we count multiplicities). Indeed, in B\'ezout's theorem $X = \PP^n$, and in Section \ref{sec:kush}, $X$ is the projective toric variety $X_{\A}$. Having a compact solution space is desirable, for instance, to apply results from \emph{intersection theory}. Here, we will discuss what $X$ is for \eqref{eq:hatsys} where the $\A_i$ are arbitrary. We will also state the \emph{Bernstein-Khovanskii-Kushnirenko (BKK) theorem}, which generalizes B\'ezout and Kushnirenko to this setting. 

Defining $X$ involves the binary operation of \emph{Minkowski addition} on polytopes.
\begin{definition}[Minkowski sum] \label{def:minkowski}
Let $P$ and $Q$ be polytopes in $M_\R$. The \emph{Minkowski sum} of $P$ and $Q$ is 
$$P+Q = \{p+q ~|~ p \in P, q \in Q\} \subset M_\R.$$
\end{definition}
One easily checks commutativity and associativity. Minkowski sums appear in the definition of an important integer associated to a tuple of $n$ lattice polytopes. 
\begin{definition}[Mixed volume]\label{def:mixedvolume}
Let $P_1, \ldots, P_n$ be polytopes in $\R^n$. The function $(\lambda_1, \ldots, \lambda_n) \mapsto {\rm Vol}(\lambda_1 \cdot P_1 + \cdots + \lambda_n \cdot P_n)$ is a homogeneous polynomial function of degree $n$ on $\R_{\geq 0}^n$, see \cite[Ch.~7, Sec.~4, Prop.~4.9]{cox2006using}. Its coefficient standing with the mixed monomial $\lambda_1 \lambda_2 \cdots \lambda_n$ is an integer, called the \emph{mixed volume} ${\rm MV}(P_1, \ldots, P_n)$.
\end{definition}
A useful formula for the mixed volume in the case where $n = 2$ is 
\begin{equation}\label{eq:MV2d}
{\rm MV}(P_1,P_2) = {\rm Vol}(P_1 + P_2) - {\rm Vol}(P_1) - {\rm Vol}(P_2). 
\end{equation}
Let $P_i = {\rm Conv}(\A_i) \subset \R^n$ and $P = P_1 + \cdots + P_n$. We assume that $P$ is full-dimensional. Our space of solutions $X$ is the toric variety $X_P$, with Cox ring $S$. To each $\hat{f}_i$ we will associate a divisor $E_i \in \Div_T(X_P)$ and a homogeneous element $f_i = \eta_{E_i}(\hat{f}_i) \in S_{[E_i]}$ such that $V_{X_P}(f_i) = \overline{V_T(\hat{f}_i)}$. Here the closure is in $X_P$, $\eta_{E_i}$ is the homogenization map from Definition \ref{def:homogenization} and $V_{X_P}(f_i)$ is as in \eqref{eq:subhyper}. The BKK theorem counts the number $\delta$ of points in \[V_{X_P}(f_1, \ldots, f_n) = V_{X_P}(f_1) \cap \cdots \cap V_{X_P}(f_n).\] If $\delta$ is finite, we say that $\hat{f_1}, \ldots, \hat{f_n}$ define a \emph{complete intersection} on $X_P$. 

Before stating the theorem, we define the divisors $E_i$. When $P$ and $P_i$ have the same normal fan, it is natural to set $E_i = D_{P_i}$. In general $P_i$ is only a Minkowski summand of $P$. This entails that the normal fan of $P$ \emph{refines} that of $P_i$, see e.g.~Proposition 6.2.5 in \cite{cox2011toric}. Let $F \in \Z^{n \times k} = [u_1 ~ \cdots ~ u_k]$ be the matrix of primitive ray generators of $\Sigma_P(1)$. We have
\[ P_i = \{ m \in \R^n ~|~ F^\top m + a_i \geq 0 \}, \]
where $a_i = (a_{i,1}, \ldots, a_{i,k})$ with $a_{i,j} = -\min_{m \in \A_i} \langle u_j, m \rangle$. We point out that this is in general \emph{not} a minimal facet representation. We set $E_i = \sum_{j = 1}^k a_{i,j} \, D_j$.

\begin{theorem}[Toric BKK theorem] \label{thm:toricbkk}
Let $\hat{f}_1, \ldots, \hat{f}_n \in \C[M]$ be as in \eqref{eq:Ai} and let $X_P$ be the toric variety of the polytope $P = P_1 + \cdots + P_n$, where $P_i = {\rm Conv}(\A_i)$. If $V_{X_P}(f_1, \ldots, f_n)$ consists of $\delta < \infty$ points on $X_P$, counting multiplicities, then $\delta$ is given by ${\rm MV}(P_1,\ldots,P_n)$. For generic choices of the coefficients $c_{i,m}$, the number of solutions in the torus $T_{X_P} \simeq T_N  = (\C^*)^n$ is exactly equal to ${\rm MV}(P_1,\ldots,P_n)$ and all solutions have multiplicity one.
\end{theorem}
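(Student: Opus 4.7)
The plan is to establish the generic case first, then use upper semicontinuity on the compactification $X_P$ to deduce the general finite case. For the generic case I would prove the theorem by polarizing Kushnirenko's theorem (Theorem \ref{thm:kush}), using that the mixed volume is uniquely characterized by symmetry, Minkowski-multilinearity in each slot, and the diagonal value $n! \, {\rm Vol}$.

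First I would define $N(P_1, \ldots, P_n)$ to be the generic number of solutions in $T_N$ of a sparse system with Newton polytopes $P_i$, and check three properties. Symmetry is immediate. Kushnirenko (Theorem \ref{thm:kush}) together with the embedding $T \hookrightarrow X_{P} \simeq X_{\widehat{(P \cap M)}}$ used in its proof gives $N(P, \ldots, P) = n! \, {\rm Vol}(P)$. For additivity $N(P_1 + P_1', P_2, \ldots, P_n) = N(P_1, P_2, \ldots, P_n) + N(P_1', P_2, \ldots, P_n)$ I would take the first equation to be a product $\hat{g} = \hat{g}_1 \hat{g}_1'$ with $\hat{g}_1, \hat{g}_1'$ generic of supports $P_1, P_1'$; its Newton polytope is $P_1 + P_1'$ and the zero set in $T$ is a disjoint union, so for generic $f_2, \ldots, f_n$ the count splits. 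Upper-semicontinuity of the generic count (proved below) promotes this equality from the factorable case to all of $N(P_1 + P_1', P_2, \ldots, P_n)$. These three properties force $N = {\rm MV}$ by the characterization in Definition \ref{def:mixedvolume}.

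Second, I would show that for generic coefficients the only solutions in $X_P$ lie in the torus and are simple. Using the orbit-cone correspondence (Theorem \ref{thm:orbitcone}), the restriction of $f_i$ to a boundary orbit closure $V(\tau)$ is (up to rescaling) the homogenization on $V(\tau)$ of the facial Laurent polynomial $\hat{f}_i^{\tau}$ supported on the face of $P_i$ selected by $\tau$, whose Newton polytope has dimension $n - \dim \tau$. A parameter-counting Bertini argument then shows that for a non-empty open set of coefficients the facial system $\hat{f}_1^{\tau} = \cdots = \hat{f}_n^{\tau} = 0$ is inconsistent on the $(n-\dim \tau)$-dimensional torus $O(\tau)$, because it is an over-determined system; intersecting these open conditions over the finitely many cones $\tau \in \Sigma_P$ with $\dim \tau \geq 1$ gives a non-empty open set where no boundary solutions exist. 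A further Bertini-smoothness argument shows the transverse torus intersections are simple, so the generic count is exactly $N(P_1, \ldots, P_n) = {\rm MV}(P_1, \ldots, P_n)$.

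Finally, for the general statement with $\delta < \infty$ isolated solutions in $X_P$ counted with multiplicity, I would invoke conservation of number: since $X_P$ is complete (Theorem \ref{thm:smoothcompactsimplicial}) and each $V_{X_P}(f_i)$ moves in the fixed linear system $|E_i|$ independent of the coefficients of $\hat{f}_i$ (Definition \ref{def:homogenization}), the total length $\delta$ of a zero-dimensional intersection $V_{X_P}(f_1) \cap \cdots \cap V_{X_P}(f_n)$ is constant in the family whenever it is finite, and equals its generic value by the previous step. The main obstacle is making this last conservation argument rigorous without appealing to intersection theory on a possibly non-smooth toric variety: one must either work in $\Pic(X_P) \otimes \Q$ (using Proposition \ref{prop:simpfinind} to clear denominators for simplicial $\Sigma_P$, and dilating by a common $\ell$ so all $\ell E_i$ are Cartier as in Example* \ref{ex:chiara10}) or, alternatively, realize the deformation from $\hat{f}_i$ to a generic sparse system as a flat family over $\PP^1$ and invoke properness of $X_P \times \PP^1 \to \PP^1$ to prevent escape of multiplicity; either route requires care but no essentially new ideas beyond those already developed.
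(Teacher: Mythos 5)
The paper offers no proof of this theorem; it only points to Fulton [Sec.~5.5] and a few other sources. Your proposal is therefore a free-standing attempt, and its overall strategy --- polarize Kushnirenko (Theorem~\ref{thm:kush}) to get the generic torus count, then carry this over to all finite $\delta$ by conservation of number on the complete space $X_P$ --- is a legitimate and classical route, closely parallel to Fulton's treatment: he computes the intersection numbers $E_1\cdots E_n$ on $X_P$, which are multilinear and symmetric for free from intersection theory, and only needs the diagonal case $E^n = n!\,{\rm Vol}(P_E)$; you instead take the diagonal from Kushnirenko and try to obtain multilinearity of the torus count directly via the factorization trick.

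That is where the exposition has a genuine gap. In the additivity step you write that ``upper-semicontinuity of the generic count'' promotes the factorable identity to the generic one, but there is no such semicontinuity in the torus: as coefficients vary, isolated solutions escape to $X_P\setminus T$ or reappear from it, so the torus count can jump in either direction. The statement you actually need is precisely the conservation-of-number principle you defer to your last paragraph: on the complete variety $X_P$, the intersection multiplicity $\delta$ depends only on the linear equivalence classes $[E_i]$, so any specialization with finite $\delta$ (including the factorable one) has the same $\delta$ as the generic one. That principle must be established first --- for possibly non-simplicial, hence singular, $X_P$, which is the technical heart of the whole argument --- before either the additivity of the torus count or your closing paragraph can be run; as written, step one quietly uses the conclusion of step three. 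You also still owe a check that the factorable system $\hat{g}_1\hat{g}_1'=\hat{f}_2=\cdots=\hat{f}_n=0$ itself has no solutions on $X_P\setminus T$ (its facial polynomials are products of facials, so the Bertini argument does still apply) and that no extra multiplicity appears along the codimension-two locus $\hat{g}_1=\hat{g}_1'=0$, which the generic curve $\hat{f}_2=\cdots=\hat{f}_n=0$ misses. With the conservation-of-number ingredient moved to the front and properly justified on $X_P$ (your $\Q$-Cartier remarks are the right ones to develop), the rest of the plan is sound.
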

\begin{proof}
See \cite[Sec.~5.5]{fulton1993introduction}. For the final statement, see \cite{bernshtein1975number} for the original proof, \cite{huber1995polyhedral} for a proof based on homotopy continuation, and \cite[Ch.~7, Sec.~5]{cox2006using} for a sketch.
\end{proof} 

\begin{example}
When $P_1 = \cdots = P_n = P$, we have ${\rm MV}(P, \ldots, P) = n!\, {\rm Vol}(P)$, so that Theorem \ref{thm:toricbkk} implies Theorem \ref{thm:kush}. When $P_i = d_i \cdot \Delta_n$ is the $d_i$-dilation of the standard simplex, the mixed volume ${\rm MV}(P_1, \ldots, P_n)$ evaluates to the B\'ezout number $d_1 \cdots d_n$. 
\end{example}

\begin{example} \label{ex:hirz1}
Consider the system of Laurent polynomial equations $\f_1= \f_2 = 0$ given by 
\begin{align*}
\f_1 &= 1 + t_1 + t_2 + t_1t_2 + t_1^2t_2 + t_1^3t_2,\\
\f_2 &= 1 + t_2 + t_1t_2 + t_1^2t_2.
\end{align*}
These are elements of $\C[M] = \C[t_1^{\pm 1}, t_2^{\pm 1}]$ and give the polygons $P_1, P_2, P$ shown in Figure \ref{fig:hirz1}.
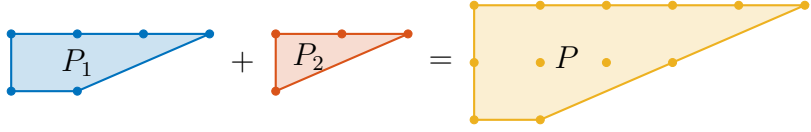
\begin{figure}
\centering
\begin{tikzpicture}
\begin{axis}[%
width=4.500in,
height=0.900in,
scale only axis,
xmin=-0.5,
xmax=12.5,
xtick = \empty,
ymin=-1,
ymax=2,
ytick = \empty,
axis background/.style={fill=white},
axis line style={draw=none} 
]
\addplot [color=mycolor1,solid,thick, fill opacity = 0.2, fill = mycolor1,forget plot]
  table[row sep=crcr]{%
0	0\\
1	0\\
3	1\\
0	1\\
0	0\\
};
\addplot[only marks,mark=*,mark size=1.5pt,mycolor1
        ]  coordinates {
    (0,0) (1,0) (3,1) (2,1) (1,1) (0,1)
};

\node at (axis cs:1.0,0.5) {$P_1$};
\node at (axis cs:3.5,0.5) {$+$};

\addplot [color=mycolor2,solid,thick, fill opacity = 0.2, fill = mycolor2,forget plot]
  table[row sep=crcr]{%
4	0\\
6	1\\
4	1\\
4	0\\
};
\addplot[only marks,mark=*,mark size=1.5pt,mycolor2
        ]  coordinates {
    (4,0) (6,1) (5,1) (4,1)
};
\node at (axis cs:4.5,0.6) {$P_2$};
\node at (axis cs:6.5,0.5) {$=$};

\addplot [color=mycolor3,solid,thick, fill opacity = 0.2, fill = mycolor3,forget plot]
  table[row sep=crcr]{%
7	-0.5\\
8	-0.5\\
12	1.5\\
7	1.5\\
7	-0.5\\
};

\addplot[only marks,mark=*,mark size=1.5pt,mycolor3
        ]  coordinates {
    (7,-0.5) (8,-0.5) (7,0.5) (8, 0.5) (9, 0.5) (10, 0.5) (7, 1.5) (8,1.5) (9,1.5) (10,1.5) (11,1.5) (12,1.5)
};

\node at (axis cs:8.4,0.6) {$P$};
\end{axis}
\end{tikzpicture}%
\caption{Polytopes from Example \ref{ex:hirz1}.}
\label{fig:hirz1}
\end{figure}
The toric variety $X_P$ is the Hirzebruch surface $\mathscr{H}_2$ from Example \ref{ex:hirz3}.
Note that $X_{P_2} \neq X_{P_1} = X_{P}$, since $P_2$ has a different normal fan. However, $P_2$ is a Minkowski summand of $P$, so it can be written as $P = \{ m \in \R^2 ~|~ F^\top m + a_2 \geq 0 \}$. Here $F$ is from Example \ref{ex:hirz3}, and $a_2 = (0,0,0,1)^\top$ gives $E_2 = D_4$. We established in Example \ref{ex:hirz} that $E_1 = D_3 + D_4$ (note that $\f = \f_1$). The system has only one solution in the torus, namely $(t_1,t_2) = (-1,-1)$. We encourage the reader to check this. However, the formula \eqref{eq:MV2d} gives ${\rm MV}(P_1,P_2) = 3$. We will show that $V_{X_P}(\f_1,\f_2)$ consists of 3 points. Homogenization gives
\begin{align*}
f_1 &= x_3x_4 + x_1x_4 + x_2x_3^3 + x_1x_2x_3^2 + x_1^2x_2x_3 + x_1^3x_2 \in S_{[D_3 + D_4]}, \\
f_2 &= x_4 + x_2x_3^2 + x_1x_2x_3 + x_1^2x_3 \in S_{[D_4]}.
\end{align*} 
We now see that the vanishing locus $V_{X_P}(f_1, f_2)$ on $X_P$ consists of three points $\pi(z_i), i = 1, \ldots, 3$, with homogeneous coordinates
$$z_1 = (-1,-1,1,1), ~ z_2 = (0,-1,1,1), ~ z_3 = (1,-1,0,1).$$
Note that $\pi(z_1)$ is the toric solution $(-1,-1)$ and the other solutions are on the boundary of the torus: $\pi(z_2) \in D_1, \pi(z_3) \in D_3$. 
\end{example}

The original BKK theorem considers solutions in $T = (\C^*)^n$. It has inspired efficient symbolic-numeric methods for solving polynomial equations, such as polyhedral homotopies \cite{huber1995polyhedral} and sparse resultants \cite{canny2000subdivision,emiris1999matrices}. The toric interpretation of the BKK theorem given in Theorem \ref{thm:toricbkk} is important for designing stable numerical algorithms \cite{bender2020toric,duff2020polyhedral,telen2019numerical}.

\small
\bibliographystyle{abbrv}
\bibliography{references.bib} 

\normalsize
\noindent
{\bf Author's address:}

\smallskip

\noindent Simon Telen, MPI-MiS Leipzig
\hfill {\tt simon.telen@mis.mpg.de}

\end{document}